\theoremstyle{plain}
\newtheorem{thm}{Theorem}[section]
\newtheorem{lemma}[thm]{Lemma}
\newtheorem{propn}[thm]{Proposition}
\newtheorem{cor}[thm]{Corollary}
\theoremstyle{definition}
\newtheorem{defn}[thm]{Definition}
\newtheorem{rmk}[thm]{Remark}
\newtheorem{ex}[thm]{Example}
\newtheorem*{thm*}{Theorem}
\renewcommand{\subset}{\subseteq}
\newcommand{\R}{\mathbb{R}}
\newcommand{\Z}{\mathbb{Z}}
\newcommand{\K}{\mathbb{K}}
\newcommand{\Hom}{\operatorname{Hom}}
\newcommand{\GermAut}{\operatorname{GermAut}}
\newcommand{\NullAut}{\operatorname{NullAut}}
\newcommand{\Aut}{\operatorname{Aut}}
\newcommand{\Diff}{\operatorname{Diff}}
\newcommand{\A}{\mathcal{A}}
\newcommand{\F}{\mathcal{F}}
\newcommand{\id}{\mathrm{id}}
\renewcommand{\Gauge}{\operatorname{Gauge}}
\newcommand{\nospacepunct}[1]{\makebox[0pt][l]{\,#1}}
\begin{document}

\title{On singular foliations tangent to a given hypersurface}
\author{Michael Francis}

\maketitle

\begin{abstract}
We consider a class of singular foliations in the sense of Androulidakis and Skandalis that we call transverse order $k$ foliations. These have a finite number of leaves: one hypersurface (the singular leaf) together with the components of its complement (open leaves). The positive integer parameter $k$ encodes the ``order of tangency'' of the leafwise vector fields to $L$. We show that a loop in the singular leaf induces a well-defined holonomy transformation at the level of $(k-1)$-jets. The resulting holonomy invariant can be used to give a complete classification of these foliations and obtain concrete descriptions of their associated groupoids and algebras.  
\end{abstract}

\noindent\textbf{Mathematics Subject Classification (2020):} 
46L87, 
53C12, 
22A22. 

\noindent\textbf{Keywords:} singular foliation, Lie groupoid, jet, holonomy.

\section{Introduction}

Note that, except for minor changes, the contents of this article are duplicated in Chapters~1,~2 and 5 of the author's PhD dissertation \cite{Francis[PhD]}.

The interplay between foliation theory and operator theory is a significant aspect of Connes' noncommutative geometry program \cite{Connes[SURVEY]}.  A key construction in this area is  the C*-algebra $C^*(\F)$ of  a regular foliation $\F$. This begins with the construction, due to Winkelnkemper \cite{Winkelnkemper}, of a  (possibly non-Hausdorff) Lie groupoid $G(\F)$ called the \emph{holonomy groupoid} or \emph{graph} of $\F$.

Since many natural examples of foliations are not regular, but instead present singularities of some type or another, it is desirable to extend these constructions so that they also apply in singular cases. A number of authors have done work on this topic, see \cite{Pradines[1984]}, \cite{Pradines[1985]}, \cite{Pradines-Bigonnet}, \cite{Debord[2001a]}, \cite{Debord[2001b]}, \cite{AS[2007]}, \cite{AZ[2014]}. The most broadly applicable construction is the one given by Androulidakis and Skandalis in \cite{AS[2007]} and it is their approach that we are concerned with here.

We also follow \cite{AS[2007]} in understanding a foliation to be any locally finitely-generated $C^\infty(M)$-module $\F$ of compactly-supported vector fields on $M$ that is closed under Lie bracket. 
A leaf of $\F$ is then the set of points accessible from a given point by composing flows of vector fields in $\F$. By work of   Stefan and Sussmann (\cite{Stefan}, \cite{Sussmann}), the leaves of $\F$ constitute a partition of $M$ into immersed submanifolds (generalizing the Frobenius theorem).
A foliation is  \emph{regular} if all its leaves have the same dimension and \emph{singular} otherwise.  In the regular setting, the module of vector fields can be recovered from the partition, but this fails in the singular setting. In fact, varying the module $\F$ while keeping the partition the same  will be a prominent theme in this work.

In \cite{AS[2007]}, given any singular foliation $\F$, Androulidakis and Skandalis constructed the following objects:
\begin{enumerate}
\item A holonomy groupoid $G(\F)$.
In general, this is only  a topological groupoid, and its topology can be very wild. It is a Lie groupoid if and only if $\F$ is \emph{almost regular}, a hypothesis satisfied by all foliations studied in this article.
\item A smooth convolution algebra $\A(\F)$. In the almost regular case, $\A(\F)\cong C_c^\infty(G(\F))$, after choosing a smooth Haar system in order to make sense of convolution.
\item A C*-algebra $C^*(\F)$, obtained by completing $\A(\F)$.
In the almost regular case, this is the usual groupoid C*-algebra in the sense of \cite{Renault[BOOK]}.
\end{enumerate}

We  now give an informal discussion of holonomy to indicate how things change in the singular setting. Figure~\ref{fig:leaves} depicts the leaves of the following three foliations  of the cylinder $S^1 \times \R$, which we  regard as having coordinates $(x,y)$ where  $x$ is $\Z$-periodic:
\begin{align*}
\F\{\tfrac{d}{dx}+y\tfrac{d}{dy}\} &&\F\{\tfrac{d}{dx}+y^2\tfrac{d}{dy}\} && \F\{\tfrac{d}{dx},y\tfrac{d}{dy}\}.
\end{align*}
Here, the notation $\F\{X_1,\ldots,X_n\}$ refers to the foliation generated by a finite set of vector fields $X_1,\ldots,X_n$. 
The first two of these  are regular foliations with 1-dimensional leaves while the third is a singular foliation whose leaves are $S^1 \times \R_+$, $S^1 \times \{0\}$ and $S^1 \times \R_-$, where $\R_+\coloneqq (0,\infty)$, $\R_-\coloneqq(-\infty,0)$.

\begin{figure}[ht]
     \centering
     \begin{subfigure}[b]{0.32\textwidth}
         \centering
    \def\svgwidth{.4\textwidth}
    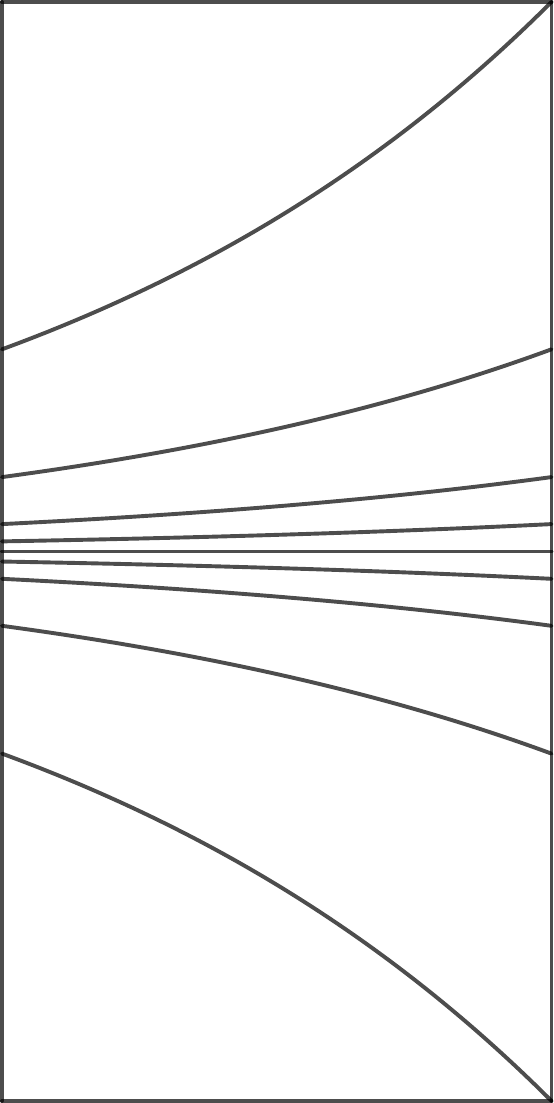
         \caption*{$\F\{\frac{d}{dx}+y\frac{d}{dy}\}$}
         \label{fig:leaves1}
     \end{subfigure}
     \hfill
     \begin{subfigure}[b]{0.32\textwidth}
         \centering
    \def\svgwidth{.4\textwidth}
    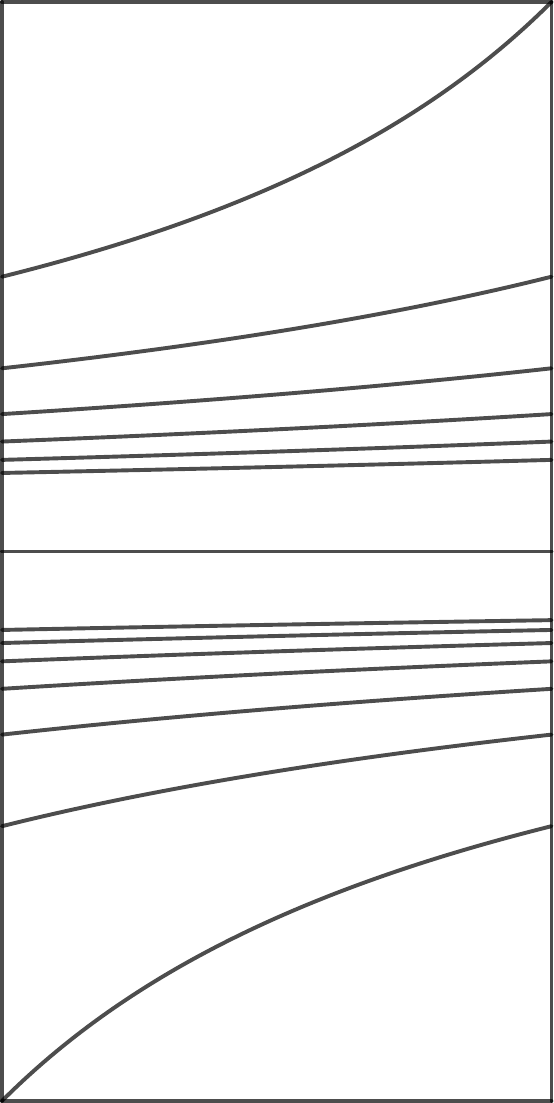
         \caption*{$\F\{\frac{d}{dx} + y^2 \frac{d}{dy}\}$}
         \label{fig:leaves2}
     \end{subfigure}
     \hfill
     \begin{subfigure}[b]{0.32\textwidth}
         \centering
    \def\svgwidth{.4\textwidth}
    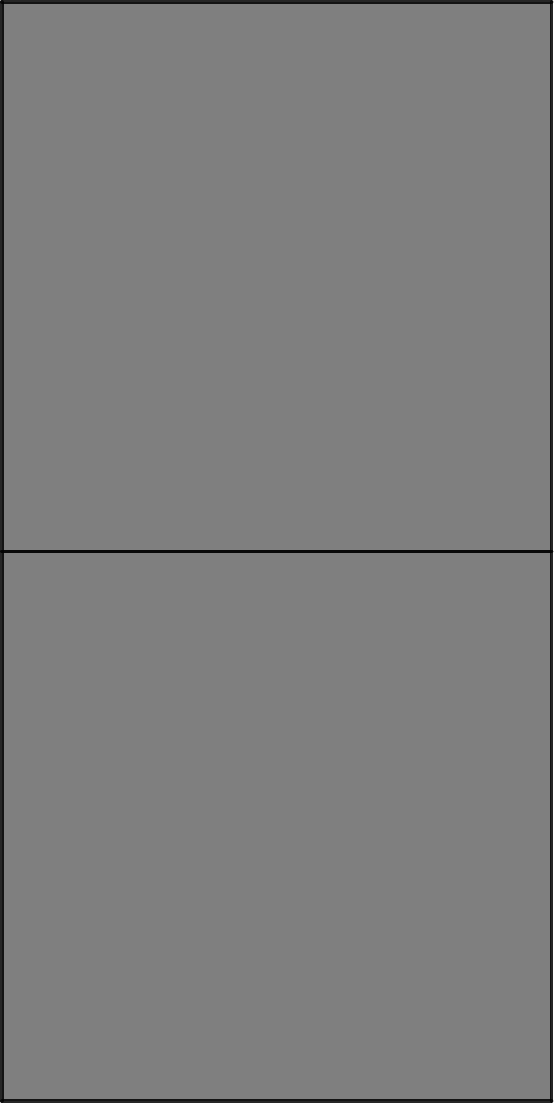
         \caption*{$\F\{\frac{d}{dx}, y\frac{d}{dy}\}$.}
         \label{fig:leaves3}
     \end{subfigure}
        \caption{Leaves of some  foliations of $S^1 \times \R$.}
        \label{fig:leaves}
\end{figure}

All three of these foliations determine nonsmooth equivalence relations. The issue becomes apparent when we restrict the leaf equivalence relations to the submanifold $T=\{0\} \times \R$ passing through   $p=(0,0)$. The  resulting subsets of $T\times T$ are depicted in Figure~\ref{fig:reln}.

\begin{figure}[ht]
     \centering
     \begin{subfigure}[b]{0.32\textwidth}
         \centering
    \def\svgwidth{.4\textwidth}
    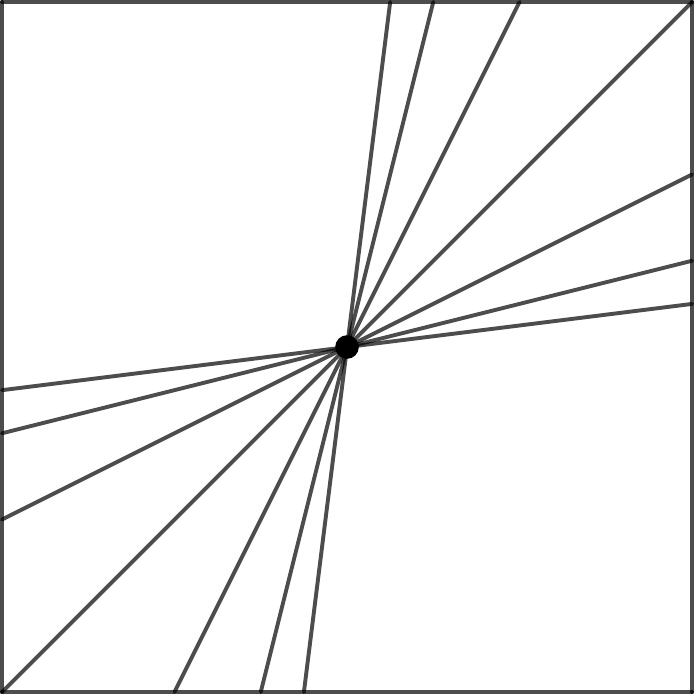
         \label{fig:reln1}
                  \caption*{$\F\{\frac{d}{dx}+y\frac{d}{dy}\}$}
     \end{subfigure}
     \hfill
     \begin{subfigure}[b]{0.32\textwidth}
         \centering
    \def\svgwidth{.4\textwidth}
    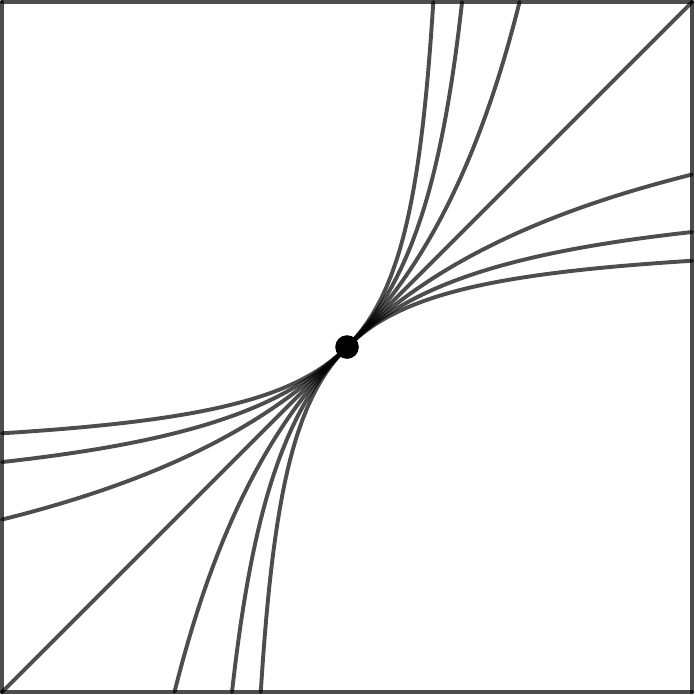
         \label{fig:reln2}
                  \caption*{$\F\{\frac{d}{dx} + y^2 \frac{d}{dy}\}$}
     \end{subfigure}
     \hfill
     \begin{subfigure}[b]{0.32\textwidth}
         \centering
    \def\svgwidth{.4\textwidth}
    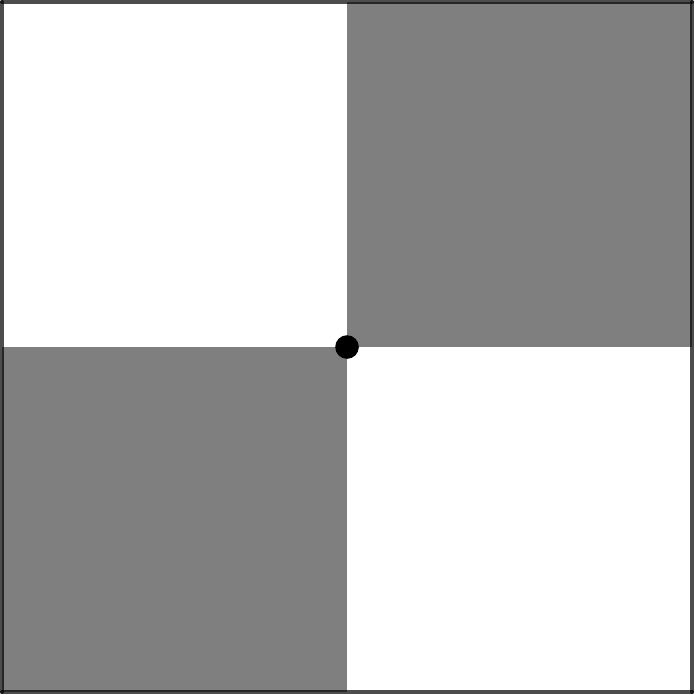
         \label{fig:reln3}
                  \caption*{$\F\{\frac{d}{dx}, y\frac{d}{dy}\}$.}
     \end{subfigure}
        \caption{Equivalence relations of some foliations of $S^1 \times \R$, restricted to $T=\{0\} \times \R$.}
        \label{fig:reln}
\end{figure}

Despite the singularities apparent in Figure~\ref{fig:reln}, the holonomy groupoid  of each of these foliations is smooth. In terms of these pictures,
what occurs is that the problematic point $(p,p)$ at the origin  gets blown up and replaced with the \emph{holonomy group} at $p$. For a regular foliation, given a point $p$ and a transversal submanifold $T$ through the leaf of $p$, the holonomy group at $p$ may be viewed as the discrete group consisting of all (germs of) diffeomorphisms of $T$ fixing $p$ which can be obtained using flows of vector fields in $\F$. For both of the regular foliations in Figure~\ref{fig:leaves},  this holonomy group is infinite cyclic, and it is easy to imagine how such a replacement can resolve the singularity. To put it more plainly, for each of the two regular foliations, the  equivalence relation in  Figure~\ref{fig:reln} is the union of the graphs of  a countable family of graphs of  diffeomorphisms which intersect only at the origin. After performing the blowup, one is left instead with the \emph{disjoint union} of these graphs.

A striking difference between the regular and singular  settings is that, whereas for regular foliations holonomy is purely a discrete phenomenon, for singular foliations one can also have \emph{continuous holonomy}. For the singular foliation  $\F\{\frac{d}{dx},y\frac{d}{dy}\}$ shown above, the group of diffeomorphism germs of $T$ which can be obtained using compositions of flows is infinite-dimensional. One perspective is that the work of Androulidakis and Skandalis  identifies the correct way to take a quotient of this infinite-dimensional group and obtain a finite-dimensional Lie group which serves as the natural generalization of the usual holonomy group.

For the foliation $\F\{\frac{d}{dx},y\frac{d}{dy}\}$ above, the holonomy group at $p=(0,0)$ may be identified with the Lie group of linear, orientation-preserving diffeomorphisms of $T$ and, in particular, it is isomorphic to $\R$.  However, this is just one of many foliations   of $S^1 \times \R$ whose leaves are $S^1 \times \R_+$, $S^1 \times \{0\}$ and $S^1 \times \R_-$. With the exception of some pathological examples, the holonomy group at $p$ of any such foliation is naturally realized, for some positive integer $k$ encoding the ``transverse order'' of the foliation, as a one-dimensional subgroup of the group $J^k$ of $k$-jets of   diffeomorphisms of $\R$ which fix $0$. Explicitly, 
\[J^k = \{ a_1y  + a_2 y^2 + \ldots a_\ell y^k: a_i \in\R , a_1 \neq 0 \} \] under the operation ``compose and truncate''.  Some examples of holonomy groups which can occur are tabulated in Table~\ref{table}.

\begin{table}[htbp]
\begin{tabular}{l|l|l}
Foliation 
& Holonomy group  & Ambient group \\  \hline 
$\F\{\tfrac{d}{dx},  y\tfrac{d}{dy}\}$ & 
$\{ e^t y : t \in \R\} $ & $ J^1$\\
$\F\{\tfrac{d}{dx},   y^2 \tfrac{d}{dy}\}$ & $\{y + ty^2 : t \in \R\} $ & $ J^2$ \\
$\F\{ \tfrac{d}{dx}+y\tfrac{d}{dy},  y^2 \tfrac{d}{dy}\}$ & $\{e^ny + ty^2 : n \in \Z, t \in \R  \} $ & $ J^2$ \\
$\F\{\tfrac{d}{dx} + y^2 \tfrac{d}{dy}, y^4 \tfrac{d}{dy}\}$ & $\{y + ny^2 + n^2 y^3 + t y^4 : n \in \Z, t \in \R\} $ & $ J^4$
\end{tabular}
\caption{Holonomy groups at $p=(0,0)$ of several different foliations of $S^1 \times \R$, all of which have  leaves  $S^1 \times \R_+$, $S^1 \times \{0\}$ and $S^1 \times \R_-$.}
\label{table}
\end{table}

The precise details of how $(p,p)$ is blown up into a copy of the holonomy group at $p$ also depends on two natural orderings of the group $J^k$, associated to the positive and negative half lines. As Figure~\ref{fig:strangeblups} shows, the topological possibilities for the blowup space are actually quite rich, especially given how simple the  leaf space of these foliations is. The last two surfaces, for example,   are not homeomorphic, as can be seen by counting the number of topological ends.

 \begin{figure}[htbp]
     \centering
          \hfill
     \begin{subfigure}[b]{0.16\textwidth}
         \centering
    \def\svgwidth{.8\textwidth}
    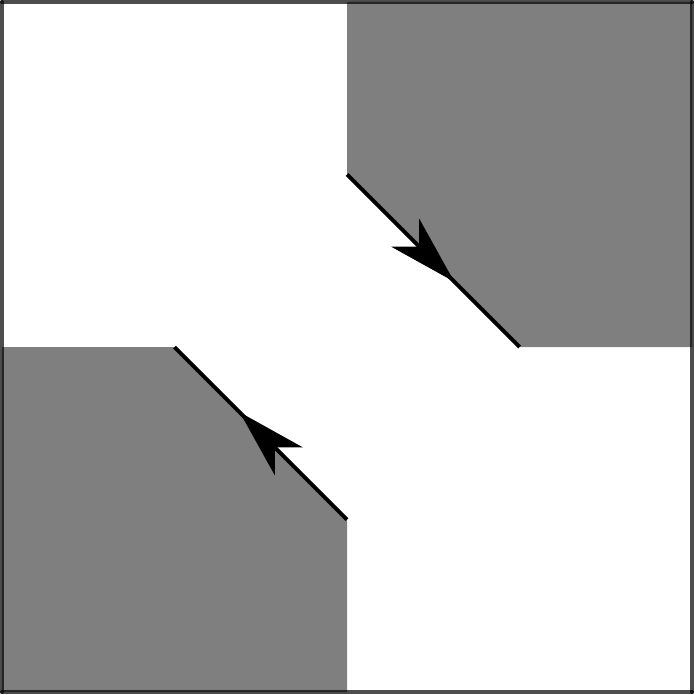
         \caption*{$\F\{\frac{d}{dx},  y\frac{d}{dy}\}$}
         \label{fig:blup1}
     \end{subfigure}
     \hfill
     \begin{subfigure}[b]{0.16\textwidth}
         \centering
    \def\svgwidth{.8\textwidth}
    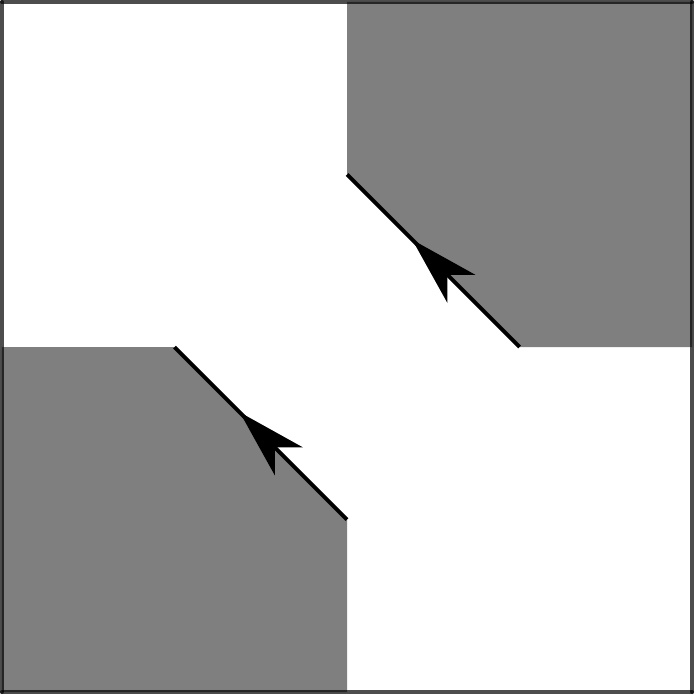
         \caption*{$\F\{\frac{d}{dx},   y^2 \frac{d}{dy}\}$}
         \label{fig:blup2}
     \end{subfigure}
               \hfill
     \begin{subfigure}[b]{0.24\textwidth}
         \centering

    \def\svgwidth{.5333\textwidth}
    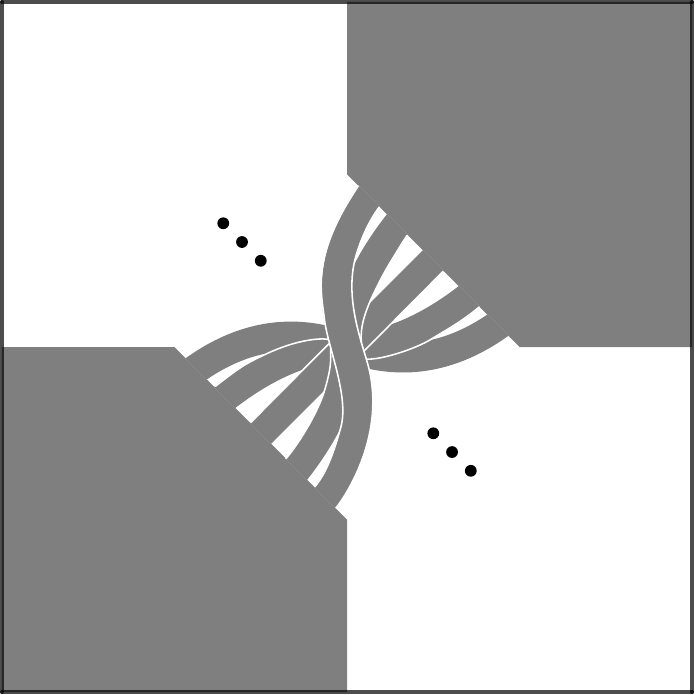
         \caption*{$\F\{ \frac{d}{dx}+y\frac{d}{dy},  y^2 \frac{d}{dy}\}$}
         \label{fig:blup3}
     \end{subfigure}
     \hfill
     \begin{subfigure}[b]{0.24\textwidth}
         \centering
    \def\svgwidth{.5333\textwidth}
    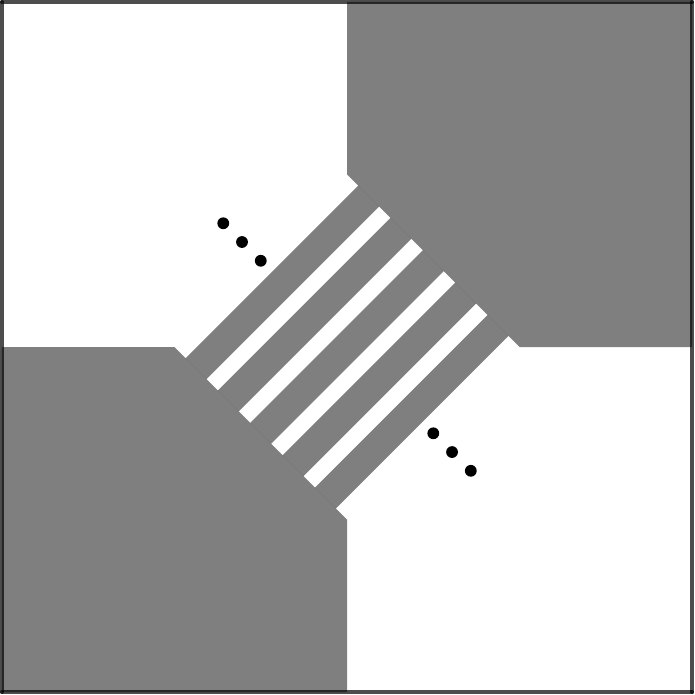
         \caption*{$\F\{\frac{d}{dx} + y^2 \frac{d}{dy}, y^4 \frac{d}{dy}\}$}
         \label{fig:blup4}
     \end{subfigure}
               \hfill \text{ }
        \caption{Holonomy groupoids of several  foliations of $S^1 \times \R$ whose leaves are  $S^1 \times \R_+$, $S^1 \times \{0\}$ and $S^1 \times \R_-$, restricted to   $T=\{0\} \times \R$.}
        \label{fig:strangeblups}
\end{figure}

The following family of one-dimensional foliations  appeared in \cite{AS[2007]}, Example~1.3~(3) and  received a detailed analysis in \cite{Francis[1dim]}. Discounting examples whose construction involves the use of bump functions, these are all of the singular foliations of $\R$ having leaves $\R_-$, $\R_+$ and $\{0\}$. 

\begin{defn}\label{FkRdef}
Let the coordinate function of $\R$ be $y$. For each positive integer $k$, we denote by $\F^k_\R$ the singular foliation of $\R$ singly-generated by $y^k \frac{\partial}{\partial y}$. That is, $\F^k_\R$ is all compactly supported vector fields on $\R$ which vanish to $k$th order at the origin.
\end{defn}

In this article, we analyze a class of   singular foliations which we call \emph{transverse  order $k$ foliations} (Definition~\ref{def:transordk}) which are the generalizations to higher codimension of the above foliations $\F^k_\R$. Suppose $\F$ is a foliation of a connected manifold $M$ whose leaves consist of a single codimension-1 submanifold $L \subset M$ together with the components of $M \setminus L$. The total number of leaves is therefore either two or three. Thanks to a splitting principle for singular foliations  (\cite{AS[2007]} Proposition~1.12, \cite{AZ[2014]} Proposition~1.2), the local structure of such a foliation is completely determined by a  foliation of $\R$ modelling   the transverse structure of the foliation near the leaf $L$. If this transverse foliation is  $\F^k_\R$, we say that $(M,\F)$ is  a foliation of \emph{transverse order $k$} (Definition~\ref{def:transordk}).  The foliations of $S^1 \times \R$ with singular leaf $S^1 \times \{0\}$ that appeared earlier in Table~\ref{table} were examples of transverse order $k$ foliations. The purpose of this article 
is to classify transverse order $k$ foliations and provide explicit descriptions their groupoids and algebras.

If  $M$ and $L$ are given, there is a unique   foliation $\F$ of transverse order $k=1$ whose singular leaf is $L$,  namely the collection of all compactly-supported vector fields which are tangent along $L$. However, when $k \geq 2$, the structure of transverse order $k$ foliations  becomes much more interesting. For this reason, we will generally assume $k$ is an integer $\geq 2$ in this article.

If $X_1,\ldots, X_n$  are smooth vector fields on a manifold $M$ such that $[X_i,X_j]$ is a $C^\infty(M)$-linear combination of $X_1,\ldots,X_n$  for all $i,j$, we use the notation 
\[ \F\{X_1,\ldots,X_n\} \coloneqq \mathrm{span}_{C_c^\infty(M)}\{X_1,\ldots,X_n\} \]
for the foliation they generate.

\begin{ex}\label{planeex}
Consider $\R^2$ with usual coordinates $(x,y)$. Then 
\begin{align*} \F\left\{ y^2 \tfrac{\partial}{\partial y}, \tfrac{\partial}{\partial x} \right\}  && \F  \left\{ y^2 \tfrac{\partial}{\partial y}, \tfrac{\partial}{\partial x} + y \tfrac{\partial}{\partial y} \right\} \end{align*}
are distinct transverse order $2$ foliations with singular leaf the $x$-axis.  Although these foliations are distinct, they are still isomorphic to each other; the  pushforward of the first foliation by the  diffeomorphism  $(x,y) \mapsto (x,e^xy)$ is the second foliation.
\end{ex}

The above example exposes the basic point that the  module of vector fields on $M$ which are ``tangent to $L$ to order $2$'' is not in fact well-defined   when  working up to smooth coordinate changes.  To get nonisomorphic examples of transverse order $2$ foliations with the same leaves, one needs the singular leaf to not be simply-connected.

\begin{ex}
In the previous example, consider $x$  as a $\Z$-periodic coordinate so that the space is the cylinder $S^1 \times \R$. Since the generating vector fields  are invariant under horizontal translation,  they also define  transverse order 2 foliations   on $S^1 \times \R$ with singular leaf the equator $S^1 \times \{0\}$. These foliations are  not isomorphic to each other and this  article  will provide  a framework explaining this nonisomorphism. Briefly, whereas for the first foliation the holonomy around the equator  is trivial, for the second, the holonomy, suitably interpreted,  is multiplication by $e$.
\end{ex}

Suppose $(M,\F)$ is a foliation of transverse order  $k \geq 2$ with singular leaf $L$. We give a brief overview of the notion of holonomy to be introduced. Let $c$ be a path in $L$ from a point $x$ to a point $x'$ and fix small, one-dimensional transversals $T$ and $T'$ at the endpoints. If $\F$ were a regular foliation of codimension-one, the classical notion of holonomy would assign to $c$  a diffeomorphism germ $T \to T'$ sending $x \mapsto x'$. This does not  occur in the  case at hand, but it turns out one does have a well-defined holonomy mapping at the level of $(k-1)$-jets (this is related to the picture of the holonomy groupoid obtained in \cite{AZ[2014]}). In particular, taking $x=x'$, $T=T'$ and choosing an  identification of $T$ with $\R$, one obtains from this assignment a homomorphism 
\begin{align}\label{hom} \gamma: \pi_1(L,x) \to J^{k-1}, \end{align}
where $J^r$ denotes the group of $r$-jets at $0$ of diffeomorphisms of $\R$ fixing $0$. Concretely, $J^r$ is the group of polynomials of the form $a_1y + \ldots + a_r y^r$ with $a_i \in \R$, $a_1 \neq 0$ under the operation ``compose and truncate''. There is a canonical quotient map $J^r \to J^{r-1}$ for all $r \geq 2$ whose kernel is the group $\R$, embedded in $J^r$ by way of $t \mapsto y + ty^r$. We obtain:
\begin{thm*}[Definition~\ref{invdef}]
The homomorphism \eqref{hom} is well-defined up to inner automorphisms of $J^{k-1}$ and gives rise to a holonomy invariant
\begin{align}\label{gamma} h(\F) \in [\pi_1(L),J^{k-1}] \end{align}
for the foliation. 
\end{thm*}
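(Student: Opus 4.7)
The plan is to construct $\gamma$ by concatenating $(k-1)$-jets of transition maps between local ``standard forms'' for $\F$. By the splitting principle cited in the introduction, near any point of $L$ one can find a foliated chart $\varphi \colon U \to V \times I$, with $V \subset L$ open, $I \subset \R$ an open interval about $0$, and $L \cap U$ mapping to $V \times \{0\}$, in which $\F|_U$ takes the \emph{standard form}: it is the $C^\infty(U)$-module generated by the vector fields tangent to $V$ together with $y^k \partial_y$. In such a chart, each transversal $\{x\} \times I$ carries a canonical identification with an interval in $\R$ sending $(x,0) \mapsto 0$.

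The crucial lemma is that on the overlap of two standard charts, the transition map $(x,y) \mapsto (A(x,y), B(x,y))$ (with $B(x,0)=0$) has the property that the $(k-1)$-jet at $y=0$ of $y \mapsto B(x,y)$ is independent of $x$ on each connected component. I would prove this by a direct computation: the pushforward of a generator $\partial_{x_i}$ in the source chart must lie in $\F$ as expressed in the target chart, i.e., must take the form $(\text{tangent to } L) + g(x',y')\, y'^{k}\partial_{y'}$. This translates to $\partial_{x_i} B(x,y) = g(A(x,y),B(x,y))\cdot B(x,y)^k$. Since $B$ vanishes to order exactly $1$ in $y$, the right side vanishes to order $k$ in $y$, yielding $\partial_{x_i}\partial_y^{\,j} B(x,0) = 0$ for $0 \leq j \leq k-1$. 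This step is the main obstacle; it is where the integer $k$ enters decisively, and it also reveals why higher jets fail: the degree-$k$ coefficient of $B(x,\cdot)$ generally does depend on $x$.

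Granted the lemma, $\gamma$ is built classically. For a path $c \colon [0,1] \to L$, choose standard charts $U_0,\ldots,U_{N-1}$ covering $c([0,1])$ and a subdivision $0 = t_0 < \cdots < t_N = 1$ with $c([t_i,t_{i+1}]) \subset U_i$. On each overlap $U_{i-1} \cap U_i$ the lemma produces a well-defined element $\tau_i \in J^{k-1}$, and one sets $\gamma(c) \in J^{k-1}$ to be the product of the $\tau_i$ in the appropriate order. A standard refinement argument gives independence of the chart cover and subdivision; because each $\tau_i$ is constant along the connected components of $L \cap U_{i-1} \cap U_i$, a homotopy of $c$ rel endpoints leaves $\gamma(c)$ unchanged, and concatenation of paths goes to multiplication of the corresponding products. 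Specialized to loops based at $x$, this gives the homomorphism $\gamma \colon \pi_1(L,x) \to J^{k-1}$ of \eqref{hom}.

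The only remaining choice is the identification of the transversal $T$ at $x$ with $\R$, i.e., the initial standard chart at $x$. Two such identifications differ by a germ of diffeomorphism of $\R$ fixing $0$; its $(k-1)$-jet $\tau \in J^{k-1}$ conjugates $\gamma$ into $\tau \gamma(\cdot) \tau^{-1}$ by direct inspection of the definition. Hence the conjugacy class $[\gamma] \in [\pi_1(L), J^{k-1}]$ is a well-defined invariant of $\F$, which I take as $h(\F)$.
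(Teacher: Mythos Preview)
Your argument is correct, and its core computation---that the $(k-1)$-jet in the transverse direction of a transition map between standard charts is locally constant along $L$---is exactly the rigidity result proved in the paper as Lemma~\ref{theta'}/Theorem~\ref{local}. So at the level of the key lemma, you and the paper are doing the same thing.

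Where you diverge is in packaging. You carry out the classical \v{C}ech-style holonomy construction directly: cover a loop by standard charts, multiply the constant transition jets, and then argue independence of subdivision, chart choice, and homotopy by hand. The paper instead observes that locally constant transition functions with values in $J^{k-1}$ are precisely the cocycle data for a smooth principal bundle with \emph{discrete} structure group $J^{k-1}_d$ (this is Lemma~\ref{bundlemaker} and Theorem~\ref{bunthm}), builds that bundle $P^{k-1}(\F) \to L$ once and for all, and then defines $h(\F)$ as its monodromy invariant via standard covering-space theory (Section~\ref{sec:monodromy}). All the path-lifting, homotopy-invariance, and basepoint-conjugation arguments you sketch are thereby absorbed into the general statement that a principal $\Gamma$-bundle with discrete $\Gamma$ over a connected base has a well-defined monodromy class in $[\pi_1(L),\Gamma]$.

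Your route is more elementary and self-contained; the paper's route is cleaner and, more importantly, also produces the bundles $P^r(\F)$ for $r=1,\ldots,k$ as a byproduct. The top bundle $P^k(\F)$ (with one-dimensional structure group $J^k_\R$ rather than discrete) is what the paper later uses to describe the holonomy groupoid over $L$ as a gauge groupoid, so the principal-bundle framework pays dividends beyond just defining $h(\F)$.
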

Here, if $A$ and $B$ are groups, we use $[A,B]$ to denote the quotient set of $\Hom(A,B)$ by the conjugation action of $B$, in the spirit of the similar notation frequently employed for homotopy classes of maps. Note it is not necessary to specify a basepoint for the fundamental group  in \eqref{gamma} because $L$ is connected, so its different fundamental groups are canonically isomorphic when working up to to inner automorphisms.

The   holonomy invariant \eqref{gamma}   is ``$L$-local'', in the sense that it only depends on the restriction of the foliation to a neighbourhood of  the singular leaf, and natural with respect to isomorphisms of transverse order $k$ foliations, in an appropriate way. 
In fact, it is a complete invariant for the structure of the foliation nearby to the singular leaf
\begin{thm*}[Theorem~\ref{holcompl}]
If $(M_i,\F_i)$ is a foliation of transverse order $k$ with singular leaf $L_i$ for $i=1,2$ and there is a diffeomorphism $\theta_0 : L_1 \to L_2$ carrying $h(\F_1)$ to $h(\F_2)$, then $\theta_0$ can be extended to a foliation-preserving diffeomorphism $\theta : U_1 \to U_2$, where $U_i$ is neighbourhood of $L_i$ in $M_i$.
\end{thm*}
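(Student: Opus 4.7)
The strategy is to construct $\theta$ by descending a $\pi_1(L)$-equivariant diffeomorphism defined on universal covers, with the hypothesis on $h$ providing exactly the equivariance needed. As a stepping stone, one first addresses the simply-connected case, where $h(\F_i)$ is automatically trivial: I would apply the splitting principle of \cite{AS[2007]} (see also \cite{AZ[2014]}) to obtain local foliation charts $V_\alpha \times (-\epsilon, \epsilon) \hookrightarrow M_i$ in which $\F_i$ takes a standard form, and then use a partition of unity on $L_i$, together with the fact that $\Diff(\R, 0)$ has convex path components, to patch these into a global trivialization $U_i \cong L_i \times (-\epsilon, \epsilon)$ carrying $\F_i$ to the product foliation with transverse factor $\F^k_\R$. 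Under such a trivialization $\theta := \theta_0 \times \id$ manifestly works, establishing the simply-connected case.

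For the general case, let $\pi_i : \widetilde L_i \to L_i$ be the universal cover and $\widetilde U_i$ the pullback of a tubular neighbourhood $U_i$ of $L_i$. By the simply-connected case there are foliation-preserving identifications $\widetilde U_i \cong \widetilde L_i \times (-\epsilon, \epsilon)$; in these coordinates each deck transformation $\sigma \in \pi_1(L_i)$ acts as $(x, y) \mapsto (\sigma \cdot x, f_\sigma^{(i)}(x, y))$ with $f_\sigma^{(i)}(x, \cdot) \in \Diff(\R, 0)$, and unwinding the definition of $h(\F_i)$ identifies $\gamma_i(\sigma) \in J^{k-1}$ with the $(k-1)$-jet $j^{k-1}_0 f_\sigma^{(i)}(x_0, \cdot)$ at a chosen basepoint $x_0 \in \widetilde L_i$. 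Lifting $\theta_0$ to $\widetilde{\theta}_0 : \widetilde L_1 \to \widetilde L_2$, the aim is to produce $\widetilde{\theta}(x, y) = (\widetilde{\theta}_0(x), \psi(x, y))$ preserving foliations and intertwining the deck actions via $\theta_0$, so that it descends to the required $\theta$. The equivariance reduces to
\[
\psi(\sigma \cdot x, f_\sigma^{(1)}(x, y)) \;=\; f_{\theta_{0*}\sigma}^{(2)}(\widetilde{\theta}_0(x), \psi(x, y)),
\]
which, by hypothesis, is solvable modulo $(k-1)$-jets after absorbing into $\psi$ a constant $c \in J^{k-1}$ witnessing the conjugacy of $h(\F_1)$ and $\theta_0^* h(\F_2)$.

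The main technical hurdle is upgrading this $(k-1)$-jet solution to a genuine smooth equivariant $\psi$. Writing $K \subset \Diff(\R, 0)$ for the subgroup of diffeomorphisms with trivial $(k-1)$-jet, one sees that $K$ consists of maps $y \mapsto y + y^k h(y)$ with $h$ smooth, and that $K$ is convex in the sense that affine combinations remain in $K$ and are diffeomorphisms for $y$ in a sufficiently small interval. The obstruction to lifting is therefore an equivariant $K$-valued $1$-cocycle on $\widetilde L$, which can be killed by a partition-of-unity construction on a fundamental domain of $\widetilde L$, possibly after shrinking $\epsilon$. Once $\psi$ is constructed, equivariance forces $\widetilde{\theta}$ to descend to a foliation-preserving diffeomorphism $\theta : U_1' \to U_2'$ between tubular neighbourhoods of $L_1$ and $L_2$, extending $\theta_0$ as required.
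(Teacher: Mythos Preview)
Your approach is genuinely different from the paper's and, while the overall architecture is reasonable, the final step contains a real gap.

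The problematic step is the claim that the equivariant $K$-valued cocycle can be killed by ``a partition-of-unity construction on a fundamental domain'', justified by the convexity of $K$ under affine combinations. The equivariance condition
\[
\psi_{\sigma x} \;=\; G_\sigma \circ \psi_x \circ F_\sigma^{-1}
\]
involves \emph{composition}, not the additive structure you are averaging with. If one defines equivariant candidates $\psi^{(\alpha)}$ over lifts of a cover of $L$ and then sets $\psi_x = \sum_\alpha \rho_\alpha(\pi(x)) \psi^{(\alpha)}_x$, checking equivariance reduces to $\sum_\alpha \rho_\alpha\, G_\sigma(\psi^{(\alpha)}_x(y)) = G_\sigma\bigl(\sum_\alpha \rho_\alpha\, \psi^{(\alpha)}_x(y)\bigr)$, which fails as soon as $G_\sigma$ is nonlinear in $y$. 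So convexity of $K$ as a set of functions does not interact well with the (nonabelian) group law, and the averaging destroys exactly the equivariance you need. One could try to repair this via the filtration $K=K_{k-1}\supset K_k\supset\cdots$ with abelian graded pieces, solving jet-by-jet, but that only produces a formal solution and requires a further Borel-type argument.

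The paper avoids this difficulty entirely, and the reason is instructive. Its key structural result (Theorem~\ref{flatcorresp}) says that a transverse order $k$ foliation on a line bundle $E\to L$ is the \emph{same data} as a flat connection on $J^{k-1}(E,\R)$. Given this, the proof of Theorem~\ref{holcompl} is short: equal holonomy invariants give an isomorphism of $J^{k-1}_d$-bundles $P^{k-1}(\F_1)\to P^{k-1}(\F_2)$ covering $\theta_0$; after passing to tubular neighbourhoods this is a flat section of $J^{k-1}(E_1,E_2)$, which lifts canonically to a fiberwise diffeomorphism $\theta$ using degree-$(k-1)$ polynomial representatives (Proposition~\ref{choosepoly}); and then Theorem~\ref{flatleafwise} says any such $\theta$ preserves the foliation, precisely because the foliation is determined by the $(k-1)$-jet data. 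In your language: once the $(k-1)$-jets are matched there is nothing left to do, so the $K$-valued obstruction never needs to be trivialised.
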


Furthermore, the possible values of this holonomy invariant are exhausted.
\begin{thm*}[Theorem~\ref{holrange}]
 Given a connected manifold $L$ and a homomorphism $\pi_1(L) \to J^{k-1}$, there exists a transverse order $k$ singular foliation with singular leaf $L$ whose holonomy invariant is (the class of) the given homomorphism.
\end{thm*}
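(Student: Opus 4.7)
My plan is to realize the prescribed holonomy as a quotient of a trivial model on the universal cover of $L$. Let $p : \tilde{L} \to L$ be the universal cover with deck group $\Gamma = \pi_1(L)$, and put $\tilde{M} = \tilde{L} \times \R$ with the canonical transverse order $k$ foliation $\tilde{\F}$ generated by compactly-supported horizontal lifts of vector fields on $\tilde{L}$ together with $y^k \tfrac{\partial}{\partial y}$. This is manifestly a transverse order $k$ foliation with singular leaf $\tilde{L} \times \{0\}$ and trivial holonomy. I will construct a smooth, free, proper $\Gamma$-action on $\tilde{M}$ of the form $g \cdot (\tilde{x}, y) = (g \tilde{x}, A_g(\tilde{x}, y))$ preserving $\tilde{\F}$; the quotient $M = \tilde{M}/\Gamma$ then inherits a transverse order $k$ foliation $\F$ with singular leaf $L$, and inspecting the holonomy at a chosen basepoint will show its invariant is $[\rho]$.

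The action $A$ is built via a two-step ``developing map'' argument. First, I construct a smooth map $\sigma : \tilde{L} \to J^{k-1}$ which is $\rho$-equivariant in the sense that $\sigma(g \tilde{x}) = \rho(g)\, \sigma(\tilde{x})$ for all $g \in \Gamma$, $\tilde x \in \tilde L$. Such a $\sigma$ is the same data as a smooth section of the flat associated bundle $\tilde{L} \times_\rho J^{k-1} \to L$; in the case $\rho(\Gamma) \subseteq J^{k-1}_+$ such sections exist because the fiber $J^{k-1}_+$ is diffeomorphic to $\R^{k-1}$ and hence contractible, so all obstructions to a section vanish. (The possibility of orientation-reversing values of $\rho$ is handled by a short auxiliary argument using the double cover corresponding to the orientation character of $\rho$.) Second, I lift $\sigma$ pointwise to a smooth map $\tilde{\sigma} : \tilde{L} \to \Diff(\R, 0)$ with $j^{k-1}\tilde{\sigma} = \sigma$: for this one uses the fact that the jet projection $\Diff(\R, 0) \to J^{k-1}$ admits a smooth set-theoretic section, obtained by taking the polynomial representative of each jet near $0$ and extending to a diffeomorphism of $\R$ via a bump-function-based interpolation, with bookkeeping by partition of unity to deal with non-compactness of $\tilde L$. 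Finally, I set
\[
A_g(\tilde{x}, y) := \tilde{\sigma}(g \tilde{x}) \bigl( \tilde{\sigma}(\tilde{x})^{-1}(y) \bigr).
\]

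The remaining verifications are then direct. The cocycle identity $A_{gh}(\tilde{x}, y) = A_g(h \tilde{x}, A_h(\tilde{x}, y))$ is immediate from the cancellation $\tilde{\sigma}(h \tilde{x})^{-1} \tilde{\sigma}(h \tilde{x}) = \id$, so $A$ indeed defines a $\Gamma$-action, which is free and proper because the $\Gamma$-action on $\tilde L$ is. The essential jet computation is
\[
j^{k-1} A_g(\tilde{x}, \cdot) = \sigma(g \tilde{x}) \cdot \sigma(\tilde{x})^{-1} = \rho(g) \sigma(\tilde{x}) \sigma(\tilde{x})^{-1} = \rho(g),
\]
with the $\tilde{x}$-dependence canceling. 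This independence of $\tilde{x}$ is precisely what is needed for the action to preserve $\tilde{\F}$: it forces the vertical component of the pushforward of any horizontal vector field to vanish to order $\geq k$ at $y = 0$, while the pushforward of $y^k \tfrac{\partial}{\partial y}$ has this property automatically from the vanishing $A_g(\tilde{x}, 0) = 0$. The same jet computation simultaneously identifies the induced holonomy invariant of $\F$ with $[\rho]$. The main obstacle is the construction of the equivariant section $\sigma$; reducing this to a section-of-contractible-fiber-bundle argument (together with the double-cover adjustment for orientation-reversing $\rho$) is what makes the proof go through.
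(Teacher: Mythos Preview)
Your approach is correct in spirit and genuinely different from the paper's. The paper argues abstractly: it takes the flat principal $J^{k-1}$-bundle $P$ with monodromy $\rho$, shows (Lemmas~\ref{buniso1}--\ref{buniso2}) that any principal $J^{k-1}$-bundle is isomorphic to a jet bundle $J^{k-1}(E,\R)$ for some line bundle $E$, transfers the flat connection to $J^{k-1}(E,\R)$, and then invokes the correspondence Theorem~\ref{flatcorresp} between flat connections and transverse order $k$ foliations. Your construction realizes the same foliation concretely as a quotient of $\tilde L\times\R$ by a $\Gamma$-action, bypassing Theorem~\ref{flatcorresp} and the bundle-theoretic lemmas. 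Both arguments rest on the same topological fact---contractibility of $J^{k-1}_+$ (equivalently, of the kernel of $J^{k-1}\to\mathrm{GL}(1)$)---to produce the needed section; the paper packages this as ``bundles with Euclidean fiber admit sections'' while you package it as ``associated bundle with contractible fiber admits a section''.

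There is, however, a real gap in the orientation-reversing case. The equivariant map $\sigma:\tilde L\to J^{k-1}$ with $\sigma(g\tilde x)=\rho(g)\sigma(\tilde x)$ is exactly a global section of the principal $J^{k-1}$-bundle $\tilde L\times_\rho J^{k-1}$, i.e.\ a trivialization of it. But that bundle is trivial only when the sign character $\bar\rho:\Gamma\to J^{k-1}\to\{\pm1\}$ is trivial: since $\tilde L$ is connected, any continuous $\sigma$ has image in a single component of $J^{k-1}$, and if $\rho(g)\in J^{k-1}_-$ the equivariance forces $\sigma(g\tilde x)$ into the other component, a contradiction. Your proposed ``double cover'' repair is too vague to resolve this; passing to the cover $\hat L$ with $\pi_1(\hat L)=\ker\bar\rho$ gives a foliation with holonomy $\rho|_{\ker\bar\rho}$, and extending the remaining $\Z/2$-action back down to $L$ is exactly the problem you started with. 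A clean fix, which is essentially what the paper's line-bundle argument encodes: seek instead $\sigma:\tilde L\to J^{k-1}_+$ satisfying the twisted equivariance $\sigma(g\tilde x)=\rho(g)\,\sigma(\tilde x)\,\bar\rho(g)^{-1}$ (here $\bar\rho(g)=\pm y\in J^{k-1}$), which \emph{does} exist since both sides now lie in the contractible $J^{k-1}_+$, and set
\[
A_g(\tilde x,y)=\tilde\sigma(g\tilde x)\bigl(\bar\rho(g)\cdot\tilde\sigma(\tilde x)^{-1}(y)\bigr).
\]
The cocycle identity and the jet computation $j^{k-1}A_g(\tilde x,\cdot)=\rho(g)$ go through exactly as in your write-up, and the resulting quotient is (fiberwise diffeomorphic to) the line bundle with $w_1=\bar\rho$ rather than the trivial one.
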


We formalize the above ideas about holonomy using principal bundles\footnote{Our principal bundles  will always be smooth, with structure group acting from the left.}. Given a transverse order $k\geq 2$ singular foliation $(M,\F)$ with singular leaf $L$, we construct a sequence of principal bundles $P^r(\F) \to L$, $r=1,2,\ldots,k$. The elements of $P^r(\F)$ are $r$-jets of certain submersions $M \to \R$  and may be thought of as dual versions of transversals. When $r\leq k-1$, the structure group of $P^r(\F)$ is $J^r_d$, the underlying discrete group of $J^r$. This captures the idea that the  $(k-1)$-jets of a transversals can be parallel transported along paths in $L$. This rigidity breaks down at $r=k$; the structure group of $P^k(\F)$ is $J^k_\R$, the one-dimensional Lie group structure on $J^k$  obtained by decomposing it into the fibers  of the  natural projection $J^k \to J^{k-1}$. The main applications of these principal bundles $P^r(\F)$ are as follows:

\begin{thm*}[Sections~\ref{principalbundles}, \ref{sec:gaugefull}, \ref{sec:gaugemin}]\Needspace{3\baselineskip} 
\leavevmode
\begin{enumerate}
\item The monodromy of the principal $J^{k-1}_d$-bundle $P^{k-1}(\F)$ is exactly the holonomy invariant $h(\F)$ of \eqref{gamma}.
\item The gauge groupoid of the principal $J^k_\R$-bundle $P^k(\F)$  reconstructs the holonomy groupoid of $\F$, restricted to $L$.
\item The monodromy of the principal $J^1_d$-bundle $P^1(\F)$ determines a flat connection on the conormal bundle of $L$ in $M$ (note that $J^1=\mathrm{GL}(1,\R)$) which it is appropriate to call the \emph{Bott connection}.
\end{enumerate}
\end{thm*}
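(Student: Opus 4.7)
The strategy is to verify each of the three parts by combining the explicit construction of the bundle $P^r(\F)$ with standard facts about principal bundles with discrete or Lie structure group. Recall that an element of $P^r(\F)$ over $x \in L$ is represented by the $r$-jet at $x$ of a submersion $M \to \R$ whose zero locus agrees with $L$ in a neighbourhood of $x$, with $J^r$ acting on fibres by post-composition in the $\R$ factor; parallel transport along a path in $L$ is obtained by pushing such jets forward through flows of leafwise vector fields tangent to $L$, and by construction returns the $r$-jet holonomy assignment already defined in the article.

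Parts (1) and (3) then follow essentially by unpacking definitions. For (1), since $J^{k-1}_d$ is discrete, the monodromy of $P^{k-1}(\F)$ is a well-defined conjugacy class of homomorphisms $\pi_1(L) \to J^{k-1}_d$, and the description of parallel transport above shows that this class is exactly $h(\F)$. For (3), the fibre of $P^1(\F)$ at $x$ is canonically the punctured conormal line $(T_xM/T_xL)^* \setminus \{0\}$, so $P^1(\F)$ is the frame bundle of $N^*L$; a reduction of structure group from $\mathrm{GL}(1,\R)$ to its underlying discrete group is by definition a flat connection on $N^*L$. To identify this flat connection with the Bott connection, I would check that, for $f \in C^\infty(M)$ vanishing on $L$, the conormal section $df|_L$ is parallel along $X \in \Gamma(TL)$ if and only if $\mathcal{L}_X(df)|_L = 0$, which matches the defining formula for the Bott connection.

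Part (2) is the substantive claim and the anticipated main obstacle. I would construct an isomorphism $\Gauge(P^k(\F)) \cong G(\F)|_L$ of Lie groupoids over $L$. A natural candidate morphism sends an arrow $[(\sigma,\sigma')]$ of the gauge groupoid --- a pair of $k$-jets at points $x, x' \in L$ modulo the diagonal $J^k_\R$-action --- to the class of any foliation bi-automorphism germ sending $\sigma$ to $\sigma'$; this is well-defined on $k$-jets because the transverse model of $\F$ near $L$ is $\F^k_\R$. Injectivity reduces to identifying the isotropy of a point $x \in L$ in $G(\F)|_L$ with $J^k_\R$ (consistent with the examples in Table~\ref{table}), while surjectivity amounts to showing every arrow in $G(\F)|_L$ is represented by such a bi-automorphism; both follow from the splitting principle applied near $L$. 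The delicate step, and where most of the work will lie, is promoting this set-theoretic bijection to a smooth groupoid isomorphism, which I would handle by matching the bisubmersion atlases describing $G(\F)|_L$ in the sense of \cite{AS[2007]}, \cite{AZ[2014]} with the smooth charts on $\Gauge(P^k(\F))$ induced by local trivialisations of $P^k(\F)$.
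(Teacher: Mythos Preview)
Your treatment of parts (1) and (3) is broadly fine, though note that in the paper both are essentially definitional: $h(\F)$ is \emph{defined} (Definition~\ref{invdef}) as the monodromy of $P^{k-1}(\F)$, and the Bott connection is \emph{defined} (Definition~\ref{bottdef}) as the flat structure induced by $P^1(\F)$ on the conormal bundle. The discrete structure group on $P^r(\F)$ for $r\le k-1$ does not arise from flows of leafwise vector fields as you suggest, but from the rigidity result Theorem~\ref{local}: the transition functions between the local sections $x\mapsto j^r_x(p)$ induced by different $\F$-$\F^k_\R$-submersions are locally constant because the first $k-1$ transverse derivatives of any such submersion are constant along $L$.

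There is a genuine error in part (2). You aim to prove $\Gauge(P^k(\F)) \cong G(\F)|_L$, but this is false in general: what the paper establishes (Theorem~\ref{gaugeiso}) is $\Gauge(P^k(\F)) \cong G_\textup{full}(\F)|_L$, the \emph{full} holonomy groupoid restricted to $L$. The minimal holonomy groupoid $G(\F)|_L$ is then extracted (Theorem~\ref{thm:extract}) as the $s$-connected component $\Gauge(P^k(\F))_0$. Your claim that the isotropy of $G(\F)|_L$ at $x\in L$ is $J^k_\R$ is precisely where this goes wrong: that isotropy is $\Gamma_\R$, the preimage in $J^k_\R$ of the image $\Gamma\subset J^{k-1}$ of the monodromy homomorphism (Theorem~\ref{isotrope}), and the entries in Table~\ref{table} are examples of such $\Gamma_\R$, not of $J^k_\R$. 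Consequently your injectivity argument cannot succeed as stated. The paper's construction of the isomorphism also proceeds in the opposite direction and is more concrete than your sketch: given a bisubmersion $W$, a point $w$ over $L$, and local $\F$-$\F^k_\R$-submersions $p,q$ at $s(w),t(w)$, one forms the induced $\F^k_\R$-bisubmersion $W_{q,p}$ (Lemma~\ref{qplemma}) and reads off $\lambda_W(w,q,p)\in J^k$ from the already-computed isomorphism $G_\textup{full}(\F^k_\R)_0\cong J^k$. Well-definedness modulo $k$-jets of $p,q$ uses Theorem~\ref{germ=nullorbequiv}, and smoothness is checked by reducing via Proposition~\ref{zoom} to the model $(\R^n,\F^k_{\R^n})$ and invoking Theorem~\ref{fullGforRn}.
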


In \cite{AS[2007]}, in addition to the (singular analog of the) usual holonomy groupoid $G(\F)$, the authors construct a \emph{full holonomy groupoid} $G_\textup{full}(\F)$. This is a ``big groupoid'' containing the usual (i.e. minimal) holonomy groupoid, as well as various intermediate groupoids.  It is helpful to work inside this big groupoid initially and later extract the usual one  as its  $s$-connected component.

The full holonomy groupoid of $\F^k_\R$ can be thought of as a smooth blowup of the singular equivalence relation $(\R\setminus\{0\})^2 \cup \{(0,0)\} \subset \R^2$  wherein the singular point $(0,0)$ is replaced by a copy of $J^k_\R$:
\[ G_\textup{full}(\F^k_\R) \cong (\R\setminus\{0\})^2 \cup J^k_\R. \] 
More generally, the full holonomy groupoid of any transversely order $k$ foliation $(M,\F)$ with singular leaf $L$ can be thought of as a smooth blowup of the singular equivalence relation $(M \setminus L)^2 \cup L^2 \subset M^2$ wherein the singular locus $L^2$ is replaced by a copy of the gauge groupoid of the principal $J^k_\R$-bundle $P^k(\F) \to L$:
\[ G_\textup{full}(\F) \cong (M\setminus L)^2 \cup \Gauge(P^k(\F)). \]
 The full holonomy groupoid of a transverse order $k$ foliation provides an interesting example of a topological space equipped with a smooth atlas  that is nearly, but not quite, a smooth manifold: 
 \begin{thm*}[Theorem~\ref{thm:topprop}]
 The topology of $G_\textup{full}(\F^k_\R)$  is Hausdorff, regular and separable, but not normal. 
 \end{thm*}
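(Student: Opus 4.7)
The strategy is to exploit the explicit description $G := G_\textup{full}(\F^k_\R) = (\R\setminus\{0\})^2 \sqcup J^k_\R$ and prove the four properties separately, with the first three following routinely from the local structure and the non-normality resting on a Moore-plane-style cardinality obstruction. The setup step is to write down basic neighbourhoods of a point $j \in J^k_\R$ explicitly: if $\tilde j(y) = a_1 y + \cdots + a_k y^k$ represents $j$, a basic neighbourhood of $j$ consists of an open interval in the fiber of $j$ with respect to $J^k \to J^{k-1}$, together with a wedge
\[ W_{j,\epsilon} = \{(x,y) \in (\R\setminus\{0\})^2 : 0 < |x| < \epsilon,\ |y - \tilde j(x)| < \epsilon |x|^k \}. \]
Points in $(\R\setminus\{0\})^2$ retain the Euclidean topology.

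For Hausdorffness I would proceed case-by-case: two Euclidean points by Euclidean balls avoiding $0$; a Euclidean point and a jet by a small ball and a sufficiently thin wedge; two jets in the same fiber by disjoint intervals in the fiber plus wedges which differ by order $|x|^k$; and two jets in different fibers by wedges which diverge because $\tilde j - \tilde{j'}$ has order strictly less than $k$. The same wedge description provides compact neighbourhoods of every point (closures of bounded wedges plus closed fiber intervals), so $G$ is locally compact Hausdorff, hence regular. Separability follows at once from density of $D := \mathbb{Q}^2 \cap (\R \setminus \{0\})^2$: every wedge $W_{j,\epsilon}$ is a non-empty Euclidean open set and thus meets $D$.

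The non-normality is the substantive content and proceeds by cardinality. The preparatory step is to verify that the subspace topology induced on $J^k_\R \subset G$ agrees with its intrinsic topology as a disjoint union of real lines indexed by $J^{k-1}$; concretely, one checks that the wedge $W_{j,\epsilon}$ meets $J^k_\R$ only in an interval of the single fiber containing $j$, because a sequence in $(\R\setminus\{0\})^2$ asymptotic to $\tilde j$ modulo $o(|x|^k)$ cannot also be asymptotic to any polynomial $\tilde{j'}$ differing from $\tilde j$ in a coefficient of degree less than $k$. Granting this, any union of entire fibers is closed in $G$. Let $\Phi$ be the set of fibers, so $|\Phi| = |J^{k-1}| = \mathfrak{c}$. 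For each $E \subset \Phi$ the sets $A_E = \bigcup_{F \in E} F$ and $B_E = \bigcup_{F \notin E} F$ are disjoint closed subsets of $G$. Suppose for contradiction $G$ is normal, and pick disjoint opens $U_E \supset A_E$, $V_E \supset B_E$ for each $E$. The assignment $E \mapsto U_E \cap D$ is injective, because if $F \in E \setminus E'$ then the open set $U_E \cap V_{E'}$ contains $F$, is therefore non-empty, and so contains a point of $D$ lying in $U_E$ but not in $U_{E'}$. This yields an injection $\mathcal{P}(\Phi) \hookrightarrow \mathcal{P}(D)$, giving $2^\mathfrak{c} \leq 2^{\aleph_0}$, a contradiction. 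The main obstacle is the clean identification of the subspace topology on $J^k_\R$; once the wedges are known to accumulate only on a single fiber, the remainder is bookkeeping.
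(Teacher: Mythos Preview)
Your argument is correct. The Hausdorffness, regularity, and separability portions essentially match the paper: the paper also works through the chart description (its Lemma~\ref{graphnbhd} is exactly your ``wedges diverge when the $(k-1)$-jets differ'' step), and regularity follows because each $Q_{\Omega_\theta}$ is a homeomorphism onto an open set, so images of compact boxes in $\Omega_\theta$ give compact neighbourhoods---this is cleaner than speaking of ``closures of wedges'', which are awkward to describe directly in $G$, but the conclusion is the same.

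The genuine difference is in the non-normality proof. The paper takes $A$ to be the union of those fibers whose leading coefficient $a_1$ is rational and $B$ its complement, and invokes a Baire category argument in the style of Niemytzki's tangent disc to show these cannot be separated. You instead run a Jones'-lemma cardinality argument: arbitrary unions of fibers are closed (since the subspace topology on $J^k_\R$ is the disjoint-union topology), and normality would force an injection $\mathcal P(J^{k-1})\hookrightarrow\mathcal P(D)$ via $E\mapsto U_E\cap D$. Both are standard techniques for Moore-plane-type spaces. Your route is arguably more elementary, needing only Cantor's theorem rather than Baire category, and it exploits the full strength of having $\mathfrak c$ many fibers; the paper's route has the virtue of exhibiting a single explicit pair of inseparable closed sets. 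Either proof is complete once one knows (as both approaches require) that the subspace topology on $J^k_\R\subset G$ coincides with its intrinsic topology, which follows from $s$ being a submersion so that $J^k_\R=s^{-1}(0)$ is an embedded submanifold.
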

 There are known constructions for  ``manifolds'' with these properties (see \cite{Kunen-Vaughan}, Chapter~14), but it is interesting to encounter such a beast ``in the wild''.

The holonomy groupoid  of a transverse order $k\geq 2$ foliation $(M,\F)$ is the $s$-connected component of the full holonomy groupoid. We can give more concrete information: let $\gamma:\pi_1(L,x_0) \to J^{k-1}$ be any homomorphism representing the holonomy invariant $h(\F) \in [\pi_1(L), J^{k-1}]$.  Let $\Gamma$ be the range of $\gamma$, a countable subgroup of $J^{k-1}$, and let $\Gamma_\R$ be the preimage of $\Gamma$ by the natural projection $J^k \to J^{k-1}$. The relationships between these various groups are shown in the following diagram:
\[ \begin{tikzcd} 
\R \ar[r] & J^k_\R \ar[r] & J^{k-1}_d \\
\R \ar[u,equals] \ar[r] & \Gamma_\R \ar[u] \ar[r] & \Gamma \ar[u] \\
 & & \pi_1(L,x_0) \ar[u,"\gamma"] 
\end{tikzcd} \]
\begin{thm*}[Theorem~\ref{isotrope}]
The isotropy groups of  $G(\F)$ at points on the singular leaf are isomorphic to $\Gamma_\R$.
\end{thm*}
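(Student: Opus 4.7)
The plan is to combine the earlier identification $G_\textup{full}(\F)|_L \cong \Gauge(P^k(\F))$ with the earlier computation that the monodromy of $P^{k-1}(\F)$ is $h(\F)$, and then read off the isotropy of the $s$-connected component. I first note that arrows in a holonomy groupoid preserve leaves, so for any $x_0 \in L$ the fiber $s^{-1}(x_0) \subset G_\textup{full}(\F)$ already lies over $L$ and the entire computation can be carried out inside $\Gauge(P^k(\F))$.

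I would fix $x_0 \in L$ and a basepoint $p_0 \in P^k(\F)_{x_0}$, and use the standard diffeomorphism $s^{-1}(x_0) \leftrightarrow P^k(\F)$ given by $[p, p_0] \leftrightarrow p$, under which $1_{x_0}$ corresponds to $p_0$ and the full isotropy at $x_0$ corresponds to $P^k(\F)_{x_0}$, which is identified as a group with $J^k_\R$ via $g \mapsto g \cdot p_0$. The isotropy of $G(\F)$ at $x_0$ is then the subset of $J^k_\R$ consisting of those $g$ for which $g \cdot p_0$ lies in the connected component of $p_0$ in $P^k(\F)$, and the goal reduces to showing this subset equals $\Gamma_\R$.

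The heart of the argument is a reduction from connected components of $P^k(\F)$ to those of the discrete bundle $P^{k-1}(\F)$. The identity component of $J^k_\R$ is the normal subgroup $\R$ (embedded as $t \mapsto y + t y^k$), with quotient $J^{k-1}$, so $P^k(\F)/\R \cong P^{k-1}(\F)$, and since the $\R$-orbits are connected, the projection $P^k(\F) \to P^{k-1}(\F)$ induces a bijection on connected components. Now $P^{k-1}(\F) \to L$ is a discrete covering with monodromy $\gamma: \pi_1(L, x_0) \to J^{k-1}$, and its connected components correspond to orbits of $\Gamma$ acting by left multiplication on the fiber $J^{k-1}$; the component of the image $\overline{p}_0$ of $p_0$ meets this fiber in the orbit $\Gamma \cdot 1 = \Gamma$. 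Pulling back via $J^k \to J^{k-1}$ shows that the component of $p_0$ meets $P^k(\F)_{x_0} = J^k_\R$ in $\Gamma_\R$. As $\Gamma_\R$ is the preimage of a subgroup under a homomorphism it is a subgroup of $J^k_\R$, and the gauge-groupoid composition restricts to its group operation, giving $\textup{Iso}_{x_0}(G(\F)) \cong \Gamma_\R$. The statement at every other $x \in L$ then follows from connectedness of $L$, since isotropy groups at points of a common orbit of $G(\F)|_L$ are conjugate and $G(\F)|_L$ is transitive over $L$ because each connected component of $P^{k-1}(\F)$ is itself a covering space of the connected base $L$. The main subtlety I expect is the bookkeeping of the bijection $\pi_0(P^k(\F)) \cong \pi_0(P^{k-1}(\F))$, which formalizes the key conceptual point that the continuous holonomy lives entirely in the $\R$-direction and so cannot bridge connected components indexed by the discrete holonomy $\gamma$.
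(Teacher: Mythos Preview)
Your proposal is correct and follows essentially the same approach as the paper: both identify $G(\F)_L$ with the $s$-connected component of $\Gauge(P^k(\F))$, pick out a connected component $P_0 \subset P^k(\F)$, use that the projection $P^k(\F)\to P^{k-1}(\F)$ has connected $\R$-fibers to reduce to components of the discrete bundle $P^{k-1}(\F)$, and then invoke covering-space monodromy to see that the stabilizer of $P_0$ in $J^k_\R$ is exactly $\Gamma_\R$. The paper packages the last steps via Proposition~\ref{subgauge}, whereas you work directly with the identification $s^{-1}(x_0)\cong P^k(\F)$, but the content is the same; one minor slip is that the monodromy action on the fiber (under the identification $h\mapsto h\cdot \overline{p}_0$) is by \emph{right} multiplication by $\Gamma$, not left, though this does not affect the orbit through $1$.
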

Meanwhile, the  restriction of $G(\F)$ to one of the open leaves (there are at most two) is a pair groupoid  so, applying standard results on groupoid C*-algebras, we  obtain information about the structure of the foliation C*-algebra. 
\begin{thm*}[Theorem~\ref{thm:extract}, Corollary~\ref{C*L}]
The foliation C*-algebra $C^*(\F)$ fits into an extension
\begin{align}\label{cstarext} 
0 \to I \to C^*(G(\F)) \to C^*(\Gamma_\R) \otimes \K \to 0.
\end{align}
where $\K$ is the C*-algebra of compact operators on a separable Hilbert and $I$ denotes either $\K$ or $\K \oplus \K$, according to whether $\F$ has one open leaf or two. 
\end{thm*}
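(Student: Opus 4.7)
The plan is to build the extension from the saturated decomposition $M = L \sqcup (M\setminus L)$ and identify each of the three pieces separately.

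Since $L$ is closed in $M$ and both $L$ and $M\setminus L$ are $\F$-saturated (each is a union of leaves), restriction of the (Lie) holonomy groupoid $G(\F)$ gives a closed Lie subgroupoid $G(\F)|_L$ and a complementary open Lie subgroupoid $G(\F)|_{M \setminus L}$ of $G(\F)$, which is a Lie groupoid because $\F$ is almost regular. Once a smooth Haar system is chosen, the standard ideal-quotient short exact sequence for groupoid C*-algebras associated to an open saturated subset of the unit space gives
\begin{equation*}
0 \to C^*\bigl(G(\F)|_{M\setminus L}\bigr) \to C^*(G(\F)) \to C^*\bigl(G(\F)|_L\bigr) \to 0;
\end{equation*}
this is the content of Theorem~\ref{thm:extract}. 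For the ideal, each connected component $U$ of $M\setminus L$ is an open leaf (so $\F|_U$ is the full tangent module of $U$) and $G(\F)|_U$ is the pair groupoid $U \times U$, whose C*-algebra is $\K(L^2(U)) \cong \K$. Summing over the one or two open leaves of $\F$ gives the identification $I \cong \K$ or $\K \oplus \K$ asserted in the statement.

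For the quotient, Theorem~\ref{isotrope} says $G(\F)|_L$ is a transitive Lie groupoid with isotropy $\Gamma_\R$, so by the classification of transitive Lie groupoids $G(\F)|_L \cong (Q \times Q)/\Gamma_\R$ for a suitable principal $\Gamma_\R$-bundle $Q \to L$ (essentially a reduction of $P^k(\F)$ from Section~\ref{principalbundles}). The C*-algebra of a transitive Lie groupoid over a connected second-countable base is Morita equivalent to $C^*$ of its isotropy, and choosing a Haar system on $G(\F)|_L$ together with a full-support measure on $L$ realises this Morita equivalence as an explicit isomorphism $C^*(G(\F)|_L) \cong C^*(\Gamma_\R) \otimes \K(L^2(L))$, which is Corollary~\ref{C*L}.

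The main subtlety is the exactness of the first sequence at the level of full C*-algebras; this is standard for Lie groupoids equipped with a Haar system but deserves some care in the non-\'etale setting. Amenability, happily, is not an obstacle: $J^{k-1}$ is solvable (built iteratively as extensions by $\R$ starting from the abelian base $J^1 \cong \R^\ast$), hence every subgroup $\Gamma \subset J^{k-1}$ is amenable, and the extension $\R \to \Gamma_\R \to \Gamma$ shows $\Gamma_\R$ is amenable too. Consequently full and reduced C*-algebras coincide throughout the argument and the standard machinery applies without modification.
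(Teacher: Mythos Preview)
Your argument is correct and follows essentially the same route as the paper: decompose $G(\F)$ over the saturated open/closed pair $M\setminus L$ and $L$, identify the open part as pair groupoids (giving $\K$ or $\K\oplus\K$), and identify the quotient via the transitive-groupoid result of \cite{MRW}. One small labeling point: in the paper, Theorem~\ref{thm:extract} is the groupoid-level description $G(\F)\cong (M\setminus L)^2 \cup \Gauge(P^k(\F))_0$ (not the C*-exact sequence itself), and Corollary~\ref{C*L} is exactly the \cite{MRW} identification $C^*(G(\F)_L)\cong C^*(\Gamma_\R)\otimes\K$ that you invoke; the extension then follows from these plus the standard open-saturated ideal sequence, as you say.
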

It would be interesting to analyse the extension \eqref{cstarext}. The one-dimensional Lie group $\Gamma_\R$ is solvable, so this problem is likely to be tractable.

Since we are mainly concerned with what is happening near the singular leaf  $L$, it is often sufficient to consider the case where $M$ is the total space of a line bundle $\pi:E \to L$, with $L$ embedded in $E$ as the zero section. In this case, we already have a natural principal $J^r$-bundle $J^r(E,\R) \to L$, even without specifying a transverse order $k$-foliation. 
\begin{thm*}[Theorem~\ref{flatcorresp}]
There is a one-to-one correspondence between:
\begin{enumerate}
\item Flat connections on the principal $J^{k-1}$-bundle $J^{k-1}(E,\R)$
\item Singular foliations of transverse order $k$ on $E$ whose singular leaf is $L$ (embedded as the zero section)
\end{enumerate}
\end{thm*}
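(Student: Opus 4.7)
The plan is to realize the bijection via a single ``admissibility'' condition pairing vector fields with $(k-1)$-jets of submersions. Let $\mathfrak{m}_L^k \subset C^\infty(E)$ denote the ideal of smooth functions vanishing to order $k$ along $L$. Since any vector field in a transverse order $k$ foliation is tangent to $L$, it preserves $\mathfrak{m}_L^k$ by Leibniz; consequently, for a local submersion $f \colon U \to \R$ defined near $x \in L$ with $f^{-1}(0) = L \cap U$, the condition ``$X(f) \in \mathfrak{m}_L^k$'' depends only on the $(k-1)$-jet $j^{k-1}_x f \in J^{k-1}(E,\R)_x$. This observation is the bridge in both directions.

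For the direction (2)$\Rightarrow$(1), I would associate to a transverse order $k$ foliation $\F$ the subset
\[ P_\F = \{ j^{k-1}_x f \in J^{k-1}(E,\R) : X(f) \in \mathfrak{m}_L^k \text{ for every } X \in \F \}, \]
and show that it is a principal $J^{k-1}_d$-subbundle (equivalently, a flat connection on $J^{k-1}(E,\R)$). The verification is local: by the splitting principle cited in the introduction (AS~Prop.~1.12, AZ~Prop.~1.2), near any $x_0 \in L$ there are coordinates $(x^1, \ldots, x^{n-1}, y)$ with $L = \{y=0\}$ in which $\F$ is generated by $\partial_{x^1}, \ldots, \partial_{x^{n-1}}, y^k \partial_y$. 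Admissibility of $f$ then reduces to $\partial_{x^i} f \in \mathfrak{m}_L^k$ for all $i$ (the condition $y^k \partial_y f \in \mathfrak{m}_L^k$ being automatic), so the $(k-1)$-jet of $f$ in $y$ is independent of $x$. The constant polynomials $a_1 y + \cdots + a_{k-1} y^{k-1}$ with $a_i \in \R$ then provide a $J^{k-1}_d$-trivialization of $P_\F$ over the chart, exhibiting $P_\F$ as a principal $J^{k-1}_d$-bundle.

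For the direction (1)$\Rightarrow$(2), given a flat connection on $J^{k-1}(E,\R)$ with (locally defined) sheaf $\operatorname{Hor}$ of horizontal sections, I would dually define
\[ \F_P = \{ X \in \Gamma_c(TE) : X \text{ is tangent to } L \text{ and } X(f) \in \mathfrak{m}_L^k \text{ for every local representative } f \text{ of every } s \in \operatorname{Hor} \}. \]
$C_c^\infty(E)$-linearity is formal, and closure under Lie bracket follows from $X, Y$ preserving $\mathfrak{m}_L^k$: if $Y(f) \in \mathfrak{m}_L^k$ then $X(Y(f)) \in \mathfrak{m}_L^k$, and symmetrically, so $[X,Y](f) \in \mathfrak{m}_L^k$. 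For local finite generation and the transverse order $k$ condition, I would trivialize the flat connection over a contractible chart and choose coordinates so that one horizontal section is represented by $y$; writing $X = \sum X^i \partial_{x^i} + Y \partial_y$, the condition $X(y) \in \mathfrak{m}_L^k$ becomes $Y \in \mathfrak{m}_L^k$, so $\F_P$ is generated by $\partial_{x^1}, \ldots, \partial_{x^{n-1}}, y^k \partial_y$, the model for transverse order $k$.

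Both constructions are governed by the same condition $X(f) \in \mathfrak{m}_L^k$, so the maps $\F \mapsto P_\F$ and $P \mapsto \F_P$ invert each other. The main obstacle is the verification in the direction (2)$\Rightarrow$(1) that $P_\F$ is a \emph{principal} $J^{k-1}_d$-subbundle: one must show that in the local model every $(k-1)$-jet at $x_0 \in L$ is realized by an $x$-independent polynomial in $y$ (local triviality), and that transitions between such local trivializations are locally constant in $J^{k-1}$ (flatness). Both follow directly from the explicit local form of $\F$ once the splitting theorem has been invoked, which is why I expect the proof to hinge on a careful but essentially computational analysis of the model $\F^k_\R \times TL$.
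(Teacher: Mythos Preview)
There is a genuine gap in the direction (2)$\Rightarrow$(1). Your key claim that ``$X(f)\in\mathfrak{m}_L^k$ depends only on the $(k-1)$-jet $j^{k-1}_x f\in J^{k-1}(E,\R)_x$'' is false, because $J^{k-1}(E,\R)_x$ records only the \emph{fiberwise} $(k-1)$-jet at the single point $x$. Concretely, take $k=2$, coordinates $(x,y)$ with $L=\{y=0\}$ and fibers $\{x=\mathrm{const}\}$, and $X=\partial_x\in\F$. Then $f=y$ and $g=y+xy$ have the same fiberwise $1$-jet at $x_0=0$, yet $X(f)=0\in\mathfrak{m}_L^2$ while $X(g)=y\notin\mathfrak{m}_L^2$. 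Your Leibniz argument shows only that $X$ preserves $\mathfrak{m}_L^k$, hence that the condition depends only on $f\bmod\mathfrak{m}_L^k$; but $f\equiv g\pmod{\mathfrak{m}_L^k}$ on a neighbourhood is equivalent to $j^{k-1}(f)=j^{k-1}(g)$ as \emph{sections} over that neighbourhood, not to equality of their values at a single point. Consequently your $P_\F$, read literally as a subset of $J^{k-1}(E,\R)$, is the whole bundle (every jet at $x_0$ is realized by the constant polynomial $a_1y+\cdots+a_{k-1}y^{k-1}$, which satisfies your condition). More conceptually, a flat connection on $J^{k-1}(E,\R)$ is a reduction of structure group from $J^{k-1}$ to $J^{k-1}_d$, which does not change the underlying set of the total space at all---it is a choice of which local \emph{sections} are parallel, not a choice of which \emph{points} belong to a subbundle.

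The repair is straightforward and brings you exactly onto the paper's proof: declare a local section $s$ of $J^{k-1}(E,\R)$ to be $\nabla_\F$-parallel iff $s=j^{k-1}(f)$ for some submersion $f$ with $X(f)\in\mathfrak{m}_L^k$ for all $X\in\F$. Your Leibniz observation now applies correctly (two such $f$'s with the same section differ by an element of $\mathfrak{m}_L^k$), and your local computation is precisely the content of Theorem~\ref{local}: in the model coordinates the admissibility condition $\partial_{x_i}f\in\mathfrak{m}_L^k$ is equivalent to the $y$-Taylor coefficients of $f$ up to order $k-1$ being independent of $x$, which is exactly the paper's characterization of an $\F$-$\F^k_\R$-submersion. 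The paper then checks that the transition functions between two such parallel sections are locally constant, which is the same computation you outline. Your (1)$\Rightarrow$(2) direction is correct as written and again matches the paper's argument (the paper glues the pullbacks $p^{-1}(\F^k_\R)$ for parallel $p$, which is your $\F_P$ described dually).
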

Once a flat connection on $J^{k-1}(\F)$  has been fixed, the resulting $J^{k-1}_d$-bundle structure on $J^{k-1}(E,\R)$ is canonically isomorphic to the principal $J^{k-1}_d$-bundle $P^{k-1}(\F)$. 

\vspace{\baselineskip}\noindent\textbf{Relation to other work.} What were introduced in \cite{Francis[PhD]}  as \emph{transverse order $k$ foliations} are quite similar to what Scott previously  introduced as \emph{$b^k$-manifolds} \cite{Scott}. A \emph{$b^k$-manifold} is an oriented, smooth manifold $M$ with an oriented hypersurface $L$ plus the data of the $(k-1)$-jet along $L$ of a smooth, positively-oriented defining function for $L$, i.e. an oriented submersion $p: \Omega \to \R$ with $p=f^{-1}(0)$, where  $\Omega$ is  a neighbourhood of $L$. Such a choice of $(k-1)$-jet determines an associated transverse order $k$ foliation by taking for $\F$ the collection of all compactly-supported vector fields $X$ such that $Xp$ vanishes to $k$th order along $L$, where $p$ is some defining function representing the given $(k-1)$-jet. However, this assignment is neither injective nor surjective when $k \geq 2$. Replacing $p$ by $2p$ changes its $(k-1)$-jet, but not its associated transverse order $k$ foliation. Furthermore, the existence of an inducing $(k-1)$-jet globally-defined on $L$ implies the triviality of the holonomy invariant studied in \cite{Francis[PhD]}, thus not  all transverse order $k$ foliations arise from Scott's setup. Indeed, $p$ is an $\F$-$\F^k_\R$-submersion (Definition~\ref{def:transordk}) defined on all of $L$ and yields a global section in the principal bundle $P^{k-1}(\F)$ (Section~\ref{principalbundles}) whose monodromy is the holonomy of $\F$.

In the recent preprint  \cite{Bischoff-},  Bischoff, del Pino and Witte introduce independently under the name \emph{$k$th order foliations} the  same objects  that \cite{Francis[PhD]} called \emph{transverse order $k$ foliations}, and also  consider  the case of arbitrary submanifolds in addition to that of hypersurfaces. The authors of \cite{Bischoff-} independently obtain a local classification theorem  essentially the same as one  in \cite{Francis[PhD]}. The other aspects of the theory of this class  of singular  foliations considered by the present author and by Bischoff, del Pino and Witte are rather complementary. For example, \cite{Francis[PhD]} emphasizes the holonomy groupoids and C*-algebras whereas \cite{Bischoff-} is more focused  on the Lie algebroid along the leaf, also considering  symplectic structures and going on to  develop and apply a cohomology theory  related to that in \cite{Scott}.

Even more recently, the preprint \cite{Fischer-} has appeared  in which  which Fischer and Laurent-Gengoux  carry the idea of classifying singular foliations with a given transverse model in a neighbourhood of given leaf by holonomy data much further than is done in either \cite{Francis[PhD]} or \cite{Bischoff-}. In \cite{Fischer-}, a complete classification is given, at least at the formal level, for any codimension and any transverse model.

\vspace{\baselineskip}\noindent\textbf{Structure of article.} We now briefly summarize the contents that follow this introduction.   Section~\ref{chap:prelim} is a rather long preliminary section that gathers for convenience various definitions and results, especially relating to Androulidakis and Skandalis's construction of the holonomy groupoid of a singular foliation. Furthermore, a description (Theorem~\ref{G/Nthm}) of the holonomy groupoid closely related to the description in  \cite{AZ[2014]} is obtained. In Section~\ref{jetsec}, we introduce the groups in which our holonomy invariant will take values. In Section~\ref{sec:fullFkR}, we study the full holonomy groupoid of the model foliation $\F^k_\R$ and its point-set topological properties (Theorem~\ref{thm:topprop}). In Section~\ref{sec:transordk}, we define precisely transverse order $k$ foliations (Definition~\ref{def:transordk}). In Section~\ref{localresultssection}, we determine which local transformations and submersions  are compatible with transverse order $k$ foliations (Theorem~\ref{local}). The rigidity in lower order Taylor expansions uncovered in this section is at the root of the holonomy invariants to be defined.  In Section~\ref{principalbundles}, we construct certain principal bundles and use their monodromy to precisely define our holonomy invariant (Definition~\ref{invdef}). In Section~\ref{sec:gaugefull}, we use the principal bundles of the preceding section to give a gauge groupoid description of the full holonomy groupoid of a transverse order $k$ foliation. In Section~\ref{sec:gaugemin} we describe  the holonomy groupoid (Theorem~\ref{thm:extract}) which sits inside the full holonomy groupoid as the \emph{$s$-connected component}.  In Section~\ref{sec:linbun}, we consider transverse order $k$ foliations in the particular case where the total space is a line bundle and the singular leaf is its zero section. In this case,  transverse order-$k$ foliations can be put into a natural correspondence with certain flat connections (Theorems~\ref{flatcorresp} and \ref{flatleafwise}). In Section~\ref{sec:complete}, we prove that our holonomy invariant is a complete invariant (Theorem~\ref{holcompl}). Finally, in Section~\ref{sec:ran}, we prove that our holonomy invariant takes on all of its possible values (Theorem~\ref{holrange}).

\section{Preliminaries}\label{chap:prelim}

In this lengthy section, we gather for ease of reference  various definitions and results which will be needed. We review the work  \cite{AS[2007]}, defining precisely what is meant by a (singular) foliation $(M,\F)$ and giving  the constructions of the holonomy groupoid $G(\F)$ as well as  the \emph{full holonomy groupoid} $G_\textup{full}(\F)$, a larger groupoid containing $G(\F)$. We also give a picture of the full holonomy groupoid as a groupoid of (equivalence classes of germs of) holonomy transformations that is  similar in spirit to the picture obtained in  \cite{AZ[2014]}, but slightly different because we do not make use of slices.   Proofs are omitted when they can be found in \cite{AS[2007]}. Readers already familiar with the literature on holonomy groupoids of singular foliations will most likely wish to skip this section and refer back to it whenever necessary.

\subsection{Manifolds}\label{smoothspaceterm}

In this article, a \textbf{smooth manifold} refers to a topological space equipped with a smooth atlas of some constant, finite dimension that is furthermore metrizable (or, equivalently, paracompact and Hausdorff). Many authors require their manifolds to be second-countable, but that assumption is not convenient here. In any event, the distinction is only relevant for highly disconnected manifolds; every smooth manifold in our sense is a (possibly uncountable) disjoint union of second-countable smooth manifolds.

We sometimes employ the term \textbf{smooth space} to refer to a topological space that is equipped  with a smooth atlas. Note that the same disclaimers which apply when one speaks of two manifolds being ``equal'' apply  to smooth spaces also. This is to say, an atlas for a smooth space is not really an innate part of its structure. Rather, one introduces the usual notion of equivalence of two atlases and works either with equivalence classes of atlases, or with the unique maximal atlas in each class.  Note the topology of a smooth space is uniquely determined by any atlas, so the former need not be specified in advance.

\subsection{Groupoids}

We shall tend to use calligraphic characters such as $\mathcal{G}$ to denote abstract groupoids (no topology) and reserve roman characters such as $G$ for topological groupoids.  For brevity, we often write  $\mathcal{G} \rightrightarrows X$ to indicate that $\mathcal{G}$ is an (abstract) groupoid with unit space $X$. We typically  denote the source and target projections of $\mathcal{G}$ by $s$ and $t$, respectively. Multiplication is performed from right to left so that, given $a,b \in \mathcal{G}$, the  product  $ab$   is defined if and only if $s(a)=t(b)$. The inversion map is denoted $\imath : \mathcal{G} \to \mathcal{G}$ or, frequently, just $a \mapsto a^{-1}$. We use (standard) notations such as $\mathcal{G}_x \coloneqq s^{-1}(x)$ and $\mathcal{G}^x  \coloneqq t^{-1}(x)$ for the source and target fibers.  We only speak of  morphisms between groupoids that have the same unit space and always require that the underlying map on the unit space  is the identity. Accordingly, the way in which  we understand quotients  of groupoids is constrained to the following:

\begin{defn}
A \textbf{normal subgroupoid}  of a groupoid  $\mathcal{G} \rightrightarrows X$ is a union $\mathcal{N} = \bigcup_{x \in M} \mathcal{N}_x$, where $\mathcal{N}_x$ is a subgroup of the isotropy group $\mathcal{G}_x^x$ such that, if $x,y \in X$, $a \in \mathcal{G}_x^y$, $b \in \mathcal{N}_x$, then $aba^{-1} \in \mathcal{N}_y$.
\end{defn}

\begin{lemma}
Let $\mathcal{N}$ be  normal subgroupoid of a groupoid $\mathcal{G}\rightrightarrows X$. Given $x,y \in X$ and $a,b \in \mathcal{G}_x^y$, put $a \approx b$ if and only if $a^{-1}b \in \mathcal{N}$. Then, $\approx$ is an equivalence relation and the groupoid operations of $\mathcal{G}$ descend in a well-defined way to give the quotient set $\mathcal{G}/\approx$ the structure of a groupoid on $X$. \qed
\end{lemma}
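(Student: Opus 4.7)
The plan is to mimic the classical construction of a quotient group by a normal subgroup, working fiber-by-fiber, and then to verify that the normality condition is exactly what is needed to make the groupoid structure descend. Note that $\approx$ is only defined between arrows with a common source and target, so the equivalence classes are subsets of the various sets $\mathcal{G}_x^y$; the source, target and unit structure will therefore trivially pass to the quotient, leaving only inversion and multiplication to check.

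First I would verify that, for any fixed $x,y \in X$, the relation $\approx$ is an equivalence relation on $\mathcal{G}_x^y$. Reflexivity follows from $a^{-1}a = 1_x \in \mathcal{N}_x$, symmetry from $(a^{-1}b)^{-1} = b^{-1}a$ and the fact that $\mathcal{N}_x$ is a subgroup, and transitivity from $a^{-1}c = (a^{-1}b)(b^{-1}c)$. This is routine and uses only that each $\mathcal{N}_x$ is a subgroup of $\mathcal{G}_x^x$.

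Next I would check that inversion descends. Given $a \approx b$ in $\mathcal{G}_x^y$, write $n \coloneqq a^{-1}b \in \mathcal{N}_x$; I need to show $(a^{-1})^{-1}(b^{-1}) = ab^{-1}\in \mathcal{N}_y$. The element $ana^{-1}$ lies in $\mathcal{N}_y$ by normality (applied to $a \in \mathcal{G}_x^y$ and $n \in \mathcal{N}_x$), and it equals $a a^{-1} b a^{-1} = ba^{-1}$. Since $\mathcal{N}_y$ is a subgroup, $ab^{-1} = (ba^{-1})^{-1} \in \mathcal{N}_y$, as required.

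The only slightly delicate check is that multiplication descends. Suppose $a \approx a'$ in $\mathcal{G}_x^y$ and $b \approx b'$ in $\mathcal{G}_w^x$, and set $n \coloneqq a^{-1}a' \in \mathcal{N}_x$, $m \coloneqq b^{-1}b' \in \mathcal{N}_w$. Then
\[
(ab)^{-1}(a'b') = b^{-1}(a^{-1}a')b' = b^{-1} n b \cdot m.
\]
By normality applied to $b^{-1} \in \mathcal{G}_x^w$ and $n \in \mathcal{N}_x$, we have $b^{-1}nb \in \mathcal{N}_w$, and $\mathcal{N}_w$ being a subgroup then gives $(ab)^{-1}(a'b') \in \mathcal{N}_w$. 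This is the main (and only non-trivial) obstacle; everything else is bookkeeping. Once these two compatibility checks are in place, the associativity, unit and inverse axioms for $\mathcal{G}/\!\approx$ follow immediately from those of $\mathcal{G}$ by lifting representatives.
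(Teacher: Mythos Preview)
Your argument is correct and complete. The paper itself omits the proof entirely (the \qed\ appended to the lemma statement signals that the author regards it as routine and leaves the verification to the reader), so your write-up supplies exactly the standard check that the paper chose not to include.
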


\begin{defn}
Given a normal subgroupoid $\mathcal{N}$ of a groupoid $\mathcal{G}$, the \textbf{quotient groupoid} $\mathcal{G}/\mathcal{N}$ is the groupoid $\mathcal{G}/\approx$ of the above lemma. 
\end{defn}

\subsection{Lie groupoids}

A \textbf{Lie groupoid} is a groupoid $G \rightrightarrows B$ where $G$ and $B$ are smooth manifolds, the source and target maps $s,t: G \to B$ are submersions and all structure maps are smooth. If $G$ is only a smooth space (see Section~\ref{smoothspaceterm}), we call $G \rightrightarrows B$ a \textbf{smooth groupoid}. One may refer to \cite{Mackenzie} for a detailed treatment of Lie groupoids.

\begin{ex}For any manifold $M$, the   \textbf{pair groupoid} refers to the Lie groupoid structure on  cartesian product $M \times M$ with  source projection  $\mathrm{pr}_2$, target projection $\mathrm{pr}_1$ and multiplication defined by $(x_3,x_2)(x_2,x_1)=(x_3,x_1)$ for all $x_1,x_2,x_3 \in M$.
\end{ex}

\begin{ex}
Given  a smooth action of a Lie group $H$ on a smooth manifold $M$,  the \textbf{transformation groupoid} $H \ltimes M$ is the Lie groupoid whose underlying manifold is  $H \times M$ with groupoid operations defined as follows:
\begin{align*}
\text{Source projection: } && &(h,x) \mapsto x \\
\text{Target projection: } &&  &(h,x) \mapsto h x \\
\text{Multiplication: } &&  &( h_2,h_1x) (h_1,x) = (h_2h_1,x)
\end{align*}
\end{ex}

We sometimes find it convenient to reverse the order of the factors; the transformation groupoid $M \rtimes H$ has underlying manifold $M \times H$ and its Lie groupoid structure is such that $(h,x) \mapsto (x,h): H \ltimes M \to M \rtimes H$ is a Lie groupoid isomorphism.

In this article, all principal bundles are assumed to be smooth, with structure group acting on the left. Every principal bundle determines a so-called gauge groupoid (also known as the Atiyah groupoid) as described below. This construction will play an important role in this article.

\begin{lemma}
Let $\pi:P \to B$ be a (smooth, left) principal $H$-bundle, where $H$ is a Lie group. Then, there is a unique Lie groupoid structure on the quotient manifold $(P \times P) / H$, where $H$ acts diagonally, whose operations are determined as follows:
\begin{align*}
\text{source projection:} && [q,p] \mapsto \pi(p) \\
\text{target projection:}&& [q,p] \mapsto \pi(q) \\
\text{multiplication:}  && [r,q][q,p] = [r,p].
\end{align*}
for all $p,q,r \in P$. Here $[q,p]$ denotes the class of $(q,p)$ in $(P \times P) / H$. \qed
\end{lemma}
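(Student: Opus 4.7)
The strategy proceeds in three stages. First, I would recall why $(P\times P)/H$ carries a smooth manifold structure: the diagonal $H$-action is free because the action on each factor is, and proper because properness of the action on $P$ alone suffices to extract a convergent subsequence of $h_n$ from any sequence with $h_n\cdot(p_n,p_n')\to(q,q')$ and $(p_n,p_n')\to(p,p')$. The quotient manifold theorem then produces the asserted smooth structure, making $P\times P\to(P\times P)/H$ a surjective submersion.

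Next, I would verify that the prescribed operations are well-defined and smooth. The source and target maps are plainly $H$-invariant and descend to smooth submersions since $\pi\circ\mathrm{pr}_i$ is one. For the multiplication, given composable classes $[r,q]$ and $[q'',p]$ with $\pi(q)=\pi(q'')$, freeness of the $H$-action supplies a unique $h\in H$ with $q''=hq$; rewriting $[q'',p]=[q,h^{-1}p]$ via the diagonal action, the defining rule forces $[r,q]\cdot[q'',p]=[r,h^{-1}p]$. A short check confirms this is invariant under changes of representatives of either factor. To establish smoothness I would work in local trivializations $\pi^{-1}(U)\cong U\times H$, which identify the fiber product $\{(q,q'')\in P^2:\pi(q)=\pi(q'')\}$ locally with $U\times H\times H$ and present the division map $(q,q'')\mapsto h$ as smooth multiplication in $H$; the multiplication on $(P\times P)/H$ then lifts to a smooth map out of this fiber product and descends by the submersion property.

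Finally, the groupoid axioms---associativity, units $[p,p]$ over $\pi(p)\in B$, and inverses $[q,p]^{-1}=[p,q]$---are direct verifications from the formulas. Uniqueness is automatic, since the formulas pin down the operations on equivalence classes, and any two smooth structures on $(P\times P)/H$ making the quotient map a surjective submersion coincide. The main obstacle is the smoothness of the multiplication, which genuinely requires the local triviality of $P$ as a principal bundle; everything else is either a formal calculation with representatives or a routine application of the submersion property of the quotient map.
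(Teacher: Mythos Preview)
The paper does not actually prove this lemma: it is stated with a terminal \qed and no argument, treated as a standard fact about the gauge (Atiyah) groupoid, with a reference to \cite{Mackenzie} given just before for Lie groupoid background. Your proposal is a correct and complete verification of what the paper takes for granted, so there is nothing to compare against; your three-stage outline (quotient manifold theorem, well-definedness and smoothness of the structure maps via local trivializations, and the algebraic groupoid axioms) is the standard route and is sound.
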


\begin{defn}\label{gaugedef}
The \textbf{gauge groupoid} $\Gauge(P) \rightrightarrows B$ of a (smooth, left) principal bundle $P\to B$ is the Lie groupoid constructed in the above lemma.
\end{defn}

\begin{ex}\label{trivalgauge}
In the case of a trivial left $H$-bundle $B \times H$, it is easy to see that $(y,x,h) \mapsto [(y,1),(x,h)]$ defines an isomorphism $B \times B \times H \to \Gauge(B \times H)$. 
\end{ex}

Gauge groupoids are always transitive and, in fact, every transitive Lie groupoid $G \rightrightarrows B$ is isomorphic to a gauge groupoid; for any choice of $x \in B$, one has that $G^x$ is a principal $G^x_x$-bundle and $G \cong \Gauge(G^x)$.  
In \cite{Androulidakis[2004]}, the  correspondence between transitive Lie groupoids and principal bundles is extended in a way that takes extensions into account.

\subsection{Monodromy of flat bundles}\label{sec:monodromy}

Given two groups $A$ and $B$, we denote by $[A,B]$ the quotient   of the set $\Hom(A,B)$ by the conjugation action of $B$.

Let $M$ be a connected, smooth manifold and  let $\Gamma$ be a (discrete)  group. Given basepoints $x,y \in M$, there is a canonical bijection between the quotient sets $[\pi_1(M,x),\Gamma]$ and $[\pi_1(M,y),\Gamma]$. This is so because the isomorphisms  $\pi_1(M,x) \to \pi_1(M,y)$ determined by two different choices of paths from $x$ to $y$ only differ  by an inner automorphism of $\pi_1(M,x)$, and it follows that their induced  bijections $\mathrm{Hom}(\pi_1(M,x),\Gamma) \to \mathrm{Hom}(\pi_1(M,y),\Gamma)$   differ  by  an inner automorphism of $\Gamma$. It therefore makes sense to speak of the set $[\pi_1(M),G)]$ without specifying a choice of basepoint. In a similar vein, we have the following

\begin{propn}\label{[pi,Gamma]induce}
If  $\theta:M_1 \to M_2$ is a diffeomorphism of connected manifolds, then pushing forward loops  by $\theta$  determines a well-defined bijection
\[
\pushQED{\qed} 
\theta_* : [\pi_1(M_1),\Gamma] \to [\pi_1(M_2),\Gamma].\qedhere
\popQED
\]    
\end{propn}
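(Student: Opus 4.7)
The plan is to construct $\theta_*$ by first fixing basepoints, descending the resulting Hom-set bijection to the $\Gamma$-conjugation quotient, and then verifying independence of the basepoint choice. The only content is bookkeeping with conjugations; no topological subtlety arises, since $\theta$ is globally defined and a homeomorphism.

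First I would choose basepoints $x_0 \in M_1$ and $y_0 \coloneqq \theta(x_0) \in M_2$. The map $\theta$ induces a group isomorphism $\theta_\sharp: \pi_1(M_1, x_0) \to \pi_1(M_2, y_0)$, $[c] \mapsto [\theta \circ c]$, and precomposition with $\theta_\sharp^{-1}$ gives a bijection $\Hom(\pi_1(M_1, x_0), \Gamma) \to \Hom(\pi_1(M_2, y_0), \Gamma)$. Since conjugation of a homomorphism by $g \in \Gamma$ amounts to post-composition with the inner automorphism determined by $g$, and post-composition commutes with pre-composition, this bijection is $\Gamma$-equivariant and descends to a bijection $\theta_*: [\pi_1(M_1, x_0), \Gamma] \to [\pi_1(M_2, y_0), \Gamma]$.

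The main step is to verify that $\theta_*$ is compatible with the canonical identifications between the sets $[\pi_1(M_i, x), \Gamma]$ for varying basepoints $x$, so that it is well-defined on the basepoint-free sets $[\pi_1(M_i), \Gamma]$. Given a second basepoint $x_1 \in M_1$ joined to $x_0$ by a path $c$, with $y_1 \coloneqq \theta(x_1)$, let $\beta_c: \pi_1(M_1, x_1) \to \pi_1(M_1, x_0)$ be the associated conjugation isomorphism, and let $\beta_{\theta \circ c}$ be the analogous isomorphism on $M_2$ built from the path $\theta \circ c$. The identity $\beta_{\theta \circ c} \circ \theta_\sharp = \theta_\sharp \circ \beta_c$ at the level of fundamental groups is immediate because $\theta$ carries concatenations of paths to concatenations. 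Transferring this identity to Hom-sets yields the commutative square expressing that the pushforward bijections induced by $\beta_c$ and $\beta_{\theta \circ c}$ intertwine the two versions of $\theta_*$ obtained from the basepoint choices $x_0, y_0$ and $x_1, y_1$.

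Bijectivity of the resulting map on the basepoint-free quotients is inherited from $\theta_\sharp$ being an isomorphism, with explicit inverse given by applying the same construction to the diffeomorphism $\theta^{-1}$. The main obstacle, as noted, is purely notational: keeping track of which side of each composition the various conjugations act on.
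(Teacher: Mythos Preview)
Your argument is correct. The paper does not actually give a proof of this proposition: it is stated with a \qed immediately after the displayed formula, following a short paragraph explaining why $[\pi_1(M),\Gamma]$ is well-defined independently of basepoint. Your write-up simply spells out the standard bookkeeping that the paper's phrase ``In a similar vein'' is gesturing at, and it does so accurately.
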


In fact, the procedure of the above proposition makes 
\[ M \mapsto [\pi_1(M),\Gamma] \]
into a functor from the category of connected, smooth manifolds and diffeomorphisms to the category of sets and bijections.

Let $\pi:Q\to M$ be a smooth principal bundle with connected base manifold $M$ and discrete structure group $\Gamma$.\footnote{This is the same thing as as a normal covering space whose group of deck transformations has been identified with $\Gamma$.} Fix a point $q_0 \in Q$ and put $x_0 \coloneqq \pi(q_0)$. One may then define a group homomorphism \[ \gamma : \pi_1(M,x_0)\to\Gamma \] 
in the following way: given any  loop $c:[0,1]\to M$ based at $M$, let $\widetilde c:[0,1] \to Q$ be the unique lift of $c$ with $c(0)=q_0$ and define $\gamma([c])$ by: 
\[ \widetilde c(1) =\gamma([c]) \cdot q_0. \]
One may check this gives a well-defined homomorphism $\gamma$, moreover, that the class in $[\pi_1(M),\Gamma]$ of this homomorphism does not depend on the chosen point $q_0\in M$. For example, if $h \in \Gamma$ and $q'_0 = h \cdot q_0$, then the monodromy homomorphism $\gamma' : \pi_1(M,x_0)\to \Gamma$ determined by $q_0'$ satisfies $\gamma' = \mathrm{Ad}_h \circ \gamma$. It therefore makes sense to define:

\begin{defn}\label{def:monodromy}
Let $Q$ be a smooth principal bundle with connected base manifold $M$ and discrete structure group $\Gamma$. The \textbf{monodromy invariant} of $Q$ is the element $h(Q) \in [\pi_1(M),\Gamma]$ represented by the homomorphism $\pi_1(M,x_0)\to \Gamma$ constructed  above.
\end{defn}

It is a standard result that principal bundles with discrete structure group (or equivalently flat bundles)  are completely classified by their monodromy invariants. To be more precise, the following result holds:

\begin{thm}\label{thm:monodromy}
For $i=1,2$, let $M_i$ be a connected, smooth manifold and let $Q_i \to M_i$  be a smooth left principal bundle with discrete structure group $\Gamma$.
\begin{enumerate}
\item If $\theta:Q_1\to Q_2$ is a $\Gamma$-bundle isomorphism and $\theta_0 : M_1 \to M_2$ is the underlying map of the base, then $(\theta_0)_*(h(Q_1)) = h(Q_2)$. \item Conversely, if there exists a diffeomorphism $\theta_0:M_1\to M_2$ such that $(\theta_0)_*(h(Q_1)) = h(Q_2)$, then there exists a $\Gamma$-bundle isomorphism $\theta : Q_1 \to Q_2$  covering $\theta_0$.\qed
\end{enumerate}
\end{thm}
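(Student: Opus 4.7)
For part (1), pick a basepoint $q_0 \in Q_1$ with image $x_0 \in M_1$, and represent $h(Q_1)$ by the monodromy homomorphism $\gamma_1 : \pi_1(M_1,x_0) \to \Gamma$ constructed from $q_0$. Then $\theta(q_0) \in Q_2$ sits over $\theta_0(x_0)$, and I would use it to compute the monodromy representative $\gamma_2$ of $h(Q_2)$ at that basepoint. The point is that for any loop $c$ in $M_1$ based at $x_0$, the lift $\widetilde c$ of $c$ starting at $q_0$ pushes forward under $\theta$ to a lift of $\theta_0 \circ c$ starting at $\theta(q_0)$; since $\theta$ is $\Gamma$-equivariant, $\theta(\widetilde c(1)) = \gamma_1([c]) \cdot \theta(q_0)$, which gives $\gamma_2([\theta_0 \circ c]) = \gamma_1([c])$. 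This is exactly the equality $\gamma_2 \circ (\theta_0)_* = \gamma_1$ at the level of homomorphisms, which yields $(\theta_0)_*h(Q_1) = h(Q_2)$.

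For part (2), my plan is to first reduce to the case of a single base manifold by pulling back. Pulling $Q_2$ back by $\theta_0$ yields a principal $\Gamma$-bundle $\theta_0^*Q_2 \to M_1$ together with a canonical $\Gamma$-equivariant map $\theta_0^*Q_2 \to Q_2$ covering $\theta_0$, which is a bundle isomorphism in the sense of part (1). By part (1), $h(\theta_0^*Q_2) = (\theta_0)_*^{-1}h(Q_2) = h(Q_1)$. Hence it suffices to establish the following special case: \emph{two smooth principal $\Gamma$-bundles over the same connected base with the same monodromy class are $\Gamma$-bundle isomorphic over the identity.}

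To prove that special case, suppose $Q, Q' \to M$ are smooth principal $\Gamma$-bundles and pick $q_0 \in Q$, $q_0' \in Q'$ over a common basepoint $x_0$. After possibly replacing $q_0'$ by $g \cdot q_0'$ for some $g \in \Gamma$ (this is why we may only assume equality of conjugacy classes), we can arrange that the two monodromy homomorphisms $\gamma, \gamma' : \pi_1(M,x_0) \to \Gamma$ are literally equal. I would then construct $\theta : Q \to Q'$ as follows: given $q \in Q$, choose any smooth path $\widetilde \sigma$ in $Q$ from $q_0$ to $q$, project it to a path $\sigma$ in $M$, lift $\sigma$ to the unique path $\widetilde\sigma'$ in $Q'$ starting at $q_0'$, and set $\theta(q) \coloneqq \widetilde\sigma'(1)$. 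Independence of the chosen path reduces to a loop at $x_0$, where the two monodromy homomorphisms agreeing ensures that both lifts end at the same translate of the chosen basepoint. The map $\theta$ is $\Gamma$-equivariant because translating the starting point by $g$ translates the endpoint by $g$, and its inverse is obtained symmetrically.

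The main obstacle, as usual with monodromy arguments, is verifying smoothness and the well-definedness of $\theta$ simultaneously. I would handle this by working in small contractible trivializing opens: on such an open $U \subset M$ containing a point $\pi(q)$, path lifting depends smoothly on endpoints, so $\theta$ restricted to $\pi^{-1}(U)$ is literally given in local trivializations by a fixed element of $\Gamma$ (determined by the chosen path from $x_0$ to $U$), hence is smooth. Because $M$ is connected and paracompact, these local formulas patch, giving a smooth $\Gamma$-equivariant diffeomorphism $\theta: Q \to Q'$ covering $\mathrm{id}_M$; composing with $\theta_0^*Q_2 \to Q_2$ then produces the desired bundle isomorphism $Q_1 \to Q_2$ covering $\theta_0$.
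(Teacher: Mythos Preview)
The paper does not actually prove this theorem: it is stated with a \qed{} symbol and introduced as ``a standard result that principal bundles with discrete structure group \ldots\ are completely classified by their monodromy invariants.'' So there is no proof in the paper to compare against, and your argument is essentially the standard one that the author is invoking.

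Your proof is correct in outline, with one small point to tighten. In part~(2), your construction ``choose any smooth path $\widetilde\sigma$ in $Q$ from $q_0$ to $q$'' tacitly assumes $Q$ is connected, which need not hold (e.g.\ the trivial bundle $M\times\Gamma$). What your path-lifting recipe really defines is $\theta$ on the connected component $Q_0\subset Q$ containing $q_0$. You then need to extend to all of $Q$ by $\Gamma$-equivariance, i.e.\ set $\theta(g\cdot q)\coloneqq g\cdot\theta(q)$ for $q\in Q_0$ and $g\in\Gamma$, and check this is consistent when $g$ stabilizes $Q_0$ (equivalently, $g$ lies in the image of $\gamma$). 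That consistency check is exactly where the equality $\gamma=\gamma'$ is used a second time, and it goes through by the same concatenation argument you sketched for well-definedness. Your sentence about $\Gamma$-equivariance gestures at this but does not quite close the loop; once you make the extension step explicit, the argument is complete. The local smoothness discussion is fine as written.
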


\subsection{Modules of  smooth sections}

Throughout the following,  $E$ is a smooth vector bundle over a smooth manifold $M$ and $\F$ is a $C^\infty(M)$-submodule of $C_c^\infty(M;E)$, the  smooth, compactly-supported sections of $E$.

\begin{defn}\label{fibersection}
The \textbf{fiber} of a submodule $\F \subset C_c^\infty(M;E)$ at $x \in M$ is the vector  space $A_x \F  \coloneqq \F/I_x \F$, where $I_x \subset C^\infty(M)$ denotes the ideal of functions vanishing at the point $x$. We denote the quotient map $\F \to A_x \F $ by $X \mapsto [X]_x$.
\end{defn}
From this definition, we have the relation
\begin{align*}
[fX]_x = f(x)[X]_x && f \in C^\infty(M), X \in \F.
\end{align*}
If $X \in \F$ has $X \in I_x \F$ for all $x \in M$, then $X=0$, so one may legitimately regard  $\F$ as a module of    sections of the bundle
\[ A\F \coloneqq \bigsqcup_{x \in M}A_x\F. \] 
However, it should be noted that $A\F$ need not have the structure of a smooth vector bundle and, indeed, the dimensions of its fibers may vary from point to point. Note also that, for each $x \in M$, the evaluation map $\F \to E_x$ contains $I_x \F$ in its kernel and therefore descends to a well-defined map $A_x \F  \to E_x$  on the fiber. Thus, the ``singular bundle'' $A\F$ is equipped with a ``singular bundle map'' $A\F \to E$. Furthermore, this bundle map sends $\F$, viewed as a module of sections of $A\F$, identically onto $\F$, viewed as a module of sections of $E$.

\begin{defn}A submodule $\F \subset C_c^\infty(M;E)$ is \textbf{finitely-generated} if there exist sections  $X_1,\ldots,X_n \in C^\infty(M;E)$ such that every $X \in \F$ has a representation $X = f_1X_1 + \ldots + f_n X_n$ with $f_i \in C_c^\infty(M)$ and \textbf{free of rank $\mathbf{n}$} if the latter representations are also unique.
\end{defn}

\begin{rmk}
Note that, in the above definition, the generators are not required to be compactly-supported. So, for example, we consider $\mathfrak{X}_c(\R^n) = C_c^\infty(\R^n;T\R^n)$, the $C^\infty(\R^n)$-module of compactly-supported vector fields on $\R^n$,  to be freely-generated by $\frac{\partial}{\partial x_1},\ldots, \frac{\partial}{\partial x_n}$, even though $\frac{\partial}{\partial x_i} \notin \mathfrak{X}_c(\R^n)$. 
\end{rmk}

\begin{defn}
Given an open set $U \subset M$, the  \textbf{restriction} of a submodule $\F \subset C_c^\infty(M;E)$ to $U$ is defined to be the $C^\infty(U)$-module $\F_U \subset C_c^\infty(U;E_U)$ given by $\F_U \coloneqq C_c^\infty(U) \F$.  We say that $\F$ is \textbf{locally finitely-generated} (resp. \textbf{locally free of rank $\mathbf{n}$}) if each point of $M$ belongs to some open set $U$ such that $\F_U$ is finitely-generated (resp. free of rank $n$).
\end{defn}

If $\F$ is locally finitely-generated, then each fiber $A_x \F $ is a finite-dimensional vector space. Furthermore,  the dimension of $A_x \F $ equals the minimum number of generators required for $\F_U$, when $U$ is any sufficiently small neighbourhood of $x$ (\cite{AS[2007]}, Proposition~1.5).

Let us now say a bit more about the locally free case.

\begin{propn}\label{locallyfree}
Let $\F$ be a locally finitely-generated $C^\infty(M)$-submodule of $C_c^\infty(M;E)$. Then, the following are equivalent:
\begin{enumerate}
\item $\dim(A_x\F) = k$ for all $x \in M$. 
\item $\F$ is locally free of rank $k$. 
\item  There exists a $k$-dimensional smooth vector bundle $A \to M$ and a vector bundle map $A \to E$ which is injective   over a dense subset of $M$ such that the image of the induced map $C_c^\infty(M;A) \to C_c^\infty(M;E)$ is $\F$. 
\end{enumerate}
Moreover, when these equivalent conditions hold, we may take $A = A\F$, equipped with the unique smooth structure for which $\F$, realized as a module of sections of $A\F$, coincides with $C_c^\infty(M;A\F)$. 
\end{propn}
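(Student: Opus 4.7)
The plan is to prove the three conditions equivalent via the cycle $(3)\Rightarrow(1)\Rightarrow(2)\Rightarrow(3)$, with the ``moreover'' clause falling out of the construction in the last step.

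For $(3)\Rightarrow(1)$, I would first promote the hypothesis that the induced map $\Phi:C_c^\infty(M;A)\to\F$ is surjective to the stronger assertion that it is a module isomorphism. Indeed, if $\Phi(s)=0$, then the bundle map $A\to E$ kills $s(x)$ for every $x$, so $s$ vanishes on the dense subset where this bundle map is injective and hence everywhere by continuity. It follows that $A_x\F\cong A_x$ is $k$-dimensional for each $x$.

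For $(1)\Rightarrow(2)$, the cited Proposition~1.5 of \cite{AS[2007]} supplies, around any $x_0$, a neighbourhood $U$ on which $\F_U$ admits exactly $k$ generators $X_1,\ldots,X_k$. I would first check via a bump-function argument that the natural map $A_x\F_U\to A_x\F$ is an isomorphism for every $x\in U$: surjectivity comes from replacing any $Y\in\F$ by $\phi Y\in\F_U$ for $\phi\in C_c^\infty(U)$ with $\phi\equiv 1$ near $x$, and injectivity is forced by $\dim A_x\F_U\le k=\dim A_x\F$. The classes $[X_i]_x$ are then a basis of $A_x\F_U$ at every point of $U$, so evaluating any relation $\sum f_iX_i=0$ at $x\in U$ gives $f_i(x)=0$ pointwise, proving freeness.

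For $(2)\Rightarrow(3)$, I would glue local free generators into a smooth rank-$k$ bundle $A\to M$ in the standard way: on overlapping charts $U_\alpha$ and $U_\beta$ where $\F$ is freely generated by $X_i^\alpha$ and $X_i^\beta$, freeness yields a unique and smooth transition matrix $g_{\alpha\beta}:U_\alpha\cap U_\beta\to\mathrm{GL}_k(\R)$, and the local prescription $(x,(a_i))\mapsto\sum a_iX_i^\alpha(x)$ assembles into a bundle map $A\to E$ whose induced map on sections has image $\F$. The delicate point is to verify that this bundle map is injective on a dense set. Suppose, to the contrary, that on some nonempty open $V$ inside a trivializing chart $U$ the generators $X_1,\ldots,X_k$ are linearly dependent in $E$ at every point. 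After shrinking $V$, I would pick a maximal subcollection $X_{i_1},\ldots,X_{i_r}$ remaining linearly independent throughout $V$, with $r<k$; these form a smooth frame for a rank-$r$ subbundle of $E|_V$, so any remaining $X_j$ expands smoothly as $X_j=\sum c_\ell X_{i_\ell}$ on $V$. Multiplying by a nontrivial $\psi\in C_c^\infty(V)$ yields $\psi X_j-\sum \psi c_\ell X_{i_\ell}=0$, which is a nontrivial $C_c^\infty(U)$-linear relation among the $X_i$ (the coefficient of $X_j$ being $\psi\neq 0$), contradicting freeness.

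For the ``moreover'' clause, the bundle $A$ produced above is fiberwise canonically identified with $A\F$ via $(a_i)\mapsto\sum a_i[X_i]_x$, which transports the smooth structure onto $A\F$; by construction $\F$ is then precisely the compactly-supported smooth sections of $A\F$, and the smooth structure is uniquely determined by the requirement that local free generators be smooth frames. The main obstacle is the density-of-injectivity argument in $(2)\Rightarrow(3)$, whose key idea is to promote the pointwise linear dependence of the $X_i$'s on an open set to a smooth dependence via a maximal locally-independent subframe, thereby producing a relation that contradicts freeness after multiplication by a bump function.
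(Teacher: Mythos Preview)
Your proof is correct and follows the same underlying logic as the paper's: the equivalence of (1) and (2) via the observation that $k$ generators whose classes span a $k$-dimensional fiber must be free, and the equivalence of (2) and (3) via the correspondence between locally free modules and vector bundles together with the fact that injectivity of the section map $C_c^\infty(M;A)\to C_c^\infty(M;E)$ is equivalent to injectivity of the bundle map $A\to E$ over a dense set. The only real difference is one of packaging: the paper invokes the Serre--Swan theorem to pass from (2) to the bundle $A\F$ and then declares (1), (3), and the ``moreover'' clause ``simple to deduce,'' whereas you build the bundle by hand from local free frames and argue the density of injectivity explicitly via a maximal-rank subframe argument. Your route is more self-contained; the paper's is shorter but leans on a named theorem and leaves the almost-injectivity equivalence implicit.
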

\begin{proof}
If (2) holds, then (a version of) the Serre-Swan theorem gives that $A\F$ is a vector bundle with respect to the unique smooth structure for which $\F$, realized as a module of sections of $A\F$, coincides with $C_c^\infty(M;A\F)$. From this, it is simple to deduce (1), (3) and the ``moreover'' statement.

Suppose (3) holds. Note the  ``almost injectivity'' assumption on the bundle map $A \to E$ is equivalent to injectivity of the induced map   $C_c^\infty(M;A) \to C_c^\infty(M;E)$. Therefore, $\F$ is isomorphic to $C_c^\infty(M;A)$ as a $C^\infty(M)$-module and (2) follows. 

Finally, assume (1) holds. Let $U \subset M$ be  open and $X_1,\ldots, X_k \in C_c^\infty(U;E_U)$ be generators for $\F_U$. Suppose $f_1,\ldots,f_k \in C_c^\infty(U)$ have $f_1 X_1 +\ldots +f_k X_k = 0$. Then, for any $x \in U$, we get $f_1(x) [X_1]_x + \ldots f_k(x) [X_k]_x$. Since the $k$ vectors $[X_i]_x$ span the $k$-dimensional vector space $A_x \F $, they are a basis, and we obtain $f_1(x) = \ldots = f_k(x) = 0$. This shows that (2) holds. 
\end{proof}

\subsection{Foliations}

In this section, we precisely define what will be meant  by the word ``foliation'' and discuss related notions and constructions.

 \begin{defn}[\cite{AS[2007]}, Definition~1.1]
 A \textbf{foliation} $\F$ of a smooth manifold $M$ is a locally finitely-generated $C^\infty(M)$-module  of compactly-supported vector fields on $M$ that is furthermore stable under taking Lie brackets. 
 \end{defn}

 The choice to work with compactly-supported vector fields, though not totally essential, is convenient in several ways. Flows  of compactly-supported vector fields are automatically complete. Furthermore,  a compactly-supported vector field defined on an open subset can always be extended by zero to the whole space. Alternative approaches include  dropping the compact-support assumption altogether or  working  with the sheaf of  locally-defined vector fields. The  approach via compactly-supported vector fields is something of a  compromise between a fully global and fully local approach.

 \begin{defn}[\cite{AS[2007]}, Definition~1.7]\label{leafdef}
 A \textbf{leaf} of a foliation $(M,\F)$ is an orbit of $\exp(\F)$, the group of diffeomorphisms of $M$ generated by $\exp(X)$, $X \in \F$. 
 \end{defn}

By work of Stefan and Sussmann (\cite{Stefan}, \cite{Sussmann}), the leaves of a foliation $(M,\F)$ constitute a partition of $M$ into immersed submanifolds. See Section~1.3 of \cite{AS[2007]} for more detailed information on the leafwise smooth structure.\footnote{There is an unimportant  error in Remark~1.15~(1) of \cite{AS[2007]}. Let $N = \R$  and let $M =\R$ with the singular foliation singly-generated by $x \frac{d}{dx}$. Define $f : N \to M$ by $f(x) = x^3$. Then $(df_x)(T_xN) \subset F_{f(x)}$ is satisfied, but $f$ is not leafwise in the sense of \cite{AS[2007]}.}

\begin{ex}
If $A$ is any Lie algebroid  over a smooth manifold $M$, then the image of the map $C_c^\infty(M;A) \to \mathfrak{X}_c(M)$ induced by the anchor map is a   foliation of $M$. Presently, it seems not to be known whether in fact \emph{all}  foliations can (perhaps only locally) be obtained in this way. It was noted in \cite{AZ[2013]}, Proposition~1.3 that one can construct   foliations $(M,\F)$, with $M$ noncompact, such that $\sup_{x \in M} \dim(A_x \F ) = \infty$. Obviously such a foliation cannot be induced by a single Lie algebroid on $M$. However, it appears to not be known whether a   foliation whose fibers are bounded in dimension (which always happens  if $M$ is compact) must be induced by a Lie algebroid, nor does the local version of this question seem to be settled. The article \cite{LGLS} contains some partial work on this problem; see Proposition~4.33 therein.
\end{ex}

\begin{ex}
Specializing the above example, any Lie groupoid  $G\rightrightarrows M$  induces a foliation by way of its Lie algebroid. If $G$ is $s$-connected, the leaves of the foliation induced by $G$ are exactly the orbits of $G$.
\end{ex}

\begin{defn}
Let $(M,\F)$ be a foliation and let $x \in M$.
\begin{itemize}
\item The \textbf{fiber} of $\F$ at $x$ is the finite-dimensional vector space $A_x\F \coloneqq \F/I_x\F$ (this is a particular case of  Definition~\ref{fibersection}). 
\item The \textbf{tangent space} of $\F$ at $x$ is the finite dimensional vector space $T_x\F \coloneqq \{ X(x) : X \in \F\}$. 
\item The \textbf{isotropy Lie algebra} of $\F$ at $x$ is the kernel $\mathfrak{g}_x\F$ of the surjective linear map $A_x\F \to T_x\F$ that descends from the evaluation map $\F \to T_x\F$. 
\end{itemize}
\end{defn}
Consequent to these definitions, for each point $x \in M$, there is an exact sequence:
\[ 0 \to \mathfrak{g}_x\F \to A_x\F \to T_x\F \to 0. \] 
As the notation and terminology would suggest, $\mathfrak{g}_x$ is a Lie algebra. The bracket on $\F$ descends to a well-defined bracket on $\mathfrak{g}_x\F$.

\subsection{Regular and almost regular foliations}

\begin{defn}
A foliation $(M,\F)$ is called \textbf{regular} if the dimensions of its tangents spaces $T_x\F$, $x \in M$ are constant. A foliation which is not regular is said to be \textbf{singular}. A foliation is called \textbf{almost regular} if the dimensions of its fibers $A_x\F$, $x \in M$ are constant.
\end{defn}

A foliation is regular if and only if its leaves all have the same dimension. For a regular foliation, the tangent spaces $T_x\F$, $x \in M$ form a subbundle of $TM$ and $\F$ is equal to the compactly-supported sections of this subbundle. See  \cite{AS[2007]}, Example~1.3~(2) for further details.

Every regular foliation is almost regular. As explained in Proposition~\ref{locallyfree}, if $\F$ is almost regular,  $A\F =\bigsqcup_{x \in M} A_x\F$ is  a  vector bundle. Indeed, transferring the bracket of $\F$ to $C_c^\infty(M;A\F)$ makes $A\F$ into Lie algebroid whose anchor map is moreover injective on a dense open subset of $M$. In fact, one may equivalently define  almost regular foliations  as precisely the ones arising from  a Lie algebroid with an almost injective anchor map. See also the discussion in \cite{AS[2007]},  Section~3.2.

\subsection{Pullbacks and automorphisms}

Foliations can be pulled back by submersions (or, more generally, by maps satisfying an appropriate transversality assumption; see \cite{AS[2007]}, Definition~1.9).

\begin{defn}If $(N,\mathcal{E})$ is a foliation and $p:M \to N$ is a submersion, the \textbf{pullback foliation} $p^{-1}(\mathcal{E})$ is the foliation of $M$ consisting of $C_c^\infty(M)$-linear combinations of vector fields on $M$ which are $p$-projectable and project to elements of $\mathcal{E}$. In particular, if $\iota$ is the inclusion of an open set $U$ into $M$, we write $\iota^{-1}(\F)=\F_U$ and call $\F_U$ the \textbf{restriction} of $\F$ to $U$. 
\end{defn}

It is clear that a foliation can be pushed forward or pulled back by a diffeomorphism, simply by pushing forward or pulling back its constituent vector fields. Indeed, this may be considered a special case of pullback by a submersion. We use the following terminology and notations.

\begin{defn}
Let $(M,\F)$ be a foliation.
\begin{itemize}
\item An \textbf{$\mathbf{\F}$-automorphism} is a diffeomorphism $\theta:M \to M$ satisfying $\theta_*(\F)=\F$. We denote the group of $\F$-automorphisms  by $\Aut(\F)$. 
\item A \textbf{local $\mathbf{\F}$-automorphism} is a diffeomorphism $\theta:U \to V$, where $U$ and $V$ are open subsets of $M$, satisfying $\theta_*(\F_U)=\F_V$. The collection of all local $\F$-automorphisms is a pseudogroup. 
\item We write $\GermAut(\F)$ for the groupoid over the base $M$  consisting of germs of local $\F$-automorphisms.
\end{itemize}
\end{defn}

 One has that $\exp(\F)$ (Definition~\ref{leafdef}) is a normal subgroup of $\Aut(\F)$ (see \cite{AS[2007]}, Proposition~1.6).

\subsection{Gluing foliations}

 Even though we never view \emph{individual} foliations as sheaves, in Section~\ref{sec:linbun} it will be useful for us to know that one can compare or construct foliations on the same manifold using a sheaf property.
 The proof, which we omit, is a routine verification using partitions of unity.

 \begin{propn}\label{foliatedgluing}
Let $M$ be a smooth manifold and let $(U_i)_{i \in I}$ be an open cover of $M$. 
 \begin{enumerate}
 \item If $\mathcal{E}$ and $\mathcal{F}$ are foliations of $M$ and $\mathcal{E}_{U_i} = \mathcal{F}_{U_i}$ for all $i \in I$, then $\mathcal{E} = \mathcal{F}$. 
 \item Suppose $\mathcal{F}_i$ is a foliation of $U_i$ for each $i \in I$. If $(\F_i)_{U_i\cap U_j} =  (\F_j)_{U_i\cap U_j}$ is satisfied for all $i,j \in I$, then there exists a (unique by (1)) foliation $\F$ of $M$ such that $\F_{U_i}=\F_i$ for all $i \in I$. \qed
 \end{enumerate} 
 \end{propn}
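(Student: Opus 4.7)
The plan is to prove (1) by a partition-of-unity argument that identifies elements of $\mathcal{E}$ with elements of $\mathcal{F}$ piece by piece, and to prove (2) by explicitly constructing $\mathcal{F}$ as a sum of the extensions-by-zero of the local foliations $\mathcal{F}_i$ and then verifying the defining properties of a foliation, using (1) for uniqueness. The common technical input in both parts is that if $Y$ is a compactly-supported vector field on $U$ with $Y \in C_c^\infty(U)\mathcal{G} = \mathcal{G}_U$ for some foliation $\mathcal{G}$ of an ambient open set, then $Y$ extends by zero to an element of $\mathcal{G}$: writing $Y = \sum_k f_k Z_k$ with $f_k \in C_c^\infty(U)$ and $Z_k \in \mathcal{G}$, each summand $f_k Z_k$ remains in $\mathcal{G}$ after extending $f_k$ by zero, because $\mathcal{G}$ is a $C^\infty$-module.

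For (1), fix $X \in \mathcal{E}$ and choose a locally finite partition of unity $(\phi_\alpha)$ subordinate to a refinement of $(U_i)$, with $\operatorname{supp}(\phi_\alpha) \subset U_{i(\alpha)}$. Since $X$ has compact support, only finitely many of the $\phi_\alpha X$ are nonzero. Each $\phi_\alpha X$ is compactly supported in $U_{i(\alpha)}$ and lies in $\mathcal{E}_{U_{i(\alpha)}} = \mathcal{F}_{U_{i(\alpha)}}$; by the extension principle above, $\phi_\alpha X \in \mathcal{F}$. Summing gives $X \in \mathcal{F}$, and the reverse inclusion is symmetric.

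For (2), define
\[
\mathcal{F} \coloneqq \sum_i \widetilde{\mathcal{F}}_i \subset C_c^\infty(M;TM),
\]
where $\widetilde{\mathcal{F}}_i$ denotes the image of $\mathcal{F}_i$ under extension by zero. This is a $C^\infty(M)$-module because if $Z \in \widetilde{\mathcal{F}}_i$ (supported in $U_i$) and $g \in C^\infty(M)$, then $gZ = (g|_{U_i})Z$ again lies in $\mathcal{F}_i$ (still compactly supported in $U_i$). The restriction property $\mathcal{F}_{U_i} = \mathcal{F}_i$ follows from the extension principle combined with a bump-function argument: any $X \in \mathcal{F}_i$ equals $\phi X$ for $\phi \in C_c^\infty(U_i)$ with $\phi \equiv 1$ on $\operatorname{supp}(X)$, exhibiting $X \in C_c^\infty(U_i)\mathcal{F} = \mathcal{F}_{U_i}$; conversely, any $Y \in \mathcal{F}_{U_i}$ reduces to finitely many terms $g_k Z_k^{(j)}$ supported in intersections $U_i \cap U_{\alpha_j}$, and on such an intersection the hypothesis $(\mathcal{F}_{\alpha_j})_{U_i \cap U_{\alpha_j}} = (\mathcal{F}_i)_{U_i \cap U_{\alpha_j}}$ puts $g_k Z_k^{(j)}$ in the restriction of $\mathcal{F}_i$, hence in $\mathcal{F}_i$ after extending by zero to $U_i$. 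Local finite generation of $\mathcal{F}$ is then inherited from that of each $\mathcal{F}_i$ via $\mathcal{F}_W = (\mathcal{F}_i)_W$ for $W \subset U_i$.

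The step I expect to require the most care is Lie-bracket closure. Given $X, Y \in \mathcal{F}$, writing $X = \sum_k X_k$, $Y = \sum_l Y_l$ with $X_k \in \widetilde{\mathcal{F}}_{i_k}$, $Y_l \in \widetilde{\mathcal{F}}_{j_l}$, we reduce to showing $[X_k, Y_l] \in \mathcal{F}$. Since this bracket has compact support in $U_{i_k} \cap U_{j_l}$, and on that intersection both $X_k$ and $Y_l$ restrict to elements of the common foliation $(\mathcal{F}_{i_k})_{U_{i_k} \cap U_{j_l}} = (\mathcal{F}_{j_l})_{U_{i_k} \cap U_{j_l}}$ (by the overlap hypothesis), the bracket lies in this restricted foliation as well. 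The extension-by-zero principle then places $[X_k, Y_l]$ in $\mathcal{F}_{i_k} \subset \widetilde{\mathcal{F}}_{i_k} \subset \mathcal{F}$. Uniqueness of $\mathcal{F}$ is immediate from (1).
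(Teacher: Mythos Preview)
Your approach is exactly what the paper has in mind: the paper omits the proof entirely, remarking only that it is ``a routine verification using partitions of unity,'' and your argument supplies precisely those details.

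One small imprecision worth flagging in the Lie-bracket step: when you say that on $U_{i_k}\cap U_{j_l}$ the vector fields $X_k$ and $Y_l$ ``restrict to elements of the common foliation $(\mathcal{F}_{i_k})_{U_{i_k}\cap U_{j_l}}$,'' this is not literally true, since $X_k|_{U_{i_k}\cap U_{j_l}}$ need not be compactly supported in the intersection. The standard fix is to choose $\chi\in C_c^\infty(U_{i_k}\cap U_{j_l})$ with $\chi\equiv 1$ on a neighbourhood of $\operatorname{supp}(X_k)\cap\operatorname{supp}(Y_l)$ and observe that $[X_k,Y_l]=[\chi X_k,\chi Y_l]$ (both sides are supported in $\operatorname{supp}(X_k)\cap\operatorname{supp}(Y_l)$ and agree where $\chi\equiv 1$); now $\chi X_k$ and $\chi Y_l$ genuinely lie in the restricted foliation and your argument goes through. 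This is a cosmetic patch, not a real gap.
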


\subsection{Bisubmersions}

The following definition is the basic ingredient in   Androulidakis and Skandalis's construction of the holonomy groupoid.
\begin{defn}[\cite{AS[2007]}, Definition~2.1]
An \textbf{$\mathbf{\F}$-bisubmersion} of a foliation $(M,\F)$ is a triple $(W,t,s)$ where $W$ is a smooth manifold and $s,t : W \to M$ are submersions satisfying $s^{-1}(\F) = t^{-1}(\F) = C_c^\infty(W;\ker(ds)) + C_c^\infty(W;\ker(dt))$.
\end{defn}

We sometimes abuse notation and denote an $\F$-bisubmersion $(W,t,s)$ simply by $W$. It is easy to see that, if $U \subset W$ is open, then $(U,t|_U,s|_U)$ is also an  $\F$-submersion.

\begin{defn}
A \textbf{morphism} of $\F$-bisubmersions  $(W_1,t_1,s_1)$, $(W_2,t_2,s_2)$ is a smooth map $f:W_1 \to W_2$ such that $s_1 = s_2\circ f$ and $t_1=t_2\circ f$. A \textbf{local morphism} from $W_1$ to $W_2$ is a morphism from an open subset of  $W_1$ to $W_2$.
\end{defn}
It is clear that morphisms of bisubmersions can be composed and that the identity is always a morphism of bisubmersions. In \cite{AS[2007]}, Corollary~2.11(c), it is shown that, if there is a local morphism of bisubmersions $W_1 \to W_2$ sending $w_1 \mapsto w_2$, then there is also a morphism of bisubmersions $W_2 \to W_1$ sending $w_2 \mapsto w_1$. The following definition is therefore justified. 
\begin{defn}\label{simdef}
Let $(M,\F)$ be a foliation, and  $(W_i)_{i \in I}$ be a collection of $\F$-bisubmerions. Then, we denote by $\sim$ the equivalence relation on $\bigsqcup_{i \in I}W_i $ given by $W_i \ni w_i \sim w_j \in W_j$ if and only if there exists a local morphism from $W_i$ to $W_j$ sending $w_i$ to $w_j$. We denote the quotient map $\bigsqcup_{i \in I} W_i \to \left(\bigsqcup_{i \in I} W_i\right)/\sim$  by $Q = (Q_i)_{i \in I}$. 
\end{defn}

\begin{defn}
Let $(M,\F)$ be a foliation and let $\mathcal{U} = (U_i)_{i \in I}$ and $\mathcal{V} = (V_j)_{j \in J}$ be collections of $\F$-bisubmersions. Put $U = \bigsqcup_{i \in I} U_i$ and $V = \bigsqcup_{j \in J} V_j$. We say that $\mathcal{U}$ is \textbf{adapted} to $\mathcal{V}$ if every $u \in U$ is $\sim$ to some $v \in V$. We say that $\mathcal{U}$ and $\mathcal{V}$ are \textbf{equivalent} if they are adapted to each other.  
\end{defn}

The following simple proposition is helpful in clarifying certain issues relating to forming the quotient by $\sim$.
\begin{propn}\label{openmap}
The quotient map $Q$ of the above definition is an open map.
\end{propn}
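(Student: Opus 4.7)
The plan is to show that for every open subset $U$ of the disjoint union $W \coloneqq \bigsqcup_{i \in I} W_i$, the saturation $Q^{-1}(Q(U))$ is open in $W$; this is equivalent to $Q$ being open. Without loss of generality I may assume $U \subset W_{i_0}$ for a single index $i_0$, since the saturation of a union is the union of saturations and a union of open sets is open.

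Let $w \in Q^{-1}(Q(U))$, say $w \in W_j$, and pick $u \in U$ with $w \sim u$. By the symmetry of $\sim$ noted in the excerpt (a consequence of \cite{AS[2007]}, Corollary~2.11(c)), there exists an open neighbourhood $V$ of $w$ in $W_j$ and a local morphism of $\F$-bisubmersions $f : V \to W_{i_0}$ with $f(w) = u$. Being smooth, $f$ is in particular continuous, so $f^{-1}(U)$ is an open neighbourhood of $w$ in $W_j$, hence in $W$. For any $w' \in f^{-1}(U)$, the restriction of $f$ to a small enough neighbourhood of $w'$ is a local morphism sending $w'$ to $f(w') \in U$, so $w' \sim f(w')$ and thus $w' \in Q^{-1}(Q(U))$. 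This exhibits an open neighbourhood of $w$ contained in $Q^{-1}(Q(U))$.

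Since $w$ was arbitrary, $Q^{-1}(Q(U))$ is open; hence $Q(U)$ is open in the quotient topology, and $Q$ is an open map. The only non-trivial input is the symmetry of $\sim$, and that is already available from \cite{AS[2007]}; everything else is point-set topology.
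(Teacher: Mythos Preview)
Your proof is correct and follows essentially the same approach as the paper: reduce to $U$ contained in a single $W_{i_0}$, take a point $w$ in the saturation equivalent to some $u\in U$, use the symmetry of $\sim$ to obtain a local morphism $f$ sending $w\mapsto u$, and observe that $f^{-1}(U)$ is an open neighbourhood of $w$ inside the saturation. The only cosmetic difference is that you make the appeal to symmetry (Corollary~2.11(c) of \cite{AS[2007]}) explicit, whereas the paper invokes it tacitly.
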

\begin{proof}
Let $W \coloneqq  \bigsqcup_{i \in I}W_i$. We need to prove  that the $\sim$-saturation of any open set  $U \subset W$ is open. It suffices to consider the case where $U \subset W_i$ for some $i \in I$. To this end, suppose $w \in U$, $w' \in W_j$ for some $j \in I$ and $w \sim w'$. Therefore, there exists a local morphism $f$ from $W_j$ to $W_i$  with $f(w')=w$. Then, $f^{-1}(U)$ is a neighbourhood of $w'$ with the property that each point in $f^{-1}(U)$ is $\sim$-equivalent (by way of $f$)   to a point  in $U$.
\end{proof}

\begin{cor}
Let $\mathcal{U} = (U_i)_{i \in I}$ and $\mathcal{V} = (V_j)_{j \in J}$ be collections of bisubmersions of $(M,\F)$. Put $U = \bigsqcup_{i \in I} U_i$ and $V = \bigsqcup_{j \in J} V_j$. If $\mathcal{U}$ is adapted to $\mathcal{V}$, then the map $U/\sim \to V/ \sim$ sending $[u] \mapsto [v]$ whenever $u \sim v$ is an open embedding. If  $\mathcal{U}$ and $\mathcal{V}$ are equivalent, this map $U/ \sim \to V / \sim$ is a homeomorphism.
\end{cor}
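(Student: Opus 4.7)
The plan is to factor the map $\Phi \colon U/{\sim} \to V/{\sim}$ through the larger quotient $(U \sqcup V)/{\sim}$ formed from the combined collection of bisubmersions $\mathcal{U} \cup \mathcal{V}$.

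First I would record the tautological but crucial \emph{restriction property}: the relation $\sim$ associated to $\mathcal{U} \cup \mathcal{V}$, restricted to $U \times U$, coincides with the $\sim$ of $\mathcal{U}$, and similarly for $V$. This is immediate from Definition~\ref{simdef}: for $u_1, u_2 \in U$, both relations say that there is a local morphism between the containing bisubmersions (which lie in $\mathcal{U}$) sending $u_1 \mapsto u_2$. From this, the natural open inclusions $U \hookrightarrow U \sqcup V$ and $V \hookrightarrow U \sqcup V$ descend to continuous maps $\bar{\imath} \colon U/{\sim} \to (U \sqcup V)/{\sim}$ and $\bar{\jmath} \colon V/{\sim} \to (U \sqcup V)/{\sim}$ that are injective (by the restriction property) and open, the latter because the quotient map $Q \colon U \sqcup V \to (U \sqcup V)/{\sim}$ is open by Proposition~\ref{openmap}, and the preimages in $U \sqcup V$ of saturated opens in $U$ remain open.

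Next, the hypothesis that $\mathcal{U}$ is adapted to $\mathcal{V}$ means precisely that the image of $\bar{\imath}$ is contained in the image of $\bar{\jmath}$. Since $\bar{\jmath}$ is a homeomorphism onto its image, I can define
\[ \Phi \coloneqq \bar{\jmath}^{-1} \circ \bar{\imath} \colon U/{\sim} \to V/{\sim}, \]
and unwinding the definitions shows $\Phi([u]) = [v]$ whenever $u \sim v$, matching the map in the statement. As the composition of an open embedding with the inverse of an open embedding (restricted to the relevant image), $\Phi$ is itself an open embedding: it is injective because $\bar{\imath}$ is; it is continuous since $\bar{\imath}$ is continuous into the open subspace $\mathrm{im}(\bar{\jmath})$ and $\bar{\jmath}^{-1}$ is continuous there; and it is open because $\bar{\imath}$ is open into $(U \sqcup V)/{\sim}$, hence into the open set $\mathrm{im}(\bar{\jmath})$, which is homeomorphic to $V/{\sim}$ via $\bar{\jmath}$.

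Finally, if $\mathcal{U}$ and $\mathcal{V}$ are equivalent, the roles of $U$ and $V$ symmetrize and the images of $\bar{\imath}$ and $\bar{\jmath}$ coincide, so $\Phi$ is surjective, hence a homeomorphism. The main obstacle is essentially conceptual rather than technical: one must recognize the right ambient quotient to factor through and verify the restriction property; once that is in place, the rest is formal, using only Proposition~\ref{openmap} and the universal property of the quotient topology.
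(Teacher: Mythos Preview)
Your proof is correct and follows essentially the same route as the paper's: both factor through the ambient quotient $(U \sqcup V)/{\sim}$, use Proposition~\ref{openmap} to see that $U/{\sim}$ and $V/{\sim}$ embed there as open subsets, and then read off the conclusion from the containment of images forced by adaptedness. Your version spells out more carefully the restriction property and the formal reasons $\bar{\imath}$ and $\bar{\jmath}$ are open embeddings, but the argument is the same.
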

\begin{proof}
Note that the restriction of an open mapping to an open set is an open mapping. In particular, this can be applied to $U$ or $V$ sitting in $U \sqcup V$. It follows that both $U/\sim$ and $V/\sim$ sit as open subsets in $(U \sqcup V)/\sim$. If $\mathcal{U}$ is adapted to $\mathcal{V}$, then $V$ meets every equivalence class in $U \sqcup V$, so that $V/ \sim$ is homeomorphic to $(U \sqcup V) / \sim$. If $\mathcal{U}$ and $\mathcal{V}$ are equivalent, then $U /\sim$ is homeomorphic to $(U \sqcup V)/\sim$ as well.
\end{proof}

\begin{rmk}
Note that, in general, the restriction of a quotient map to an open   set which meets every equivalence class is not a quotient map. For example, the map $[0,1] \to S^1 : t \mapsto e^{2 \pi it}$ is a quotient map, but its restriction to $[0,1)$ is not.
\end{rmk}

\subsection{Construction of the holonomy groupoid}\label{sec:grpconstr}

There are  natural notions of inverse and composition for bisubmersions. See \cite{AS[2007]}, Proposition~2.4.

\begin{defn}
Let $(M,\F)$ be a foliation.
\begin{itemize}
\item The \textbf{composition} of two $\F$-bisubmersions $W_1$, $W_2$ is the $\F$-bisubmersion $W_2 \circ W_1$  whose  underlying manifold is the fiber product $W_2  \mathbin{{}_{s_2}{\times}_{t_1}} 
W_1$ and whose source and target maps are given by $s(w_2, w_1) = s_1(w_1)$ and $t(w_2 , w_1)  =t_2(w_2)$, in an obvious notation.
\item The \textbf{inverse} of an $\F$-bisubmersion $W$ is the $\F$-bisubmersion $W^{-1}$ obtained by keeping the same underlying manifold, but interchanging the source and target maps. 
\end{itemize}
\end{defn}

\begin{defn}[\cite{AS[2007]}, Definition~3.1]
Let $(M,\F)$ be a foliation. A collection   $\mathcal{W} =(W_i)_{i \in I}$ of $\F$-bisubmersions  is called an \textbf{holonomy atlas}  provided that:
\begin{enumerate}[(i)]
\item $\bigcup_{i \in I} s_i(W_i) =M$.
\item If $W \in \mathcal{W}$ and $w \in W$, then there exists $W' \in \mathcal{W}$ and $w' \in W'$ such that $w \in W^{-1}$ is $\sim$ to $w' \in W'$.
\item If $W_1,W_2 \in \mathcal{W}$, $w_1 \in W_1$, $w_2 \in W_2$, and $s_1(w_1) = t_2(w_2)$, then exists $W' \in \mathcal{W}$ and $w' \in W'$ such that $(w_1 \circ w_2 )\in W_1 \circ W_2$ is $\sim$ to $w' \in W'$.
\end{enumerate}
\end{defn}

Items (ii) and (iii) amount to saying $\mathcal{W}$ is closed under inverse and composition, if we work up to $\sim$.

\begin{thm}[\cite{AS[2007]}, Proposition~3.2]\label{holgrpddef}
Suppose  $\mathcal{W} =(W_i)_{i \in I}$ is a holonomy atlas for a foliation $(M,\F)$. Let $G(\mathcal{W}) \coloneqq \left(\bigsqcup_{i\in I} W_i\right)/\sim$. Then there is a groupoid structure on $G(\mathcal{W})$ such that
\[ Q_{W_2}(w_2) Q_{W_1}(w_1) = Q_{W_2 \circ W_1}(w_2,w_1) \]
whenever $W_1$ and $W_2$ are bisubmersions adapted to $\mathcal{W}$ and $w_1 \in W_1$, $w_2 \in W_2$ are such that $s_2(w_2)=s_1(w_1)$. \qed
\end{thm}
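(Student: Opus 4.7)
The plan is to define source, target, unit and inverse maps on $G(\mathcal{W})$ to accompany the prescribed multiplication, and then verify the groupoid axioms. The source and target maps descend from the individual $s_i$, $t_i$ because any morphism of bisubmersions intertwines source and target projections by definition; hence $w \sim w'$ implies $s_i(w) = s_j(w')$ and $t_i(w) = t_j(w')$.

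The substantive step is well-definedness of the multiplication formula
\[ Q_{W_2}(w_2) \cdot Q_{W_1}(w_1) := Q_{W_2 \circ W_1}(w_2, w_1). \]
This has two parts. First, the right-hand class must actually lie in $G(\mathcal{W})$: since $\mathcal{W}$ is a holonomy atlas, condition (iii) provides some $W' \in \mathcal{W}$ and $w' \in W'$ with $(w_2, w_1) \sim w'$ whenever $W_1, W_2 \in \mathcal{W}$, and for general $W_1, W_2$ adapted to $\mathcal{W}$ one first replaces each $w_i$ by a $\sim$-equivalent point in some element of $\mathcal{W}$ and invokes (iii) there. Second, the result is independent of representatives: if $w_i \sim w_i'$ via a local morphism $f_i$ from $W_i$ to $W_i'$ with $f_i(w_i) = w_i'$, then the identities $s_i' \circ f_i = s_i$ and $t_i' \circ f_i = t_i$ guarantee that $(w_2', w_1')$ lies in the fiber product $W_2' \circ W_1'$ and that $(f_2, f_1)$ restricts to a local morphism $W_2 \circ W_1 \to W_2' \circ W_1'$ carrying $(w_2, w_1) \mapsto (w_2', w_1')$, so $(w_2, w_1) \sim (w_2', w_1')$ as required.

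Associativity of the resulting operation reduces to the canonical identification $(W_3 \circ W_2) \circ W_1 \cong W_3 \circ (W_2 \circ W_1)$, which is manifestly a morphism of bisubmersions. For the unit at $m \in M$, I would construct an identity bisubmersion: given local generators $X_1, \ldots, X_n$ of $\F$ on a neighborhood $U$ of $m$, the product $U \times \R^n$ equipped with $s(x,\lambda)=x$ and $t(x,\lambda)=\exp\bigl(\sum_i \lambda_i X_i\bigr)(x)$ is an $\F$-bisubmersion on a neighborhood of $(m,0)$. By condition (i) this bisubmersion is adapted to $\mathcal{W}$, so the class $1_m := Q(m,0)$ makes sense in $G(\mathcal{W})$; the two-sided unit property follows from explicit local morphisms $W \circ U \to W$ and $U \circ W \to W$ sending the relevant points back to $w$. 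Inversion $Q_W(w) \mapsto Q_{W^{-1}}(w)$ lies in $G(\mathcal{W})$ thanks to condition (ii), and the inverse laws come from morphisms between $W^{-1} \circ W$ (respectively $W \circ W^{-1}$) and an identity bisubmersion carrying the diagonal of $W$ to the zero section.

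The main obstacle will be the unit and inverse laws: one must reliably produce the specific local morphisms between composed bisubmersions and identity bisubmersions that realize the identities $1_{t(w)} \cdot Q(w) = Q(w) = Q(w) \cdot 1_{s(w)}$ and $Q(w^{-1}) \cdot Q(w) = 1_{s(w)}$ on given points. These constructions rest on the defining decomposition $s^{-1}(\F) = t^{-1}(\F) = C_c^\infty(W;\ker ds) + C_c^\infty(W;\ker dt)$ of an $\F$-bisubmersion, which is precisely what allows one to convert foliation vector fields into honest smooth morphisms between bisubmersions; careful bookkeeping with flow parameters and implicit-function-theorem arguments will be needed to see that the relevant diagonals and zero sections are indeed carried to one another.
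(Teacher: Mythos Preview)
The paper does not give a proof of this theorem: it is stated with a citation to \cite{AS[2007]}, Proposition~3.2, and closed immediately with \qed. As announced at the start of Section~\ref{chap:prelim}, proofs that can be found in \cite{AS[2007]} are omitted. There is therefore nothing in the present paper to compare your argument against.

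Your sketch is the standard argument one would reconstruct from \cite{AS[2007]}. Two small remarks. First, your appeal to condition~(i) alone to place the unit in $G(\mathcal{W})$ is too quick: condition~(i) only guarantees a point $w\in W_i$ with $s_i(w)=m$, not one carrying the identity. One instead forms $(w,w)\in W_i^{-1}\circ W_i$, which does carry $\id$ at $m$, and then invokes conditions~(ii) and~(iii) together with \cite{AS[2007]}, Corollary~2.11 to obtain a $\sim$-equivalent point inside $\mathcal{W}$. Second, in the well-definedness step you assume both local morphisms $f_1,f_2$ point the same way; since $\sim$ is only known to be symmetric via the existence result \cite{AS[2007]}, Corollary~2.11(c), a word is needed to justify that the two directions can be aligned before forming $f_2\times f_1$. (Incidentally, the composability hypothesis in the theorem statement should read $s_2(w_2)=t_1(w_1)$.)
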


Every foliation  $(M,\F)$ admits a path holonomy atlas $\mathcal{W}_\textup{path}$ and a full holonomy atlas $\mathcal{W}_\textup{full}$ such that, if $\mathcal{W}$ is any holonomy atlas for $(M,\F)$, then $\mathcal{W}_\textup{path}$ is adapted to $\mathcal{W}$  and $\mathcal{W}$ is adapted to $\mathcal{W}_\textup{full}$. 
\begin{defn}[\cite{AS[2007]}, Definition~3.5, Example~3.4~(1)]
Let $(M,\F)$ be a foliation with   path holonomy atlas  $\mathcal{W}_\textup{path}$ and  full holonomy atlas $\mathcal{W}_\textup{full}$. Then, the \textbf{path holonomy groupoid}, or simply the \textbf{holonomy groupoid}, of $\F$ is $G(\F) \coloneqq G(\mathcal{W}_\textup{path})$. Similarly, the \textbf{full holonomy groupoid} of $\F$ is $G_\textup{full}(\F) \coloneqq G(\mathcal{W}_\textup{full})$.
\end{defn}
By definition, given  any  holonomy atlas  $\mathcal{W}$ for $\F$, there are canonical open inclusions $G(\F) \subset G(\mathcal{W}) \subset G_\textup{full}(\F)$.

\subsection{Bisections}

\begin{defn}
A \textbf{bisection} of a bisubmersion $(W,t,s)$ of a foliation $(M,\F)$ is a locally closed submanifold $N \subset W$ such that the restrictions of $s$ and $t$ to $N$ are diffeomorphisms onto open subsets of $M$. 
\end{defn}

\begin{propn}
Suppose $N$ is a bisection of a bisubmersion $(W,t,s)$ of a foliation $(M,\F)$. Then, $t|_N \circ (s|_N)^{-1}$ is a local $\F$-automorphism
\end{propn}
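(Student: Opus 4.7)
The plan is to prove the forward inclusion $\phi_*(\F_{s(N)}) \subseteq \F_{t(N)}$ where $\phi \coloneqq t|_N \circ (s|_N)^{-1}$; the reverse inclusion then follows by applying the same argument to the inverse bisubmersion $W^{-1}$ with the same bisection $N$, whose associated transformation is $\phi^{-1}$. So fix $X \in \F_{s(N)}$, which we extend by zero to an element of $\F$ on all of $M$.

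The first step is to construct a compactly supported vector field $\widetilde X$ on $W$ that is $s$-projectable onto $X$ and additionally satisfies $\widetilde X(w) \in T_w N$ for all $w \in N$. Pick any $s$-projectable lift $\widetilde X_0$ of $X$ using submersion charts and a partition of unity. The ``error'' along $N$, namely $V_N \coloneqq (s|_N)^{-1}_* X|_{s(N)} - \widetilde X_0|_N$, takes values in $\ker(ds)|_N$ since both summands $s$-project to $X|_{s(N)}$. Extend $V_N$ to a section $V$ of $\ker(ds)$ on $W$ (using a tubular neighbourhood of $N$ and a partition of unity) and take $\widetilde X$ to be $\widetilde X_0 + V$ multiplied by a cutoff equal to one near $N \cap s^{-1}(\mathrm{supp}\, X)$; by construction $\widetilde X$ has compact support, is $s$-projectable to $X$, and satisfies $\widetilde X|_N = (s|_N)^{-1}_* X|_{s(N)}$, which lies in $TN$.

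Next, invoke the bisubmersion property: since $s_*(\widetilde X) = X \in \F$, one has $\widetilde X \in s^{-1}(\F) = t^{-1}(\F)$. By the definition of the pullback foliation, write a finite expression
\begin{equation*}
\widetilde X \;=\; \sum_{i=1}^m f_i \, Y_i, \qquad f_i \in C_c^\infty(W), \ Y_i \text{ is } t\text{-projectable with } t_*(Y_i) = Z_i \in \F.
\end{equation*}
Because $\widetilde X|_N$ is tangent to $N$ and $s|_N$, $t|_N$ are diffeomorphisms onto their images, the chain rule gives $t_*(\widetilde X|_N) = \phi_* X$; on the other hand, the decomposition above yields
\begin{equation*}
t_*(\widetilde X|_N) \;=\; \sum_{i=1}^m \bigl(f_i \circ (t|_N)^{-1}\bigr) \cdot Z_i\big|_{t(N)}.
\end{equation*}
Since $\phi_* X$ is supported in the compact set $\phi(\mathrm{supp}\, X) \subset t(N)$, we may multiply each coefficient by a cutoff supported in $t(N)$ that equals one on this set, thereby placing the coefficients in $C_c^\infty(t(N))$. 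This realizes $\phi_* X$ as an element of $\F_{t(N)}$.

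The main obstacle is the construction carried out in the first step: arranging simultaneously compact support, tangency to $N$ along $N$, and the $s$-projection condition. Once that extension exists, the bisubmersion hypothesis $s^{-1}(\F) = t^{-1}(\F)$ does the essential foliation-theoretic work, and what remains is careful bookkeeping with the definition of the pullback foliation together with the chain rule.
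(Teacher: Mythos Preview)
Your argument is essentially correct and, unlike the paper (which simply cites \cite{AS[2007]}, Proposition~2.9), actually gives a self-contained proof. One small slip: after multiplying $\widetilde X_0 + V$ by the cutoff $\chi$, the resulting $\widetilde X$ is \emph{not} $s$-projectable to $X$ (the cutoff is not constant on $s$-fibers). This does not matter, though, because what you actually need is $\widetilde X \in s^{-1}(\F)$, and that holds anyway: $\chi\widetilde X_0$ is a $C_c^\infty(W)$-multiple of an $s$-projectable vector field landing in $\F$, while $\chi V \in C_c^\infty(W;\ker ds) \subset s^{-1}(\F)$. With this correction the rest goes through. In the displayed identity for $t_*(\widetilde X|_N)$, note that the individual $Y_i|_N$ need not be tangent to $N$; what you are really doing is applying $dt$ pointwise along $N$, which is fine and gives exactly the formula you wrote.
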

\begin{proof}
See Proposition~2.9 of \cite{AS[2007]}.
\end{proof}

\begin{defn}
Suppose that $(W,t,s)$ is a bisubmersion of a foliation $(M,\F)$. We say that a local $\F$-automorphism $\theta$ is \textbf{carried} by $(W,t,s)$ at a point $w \in W$ if there is a bisection $N$ of $(W,t,s)$ with $w \in N$ such that $\theta$ has the same germ as $t|_N \circ (s|_N)^{-1}$ at $s(w)$.  
\end{defn}

\subsection{Holonomy transformation picture of full holonomy groupoid}

In \cite{AZ[2014]}, it is shown that the holonomy groupoid of \cite{AS[2007]} can be realized as a groupoid of (equivalence classes of)  holonomy transformations of a family of transversal slices. It is pointed out  in \cite{AZ[2014]}, Remark~2.10(c) that the use of slices is essential because of an issue which can arise from nonorientable leaves. In this section, we lay out a rather cheap way to realize the groupoid of \cite{AS[2007]} without introducing slices which, though not very different from the original description in terms of bisubmersions, still has some of the flavour of a description by holonomy transformations.

\begin{propn}\label{simtoapprox}
Let $W_1$, $W_2$ be bisubmersions of  $(M,\F)$ and fix $w_i \in W_i$. If $w_1 \sim w_2$, then the set of local automorphisms carried by $W_1$ at $w_1$ is  exactly the equal to the set of local automorphisms carried by $W_2$ at $w_2$. Conversely, if there exists a local automorphism carried at both $w_1$ and $w_2$, then $w_1 \sim w_2$. 
\end{propn}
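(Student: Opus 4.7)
The plan is to treat the two implications separately, with the forward direction following by direct pushforward of bisections along a morphism of bisubmersions, and the converse by an appeal to the standard existence theorem for bisubmersion morphisms in \cite{AS[2007]}.

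For the forward direction, suppose $w_1 \sim w_2$. Combining Definition~\ref{simdef} with the symmetry noted after it (\cite{AS[2007]}, Corollary~2.11(c)), I obtain a local morphism of $\F$-bisubmersions $f : W_1 \to W_2$ with $f(w_1) = w_2$. Given any bisection $N_1 \subset W_1$ through $w_1$ carrying a local automorphism $\theta$, I will check that $f(N_1)$ (shrunk to where $f$ is defined) is a bisection of $W_2$ through $w_2$ carrying the same $\theta$. Since $s_2 \circ f = s_1$ and $s_1|_{N_1}$ is a diffeomorphism onto an open subset of $M$, while $s_2$ is a submersion, $f|_{N_1}$ is forced to be an embedding whose image is a locally closed submanifold on which $s_2$ is a diffeomorphism; the analogous statement for $t_2$ follows from $t_2 \circ f = t_1$. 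Those same identities then give $t_2|_{f(N_1)} \circ (s_2|_{f(N_1)})^{-1} = t_1|_{N_1} \circ (s_1|_{N_1})^{-1}$, so the carried local automorphism is preserved. Swapping the roles of $W_1$ and $W_2$ via a reverse morphism (again Corollary~2.11(c)) yields equality of the two sets of carried local automorphisms.

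For the converse, suppose a local $\F$-automorphism $\theta$ is carried by $W_1$ at $w_1$ via a bisection $N_1$, and by $W_2$ at $w_2$ via a bisection $N_2$. Then necessarily $s_1(w_1) = s_2(w_2)$ (the source of $\theta$) and $t_1(w_1) = t_2(w_2) = \theta(s_1(w_1))$. The plan is to invoke the standard criterion from \cite{AS[2007]}, Proposition~2.10: two bisubmersions admit a local morphism between specified points as soon as they carry a common local diffeomorphism near those points. This immediately produces a local morphism $W_1 \to W_2$ sending $w_1 \mapsto w_2$, so $w_1 \sim w_2$.

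The main, and rather mild, obstacle is the submanifold verification in the forward direction, namely that $f(N_1)$ is actually a bisection rather than some more singular image. This reduces to the elementary observation that $s_1|_{N_1} = s_2 \circ f|_{N_1}$ with $s_1|_{N_1}$ a diffeomorphism and $s_2$ a submersion forces $f|_{N_1}$ to factor locally as $(s_2|_{f(N_1)})^{-1} \circ s_1|_{N_1}$, pinning down both the smoothness of $f(N_1)$ and the compatibility of $s_2, t_2$ restrictions. Beyond this, the real content of the proposition is repackaging Proposition~2.10 of \cite{AS[2007]} as a statement about germs of carried automorphisms, preparing the ground for the holonomy transformation picture of $G_{\textup{full}}(\F)$ discussed in the surrounding section.
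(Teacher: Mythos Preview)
Your proof is correct and follows essentially the same approach as the paper's: push a bisection forward along a local morphism for the forward direction, and cite \cite{AS[2007]} for the converse. Your phrasing of the forward step is arguably cleaner than the paper's---observing that $f \circ (s_1|_{N_1})^{-1}$ is a smooth section of the submersion $s_2$ (hence an embedding with image a bisection) is more direct than the paper's argument via immersion and transversality, though the content is the same; note that your final factorization $(s_2|_{f(N_1)})^{-1} \circ s_1|_{N_1}$ is mildly circular as stated and would read better as the section formulation just mentioned.
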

\begin{proof}
Suppose $w_1 \sim w_2$ and let $f$ be a local morphism with $f(w_1)=w_2$. Let $N_1 \subset W_1$ be a bisection with $w_1 \in N_1$. Since $N_1$ is a section of $s_1$, we have $T_w W_1 = T_w N_1 \oplus \ker(ds_1)_w$ for all $w \in S_1$.  From  $s_1 = s_2 \circ f$, one may deduce that $T_wS_1 \cap \ker(df)_w = \{0\}$, and that $df_w(T_wS) \cap \ker(ds_2) = \{0\}$. Thus, $f|_{N_1}$ is an immersion which is transverse to $s_2$. In the same way, $f|_{N_1}$ is transverse to $r_2$. It follows that the image of a small neighbourhood of $w_1$ in $N_1$ is a bisection $N_2 \subset W_2$ with $w_2 \in N_2$. Clearly the local diffeomorphisms induced by $N_1$ and $N_2$ have the same germ at $s_1(w_1)  = s_2(w_2)$, and we obtain that every local automorphism carried at $w_1$ is also carried at $w_2$. Interchanging the roles of $w_1$ and $w_2$, we get that same local automorphisms are carried at the two points. The converse statement is exactly \cite{AS[2007]}, Corollary~2.11(b). 
\end{proof}

 By the above proposition, the following defines an equivalence relation.

\begin{defn}
Let $(M,\F)$ be a foliation. Let $\theta_1$ and $\theta_2$ be germs at $x \in M$ of local $\F$-automorphisms with $\theta_1(x)=\theta_2(x)$. We write $\theta_1 \approx \theta_2$ if there exists an $\F$-bisubmersion $W$ and a point $w \in W$ such that both $\theta_1$ and $\theta_2$ are carried by $W$ at $w$.
\end{defn}

\begin{defn}\label{nulldef}
Let $(M,\F)$ be a foliation.  A local $\F$-automorphism $\theta_0$ is \textbf{null} at $x \in M$ if there exists an $\F$-bisubmersion $W$ and a point $w \in W$ with $s(w)=x$ such that both $\theta_0$ and $\id_M$ are carried by $W$ at $w$. We denote the group of germs at $x$ of local $\F$-automorphisms which are null at $x$ by $\NullAut(\F)_x$  and put $\NullAut(\F) \coloneqq \bigsqcup_{x \in M} \NullAut(\F)_x$. 
\end{defn}

\begin{thm}\label{G/Nthm}
Let $(M,\F)$ be a foliation. Then $\NullAut(\F)$ is a normal subgroupoid of $\GermAut(\F)$ and there is an abstract groupoid isomorphism 
\[G_\textup{full}(\F) \to \GermAut(\F)/\NullAut(\F) \] 
such that, if $W=(W,t,s)$ is an $\F$-bisubmersion and $w \in W$, then $Q_W(w) \in G_\textup{full}(\F)$ is mapped to the germ at $s(w)$ of any local $\F$-automorphism that is   carried by $W$ at $w$. 
\end{thm}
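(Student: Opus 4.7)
I would establish the theorem in three coordinated movements: verifying $\NullAut(\F)$ is a normal subgroupoid of $\GermAut(\F)$, defining the candidate quotient map, and proving it is a well-defined groupoid isomorphism. The key technical tool throughout is the interplay between bisubmersions and their bisections, together with Proposition~\ref{simtoapprox}, which identifies $\sim$-equivalence with carrying the same germs.

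For the normal subgroupoid claim, one observes that composition, inversion and conjugation of null germs all lift to the level of bisubmersions. If $\theta$ is null at $x$ via $W$ carrying both $\theta$ and $\id$ at some $w$ (so necessarily $s(w)=t(w)=x$) and $\theta'$ is similarly null at $x$ via $W'$ at $w'$, then the composition $W \circ W'$ carries both $\theta\theta'$ and $\id \cdot \id = \id$ at $(w,w')$. Inversion is immediate from $W^{-1}$ at the same underlying point $w$. For normality, given $\eta \in \GermAut(\F)_x^y$ carried by some $W_\eta$ at $w_\eta$, the triple composition $W_\eta \circ W \circ W_\eta^{-1}$ at the point $(w_\eta,w,w_\eta)$ simultaneously carries $\eta\theta\eta^{-1}$ and $\eta \id \eta^{-1} = \id$, so the conjugate is null at $y$.

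I would then define $\Phi : G_\textup{full}(\F) \to \GermAut(\F)/\NullAut(\F)$ by sending $Q_W(w)$ to the class of any local $\F$-automorphism $\theta$ carried by $W$ at $w$; existence of such a $\theta$ is ensured by the standard construction of a bisection through any point of a bisubmersion via transversality. Independence of the chosen bisection at $w$ is verified as follows: given two germs $\theta_1,\theta_2$ carried at $w$, form $W^{-1} \circ W$ and show that at the point $(w,w)$ both $\theta_1^{-1}\theta_2$ and $\id$ are carried — the former by pairing the two bisections across the fiber product, the latter via the diagonal bisection $\{(w',w') : w' \in N\}$ where $N$ is any bisection of $W$ through $w$. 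This gives $\theta_1^{-1}\theta_2 \in \NullAut(\F)_x$, so $[\theta_1]=[\theta_2]$. Independence of the representative $w$ in its $\sim$-class is immediate from Proposition~\ref{simtoapprox}, since $\sim$-equivalent points carry exactly the same local automorphisms. That $\Phi$ is a groupoid homomorphism follows from Theorem~\ref{holgrpddef} once one notes that bisections of $W_2$ and $W_1$ through composable points fiber-product to a bisection of $W_2 \circ W_1$ carrying the composition of the carried germs.

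Injectivity is the mirror image of the well-definedness argument: if $\theta_1^{-1}\theta_2$ is null, witnessed by $W'$ at $w'$, then the composed bisubmersion $W_1 \circ W'$ at a suitable point carries both $\theta_1$ (via the $\theta_1$ and $\id$ bisections) and $\theta_2$ (via the $\theta_1$ and $\theta_1^{-1}\theta_2$ bisections); two applications of the converse direction of Proposition~\ref{simtoapprox} then yield $w_1 \sim w_2$. For surjectivity, every local $\F$-automorphism $\theta$ near $x$ must be carried by some bisubmersion in $\mathcal{W}_\textup{full}$: starting from a unit bisubmersion $(W_0,t_0,s_0)$ carrying $\id$ at some $w_0$ with $s_0(w_0)=x$ (such bisubmersions exist locally, e.g.\ from the path holonomy construction using local generators of $\F$), the modified triple $(W_0, \theta\circ t_0, s_0)$ is again a bisubmersion precisely because $\theta_*(\F)=\F$, and it carries $\theta$ at $w_0$. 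The main obstacle is bookkeeping: one must ensure that the various auxiliary bisubmersions constructed during the well-definedness, injectivity and surjectivity arguments actually lie in (or are equivalent to elements of) $\mathcal{W}_\textup{full}$, which is guaranteed by its defining maximality property as the atlas to which all others adapt.
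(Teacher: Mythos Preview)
Your proof is correct and follows essentially the same strategy as the paper, hinging on Proposition~\ref{simtoapprox} together with the standard composition and inversion operations on bisubmersions and their bisections (the latter being what the paper attributes to \cite{AS[2007]}, Proposition~2.8). The paper's own proof is extremely terse---two sentences citing those two ingredients---so your write-up is a genuine expansion rather than an alternative route. In particular, you make explicit two points that the paper leaves to the reader: first, the verification that $\NullAut(\F)$ is a normal subgroupoid via composition, inversion, and conjugation at the bisubmersion level; second, and more substantively, the equivalence between the relation $\approx$ (two germs being carried at a common point) and the coset relation modulo $\NullAut(\F)$, which your $W^{-1}\circ W$ and $W_1\circ W'$ arguments establish. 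The paper simply asserts that bijectivity ``follows from Proposition~\ref{simtoapprox}'', implicitly relying on this equivalence without spelling it out. Your surjectivity argument via twisting a unit bisubmersion by $\theta$ is also a detail the paper omits but uses elsewhere (Definition~\ref{omegatheta}) in the concrete case of $\F^k_\R$.
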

\begin{proof}
That this map is a bijection follows from Proposition~\ref{simtoapprox}. Multiplication and inversion are preserved by \cite{AS[2007]}, Proposition~2.8.
\end{proof}

\subsection{Smoothness of the holonomy groupoid of an almost regular foliation}\label{sec:almostregsmooth}

The holonomy groupoid constructed in \cite{AS[2007]} can be quite poorly-behaved for general foliations. Almost regular foliations are precisely the foliations  whose holonomy groupoids are Lie groupoids. This case was previously treated by Debord in \cite{Debord[2001b]}, with a different approach than that of \cite{AS[2007]}. The following result follows from \cite{AS[2007]}, Section~3.2 and can also be obtained in a more direct manner by adapting the arguments in  \cite{Debord[2013]}. 

\begin{propn}
Let $M$ be an $n$-dimensional smooth manifold, and $\F$ an almost regular singular foliation of $M$ with constant fiber dimension $k$. Then there is a unique smooth structure on $G_\textup{full}(\F)$ such that, for any $\F$-bisubmersion $W$,  the map $Q_W : W \to G_\textup{full}(\F)$ is smooth. The groupoid operations are smooth with respect to this smooth structure. \qed
\end{propn}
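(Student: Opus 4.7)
By almost regularity with constant fiber dimension $k$, Proposition~\ref{locallyfree} makes $A\F$ into a smooth vector bundle of rank $k$; the Lie bracket on $\F$ then gives it the structure of a Lie algebroid whose anchor $A\F \to TM$ is injective on a dense open set. The strategy is to atlas $G_\textup{full}(\F)$ by charts coming from ``minimal'' $\F$-bisubmersions and to check that the transition maps are smooth diffeomorphisms.

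Near any $x_0 \in M$, pick a local smooth frame $X_1,\ldots,X_k$ of $A\F$ (so these are local generators of $\F$) and form the bisubmersion $W = \R^k \times U$, with $U$ a small neighbourhood of $x_0$, source $s(\lambda,x)=x$, and target $t(\lambda,x) = \exp\bigl(\sum_i \lambda_i X_i\bigr)(x)$, defined for $(\lambda,x)$ near $(0,x_0)$. These are $\F$-bisubmersions of dimension $n+k$. Any $\F$-bisubmersion $W'$ satisfies $\dim W' \geq n + \dim A_{s(w')}\F = n+k$, so the ones just constructed realize the \emph{minimal} possible dimension. Together with their inverses and iterated compositions they form a holonomy atlas, so every point of $G_\textup{full}(\F)$ is represented by some $w \in W$ in a minimal $W$.

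The central observation is that if $W_1,W_2$ are minimal $\F$-bisubmersions of the same dimension $n+k$ and $f : W_1 \to W_2$ is a local morphism, then $f$ is a local diffeomorphism. Indeed, $s_1 = s_2\circ f$ forces $\ker(df) \subset \ker(ds_1)$, and restricting $f$ to a bisection through the source-fiber shows $df$ has trivial kernel by dimension count. It follows that on small enough neighbourhoods of any $w$ in a minimal $W$ the map $Q_W$ is injective, and that where two such minimal charts overlap the change of coordinates is identified with an étale morphism of minimal bisubmersions, hence is a smooth diffeomorphism of open subsets of $\R^{n+k}$. This defines the smooth structure on $G_\textup{full}(\F)$. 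Uniqueness is immediate: any smooth structure making every $Q_W$ smooth must agree locally with these charts, since $Q_W|_{\text{small}}$ is then a smooth open injection from a $(n+k)$-manifold. Smoothness of the source and target projections, inversion, and multiplication is inherited from the manifestly smooth constructions of Section~\ref{sec:grpconstr}: for minimal $W$ the source and target of $G_\textup{full}(\F)$ read off as $s_W,t_W$ (hence are submersions), and multiplication is computed from the composition $W_2 \circ W_1$, which admits a local morphism down to a minimal representative.

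The most delicate step is the local injectivity of $Q_W$ on a minimal bisubmersion $W$. One must verify that distinct nearby points of $W$ cannot be $\sim$-equivalent: if $w,w' \in W$ are close and $w \sim w'$ is witnessed by a local morphism $W \to W$, then by the étale claim above this morphism is a local diffeomorphism fixing source and target projections, and a short argument using minimality shows it must be the identity near $w$, forcing $w = w'$. In the language of Theorem~\ref{G/Nthm}, this is the statement that $\NullAut(\F)$ intersects each minimal chart in a discrete set. Once this is in hand, the rest of the construction is bookkeeping, and the argument essentially specializes either the general discussion of \cite{AS[2007]}, Section~3.2 or Debord's integrability arguments \cite{Debord[2013]} to the almost regular setting.
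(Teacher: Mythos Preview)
The paper does not give its own proof here; it simply cites \cite{AS[2007]}, Section~3.2 and \cite{Debord[2013]}. Your outline follows the approach of \cite{AS[2007]} (chart $G_\textup{full}(\F)$ by minimal bisubmersions of dimension $n+k$, show morphisms between such are \'etale, deduce the charts glue smoothly), so at the level of strategy you are aligned with the intended argument.

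That said, two steps in your sketch are not justified as written. First, the sentence ``Together with their inverses and iterated compositions they form a holonomy atlas, so every point of $G_\textup{full}(\F)$ is represented by some $w\in W$ in a minimal $W$'' is a non sequitur: the atlas you describe is a \emph{path} holonomy atlas, and its associated groupoid is $G(\F)$, which is in general strictly smaller than $G_\textup{full}(\F)$ (indeed the whole paper exploits this discrepancy). The correct argument is pointwise: given an arbitrary $\F$-bisubmersion $W'$ and $w'\in W'$, take a bisection through $w'$ carrying some local $\F$-automorphism $\theta$, and observe that the twisted minimal bisubmersion $(\R^k\times U,\ \theta\circ t,\ s)$ carries $\theta$ at $(0,s(w'))$; Proposition~\ref{simtoapprox} then gives $w'\sim(0,s(w'))$. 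This is precisely what the paper does in the model case (Proposition~\ref{omegaatlas}).

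Second, your local injectivity step (``a short argument using minimality shows it must be the identity near $w$'') is where almost regularity actually does work, and minimality alone is not the reason. What you need is that the anchor of $A\F$ is injective over a dense open subset of $M$: on the preimage of that set in a minimal $W$, the pair $(s,t)$ is already injective, so any local morphism $W\to W$ agrees with the identity on a dense set and hence everywhere by continuity. You stated this density at the outset but did not invoke it where it is needed; compare the proof of Proposition~\ref{omegaatlas}(2), which makes exactly this move. Once these two points are repaired, the rest of your outline goes through.
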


 When $\F$ is almost regular, $\A(\F)$ is isomorphic to  $C_c^\infty(G(\F))$, the smooth convolution algebra of the  groupoid,   and $C^*(\F)$ is isomorphic to $C^*(G(\F))$, the C*-algebra of the groupoid. Here we are implicitly fixing a smooth Haar system on $G(\F)$ in order to make sense of convolution and bypass any discussion of densities.

\section{Groups of jets on the line}\label{jetsec}

In this section we introduce certain groups of jets of diffeomorphisms of the real line which will play an important role.  Let us briefly recall the concept of the jet of a smooth mapping. For more information, one may refer the exposition in \cite{Michor-}, Section~12.

\begin{defn}
Let $M$ be a smooth manifold and $f$ a smooth real-valued function on $M$. Given $x_0 \in M$ and $r$ a positive integer, we say that $f$ \textbf{vanishes to order $\mathbf{r}$} at $x_0$ if $f \in (I_{x_0})^r$, where $I_{x_0} \subset C^\infty(M,\R)$ denotes the ideal of functions which vanish at $x_0$. 
\end{defn}

\begin{lemma}
Let $M, N$ be smooth manifolds, let $r$ be a nonnegative integer and let  $x_0 \in M$, $y_0 \in N$. Choose a diffeomorphism $\phi=(\phi_1,\ldots,\phi_n)$  from an open neighbourhood $V \subset N$ of $y_0$ onto an open set in $\R^n$  and define an equivalence relation  $\sim_{r,x_0}$ on the set of smooth functions $M\to N$ that send $x_0 \mapsto y_0$ by $f \sim_{r,x_0} g$ if and only if $\phi_i \circ f - \phi_i \circ g$ vanishes to order $r+1$ at $x_0$ for $i=1,\ldots,n$. Then, the equivalence relation $\sim_{r,x_0}$ does not depend on the choice of chart $\phi$. \qed
\end{lemma}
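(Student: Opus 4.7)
The plan is to use the symmetry of the problem to reduce to a one-sided statement and then push the verification through by a Hadamard-type expansion.

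First, note that if $\psi$ is a second chart on some neighbourhood $V' \subset N$ of $y_0$ with values in $\R^n$, it suffices by symmetry (interchanging the roles of $\phi$ and $\psi$) to prove that, if $f \sim_{r,x_0} g$ with respect to $\phi$, then $f \sim_{r,x_0} g$ with respect to $\psi$. Since the equivalence relation is defined locally at $x_0$, I can shrink $V \cap V'$ to a single open set on which both charts are defined and compose to form a diffeomorphism $\chi \coloneqq \psi \circ \phi^{-1}$ between open subsets of $\R^n$. On this common domain $\psi_i = \chi_i \circ \phi$, and hence
\[ \psi_i \circ f - \psi_i \circ g = \chi_i(\phi \circ f) - \chi_i(\phi \circ g). \]
By continuity, for $x$ in some sufficiently small neighbourhood of $x_0$ the straight-line segment from $(\phi \circ g)(x)$ to $(\phi \circ f)(x)$ lies inside the image of $\phi$.

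The main step is then to invoke Hadamard's lemma (equivalently, the fundamental theorem of calculus applied to $t \mapsto \chi_i(\phi \circ g + t((\phi\circ f)-(\phi \circ g)))$). This gives smooth functions $k_{ij}$, defined on a neighbourhood of $x_0$, such that
\[ \chi_i(\phi \circ f) - \chi_i(\phi \circ g) = \sum_{j=1}^n (\phi_j \circ f - \phi_j \circ g)\, k_{ij}. \]
Because $(I_{x_0})^{r+1} \subset C^\infty(M,\R)$ is an ideal, and each factor $\phi_j \circ f - \phi_j \circ g$ lies in $(I_{x_0})^{r+1}$ by hypothesis, each product $(\phi_j\circ f - \phi_j \circ g)\, k_{ij}$ again lies in $(I_{x_0})^{r+1}$; hence so does $\psi_i \circ f - \psi_i \circ g$, for every $i$, as required.

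I do not expect any genuine obstacle here; the only mildly delicate point is ensuring that the integral producing the Hadamard coefficients $k_{ij}$ makes sense, which requires shrinking to a neighbourhood of $x_0$ on which the relevant straight line segments stay inside a convex subset of $\phi(V \cap V')$. This is automatic by continuity, since at $x_0$ both segment endpoints collapse to $\phi(y_0)$. Note also that once the lemma is known for $C^\infty$-valued functions into $\R^n$, the statement as given (for a single chart $\phi$ at $y_0$) follows because the definition of $\sim_{r,x_0}$ only involves germs at $x_0$, so any two choices of domain for $\phi$ yield the same relation.
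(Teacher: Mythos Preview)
Your argument is correct. The paper does not actually prove this lemma: it is stated with an immediate \qed and treated as standard background on jets (with a pointer to \cite{Michor-}, Section~12). Your route via the transition map $\chi = \psi \circ \phi^{-1}$ and the Hadamard expansion
\[
\chi_i(\phi\circ f) - \chi_i(\phi\circ g) = \sum_{j=1}^n (\phi_j\circ f - \phi_j\circ g)\,k_{ij}
\]
is the standard one, and your care about shrinking to a neighbourhood of $x_0$ on which the interpolating segments remain in a convex subset of the domain of $\chi$ addresses the only genuine technicality. One cosmetic point: since $\phi_j\circ f$, $\phi_j\circ g$ and the $k_{ij}$ are only defined on a neighbourhood of $x_0$, the appeal to $(I_{x_0})^{r+1}$ being an ideal is really a statement about germs (equivalently, extend everything by a bump function); this is implicit already in the paper's formulation and does not affect the argument.
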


\begin{defn}
Let $M, N$ be smooth manifolds and $f:M \to N$ a smooth function. Given $x_0 \in M$ and $r$ a nonnegative integer, the \textbf{$\mathbf{r}$-jet}  of $f$ at $x_0$ is the equivalence class $j^r_{x_0}(f)$ of $f$ under the relation $\sim_{r,x_0}$ of above lemma.
\end{defn}

If $M=\R^n$ and $N=\R$ in the above definition, then $f \sim_{r,x_0} g$ if and only if the $r$th order Taylor polynomials of $f$ and $g$ at $x_0$ are the same. For this reason,  it makes sense   to identify $j^r_{x_0}(f)$ with the $r$th order Taylor polynomial of $f$. We shall make such identifications without comment.

There is a well-defined composition operation on jets. If $f,f' : M_1 \to M_2$ have the same $r$-jet at $x_0 \in M$ and $g,g':  M_2\to M_3$ have the same $r$-jet at $y_0 \coloneqq f(x_0)=f'(x_0) \in M_2$, then $g \circ f$ and $g' \circ f'$ have the same $r$-jet at $x_0$.  It therefore makes sense to define $j^r_{y_0}(g) \circ j^r_{x_0}(f) \coloneqq j^r_{x_0}(g \circ f)$. When working in coordinates, this  operation on jets is the usual ``compose and truncate'' on  $r$th order Taylor polynomials.

The following groups of jets will play an important role for us.

\begin{defn}
For $r$ a positive integer, $J^r$ denotes the group of $r$-jets at $0$ of diffeomorphisms of $\R$ fixing $0$. 
\end{defn}
The group $J^r$ has a canonical $r$-dimensional Lie group structure coming from its identification  with the group of real polynomials of the form $a_1y+\ldots +a_ry^r$ where $a_1 \neq 0$ with respect to the  ``compose and truncate'' operation. 

For each $k \geq 2$, there is canonical exact sequence of Lie groups
$$
\begin{tikzcd}
0 \ar{r}  & \R \ar[r] & J^k \ar{r} & J^{k-1} \ar{r} & 0 
\end{tikzcd}$$
where the projection map $J^k \to J^{k-1}$ is given by deleting the order $k$ term and the inclusion map $\R \to J^k$ is defined by  $t \mapsto y + ty^k$. 

We  will frequently want to equip these jet groups with a nonstandard topology.

\begin{defn}\label{weirdgroup}
For $r \geq 1$, we write $J^r_d$ for $J^r$ considered as an (uncountable) discrete group. For $k \geq 2$ we write $J^k_\R$ for $J^k$ equipped with the  one-dimensional Lie group structure arising from its partition into the cosets of $\R$ in $J^k$ (of which there are uncountably many).
\end{defn}
We then have an extension of (non-second-countable) Lie groups
$$ 
\begin{tikzcd}
0 \ar{r}  & \R \ar[r] & J^k_\R  \ar{r} & J^{k-1}_d \ar{r} & 0,
\end{tikzcd}$$
where $\R$ has its standard smooth structure.

The following proposition is intended to show that, from the perspective of abstract group theory, these jet groups are quite tame.

\begin{propn}\label{solvable}
For every $k \geq 2$, the group $J^k$ is solvable. Indeed, we may express $J^k$ as the semidirect product of a nilpotent group by an abelian group.
\end{propn}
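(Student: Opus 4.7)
The plan is to exhibit $J^k$ as a semidirect product $N^k \rtimes \R^*$, where $N^k \subset J^k$ is the subgroup of jets with linear coefficient equal to $1$, and to show that $N^k$ is nilpotent. Solvability then follows automatically.

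For the decomposition, the ``linear part'' map $\pi : J^k \to J^1 = \R^*$ sending $a_1 y + a_2 y^2 + \ldots + a_k y^k \mapsto a_1 y$ is a surjective group homomorphism whose kernel is $N^k = \{y + a_2 y^2 + \ldots + a_k y^k\}$. The assignment $a \mapsto ay$ is a group homomorphism $\R^* \to J^k$ splitting $\pi$, so $J^k \cong N^k \rtimes \R^*$. The quotient $\R^*$ is of course abelian, so all that remains is the nilpotency of $N^k$.

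For this I would argue by induction on $k$. The base case $k=2$ is immediate since $N^2 \cong \R$ (parametrised by the $y^2$ coefficient), which one checks to be abelian using the ``compose and truncate'' operation. For the inductive step, I would intersect the exact sequence $0 \to \R \to J^k \to J^{k-1} \to 0$ from the excerpt with $N^k$ to obtain a short exact sequence
\[ 0 \to \R \to N^k \to N^{k-1} \to 0, \]
where $\R$ is embedded as $t \mapsto y + ty^k$. The key claim is that this extension is \emph{central}: if $f \in N^k$ then the linear part of $f$ equals $y$, so $f^j \equiv y^j \pmod{y^{k+1}}$ for all $j \geq 2$, and a direct calculation gives $(y + ty^k) \circ f \equiv f + t y^k \equiv f \circ (y + t y^k) \pmod{y^{k+1}}$. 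Thus $\R \subset Z(N^k)$. Since a central extension of a nilpotent group by $\R$ is again nilpotent (with nilpotency class increased by at most one), induction yields that $N^k$ is nilpotent, of class at most $k-1$.

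The main obstacle, if any, is purely computational: verifying the centrality of the subgroup $\R \subset N^k$, which is a short manipulation using the fact that $f \in N^k$ has trivial linear part. Once that is established, everything else is formal group theory. Structurally, $N^k$ and $\R^*$ play the roles of the ``unipotent'' and ``toral'' parts of $J^k$, in analogy with the Levi-like decomposition of a group of upper-triangular matrices into unipotent times diagonal.
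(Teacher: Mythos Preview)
Your argument is correct and essentially identical to the paper's: the paper writes $J^{2,k}$ for your $N^k$, exhibits the same splitting $J^k \cong J^{2,k} \rtimes J^1$ via $ay \mapsto ay$, and proves nilpotency of $J^{2,k}$ by the same induction through the central extension $0 \to \R \to J^{2,k} \to J^{2,k-1} \to 0$. One small slip to fix: the claim $f^j \equiv y^j \pmod{y^{k+1}}$ fails for $2 \leq j < k$ (e.g.\ $f^2$ has a $y^3$ term when $a_2 \neq 0$), but you only use $j=k$, where it does hold, and the other half of the centrality check instead needs $(y+ty^k)^j \equiv y^j \pmod{y^{k+1}}$ for $j \geq 2$, which is correct.
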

\begin{proof}
For every $k \geq 2$, there is an evident exact sequence:
\[ 0 \to J^{2,k} \to J^k \to J^1 \to 0 \]
where $J^{2,k} \coloneqq \{ y + a_2y^2 + \ldots + a_k y^k : a_i \in \R\}$. This sequence splits on the right via $ay \mapsto ay$, so $J^k$ is the semidirect product of $J^{2,k}$ by the abelian group $J^1$.  For $k=2$, we have $J^{2,k} \cong \R$. For $k \geq 3$, we have a central extension:
\[ 0 \to \R \to  J^{2,k} \to J^{2,k-1} \to 0. \] 
By induction, $J^{2,k}$ is nilpotent for all $k \geq 2$.
\end{proof}

The groups $J^k$ for  $k \geq 2$ are not themselves nilpotent.  In fact, the center of $J^k$ is trivial. It is interesting to notice that  $J^2$ is isomorphic to the ``ax+b group'' of affine bijections of the real line. An example of an isomorphism is  $ay+b \mapsto a^{-1}y+ba^{-2}y^2$. This is conceptually related to the fact that the inversion map  $y \mapsto 1/y$ conjugates $y^2 \frac{d}{dy}$ to $-\frac{d}{dy}$.

\begin{rmk}
It will later be relevant to take a countable subgroup $\Gamma \subset J^{k-1}_d$ and consider its preimage $\Gamma_\R \subset J^k_\R$ under the projection $J^k_\R \to J^{k-1}_d$. Because subgroups of solvable groups are solvable, the above proposition gives that the one-dimensional group $\Gamma_\R$ is second-countable and amenable. One then has that extensions of $C^*(\Gamma_\R)$ can be described in terms of K-theoretic data (by the universal coefficient theorem) and that the K-theory of $C^*(\Gamma_\R)$ can be geometrically computed (by the Baum-Connes conjecture). 
\end{rmk}

\section{The full holonomy groupoid of \texorpdfstring{$\F^k_\R$}{FkR}}\label{sec:fullFkR}

In this section, we describe the full holonomy groupoid (Definition~\ref{holgrpddef}) $G_\textup{full}(\F^k_\R)$  
of $\F^k_\R$ and discuss its point-set topological properties.  When $k=1$, it is simple to see that $G_\textup{full}(\F^1_\R) \cong  \operatorname{GL}(1,\R) \ltimes \R$, so we concentrate our discussion on the case $k \geq 2$ where the groupoid is larger.  The minimal holonomy groupoid of $\F^k_\R$ was discussed in \cite{Francis[1dim]}; there is a unique Lie groupoid isomorphism
\[ G(\F^k_\R) \cong \R \ltimes_\phi \R, \]
where $\phi$ denotes the flow of any complete vector field  $X$ generating $\F^k_\R$. It will be more convenient, however,  to replace  $\R \ltimes_\phi \R$ by an isomorphic bisubmersion with polynomial structure maps.

\begin{defn}\label{omegadef}
Let $\Omega=\{ (t,y) \in \R^2 : 1+ty^{k-1} > 0 \}$ and define $\sigma,\tau : \Omega \to \R$ by $\sigma(t,y) = y$ and $\tau(t,y)= y + ty^k$.
\end{defn}

Note the inequality $1+ty^{k-1}>0$ is  precisely the  condition guaranteeing $\sigma(t,y)$ and $\tau(t,y)=y+ty^k$ have the same sign. 

\begin{lemma}\label{omega}
The triple $\Omega=(\Omega, \tau,\sigma)$ is an $\F^k_\R$-bisubmersion adapted to the path holonomy atlas and the natural map $Q_\Omega: \Omega \to G(\F^k_\R)$ is a diffeomorphism.
\end{lemma}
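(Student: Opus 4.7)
The plan is in two parts. The first part is a direct verification of the bisubmersion axioms for $\Omega$. The second part identifies $\Omega$ as a bisubmersion with the usual flow bisubmersion of a generator of $\F^k_\R$, from which adaptation to the path holonomy atlas and the diffeomorphism claim follow by transport from the previously established description of $G(\F^k_\R)$.

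For the bisubmersion verification, $\sigma = \mathrm{pr}_2$ is plainly a submersion, and $d\tau = y^k\, dt + (1+kty^{k-1})\, dy$ is nowhere zero on $\Omega$ (if $y = 0$ then $d\tau = dy$, while if $y \neq 0$ and $1+kty^{k-1} = 0$ then $d\tau = y^k\, dt$). Writing $Z \coloneqq (1+kty^{k-1})\partial_t - y^k\partial_y$ for the generator of $\ker d\tau$, the identity
\[ y^k \partial_y = (1+kty^{k-1})\,\partial_t - Z \]
realises the canonical $\sigma$-projectable lift $y^k \partial_y$ of the generator of $\F^k_\R$ as an element of $\ker d\sigma + \ker d\tau$. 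The analogous $\tau$-projectable lift $(1+ty^{k-1})^k\partial_t$ of $y^k \partial_y$ lies in $\ker d\sigma$, and since $1+ty^{k-1}$ is a smooth unit on $\Omega$ this suffices. Both $\sigma^{-1}(\F^k_\R)$ and $\tau^{-1}(\F^k_\R)$ therefore equal $C_c^\infty(\Omega; \ker d\sigma) + C_c^\infty(\Omega; \ker d\tau)$, as required.

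For the second part, let $\phi_u$ denote the flow of $y^k \partial_y$, defined on an open subset $D \subset \R \times \R$, and recall from \cite{Francis[1dim]} that the flow bisubmersion on $D$ is adapted to the path holonomy atlas of $\F^k_\R$ and that its quotient map is a Lie groupoid isomorphism $D/{\sim}\; \cong\; G(\F^k_\R)$. I would define $\Psi : \Omega \to D$ by $\Psi(t,y) = (u(t,y), y)$, where $u(t,y)$ is the unique time at which $\phi_{u(t,y)}(y) = y + ty^k$. Using the closed form $\phi_u(y) = y(1 - (k-1)uy^{k-1})^{-1/(k-1)}$ for $y \neq 0$, one solves to obtain
\[ u(t,y) = \frac{1 - (1+ty^{k-1})^{-(k-1)}}{(k-1)y^{k-1}} \qquad (y \neq 0), \]
and the algebraic identity $(1+w)^{k-1} - 1 = w \sum_{i=0}^{k-2}(1+w)^i$ applied with $w = ty^{k-1}$ shows that the apparent singularity at $y = 0$ is removable, with $u(t,0) = t$. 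A similar analysis of the inverse $(u,y) \mapsto ((\phi_u(y)-y)/y^k, y)$ shows that $\Psi$ is a diffeomorphism onto $D$. By construction $\Psi$ intertwines the source and target maps, hence is a bisubmersion isomorphism; transport through $\Psi$ gives adaptation of $\Omega$ to the path holonomy atlas and promotes $Q_\Omega$ to the composition $Q_D \circ \Psi$, which is a diffeomorphism onto $G(\F^k_\R)$.

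The principal technical obstacle is verifying smoothness of $\Psi$ and its inverse across the singular leaf $\{y = 0\}$, where the defining relation $\phi_u(y) = y + ty^k$ degenerates to $0 = 0$; the polynomial identity above does the required work, reducing the rest of the argument to routine bookkeeping within the Androulidakis--Skandalis formalism.
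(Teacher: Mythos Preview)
Your overall strategy matches the paper's: identify $\Omega$ with the flow bisubmersion of a generator of $\F^k_\R$ and then invoke the known description $G(\F^k_\R)\cong \R\ltimes_\phi\R$ from \cite{Francis[1dim]}. The execution differs in a couple of interesting ways. The paper does \emph{not} verify the bisubmersion axioms for $\Omega$ directly; it instead constructs the isomorphism first and inherits the bisubmersion property from the flow groupoid. Your first part is therefore extra work, though it is clean and correct (the key being that $1+ty^{k-1}$ is a unit on $\Omega$). For the second part, the paper works with an unspecified \emph{complete} generator $X=f(y)y^k\tfrac{d}{dy}$, so the flow domain is all of $\R^2$ and the cited isomorphism applies verbatim; it handles smoothness at $y=0$ by the trick of choosing $X$ to agree with $y^k\tfrac{d}{dy}$ near $0$, so that $h(t,0)=t$ drops out without computation. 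You instead take the explicit but incomplete $y^k\partial_y$, get a closed-form flow, and remove the $y=0$ singularity by the polynomial identity---more computational but equally valid.

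One point to tighten: the result you quote from \cite{Francis[1dim]} is stated (in this paper at least) for \emph{complete} generators, giving $G(\F^k_\R)\cong\R\ltimes_\phi\R$. You are applying it to the maximal flow domain $D=\{(u,y):1-(k-1)uy^{k-1}>0\}$ of the incomplete field $y^k\partial_y$. The conclusion you want is still true, but you should either (i) note that your $\Psi$ already furnishes a bisubmersion isomorphism $\Omega\cong D$, and separately exhibit a bisubmersion isomorphism $D\cong\R\ltimes_\phi\R$ by reparametrising time (sending $(u,y)$ to $(\text{time under the complete flow},y)$), or (ii) simply run your construction with a complete generator agreeing with $y^k\partial_y$ near $0$, exactly as the paper does, which lets you keep your algebraic analysis of the singularity while citing the result as stated.
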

\begin{proof}
Identify $G(\F^k_\R)$ with $\R \ltimes_\phi \R$, where $\phi$ is the flow of some complete vector field $X$ on $\R$ generating $\F^k_\R$. We have $X 
=f(y) y^k \frac{d}{dy}$ where $f$ is smooth and nonvanishing. From this, it follows that we can write 
\begin{align}\label{h} \phi_t(y) = y + 
h(t,y)y^k  && t,y \in \R,
\end{align}
where $h : \R^2 \to \R$ is smooth. We claim that
\[ (t,y) \mapsto (h(t,y),y) : \R \ltimes_\phi \R \to \Omega \]
defines an isomorphism of bisubmersions. When $y \neq 0$, note that $\{ t \in \R : 1+ty^{k-1} > 0\}$ equals the set of all $t \in \R$ such that $y+ty^k$ has the same sign as $y$. Since $\phi$ is free and transitive on the positive and negative half lines, it follows that $(t,y) \mapsto (h(t,y),y)$ is a bijection from $\R^2 \setminus (\R \times \{0\})$ to $\Omega \setminus (\R \times\{0\})$. 

Differentiating \eqref{h} with respect to $t$ and rearranging gives
\begin{align*}
\tfrac{\partial h}{\partial t}(t,y) = \left(\frac{ \phi_t(y)}{y}\right)^k  > 0 && t,y \in \R, y \neq 0.
\end{align*}
from which one may deduce that $(t,y) \mapsto (h(t,y),y)$ is a diffeomorphism from $\R^2 \setminus (\R \times \{0\}) \to \Omega \setminus (\R \times\{0\})$.

Finally, for the sake of simplicity, choose $X$ to  coincide with $y^k \frac{d}{dy}$ on a neighbourhood of $0$. Then, $\phi_t(y)=\frac{y}{\sqrt[k-1]{1-(k-1)ty^{k-1}}}$ holds on a neighbourhood of $\R \times \{0\}$. In particular, the   Taylor series of $\phi_t$ begins $\phi_t(y) \sim y + ty^k + \tfrac{1}{2} t^2 y^{2k-1} +  \ldots$ and we have $h(t,0) = t$ for all $t \in \R$. The rest follows.
\end{proof}

By Theorem~\ref{G/Nthm}, the full holonomy groupoid of a foliation $(M,\F)$  is isomorphic to the groupoid $\GermAut(\F)$  of germs of local $\F$-automorphisms modulo  the normal subgroupoid $\NullAut(\F)$ of germs of null automorphisms (Definition~\ref{nulldef}). Therefore, to determine $G_\textup{full}(\F^k_\R)$ as an abstract groupoid, we need only determine the automorphisms and null automorphisms of $\F^k_\R$.

\begin{lemma}\label{omegacarry}
Let $\theta$ be a diffeomorphism of $\R$  defined on a neighbourhood of $0$.
\begin{enumerate}
\item $\theta$ preserves $\F^k_\R$ if and only if $\theta(0)=0$. 
\item $\theta$ is  null at $0$ if and only if $j^k_0(\theta)=y$.
\end{enumerate}
\end{lemma}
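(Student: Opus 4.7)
My plan for (1) is straightforward. For the ``if'' direction, writing a typical generator of $\F^k_\R$ as $f(y) y^k \partial_y$, I will compute the pushforward by a local diffeomorphism $\theta$ with $\theta(0)=0$ and observe that the chain rule introduces a factor of $\theta^{-1}(z)^k = z^k a(z)^k$ for some smooth non-vanishing $a$, exhibiting $\theta_*(f y^k \partial_y)$ as an element of $\F^k_\R$ near $0$. For the ``only if'' direction, I will argue by contradiction: if $\theta(0)=p\neq 0$, then comparing $\theta_*(\F^k_\R|_U)$ with $\F^k_\R|_V$ at the point $0 \in V$ or at $p$ (depending on whether $0$ lies in the image of $\theta$) produces an explicit vector field witnessing that the two modules differ.

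For the ``if'' direction of (2), the construction is explicit. Given $\theta$ with $j^k_0(\theta)=y$, I will write $\theta(y) = y + y^k\psi(y)$ where $\psi(0)=0$, which is possible because $\theta(y)-y$ vanishes to order $k+1$. The bisubmersion $\Omega$ of Lemma~\ref{omega} then admits the two bisections $y \mapsto (0,y)$ and $y \mapsto (\psi(y),y)$ through $(0,0)$, which respectively carry $\id$ and $\theta$, showing $\theta$ is null at $0$.

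The ``only if'' direction of (2) is the main work. Assuming $\theta$ is null at $0$, witnessed by a bisubmersion $(W,t,s)$, a point $w\in W$ with $s(w)=0$, and bisections $N_{\id}, N_\theta$ through $w$ carrying $\id$ and $\theta$, one first observes $t(w)=s(w)=0$ since $N_\id$ is a common local section of $s$ and $t$. I will choose coordinates $(y,z_1,\ldots,z_{d-1})$ on $W$ near $w$ with $s(y,z)=y$ and $w$ at the origin, and parametrize the bisections as $n(y)=(y,\zeta(y))$ and $\widetilde{n}(y)=(y,\widetilde{\zeta}(y))$, with $\zeta(0)=\widetilde{\zeta}(0)=0$, so that $t(y,\zeta(y))=y$ and $t(y,\widetilde{\zeta}(y))=\theta(y)$.

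The crucial step is to extract a divisibility property for $t$ from the bisubmersion axiom. Since $\partial/\partial z_i \in \ker(ds) \subset s^{-1}(\F^k_\R) = t^{-1}(\F^k_\R)$, a local representation of $\partial/\partial z_i$ as a $C^\infty$-linear combination of $t$-projectable vector fields projecting onto multiples of $y^k\partial_y$ yields a smooth function $\widetilde{g}_i$ near $w$ with $\partial t/\partial z_i = t^k\cdot \widetilde{g}_i$. Writing
\[\theta(y) - y \;=\; t(y,\widetilde{\zeta}(y)) - t(y,\zeta(y)) \;=\; \int_0^1 \sum_i \frac{\partial t}{\partial z_i}(y,\zeta_\lambda(y))\cdot(\widetilde{\zeta}_i(y) - \zeta_i(y))\,d\lambda\]
with $\zeta_\lambda = (1-\lambda)\zeta + \lambda \widetilde{\zeta}$, substituting the divisibility relation, and using the bounds $t(y,\zeta_\lambda(y))^k = O(y^k)$ (since $t$ is smooth and vanishes at $w$), $\widetilde{g}_i = O(1)$, and $\widetilde{\zeta}_i(y)-\zeta_i(y) = O(y)$, the integrand becomes $O(y^{k+1})$, yielding $\theta(y)-y = O(y^{k+1})$ and hence $j^k_0(\theta)=y$. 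I expect the hardest step to be recognizing that the bisubmersion axiom, in suitable local coordinates, forces the $t^k$-divisibility of $\partial t/\partial z_i$, which is precisely the mechanism by which the abstract definition of ``null'' forces the $k$-jet condition.
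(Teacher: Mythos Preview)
Your argument is correct. Parts (1) and the ``if'' direction of (2) match the paper's proof essentially verbatim: the paper phrases (1) in terms of the ideal $I_0^k$ being preserved by composition with $\theta^{-1}$, and for (2) writes $\theta(y)=y+f(y)y^k$ with $f(0)=0$ and exhibits the bisection $\{(f(y),y)\}\subset\Omega$ through $(0,0)$, exactly as you do.

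For the ``only if'' direction of (2), however, you take a genuinely different route. The paper does not analyse an arbitrary bisubmersion $W$. Instead it invokes Proposition~\ref{simtoapprox} together with Lemma~\ref{omega}: since $\id$ is carried both at the given $w\in W$ and at $(0,0)\in\Omega$, one has $w\sim(0,0)$, hence the local automorphisms carried at $w$ coincide with those carried at $(0,0)\in\Omega$. The two-dimensional model $\Omega$ is so explicit that every bisection through $(0,0)$ is visibly a graph $\{(f(y),y)\}$ with $f(0)=0$, inducing $y\mapsto y+f(y)y^k$, and the $k$-jet condition is immediate. Your approach instead stays with the abstract $W$, extracts the divisibility $\partial t/\partial z_i=t^k\widetilde g_i$ directly from the bisubmersion axiom $\ker(ds)\subset t^{-1}(\F^k_\R)$, and combines this with a Hadamard-type interpolation. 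The paper's argument is shorter because it cashes in the structural work already done in Lemma~\ref{omega} and Proposition~\ref{simtoapprox}; your argument is more self-contained and makes transparent exactly how the $k$-jet rigidity is forced by the bisubmersion condition, without ever appealing to the concrete model $\Omega$ or to the equivalence $\sim$.
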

\begin{proof}
If $f \in C_c^\infty(\R)$, we have $\theta_*(f \frac{d}{dy})= (f \circ \theta^{-1}) \theta_*(\frac{d}{dy})$. Since $f \mapsto \theta^{-1}$ preserves the ideal of functions that  vanish to order $k$ at $0$ and $\theta_*(\frac{d}{dy})$ is a positive, smooth function-multiple of $\frac{d}{dy}$, assertion (1) follows. If $j^k_0(\theta) =y$, we may write write $\theta(y) = y + f(y)y^k$ where $f$ is a smooth function with $f(0)=0$. Then, for  $\epsilon>0$ appropriately small,  $\{(f(y),y) : y \in (-\epsilon,\epsilon) \}$ is a bisection of $\Omega$ containing $(0,0)$ which induces $\theta$. Conversely, any bisection passing through $(0,0)$ is locally of the form $\{(f(y),y) : y \in (-\epsilon,\epsilon) \}$ for some smooth $f$, and (2) follows.
\end{proof}

\begin{propn}\label{abstractiso}
There is a unique isomorphism of abstract groupoids 
\[ G_\textup{full}(\F^k_\R) \to (\R\setminus\{0\})^2 \cup J^k\]
such that, if $(W,t,s)$ is any $\F^k_\R$-bisubmersion   and $w \in W$ has $s(w)=0$, then $Q_W(w) \mapsto j^k_0(\theta)$ where $\theta$ is any diffeomorphism of $\R$ carried by $W$ at $w$. 
\end{propn}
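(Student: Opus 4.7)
The strategy is to invoke Theorem~\ref{G/Nthm}, which identifies $G_\textup{full}(\F^k_\R)$ with the quotient groupoid $\GermAut(\F^k_\R)/\NullAut(\F^k_\R)$. It then suffices to describe these objects concretely at each point $x \in \R$ and assemble the pieces. The candidate bijection will send $[\theta]_0 \mapsto j^k_0(\theta) \in J^k$ for $x=0$, and $[\theta]_x \mapsto (\theta(x), x)$ for $x \neq 0$.

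The case $x = 0$ is handled directly by Lemma~\ref{omegacarry}: $\F^k_\R$-automorphism germs at $0$ are exactly germs of diffeomorphisms of $\R$ fixing $0$, and null ones are those whose $k$-jet is $y$, so the quotient at $0$ is canonically $J^k$. For the case $x \neq 0$, the key observation is that $\F^k_\R$ restricted to any open $U \subset \R \setminus \{0\}$ is the tautological foliation (all compactly-supported vector fields), so every germ of diffeomorphism at $x$ whose image avoids $0$ is automatically an $\F^k_\R$-automorphism. To show that every germ $\phi$ at $x$ fixing $x$ is null, I would use the bisubmersion $(\R \times U, \tau, \sigma)$ with $\sigma(s,y) = y$ and $\tau(s,y) = y + s$, which is an $\F^k_\R$-bisubmersion precisely because $\F^k_\R|_U$ is tautological; the point $(0, x)$ carries both $\phi$ (via the bisection $\{(\phi(y)-y, y)\}$) and $\id$ (via $\{(0,y)\}$). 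Therefore the quotient at $x \neq 0$ is parameterized by $\theta(x)$ in the same connected component of $\R\setminus\{0\}$ as $x$, recovering the pair-groupoid piece.

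Assembling these local descriptions yields a bijection $\Phi$ from $G_\textup{full}(\F^k_\R)$ onto $(\R\setminus\{0\})^2 \cup J^k$. Verifying it is a groupoid homomorphism amounts to three routine checks: there are no arrows between $0$ and points of $\R\setminus\{0\}$ (because $\{0\}$ is a separate leaf and both sides respect this), composition over $0$ reduces to the chain rule for $k$-jets, and composition over $\R\setminus\{0\}$ is just pair-groupoid composition. Uniqueness follows because the constraint at $s(w)=0$ pins down the $J^k$-part, while the pair-groupoid part has at most one arrow between any two composable units, so any groupoid isomorphism over $\R$ must agree with $\Phi$ there.

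The main technical obstacle is the claim that germs at $x \neq 0$ fixing $x$ are null; this rests on the explicit construction of the bisubmersion $\R \times U$ and the verification that it satisfies the defining identity $s^{-1}(\F^k_\R|_U) = t^{-1}(\F^k_\R|_U) = C_c^\infty(\R \times U; \ker ds) + C_c^\infty(\R \times U; \ker dt)$, which follows directly from $\F^k_\R|_U = \mathfrak{X}_c(U)$ together with $\partial_s$ being the joint $s$-$t$ projectable generator. All other steps are either cited from Lemma~\ref{omegacarry} or amount to unwinding definitions.
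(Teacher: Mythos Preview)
Your approach is essentially the paper's: invoke Theorem~\ref{G/Nthm}, use Lemma~\ref{omegacarry} at $0$, and observe that the restriction to $\R\setminus\{0\}$ is the pair groupoid. The paper dispatches the last point in one sentence (``clearly the restriction\ldots is uniquely isomorphic to the pair groupoid''), while you spell out the null-automorphism argument. Two corrections are needed, though.

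First, your proposed bisubmersion $(\R\times U,\,\tau,\,\sigma)$ with $\tau(s,y)=y+s$ is \emph{not} an $\F^k_\R$-bisubmersion: the image of $\tau$ is all of $\R$, so $t^{-1}(\F^k_\R)$ is not the full module of vector fields (for instance $\partial_s$ projects under $\tau$ to $\partial_y$, which does not lie in $\F^k_\R$ near $0$), whereas $s^{-1}(\F^k_\R)$ is full. The identity $s^{-1}(\F^k_\R)=t^{-1}(\F^k_\R)$ therefore fails. The fix is easy: restrict to a neighbourhood of $(0,x)$ small enough that $\tau$ avoids $0$, or simply note that $(\F^k_\R)_{\R\setminus\{0\}}$ is the trivial one-leaf foliation, whose full holonomy groupoid is the pair groupoid on general grounds.

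Second, your claim that ``$\theta(x)$ lies in the same connected component of $\R\setminus\{0\}$ as $x$'' is wrong for the full holonomy groupoid and contradicts the target $(\R\setminus\{0\})^2$. A local $\F^k_\R$-automorphism at $x\neq 0$ can send $x$ to either half-line (e.g.\ $y\mapsto -y$); the restriction to the same component occurs only for the \emph{minimal} holonomy groupoid $G(\F^k_\R)$.
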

\begin{proof}
Clearly the restriction of $G_\textup{full}(\F^k_\R)$ to $\R\setminus\{0\}$ is uniquely isomorphic to the pair groupoid $(\R\setminus\{0\})^2$. By Lemma~\ref{omegacarry}, the group $\GermAut(\F^k_\R)_0$ of germs at $0$ of local $\F^k_\R$ automorphisms is the group of germs of diffeomorphisms $\theta$ of  $\R$ with $\theta(0)=0$ and the normal subgroup $\NullAut(\F^k_\R)_0$ of germs of null automorphisms at $0$ is the group of germs of diffeomorphisms $\theta$ of $\R$ whose $k$-jet at $0$ is $y$. Thus, by Theorem~\ref{G/Nthm},  $G_\textup{full}(\F^k_\R)_0 \cong \GermAut(\F^k_\R)_0/\NullAut(\F^k_\R)_0 = J^k$.
\end{proof}

\begin{rmk}The orbits of the minimal holonomy groupoid $G(\F)$ of a foliation $(M,\F)$ are exactly the leaves of $\F$. Similarly, if $\mathcal{W}$ is any holonomy atlas for $\F$, then the orbits of the holonomy groupoid $G(\mathcal{W})$ are unions of leaves related by $\mathcal{W}$. This explains why $G(\F^k_\R)$ is a blow up of $(\R_-)^2 \cup (\R_+)^2 \cup \{(0,0)\}$ and $G_\textup{full}(\F^k_\R)$ is a blowup of $(\R\setminus\{0\})^2 \cup \{(0,0)\}$.
\end{rmk}

It is quite easy to see that the above identification  of $G_\textup{full}(\F^k_\R)_{\R\setminus\{0\}}$ with $(\R\setminus\{0\})^2$ is also a diffeomorphism. However, we shall see that the isotropy group $G_\textup{full}(\F^k_\R)_0$ is in fact diffeomorphic to the one-dimensional Lie group $J^k_\R$, rather than the $k$-dimensional Lie group $J^k$.  In order to  illuminate  the topological structure of  $G_\textup{full}(\F^k_\R)$ near its isotropy group  $G_\textup{full}(\F^k_\R)_0$, we need to introduce  an explicit  full holonomy atlas for $\F^k_\R$.  We use freely  the results and terminology of Section~\ref{sec:grpconstr}.

\begin{defn}\label{omegatheta}
For each $\theta \in \Diff_0(\R)$, put $\Omega_\theta \coloneqq (\Omega,\theta \circ \tau, \sigma)$.
\end{defn}

By construction, $\Omega_\theta$ is an $\F^k_\R$-bisubmersion carrying $\theta$ at the point $(0,0)$.

\begin{propn}\label{omegaatlas}
\text{ }
\begin{enumerate}
\item $\{\Omega_\theta : \theta \in \mathrm{Diff}_0(\R)\}$ is  a full holonomy atlas for $\F^k_\R$. That is, any $\F^k_\R$-bisubmersion is adapted to this holonomy atlas (see Section~\ref{sec:grpconstr}).
\item For each $\theta \in \Diff_0(\R)$, the canonical map $Q_{\Omega_\theta} : \Omega_\theta \to G_\textup{full}(\F^k_\R)$ is a diffeomorphism onto its image.
\end{enumerate}
\end{propn}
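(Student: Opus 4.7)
The plan is to exploit Proposition~\ref{simtoapprox}, which reduces the $\sim$-relation to the carrying of a common local $\F^k_\R$-automorphism. To show that an arbitrary $\F^k_\R$-bisubmersion $(W,t,s)$ is adapted to the family $\{\Omega_\theta\}$, it will therefore suffice, for each $w \in W$, to produce $\theta \in \Diff_0(\R)$ and $(t_0,y_0) \in \Omega$ such that $\Omega_\theta$ carries at $(t_0,y_0)$ some local $\F^k_\R$-automorphism $\theta_W$ that is also carried by $W$ at $w$. A bisection of $\Omega_\theta$ through $(t_0,y_0)$ is locally of the form $\{(f(y),y):y \in V\}$ for a smooth germ $f$ with $f(y_0)=t_0$, and induces the local diffeomorphism $y \mapsto \theta(y + f(y)y^k)$; the task thus becomes to realize $\theta_W$ in this form for a suitable choice of $\theta$ and $f$.

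For $y_0 := s(w) \neq 0$, I would take $\theta = \mathrm{id}$ and define $f(y) \coloneqq (\theta_W(y)-y)/y^k$, which is smooth on a neighbourhood of $y_0$ avoiding $0$. Since $\theta_W$ must preserve the leaves of $\F^k_\R$, the values $\theta_W(y_0)$ and $y_0$ share a sign, and a direct computation gives $1 + f(y_0)y_0^{k-1} = \theta_W(y_0)/y_0 > 0$, verifying $(f(y_0),y_0) \in \Omega$. For $y_0 = 0$, Lemma~\ref{omegacarry} gives $\theta_W(0) = 0$, so I can construct a globally-defined $\theta \in \Diff_0(\R)$ with $j^k_0(\theta) = j^k_0(\theta_W)$ by extending $\theta_W$ off a small neighbourhood of $0$ using a bump function. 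Then $\theta^{-1}\circ\theta_W$ has $k$-jet equal to $y$ at $0$, so Hadamard's lemma yields a smooth germ $g$ with $g(0)=0$ and $\theta^{-1}\circ\theta_W(y) = y + g(y)y^k$. Taking $f = g$, the set $\{(g(y),y)\}$ is a bisection of $\Omega_\theta$ passing through $(0,0) \in \Omega$, and by construction its induced diffeomorphism is $\theta_W$, completing part (1).

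For part (2), I would establish injectivity of $Q_{\Omega_\theta}$ and invoke Proposition~\ref{openmap} for the rest. Source and target are $\sim$-invariant, so any equality $Q_{\Omega_\theta}(t_1,y_1) = Q_{\Omega_\theta}(t_2,y_2)$ forces $y_1 = y_2 =: y_0$ and $\theta(y_0+t_1 y_0^k) = \theta(y_0+t_2 y_0^k)$. When $y_0 \neq 0$, injectivity of $\theta$ and nonvanishing of $y_0^k$ give $t_1 = t_2$ directly. When $y_0=0$, the constant-$f$ bisection $\{(t_i,y)\}$ shows that $\Omega_\theta$ carries at $(t_i,0)$ a diffeomorphism with $k$-jet $j^k_0(\theta)\cdot(y + t_i y^k) \in J^k$; applying Proposition~\ref{abstractiso}, equality of the images in $J^k$ and group cancellation force $t_1 = t_2$. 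Since $Q$ is an open map by Proposition~\ref{openmap}, the injective continuous map $Q_{\Omega_\theta}$ is an open topological embedding, and the smooth structure on $G_\textup{full}(\F^k_\R)$ induced via these charts then promotes it to a diffeomorphism onto its image.

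The principal difficulty lies in the $y_0 = 0$ case of part (1): ensuring $(\theta^{-1}\circ\theta_W(y)-y)/y^k$ extends smoothly across the origin. This requires both the matching condition $j^k_0(\theta) = j^k_0(\theta_W)$, arranged by the bump extension, and the subsequent application of Hadamard's lemma, which together cleanly separate the ``holonomy'' content (captured in the jet of $\theta$) from the ``null'' content (captured in $g$, which vanishes at the origin).
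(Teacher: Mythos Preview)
Your argument for part~(1) has a genuine gap in the $y_0 \neq 0$ case. You claim that ``$\theta_W$ must preserve the leaves of $\F^k_\R$, [so] the values $\theta_W(y_0)$ and $y_0$ share a sign.'' This is false: away from $0$ the foliation $\F^k_\R$ restricts to the full tangent bundle, so \emph{any} local diffeomorphism between open subsets of $\R\setminus\{0\}$ is a local $\F^k_\R$-automorphism, whether or not it exchanges the half-lines. A bisubmersion $W$ can certainly have $s(w)>0$ and $t(w)<0$; indeed, by Proposition~\ref{abstractiso} the restriction of $G_\textup{full}(\F^k_\R)$ to $\R\setminus\{0\}$ is the full pair groupoid $(\R\setminus\{0\})^2$. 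When the signs differ, your choice $\theta=\id$ yields $1+f(y_0)y_0^{k-1}=\theta_W(y_0)/y_0<0$, so $(f(y_0),y_0)\notin\Omega$ and the construction fails. The repair is immediate---take an orientation-reversing $\theta\in\Diff_0(\R)$ such as $\theta(y)=-y$ and set $f(y)=(\theta^{-1}(\theta_W(y))-y)/y^k$---but it is a repair you need to make. The paper sidesteps the explicit bisection-building entirely: it simply observes that $\Omega_\theta|_{\R\setminus\{0\}}$ is isomorphic to $(\R_-)^2\cup(\R_+)^2$ when $\theta$ preserves orientation and to $(\R_-\times\R_+)\cup(\R_+\times\R_-)$ when it reverses it, so together these cover all of $(\R\setminus\{0\})^2$.

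For part~(2) your approach also diverges from the paper's and the final step is underjustified. Your direct injectivity computation (via source/target for $y_0\neq 0$ and via Proposition~\ref{abstractiso} for $y_0=0$) is correct and more explicit than the paper's density trick, which argues that any local morphism $\Omega_\theta\to\Omega_\theta$ is the identity on the dense set of trivial-isotropy points and hence everywhere. However, your closing sentence---that the smooth structure ``induced via these charts'' promotes the open embedding to a diffeomorphism---presupposes that the $\Omega_\theta$ serve as defining charts for the smooth structure on $G_\textup{full}(\F^k_\R)$. This is true precisely because $\dim\Omega_\theta=2$ matches $\dim\R$ plus the fiber dimension of $\F^k_\R$, which is the content of \cite{AS[2007]}, Proposition~3.11(b); the paper cites this to conclude $Q_{\Omega_\theta}$ is a local diffeomorphism. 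Proposition~\ref{openmap} alone gives only a topological embedding.
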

\begin{proof}
If $\theta$ is orientation-preserving, then $\Omega_\theta$ restricted to  $\R\setminus\{0\}$ is isomorphic to $(\R_-)^2 \cup (\R_+)^2$. If $\theta$ is orientation-reversing, then $\theta$  restricted to  $\R\setminus\{0\}$ is isomorphic to $(\R_- \times\R_+) \cup  (\R_+\times \R_-)$. This implies that, for $y \in \R\setminus\{0\}$, every $\F^k_\R$-automorphism germ at $y$ is carried by some $\Omega_\theta$. By construction, if $\theta \in \Diff_0(\R)$, then $\theta$ is carried by $\Omega_\theta$ at $(0,0)$. We have shown that every local automorphism of $\F^k_\R$ is carried by some $\Omega_\theta$ so, by Proposition~\ref{simtoapprox}, the $\Omega_\theta$ form a full holonomy atlas.

For (2), note that the dimension of $\Omega_\theta$ equals the dimension of $\R$ plus the fiber dimension of $\F^k_\R$, so the map of $\Omega_\theta$ to $G_\textup{full}(\F^k_\R)$ is a local diffeomorphism (see Proposition~3.11(b) in \cite{AS[2007]}). Since the set of points in $\Omega_\theta$ with trivial isotropy is dense, every local morphism $\Omega_\theta \to \Omega_\theta$ is the identity and the map $\Omega_\theta \to G_\textup{full}(\F^k_\R)$ is injective.
\end{proof}

One rudimentary property of the topology of $G_\textup{full}(\F^k_\R)$ is that it has two connected components. The basic idea of the proof, which we omit,  already appeared in the first  paragraph of the proof of Proposition~\ref{omegaatlas}.

\begin{propn}\label{2comp}
The holonomy groupoid $G_\textup{full}(\F^k_\R)$ has two connected components\footnote{Or, equivalently, path components, since $G_\textup{full}(\F^k_\R)$ is locally Euclidean.}. Under the isomorphism of Proposition~\ref{abstractiso}, the components map to $J^k_+ \cup (\R_-)^2 \cup (\R_+)^2$ and $J^k_- \cup (\R_-\times\R_+)\cup(\R_+\times\R_-)$, where $J^k_+$ and $J^k_-$ denote the $k$-jets of the orientation-preserving and orientation-reversing diffeomorphisms, respectively. \qed
\end{propn}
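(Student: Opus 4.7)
The plan is to exploit the full holonomy atlas $\{\Omega_\theta : \theta \in \Diff_0(\R)\}$ of Proposition~\ref{omegaatlas}. First, I would observe that $\Omega$ is connected by a star-shapedness argument at the origin: given $(t,y) \in \Omega$ and $\lambda \in [0,1]$, one has $1+\lambda^k ty^{k-1} \geq \min(1,\,1+ty^{k-1}) > 0$ (the two cases being $ty^{k-1} \geq 0$ and $ty^{k-1}<0$, where in the latter $\lambda^k \in (0,1]$ brings a negative quantity closer to zero). Consequently, the line segment from $(0,0)$ to $(t,y)$ lies in $\Omega$, and each image $Q_{\Omega_\theta}(\Omega) \subset G_\textup{full}(\F^k_\R)$ is connected.

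Next, I would compute each $Q_{\Omega_\theta}(\Omega)$ explicitly under the identification of Proposition~\ref{abstractiso}. For $(t,y) \in \Omega$ with $y \neq 0$, the defining condition $1+ty^{k-1}>0$ forces $\tau(t,y)=y+ty^k$ to share the sign of $y$; the image point $(\theta(\tau(t,y)),\, y) \in (\R\setminus\{0\})^2$ therefore has coordinates of the same sign when $\theta$ is orientation-preserving and opposite signs when $\theta$ is orientation-reversing. For $(t,0) \in \Omega$, the bisection $\{(t,y) : |y|<\epsilon\}$ carries the local diffeomorphism $y \mapsto \theta(y+ty^k)$, whose $k$-jet is $j^k_0(\theta) \circ (y+ty^k)$; as $t$ ranges over $\R$, this sweeps out the coset of the one-parameter subgroup $\{y+ty^k : t \in \R\}$ containing $j^k_0(\theta)$. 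A routine check (solving for $t$) then gives, for $\theta$ orientation-preserving,
\[ Q_{\Omega_\theta}(\Omega) = (\R_+)^2 \cup (\R_-)^2 \cup \bigl(j^k_0(\theta) \cdot \R\bigr), \]
with the analogous formula for orientation-reversing $\theta$ featuring $(\R_+\times\R_-)\cup(\R_-\times\R_+)$ and $j^k_0(\theta) \in J^k_-$. A standard bump-function construction shows that every element of $J^k_+$ (resp.\ $J^k_-$) is realized as $j^k_0(\theta)$ for some orientation-preserving (resp.\ orientation-reversing) $\theta \in \Diff_0(\R)$, so these cosets exhaust $J^k_+$ (resp.\ $J^k_-$).

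Taking the union of the $Q_{\Omega_\theta}(\Omega)$ over orientation-preserving $\theta$ therefore yields exactly $J^k_+ \cup (\R_+)^2 \cup (\R_-)^2$, and similarly for orientation-reversing $\theta$ one obtains $J^k_- \cup (\R_+\times\R_-)\cup(\R_-\times\R_+)$. Each of these unions is connected, since each constituent $Q_{\Omega_\theta}(\Omega)$ is connected and all constituents share the common connected subset $(\R_+)^2$ (respectively $\R_+\times\R_-$). Each union is also open, being a union of the open sets $Q_{\Omega_\theta}(\Omega)$ given by Proposition~\ref{omegaatlas}(2). Since the two subsets are manifestly disjoint and exhaust $G_\textup{full}(\F^k_\R)$, each is clopen, identifying them as exactly the two connected components. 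There is no real obstacle in this proof---the only step requiring any calculation is the displayed formula for $Q_{\Omega_\theta}(\Omega)$, which amounts to solving $\theta(y+ty^k) = y'$ for $t$ given any $(y',y)$ with the appropriate sign relationship.
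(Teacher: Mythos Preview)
Your proof is correct and follows essentially the same approach the paper alludes to: the paper omits the argument but points to the first paragraph of the proof of Proposition~\ref{omegaatlas}, where it is observed that $\Omega_\theta$ restricted to $\R\setminus\{0\}$ is isomorphic to $(\R_-)^2\cup(\R_+)^2$ or $(\R_-\times\R_+)\cup(\R_+\times\R_-)$ according to the orientation of $\theta$. You have simply filled in the remaining details---connectedness of $\Omega$, the behaviour over $y=0$ via Proposition~\ref{Jkdescrip}, and the clopen partition---all of which are straightforward.
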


Next we describe the smooth structure of the isotropy group $G_\textup{full}(\F^k_\R)$.

\begin{propn}\label{Jkdescrip}
Giving $J^k$ its one-dimensional Lie group structure $J^k_\R$ (Definition~\ref{weirdgroup}) makes the group isomorphism $G(\F^k_\R)_0 \to J^k_\R$ provided by  Proposition~\ref{abstractiso} into a Lie group isomorphism. 
\end{propn}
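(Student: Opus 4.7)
The plan is to realize the smooth structure on $G_\textup{full}(\F^k_\R)_0$ using the bisubmersion charts $Q_{\Omega_\theta}$ of Proposition~\ref{omegaatlas}, restricted to source fibers over $0$, and to match them against the chart structure defining $J^k_\R$. For each $\theta \in \Diff_0(\R)$ fixing $0$, the source fiber of $\Omega_\theta$ over $0$ is $\{(t,0) : t \in \R\} \cong \R$, and by Proposition~\ref{omegaatlas}(2), $Q_{\Omega_\theta}$ restricts to a smooth embedding of this line into the one-dimensional manifold $G_\textup{full}(\F^k_\R)_0$ (one-dimensional because $\F^k_\R$ is almost regular of fiber dimension $1$; see Section~\ref{sec:almostregsmooth}).

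The key computation is identifying the image of $(t_0,0) \in \Omega_\theta$ under the abstract group isomorphism of Proposition~\ref{abstractiso}. I would observe that any bisection of $\Omega_\theta$ through $(t_0,0)$ has the form $\{(f(y),y) : |y| < \epsilon\}$ with $f(0) = t_0$, and that it carries the local diffeomorphism $y \mapsto \theta(y + f(y)y^k)$. Since $f(y)y^k$ has $k$-jet $t_0 y^k$ at $0$, the $k$-jet of this local diffeomorphism at $0$ equals $j^k_0(\theta) \cdot (y + t_0 y^k)$, using the compose-and-truncate product in $J^k$. So, through the isomorphism of Proposition~\ref{abstractiso}, the chart $Q_{\Omega_\theta}|_{\R \times \{0\}}$ corresponds to the map
\[ \R \to J^k, \qquad t \mapsto j^k_0(\theta) \cdot (y + ty^k). \]

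The decisive observation is that $t \mapsto y + ty^k$ is exactly the parameterization making the normal subgroup $\R \subset J^k$ the identity component of $J^k_\R$ (Definition~\ref{weirdgroup}), while left multiplication by $j^k_0(\theta)$ is a diffeomorphism of $J^k_\R$ sending this identity component to the coset through $j^k_0(\theta)$. Consequently, each chart $Q_{\Omega_\theta}|_{\R \times \{0\}}$, composed with the abstract isomorphism, is a diffeomorphism of $\R$ onto a connected component of $J^k_\R$. As $\theta$ varies, and since every polynomial in $J^k$ extends to a diffeomorphism of $\R$ fixing $0$ with that prescribed $k$-jet, these charts cover all cosets of $\R$ in $J^k$, i.e., all components of $J^k_\R$. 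This shows the abstract isomorphism of Proposition~\ref{abstractiso} is smooth with smooth inverse, and being a group isomorphism, it is a Lie group isomorphism.

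The main obstacle is really just the $k$-jet computation -- checking that $(t_0,0)$ corresponds to $j^k_0(\theta) \cdot (y + t_0 y^k)$. Once this is in hand, the result reduces to recognizing the standard parameterization $t \mapsto y + ty^k$ of the identity component of $J^k_\R$ and using that $J^k_\R$ is a Lie group in which left translations are diffeomorphisms.
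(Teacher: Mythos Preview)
Your proposal is correct and follows essentially the same approach as the paper: both identify that $Q_{\Omega_\theta}(t,0)$ corresponds under Proposition~\ref{abstractiso} to $j^k_0(\theta)\circ(y+ty^k)$, and conclude that each $\Omega_\theta\cap(\R\times\{0\})$ is carried diffeomorphically onto a coset of $\R$ in $J^k_\R$. The paper uses the specific bisection $\{t\}\times\R$ rather than a general one, but the argument is otherwise the same.
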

\begin{proof}
If $\theta \in \Diff_0(\R)$ and $t\in \R$, then $(\{t\}\times\R) \cap\Omega_\theta$ is a bisection of $\Omega_\theta$ carrying the local diffeomorphism $y \mapsto \theta(y+ty^k)$ at the point $(t,0)$. The composition
\[\Omega_\theta \overset{\Omega_\theta}{\to} G_\textup{full}(\F^k_\R) \to (\R\setminus\{0\})^2 \cup J^k\]
therefore sends
\[ \Omega_\theta \ni (t,0) \mapsto j^k_0(\theta) \circ (y+ty^k) \in J^k \]
so that $\Omega_\theta \cap (\R \times \{0\})$ is carried diffeomorphically onto the coset of $\R \subset J^k_\R$ containing $j^k_0(\theta)$.
\end{proof}

Proposition~\ref{Jkdescrip} shows that $G_\textup{full}(\F^k_\R)$ is necessarily a somewhat strange space; it is a blowup of the singular equivalence relation $(\R\setminus\{0\})^2 \cup\{(0,0)\}\subset \R^2$ for which the singular point at the origin is replaced by continuum-may copies of the real line. This already shows it is not a manifold (even in our relaxed sense of the word, see Subsection~\ref{smoothspaceterm}) because it has only two components (Proposition~\ref{2comp}), but is not second countable.

The remainder of this section is devoted to investigating the separation properties of $G_\textup{full}(\F^k_\R)$. The basic observation is as follows: if $\theta_1, \theta_2 \in \Diff_0(\R)$ have different Taylor series at $0$, then the intersection of their graphs with some  punctured neighbourhood of the origin are disjoint and, therefore, can   be separated open subsets of the punctured plane. The following lemma shows that, by choosing  neighbourhoods carefully, we can separate the different cosets  of $\R$ in $J^k_\R$ from each other by open sets in  $G_\textup{full}(\F^k_\R)$.

\begin{figure}[ht]
     \centering
    \def\svgwidth{.3\textwidth}
    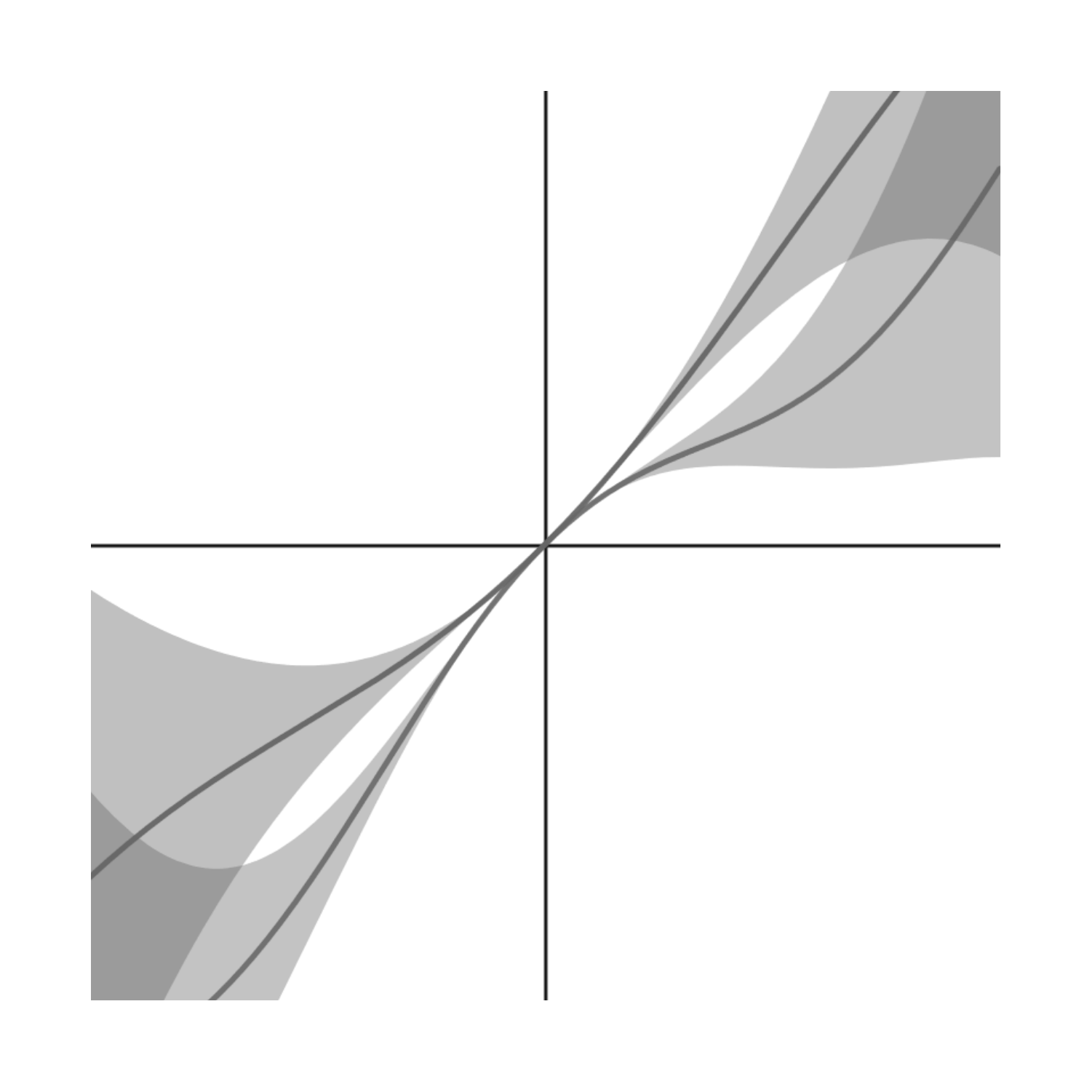
    \caption{The geometry  behind Lemma~\ref{graphnbhd}.}
\end{figure}

\begin{lemma}\label{graphnbhd}
For any $\theta \in \Diff_0(\R)$, there is a unique smooth function $f_\theta$ on $\R^2$ such that $\theta(y+ty^k) = \theta(y) + f_\theta(t,y)y^k$ for all $(t,y) \in \R^2$. Define
\[ U_\theta \coloneqq \{ (t,y) \in \Omega_\theta : |y|^{1/2} |f_\theta(t,y)| < 1 \}, \]
so that $U_\theta$ is an open subset of $\Omega_\theta$ containing $\R \times \{0\}$. Then, given   $\theta_1, \theta_2 \in \Diff_0(\R)$ with different $(k-1)$-jets at $0$, there exists $\epsilon > 0$ such that $U_{\theta_1}$ and $U_{\theta_2} \cap (\R \times (-\epsilon,\epsilon))$, viewed as $\F^k_\R$-bisubmersions, have disjoint images in $G_\textup{full}(\F^k_\R)$. 
\end{lemma}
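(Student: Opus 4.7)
The plan is to establish existence and uniqueness of $f_\theta$, confirm the openness assertions for $U_\theta$, and then prove the separation claim by a contradiction argument that splits into the $y=0$ and $y \neq 0$ cases; the second case carries the real content.

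For the opening claims I would apply Hadamard's lemma in integral form: since $\theta$ is smooth on $\R$ with $\theta(0)=0$,
\[ \theta(y+ty^k) - \theta(y) \;=\; ty^k \int_0^1 \theta'(y+sty^k)\,ds, \]
so $f_\theta(t,y) := t\int_0^1 \theta'(y+sty^k)\,ds$ exhibits the required factorization; uniqueness follows because the zero locus of $y^k$ is nowhere dense. Openness of $U_\theta$ in $\Omega_\theta$ is then immediate from continuity of $(t,y) \mapsto |y|^{1/2}|f_\theta(t,y)|$, and $\R \times \{0\} \subset U_\theta$ because $|y|^{1/2}$ vanishes on the $y$-axis (which also lies in $\Omega$).

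For the separation I would argue by contradiction. If the images of $U_{\theta_1}$ and $U_{\theta_2} \cap (\R \times (-\epsilon,\epsilon))$ in $G_\textup{full}(\F^k_\R)$ meet for every $\epsilon>0$, choose representatives $(t_i,y_i) \in U_{\theta_i}$; the equality of source maps forces $y_1=y_2=:y$, and shrinking $\epsilon$ drives $|y|\to 0$. In the case $y=0$, Proposition~\ref{Jkdescrip} identifies the two groupoid images with $j^k_0(\theta_i)\circ (y+t_iy^k)$ inside $J^k$; projecting through the canonical $J^k \to J^{k-1}$ kills the $y^k$-term and collapses the equality to $j^{k-1}_0(\theta_1) = j^{k-1}_0(\theta_2)$, contradicting the hypothesis on $(k-1)$-jets.

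The hard part will be the case $y \neq 0$, and this is where I expect the real work. Here the restriction of $G_\textup{full}(\F^k_\R)$ to $\R\setminus\{0\}$ is the pair groupoid, so equality of groupoid elements reduces to equality of targets: $\theta_1(y+t_1y^k) = \theta_2(y+t_2y^k)$. Using the defining relation for $f_{\theta_i}$ and setting $\delta := \theta_2-\theta_1$, this rearranges to
\[ \bigl(f_{\theta_1}(t_1,y) - f_{\theta_2}(t_2,y)\bigr)\,y^k \;=\; \delta(y). \]
The $U_{\theta_i}$-constraints yield $|f_{\theta_i}(t_i,y)|<|y|^{-1/2}$, bounding the left side by $2|y|^{k-1/2}$. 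On the other hand, since $j^{k-1}_0(\theta_1)\neq j^{k-1}_0(\theta_2)$, the function $\delta$ has its first nonzero Taylor coefficient at $0$ at some order $r \le k-1$, so $|\delta(y)| \ge c\,|y|^r$ for some $c>0$ and all sufficiently small $|y|$. Since $r \le k-1 < k-\tfrac12$, the two estimates are incompatible once $|y|$ is below an explicit threshold depending only on $\theta_1,\theta_2$, and taking $\epsilon$ below that threshold finishes the argument. The essential insight is the gap between the allowed blow-up rate $|y|^{-1/2}$ of $f_\theta$ inside $U_\theta$ and the stronger divergence $|y|^{r-k}$, $r \le k-1$, that would be needed to realize a genuine collision.
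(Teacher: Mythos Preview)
Your proposal is correct and follows essentially the same route as the paper: both arguments reduce the $y\neq 0$ case to comparing the bound $|f_{\theta_i}(t_i,y)y^k|<|y|^{k-1/2}$ coming from the definition of $U_{\theta_i}$ against the lower bound on $|\theta_1(y)-\theta_2(y)|$ forced by the nonvanishing $(k-1)$-jet of the difference. The paper phrases this as a direct inequality $|\theta_1(y+t_1y^k)-\theta_2(y+t_2y^k)|\geq|\theta_1(y)-\theta_2(y)|-2|y|^{k-1/2}$ rather than via contradiction, and handles the $y=0$ case by noting the images land in distinct cosets of $\R$ in $J^k_\R$, but these are only cosmetic differences.
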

\begin{proof}
We identify $G_\textup{full}(\F^k_\R)$ with $(\R\setminus\{0\})^2\cup J^k_\R$ without comment. It is straightforward to deduce the existence of $f_\theta$  from Taylor's theorem. 

Suppose $\theta_1,\theta_2 \in \Diff_0(\R)$ and $j^{k-1}_0(\theta_1) \neq j^{k-1}_0(\theta_2)$. The images of $U_{\theta_1} \cap (\R \times \{0\})$ and $U_{\theta_2} \cap (\R \times \{0\})$ are the (disjoint) cosets of $\R$  in $J^k_\R$ which contain $j^k_0(\theta_1)$ and $j^k_0(\theta_2)$, respectively. We need therefore only need to check that there is some $\epsilon > 0$ such that, whenever $(t_1,y) \in U_{\theta_1}$, $(t_2,y)\in U_{\theta_2}$ and $0<|y|<\epsilon$, we have $\theta_1(y+t_1y^k)\neq\theta_2(y+t_2y^k)$. For any $(t_1,y) \in U_{\theta_1}$ and $(t_2,y)\in U_{\theta_2}$, we have
\begin{align*} | \theta_1(y+t_1y^k) - \theta_2(y+t_2y^k)| &\geq |\theta_1(y)-\theta_2(y)| - |y^k f_{\theta_1}(t_1,y)|-|y^k f_{\theta_2}(t_2,y) \\ 
&\geq |\theta_1(y)-\theta_2(y)|  -  2|y|^{k-\frac{1}{2}}
\end{align*}
Because the $(k-1)$-jet of $\theta_1-\theta_2$ at $0$ is nonzero, $2|y|^{k-\frac{1}{2}}$ vanishes more quickly than $|\theta_1(y)-\theta_2(y)|$ at $y=0$. It follows that there exists an $\epsilon >0$ such that, for $0<|y|<\epsilon$, the right hand side of the above inequality is strictly positive.
\end{proof}

\begin{cor}\label{c1}
Let $A$ be any coset of $\R$ in $J^k_\R$ and put $B = J^k_\R \setminus A$. Then, there are disjoint open sets $U, V \subset G_\textup{full}(\F^k_\R) \cong (\R\setminus\{0\})^2 \cup J^k_\R$ such that $A \subset U$ and $B \subset V$ (see Figure~\ref{fig:hdrf}).
\end{cor}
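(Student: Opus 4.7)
The plan is to apply Lemma~\ref{graphnbhd} once for each coset of $\R$ in $J^k_\R$ other than $A$, and then take the union of the resulting bisubmersion images. The whole argument is essentially a packaging of Lemma~\ref{graphnbhd}, together with the open embedding property in Proposition~\ref{omegaatlas}(2) and the coset description in Proposition~\ref{Jkdescrip}; there is no serious obstacle to surmount.

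First I would pick some $\theta_A \in \Diff_0(\R)$ with $j^k_0(\theta_A) \in A$ and define $U := Q_{\Omega_{\theta_A}}(U_{\theta_A})$. Proposition~\ref{omegaatlas}(2) says $Q_{\Omega_{\theta_A}}$ is an open embedding, so $U$ is open in $G_\textup{full}(\F^k_\R)$, and Proposition~\ref{Jkdescrip} shows that $Q_{\Omega_{\theta_A}}$ carries $\R \times \{0\} \subset U_{\theta_A}$ onto the coset $A$, giving $A \subset U$.

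For the second open set, for each coset $C$ of $\R$ in $J^k_\R$ with $C \neq A$ I would pick a representative $\theta_C \in \Diff_0(\R)$ with $j^k_0(\theta_C) \in C$. Since $C \neq A$, we have $j^{k-1}_0(\theta_C) \neq j^{k-1}_0(\theta_A)$, so Lemma~\ref{graphnbhd} supplies an $\epsilon_C > 0$ such that $U_{\theta_A}$ and $U_{\theta_C} \cap (\R \times (-\epsilon_C, \epsilon_C))$ have disjoint images in $G_\textup{full}(\F^k_\R)$. Setting
\[
V_C := Q_{\Omega_{\theta_C}}\bigl(U_{\theta_C} \cap (\R \times (-\epsilon_C, \epsilon_C))\bigr),
\]
I obtain (again by Proposition~\ref{omegaatlas}(2)) an open subset of $G_\textup{full}(\F^k_\R)$ that is disjoint from $U$ and, by the same Proposition~\ref{Jkdescrip} argument as before, contains the entire coset $C$.

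Finally I would take $V := \bigcup_{C \neq A} V_C$, the union over all cosets of $\R$ in $J^k_\R$ distinct from $A$. This is open as a union of open sets, is disjoint from $U$ since each $V_C$ is, and contains $B = J^k_\R \setminus A$ since each $V_C$ contains $C$. The only mildly delicate point is that the indexing set of cosets is uncountable (it is bijective with $J^{k-1}$), but this causes no trouble since arbitrary unions of open sets are open. \qed
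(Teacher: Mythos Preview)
Your proof is correct and follows essentially the same approach as the paper: fix a representative for the distinguished coset $A$, take $U$ to be the image of its $U_{\theta_A}$, and for every other coset apply Lemma~\ref{graphnbhd} to obtain a shrunken neighbourhood whose image is disjoint from $U$, then take $V$ to be the union of these. The only cosmetic difference is that the paper indexes over all $\theta$ with $j^{k-1}_0(\theta)\neq j^{k-1}_0(\theta_0)$ rather than picking one representative per coset, but this changes nothing.
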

\begin{figure}[ht]
     \centering
    \def\svgwidth{.8\textwidth}
    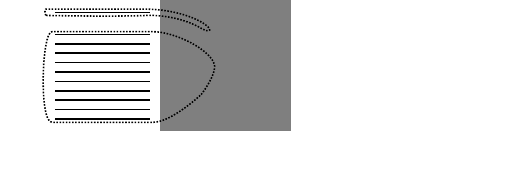
    \caption{Separating one component of $J^k_\R$ from the rest inside $G_\textup{full}(\F^k_\R)$.}
    \label{fig:hdrf}
\end{figure}
\begin{proof}
Fix $\theta_0 \in \Diff_0(\R)$. In the notation of  Lemma~\ref{graphnbhd} above, for each $\theta \in \Diff_0(\R)$ with $j^{k-1}_0(\theta)\neq j^{k-1}_0(\theta_0)$, there exists an $\epsilon_\theta$ such that $U_{\theta_0}$ and $U_{\theta} \cap (\R \times (-\epsilon_\theta,\epsilon_\theta))$ have disjoint images in $G_\textup{full}(\F^k_\R)$. Let $U$ be the image of $U_\theta$ and let $V$ be the union of the images of the $U_{\theta} \cap (\R \times (-\epsilon_\theta,\epsilon_\theta))$, ranging over $\theta \in \Diff_0(\R)$ with $j^{k-1}_0(\theta)\neq j^{k-1}_0(\theta_0)$.
\end{proof}
\begin{cor}\label{hdrffreg}
The topology of $G_\textup{full}(\F^k_\R)$ is Hausdorff and regular.
\end{cor}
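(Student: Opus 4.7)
The plan is to prove Hausdorffness by a four-case analysis on where the two points lie (in $(\R\setminus\{0\})^2$ or in $J^k_\R$), and then deduce regularity from local compactness combined with Hausdorffness. Throughout I identify $G_\textup{full}(\F^k_\R)$ with $(\R\setminus\{0\})^2 \cup J^k_\R$ via Proposition~\ref{abstractiso}.

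For Hausdorffness, let $p \neq q$ and consider:
(i) If both points lie in $(\R\setminus\{0\})^2$, this is an open subgroupoid with its standard manifold topology, so Euclidean balls separate them.
(ii) If both points lie in the same coset of $\R$ in $J^k_\R$, then both are in the image of $Q_{\Omega_\theta}$ for some common $\theta$; since this map is a diffeomorphism onto an open subset by Proposition~\ref{omegaatlas}(2), we can separate them in this Euclidean chart.
(iii) If $p$ and $q$ lie in distinct cosets of $\R$ in $J^k_\R$, apply Corollary~\ref{c1} to the coset containing $p$ to get disjoint open sets separating $\{p\}$ (inside the coset $A$) and $q$ (inside the complement $B$).
(iv) The remaining case is $p \in J^k_\R$, $q = (a,b) \in (\R\setminus\{0\})^2$. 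Write $p = j^k_0(\theta) \circ (y + t_0 y^k)$ so that $p = Q_{\Omega_\theta}(t_0,0)$, and choose $\epsilon < |b|/2$. Then the image of $(t_0-\delta,t_0+\delta)\times(-\epsilon,\epsilon) \subset \Omega_\theta$ in $G_\textup{full}(\F^k_\R)$ is an open neighborhood of $p$ whose intersection with $(\R\setminus\{0\})^2$ consists of points of the form $(\theta(y+ty^k),y)$ with $|y|<\epsilon$; this is disjoint from a small ball around $q$ kept inside $\{(a',b'): |b'|>\epsilon\}$.

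For regularity, the key observation is that $G_\textup{full}(\F^k_\R)$ is locally Euclidean: by Proposition~\ref{omegaatlas}, the maps $Q_{\Omega_\theta}$ cover $G_\textup{full}(\F^k_\R)$ by open sets homeomorphic to the $2$-manifold $\Omega$. In particular, each point has a neighborhood basis of compact sets, obtained by taking images of small closed rectangles in $\Omega_\theta$. Having already established Hausdorffness, each such compact neighborhood is closed in $G_\textup{full}(\F^k_\R)$, so every point has a neighborhood basis of closed sets. This is equivalent to regularity.

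The main obstacle I anticipate is case (iv) above: one must check that the $\Omega_\theta$-chart neighborhoods of a jet $p$ genuinely ``pinch toward $y=0$'' when projected into the pair-groupoid part, so that they can be kept away from any fixed $q$ with nonzero $y$-coordinate. This amounts to reading off the $y$-coordinate of $Q_{\Omega_\theta}(t,y) \in (\R\setminus\{0\})^2$ for $y\ne 0$, which is simply $y$ itself, so the argument reduces to choosing $\epsilon$ small. The rest of the proof is formal: cases (i)--(iii) use only already-proved structural facts about $G_\textup{full}(\F^k_\R)$, and regularity is then the standard consequence of local compactness and Hausdorffness.
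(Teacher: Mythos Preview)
Your proof is correct and fills in exactly the details the paper leaves implicit: the paper states Corollary~\ref{hdrffreg} without proof, intending Hausdorffness to follow from Corollary~\ref{c1} via the case analysis you give, and regularity to follow from the fact (recorded in Theorem~\ref{thm:topprop}) that $G_\textup{full}(\F^k_\R)$ carries a smooth atlas, hence is locally Euclidean, so Hausdorff plus local compactness yields regularity. Your handling of case~(iv), reading off the second coordinate $y$ of $Q_{\Omega_\theta}(t,y)\in(\R\setminus\{0\})^2$ from $\sigma(t,y)=y$, is exactly right.
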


\begin{propn}\label{notnormal}
For every $k \geq 2$, the topology of $G_\textup{full}(\F^k_\R))$ is not normal.
\end{propn}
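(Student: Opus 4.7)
The plan is to apply Jones's Lemma from general topology: in a normal Hausdorff space with a dense subset of cardinality $\kappa$ and a closed discrete subspace of cardinality $\lambda$, one has $2^\lambda \leq 2^\kappa$. I will show that $G_\textup{full}(\F^k_\R)$ is separable and exhibit a closed discrete subspace of cardinality $2^{\aleph_0}$. Since $2^{2^{\aleph_0}} > 2^{\aleph_0}$ by Cantor, this contradicts normality.

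For separability, I first observe that the pair-groupoid piece $(\R\setminus\{0\})^2$ is open in $G_\textup{full}(\F^k_\R)$: the source and target maps are continuous, and this set is the complement of the closed isotropy $s^{-1}(0)\cap t^{-1}(0)=J^k_\R$. It is also dense, because any open neighborhood of a point of $J^k_\R$ pulls back through some chart $Q_{\Omega_\theta}$ (Proposition~\ref{omegaatlas}) to a neighborhood of a point $(t,0)\in\Omega_\theta$, which meets $\R\times(\R\setminus\{0\})$ and hence maps into the pair-groupoid piece. Since $(\R\setminus\{0\})^2$ is second countable, $G_\textup{full}(\F^k_\R)$ is separable.

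For the closed discrete subspace, I use the isotropy at $0$, which is closed and identified with $J^k_\R$ by Proposition~\ref{Jkdescrip}. Its cosets of $\R$ are parameterized by $J^{k-1}$, a set of cardinality $2^{\aleph_0}$. Select one representative $\hat\phi$ from each coset (say the unique lift with vanishing $y^k$ coefficient) and let $C$ be the resulting set. Corollary~\ref{c1} provides, for each coset $A$, an ambient open $U_A\subset G_\textup{full}(\F^k_\R)$ with $U_A\cap J^k_\R=A$; thus $U_A\cap C=\{\hat\phi\}$, establishing discreteness. For closedness, points outside $J^k_\R$ already have neighborhoods inside $(\R\setminus\{0\})^2$ disjoint from $C$; and for $p \in J^k_\R \setminus C$, using Hausdorffness (Corollary~\ref{hdrffreg}) to separate $p$ from the unique chosen representative in its coset, intersected with the corresponding $U_A$, yields a neighborhood of $p$ disjoint from $C$.

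The conceptually decisive step is recognizing Jones's Lemma as the right tool for converting ``too many cosets in the isotropy'' into a failure of normality. The technical heart has already been supplied by Corollary~\ref{c1}: it asserts precisely that distinct cosets of $\R$ in the isotropy can be separated by ambient open sets, and this is exactly what promotes the natural choice of one point per coset into a closed discrete set of cardinality continuum. I do not anticipate any serious obstacle beyond arranging these pieces; the argument is essentially bookkeeping once Corollary~\ref{c1} is available.
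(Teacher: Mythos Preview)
Your proof is correct and takes a genuinely different route from the paper's. The paper argues by analogy with the Niemytzki tangent disc: it partitions $J^k_\R$ into the union $A$ of cosets with rational leading coefficient and its complement $B$, both closed, and invokes the Baire category theorem to show these cannot be separated by ambient open sets. You instead use Jones's Lemma, converting Corollary~\ref{c1} directly into a closed discrete subspace of cardinality $2^{\aleph_0}$ and then letting separability do the work via the cardinality inequality $2^{|C|}\le 2^{|D|}$.

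Both approaches exploit the same structural fact---that $G_\textup{full}(\F^k_\R)$ is separable while its isotropy contains continuum-many cosets that are pairwise separated by ambient open sets---but package it differently. The paper's Baire argument is more hands-on and stays closer to the geometry of the charts $\Omega_\theta$, while your argument is cleaner once Corollary~\ref{c1} is in place: it requires no further analysis of how neighborhoods of distinct cosets interact, only that each coset can be isolated. Your approach also makes transparent that non-normality is an immediate consequence of having a separable space with a closed discrete subset of size continuum, which is a reusable template.
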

\begin{proof}
For notational convenience we take $k=2$; the argument for $k>2$ is essentially the same. Let $A \subset J^2_\R$ consist of all $a_1y+a_2y^2 \in J^2$ with $a_1$ rational.  Put $B = J^2_\R \setminus A$. Then $A$ and $B$ are disjoint closed sets partitioning $J^2_\R$ (each is a union of cosets). In a similar spirit to Niemytzki's Tangent Disc (see \cite{Steen-Seebach}, pp.~100), one can use the Baire category theorem to show that $A$ and $B$ cannot be separated by disjoint open sets in $G_\textup{full}(\F^2_\R)$. 
\end{proof}

We summarize various topological  properties  of $G_\textup{full}(\F^k_\R)$ discussed above in the following theorem. 

\begin{thm}\label{thm:topprop}
For every $k\geq 2$, the full holonomy groupoid $G_\textup{full}(\F^k_\R)$:
\begin{enumerate}
\item is equipped with a smooth atlas (i.e. is a smooth space, in the sense of Section~\ref{smoothspaceterm}),
\item is Hausdorff and regular, but not normal,
\item is separable, but not second countable,
\item has two connected components.
\end{enumerate}
\end{thm}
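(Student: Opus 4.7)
The proof is essentially an assembly of results already established in this section, so the plan is to walk through the four items in order, citing the appropriate earlier statements and filling in only the separability claim in part (3), which has not yet been spelled out.

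For (1), the plan is to invoke Proposition~\ref{omegaatlas}: the collection $\{\Omega_\theta : \theta \in \Diff_0(\R)\}$ is a full holonomy atlas and each $Q_{\Omega_\theta}$ is a diffeomorphism onto its image. Transporting the smooth structure of each $\Omega_\theta$ through $Q_{\Omega_\theta}$ gives a smooth atlas, and because any two bisubmersions adapted to each other differ (locally) by a local diffeomorphism (\cite{AS[2007]}, Proposition~2.9), the transition maps are smooth.

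For (2), I would simply quote Corollary~\ref{hdrffreg} (Hausdorff and regular) and Proposition~\ref{notnormal} (not normal). For (4), I would quote Proposition~\ref{2comp}.

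The only item requiring genuine work is (3). For separability, using the identification of Proposition~\ref{abstractiso}, the open subgroupoid $(\R\setminus\{0\})^2 \subset G_\textup{full}(\F^k_\R)$ is second countable in its induced topology (it carries the standard topology of the punctured plane, via the restriction of any $Q_{\Omega_\theta}$ to $\{(t,y) \in \Omega_\theta : y \neq 0\}$). Hence it contains a countable dense subset $D$. I will then argue that $D$ is also dense in all of $G_\textup{full}(\F^k_\R)$: any nonempty open $U \subset G_\textup{full}(\F^k_\R)$ pulls back to a nonempty open subset of some $\Omega_\theta$, and since the set $\{(t,y) \in \Omega_\theta : y \neq 0\}$ is dense in $\Omega_\theta$, the preimage of $U$ meets it, so $U$ meets $(\R\setminus\{0\})^2$ and hence meets $D$. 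For the failure of second countability, I will use Corollary~\ref{c1}: for each coset $A$ of $\R$ in $J^k_\R$ there is an open set $U_A \subset G_\textup{full}(\F^k_\R)$ separating $A$ from $J^k_\R \setminus A$, so in particular $U_A \cap J^k_\R = A$. Since there are uncountably many such cosets $A$, any base of the topology would need to include uncountably many distinct open sets (one intersecting each $A$), contradicting second countability.

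I do not anticipate any real obstacles here: the only non-routine item is the simultaneous combination of separability (a countable dense set lives in the ``bulk'' $(\R\setminus\{0\})^2$) with the failure of second countability (uncountably many pairwise-disjoint traces of open sets on the isotropy fiber $J^k_\R$), and both are immediate from the concrete atlas and from Corollary~\ref{c1}.
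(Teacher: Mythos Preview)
Your proposal is correct and follows essentially the same assembly-of-citations approach as the paper. Two minor simplifications the paper makes: for (1) it simply invokes the general fact (Section~\ref{sec:almostregsmooth}) that the full holonomy groupoid of any almost regular foliation carries a smooth atlas, rather than rebuilding one from the $\Omega_\theta$; and for the failure of second countability in (3) it just notes that second countability is hereditary and that the subspace $J^k_\R$ already fails to be second countable, which is a bit quicker than routing through Corollary~\ref{c1}.
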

\begin{proof}
(1) holds for the full holonomy groupoid of any almost regular foliation (see Section~\ref{sec:almostregsmooth}). (2) is a repetition of Corollary~\ref{hdrffreg} and Corollary~\ref{notnormal}.  $G_\textup{full}(\F^k_\R)$ is separable because it contains  $(\R\setminus\{0\})^2$ as a dense open subset. It is not second countable because it contains $J^k_\R$ and $J^k_\R$ is not second-countable. (4) is a repetition of Proposition~\ref{2comp}. 
\end{proof}

\section{Transverse order \texorpdfstring{$k$}{k} foliations}\label{sec:transordk}

We now define this articles's main objects of study and give some of their basic properties. Recall (Definition~\ref{FkRdef}) that $\F^k_\R \coloneqq \F\{y^k \frac{d}{dy}\}$. This is the foliation of $\R$ consisting of all  compactly-supported vector fields on $\R$ which vanish to order $k$ or more at $0$.

\begin{defn}\label{def:transordk}
Let $M$ be a connected, smooth manifold. A foliation $(M,\F)$ is a \textbf{transverse order $\mathbf{k}$} foliation if:
\begin{enumerate}
\item for each $x \in M$, there  exists an open set $U \subset M$  with $x \in U$ and a local submersion $p : U \to \R$ such that $p^{-1}(\F^k_\R) = \F_U$.
\item $\F$ has exactly one (connected) singular leaf $L$.
\end{enumerate}
Given a transverse order $k$ foliation $(M,\F)$, we  call a submersion such as the one in (1) a \textbf{local $\mathbf{\F}$-$\F^k_\R$-submersions}.  
\end{defn}

\begin{rmk}
The important assumption above is (1). Assumption (2)  
is included mainly for convenience. 
\end{rmk}

The prototypical example of a transverse order $k$ foliation is the following.

\begin{ex}\label{prototype}
Let $\ell$ be a positive integer and $n \coloneqq \ell+1$.   Equip $\R^n = \R^\ell \times \R$ with coordinates $(x,y) = (x_1,\ldots, x_\ell, y)$. Then 
\[ \F^k_{\R^n} \coloneqq \F \left\{y^k\tfrac{\partial}{\partial y},\tfrac{\partial}{\partial x_1}, \ldots, \tfrac{\partial}{\partial x_\ell} \right\}\] 
is a transverse order $k$ foliation of $\R^n$. Indeed, $\F^k_{\R^n} = {\mathrm{pr}_2}^{-1}( \F^k_\R)$, where $\mathrm{pr}_2$ is the final coordinate projection $(x,y) \mapsto y$. The singular leaf of $\F^k_{\R^n}$ is the horizontal hyperplane $\R^\ell \times \{0\}$. 
\end{ex}

Every transverse order $k$ foliation of an $n$-dimensional manifold is locally isomorphic to the foliation in Example~\ref{prototype}.

\begin{propn}\label{zoom}
Let $(M,\F)$ be a transverse order $k$ foliation with singular leaf $L$ and $n \coloneqq \dim(M)\geq 2$. Then, for any $x_0 \in L$, there exists a diffeomorphism $\theta : U \to V$, where $U \subset M$ is an open neighbourhood of $x_0$ and $V \subset \R^n = \R^\ell \times \R$ is an open neighbourhood of $(0,0)$, such that $\theta(x_0)=(0,0)$ and   $\theta_*(\F_U) = (\F^k_{\R^n})_V$.  Moreover, given any local $\F$-$\F^k_\R$-submersions $p$ and any local retraction $\pi$ onto $L$, both defined near $x_0$, there exists a diffeomorphism $\theta$ under which  $p$ becomes $\mathrm{pr}_2$ and $\pi$ becomes $\mathrm{pr}_1$. 
\end{propn}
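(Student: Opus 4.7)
The plan is to reduce the existence statement to the moreover clause, then build $\theta$ explicitly from the given $p$ and $\pi$, using the inverse function theorem for the diffeomorphism property and functoriality of pullback foliations for the foliation compatibility.

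First, I would argue that bare existence follows from the moreover clause: Definition~\ref{def:transordk} supplies a local $\F$-$\F^k_\R$-submersion $p$ near $x_0$, and since $\{0\}$ is the singular leaf of $\F^k_\R$ while $L$ is the singular leaf of $\F$, the pullback of leaves forces $L\cap U_0 = p^{-1}(0)$ on some open $U_0\ni x_0$. Hence $L$ is a smooth hypersurface near $x_0$, and a local retraction $\pi$ onto $L$ exists by a standard tubular neighbourhood argument. Thus it suffices to prove the moreover statement.

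Given $p$ and $\pi$, choose a chart $\phi:L_0\to\R^\ell$ with $\phi(x_0)=0$, where $L_0\subset L$ is an open neighbourhood of $x_0$. After shrinking if necessary to an open $U\ni x_0$ on which $\pi$ takes values in $L_0$, define
\[ \theta \coloneqq (\phi\circ\pi,\,p) : U \to \R^\ell\times\R. \]
By construction $\theta(x_0)=(0,0)$, $\mathrm{pr}_1\circ\theta=\phi\circ\pi$, and $\mathrm{pr}_2\circ\theta=p$. I would then verify that $d\theta_{x_0}$ is a linear isomorphism: since $\pi|_L=\id_L$ we have $d\pi_{x_0}|_{T_{x_0}L}=\id$, and $T_{x_0}L=\ker(dp_{x_0})$ because $L=p^{-1}(0)$ locally. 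Relative to a splitting $T_{x_0}M = T_{x_0}L\oplus\langle v\rangle$ with $dp_{x_0}(v)\neq 0$, the differential $d\theta_{x_0}$ is block triangular with invertible diagonal blocks. The inverse function theorem then yields, after further shrinking $U$, a diffeomorphism $\theta:U\to V$ with $V\subset\R^\ell\times\R$ an open neighbourhood of $(0,0)$.

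It remains to check $\theta_*(\F_U)=(\F^k_{\R^n})_V$. From $\mathrm{pr}_2\circ\theta=p$ and functoriality of pullback of foliations under composition of submersions,
\[ \F_U = p^{-1}(\F^k_\R) = \theta^{-1}\bigl(\mathrm{pr}_2^{-1}(\F^k_\R)\bigr) = \theta^{-1}\bigl((\F^k_{\R^n})_V\bigr), \]
where the middle equality uses $\mathrm{pr}_2^{-1}(\F^k_\R)=\F^k_{\R^n}$ from Example~\ref{prototype}. Applying $\theta_*$, which is a well-defined operation because $\theta$ is a diffeomorphism, gives the claim. The main obstacle is essentially bookkeeping: one must verify that pullback by a composition of submersions equals the iterated pullback and that the various restrictions to open sets behave compatibly, which follow respectively from unwinding the definition of $p^{-1}(\F^k_\R)$ and from Proposition~\ref{foliatedgluing}.
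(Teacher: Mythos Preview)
Your proof is correct and follows essentially the same route as the paper's: build $\theta=(\phi\circ\pi,p)$, invoke the inverse function theorem, and verify $\theta_*(\F_U)=(\F^k_{\R^n})_V$ via $p^{-1}(\F^k_\R)=\theta^{-1}(\mathrm{pr}_2^{-1}(\F^k_\R))$. The only cosmetic difference is that the paper first chooses coordinates making $p=\mathrm{pr}_2$ and then straightens $\pi$, whereas you handle both at once; your more explicit check that $d\theta_{x_0}$ is invertible is a welcome addition.
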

\begin{proof}
Let $p:U_0 \to \R$ be a local $\F$-$\F^k_\R$-submersions with $x_0 \in U_0$. Any submersion is locally a Euclidean projection so, shrinking $U_0$ and choosing coordinates appropriately, we may identify $U_0$ with a ball in $\R^n$ centred at $x_0=(0,0)$ in such a way that $p= \mathrm{pr}_2|_{U_0}$. We 
then have $\F_{U_0} = (\mathrm{pr}_2|_{U_0})^{-1}(\F^k_\R) = (\F^k_{\R^n})_{U_0}$. In particular, $L \cap U_0 = (\R^\ell \times \{0\})\cap U_0$. Now, possibly shrinking $U_0$ further, let $\pi :U_0 \to \R^\ell$ be a submersion satisfying $\pi(x,0)=x$ for all $(x,0) \in U_0 \cap (\R^\ell \times \{0\})$. By 
the inverse function theorem, there is a smaller ball $U \subset U_0$ centred on the origin such that $(x,y) \mapsto (\pi(x,y),y)$ defines a diffeomorphism $\theta : U \to V$, where $V = \theta(U) \subset U_0$ as well. Since, by construction, $\mathrm{pr}_2|_V \circ \theta = \mathrm{pr}_2|_U$, we have $\theta_*(\F_U) = \theta_*((\mathrm{pr}_2|_U)^{-1}(\F^k_\R)) =(\mathrm{pr}_2|_V)^{-1}(\F^k_\R) = (\F^k_{\R^m})_V$. 
\end{proof}

\begin{cor}
Let $(M,\F)$ be a transverse order $k$ foliation with singular leaf $L$. Then, $L$ is a closed submanifold of codimension-$1$.
\end{cor}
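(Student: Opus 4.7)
The plan is to deduce both assertions directly from the local normal form provided by Proposition~\ref{zoom}, supplemented by the observation that a local $\F$-$\F^k_\R$-submersion exists at \emph{every} point of $M$ (not only points of $L$) and records membership in $L$ via its zero set.

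First, fix $x_0 \in L$. Proposition~\ref{zoom} furnishes a chart $\theta : U \to V \subset \R^n = \R^\ell \times \R$ with $\theta(x_0) = (0,0)$ and $\theta_*(\F_U) = (\F^k_{\R^n})_V$. Inspecting Example~\ref{prototype}, the transverse coordinate projection $\mathrm{pr}_2$ is an $\F^k_{\R^n}$-$\F^k_\R$-submersion and the singular leaf of $\F^k_{\R^n}$ is the hyperplane $\R^\ell \times \{0\}$. Hence $\theta$ identifies $L \cap U$ with $(\R^\ell \times \{0\}) \cap V$, exhibiting $L$ as a codimension-$1$ embedded submanifold near $x_0$. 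Covering $L$ by such charts establishes the submanifold structure globally.

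Second, to prove $L$ is closed in $M$, I would show that $M \setminus L$ is open. Fix an arbitrary $x_0 \in M \setminus L$. By Definition~\ref{def:transordk}(1), there exists a local $\F$-$\F^k_\R$-submersion $p : U \to \R$ with $x_0 \in U$. The key claim is that the singular locus of $\F_U = p^{-1}(\F^k_\R)$ coincides with $p^{-1}(0) \cap U$. Indeed, at a point $x \in U$ with $p(x)=0$, every $p$-projectable vector field in $\F_U$ projects to an element of $\F^k_\R$ vanishing at $0$, so $T_x\F \subset \ker(dp_x)$ has dimension at most $\dim M - 1$; whereas at $x$ with $p(x) \neq 0$ the generator $y^k\tfrac{d}{dy}$ of $\F^k_\R$ is nonzero at $p(x)$, so $T_x\F$ also surjects onto $T_{p(x)}\R$ under $dp_x$, giving $T_x\F = T_xM$. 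Because $L$ is by hypothesis the unique singular leaf of $\F$, we conclude $L \cap U = p^{-1}(0) \cap U$. Since $p(x_0) \neq 0$, the open set $\{x \in U : p(x) \neq 0\}$ is a neighbourhood of $x_0$ disjoint from $L$, completing the argument.

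The only step that requires any attention is the identification of the singular locus of $p^{-1}(\F^k_\R)$ with $p^{-1}(0)$. This reduces to the bookkeeping in the preceding paragraph and is essentially automatic from the definition of the pullback foliation, so I do not foresee a genuine obstacle.
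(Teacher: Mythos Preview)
Your proof is correct and follows essentially the same approach as the paper. The first half (embedded codimension-$1$ submanifold via Proposition~\ref{zoom}) is identical. For closedness, the paper simply asserts in one sentence that the other leaves of $\F$ are open, hence $L$ is closed; your argument unpacks this by explicitly computing that $T_x\F = T_xM$ exactly when $p(x)\neq 0$, so that $L\cap U = p^{-1}(0)$ and $M\setminus L$ is open. This is the same idea with more detail supplied.
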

\begin{proof}
The above proposition shows that $L$ is a codimension-$1$ embedded submanifold of $M$. Since the other leaves of $\F$ are open, it follows that $L$ is closed.
\end{proof}

\section{Local results on transverse order \texorpdfstring{$k$}{k} foliations}\label{localresultssection}

In this section, we study the prototypical transverse order $k$ foliation $\F^k_{\R^n}$ of Example~\ref{prototype}. We assume throughout that $k \geq 2$. The main results for $\F^k_{\R^n}$ are Theorem~\ref{local}, which characterizes $\F^k_{\R^n}$-$\F^k_\R$-submersions (Definition~\ref{def:transordk}) in terms of their infinitesimal  behaviour along the singular hyperplane, and Theorem~\ref{fullGforRn}, which computes the restriction of the full holonomy groupoid to the singular hyperplane. Also of importance is  Theorem~\ref{germ=nullorbequiv} which shows that, for any transverse order $k$ foliation $(M,\F)$ and $x$ a point in the singular leaf, $k$-jet equivalence of $\F$-$\F^k_\R$-submersions at $x$ is the same as orbit equivalence under the action of the group of null $\F$-automorphisms at $x$.

Let $n \geq 2$ be an integer and put  $\ell \coloneqq n-1$.  We equip $\R^n = \R^\ell \times \R$ with coordinates $(x,y) = (x_1,\ldots, x_\ell, y)$. The transverse order $k$ foliation under discussion is:  
\[ \F^k_{\R^n} \coloneqq \F \left\{y^k\tfrac{\partial}{\partial y},\tfrac{\partial}{\partial x_1}, \ldots, \tfrac{\partial}{\partial x_\ell} \right\} =\mathrm{pr}_2^{-1}(\F^k_\R).  \] 
The singular leaf of $\F^k_{\R^n}$ is the horizontal hyperplane:
\[ L \coloneqq \R^\ell \times \{0\}. \]
The following   result shows that $\F^k_{\R^n}$-automorphisms and $\F^k_{\R^n}$-$\F^k_\R$-submersions are closely related.

\begin{propn}\label{difftosub}
Let $\theta : \R^n \to \R^n$ be a diffeomorphism. Then, $\theta$ preserves $\F^k_{\R^n}$ if and only  if $p^{-1}(\F^k_\R) = \F^k_{\R^n}$, where $p \coloneqq \mathrm{pr}_2 \circ \theta :  \R^n \to \R$. 
\end{propn}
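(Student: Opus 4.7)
The plan is to observe that this proposition is essentially a direct consequence of the functoriality of pullback of foliations, once it is combined with the defining identity $\F^k_{\R^n} = \mathrm{pr}_2^{-1}(\F^k_\R)$ and the fact that $p = \mathrm{pr}_2 \circ \theta$. No computation involving the specific generators $y^k \frac{\partial}{\partial y}, \frac{\partial}{\partial x_i}$ should be needed.

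First I would record two general facts about the pullback operation on foliations defined in the subsection ``Pullbacks and automorphisms.'' (a) Pullback is compatible with composition: if $\theta: M \to N$ is a diffeomorphism and $q: N \to \R$ is a submersion, so that $q \circ \theta: M \to \R$ is again a submersion, then for any foliation $\mathcal{E}$ on $\R$ one has
\[ (q\circ\theta)^{-1}(\mathcal{E}) = \theta^{-1}\bigl(q^{-1}(\mathcal{E})\bigr). \]
This is a routine unpacking of the definition: a vector field $X$ on $M$ lies in $(q \circ \theta)^{-1}(\mathcal{E})$ iff locally it is a $C^\infty_c(M)$-combination of $(q\circ\theta)$-projectable fields whose projections lie in $\mathcal{E}$, and because $\theta$ is a diffeomorphism this is exactly the condition that $\theta_*X$ locally be a combination of $q$-projectable fields projecting into $\mathcal{E}$. (b) For a diffeomorphism $\theta$, the pullback $\theta^{-1}(\mathcal{F})$ of a foliation (viewed as a submersion pullback) coincides with the ordinary pushforward $(\theta^{-1})_*(\mathcal{F})$, since every vector field on the domain is uniquely $\theta$-projectable. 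Consequently, $\theta^{-1}(\mathcal{F}) = \mathcal{F}$ if and only if $\theta_*(\mathcal{F}) = \mathcal{F}$.

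With these two observations in hand, the proposition is immediate. Applying (a) with $q = \mathrm{pr}_2$ gives
\[ p^{-1}(\F^k_\R) \;=\; \theta^{-1}\bigl(\mathrm{pr}_2^{-1}(\F^k_\R)\bigr) \;=\; \theta^{-1}(\F^k_{\R^n}). \]
Therefore the equality $p^{-1}(\F^k_\R) = \F^k_{\R^n}$ holds if and only if $\theta^{-1}(\F^k_{\R^n}) = \F^k_{\R^n}$, which by (b) is the same as $\theta_*(\F^k_{\R^n}) = \F^k_{\R^n}$, i.e., $\theta$ preserves $\F^k_{\R^n}$.

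The only ``obstacle,'' and it is a very mild one, is verifying (a) and (b) carefully from the stated definition of pullback by a submersion; these are standard and essentially contained in the discussion of pullbacks in Section~\ref{chap:prelim}, so one could also just cite that material. Given these, no further work is required.
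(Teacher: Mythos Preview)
Your proof is correct and takes essentially the same approach as the paper: the paper's proof is the single line $\theta^*(\F^k_{\R^n}) = \theta^{-1}(\mathrm{pr}_2^{-1}(\F^k_\R)) = p^{-1}(\F^k_\R)$, which is exactly your chain of equalities, just stated without spelling out the justifications (a) and (b).
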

\begin{proof}
We have $\theta^*(\F^k_{\R^n}) = \theta^{-1}( \mathrm{pr}_2 ^{-1}(\F^k_\R)) = p^{-1}(\F^k_\R)$. 
\end{proof}

The following terminology will be convenient.

\begin{defn}
Let $\theta$ be a diffeomorphism of $\R^n$. We say that $\theta$ is \emph{vertical} if it has the form $\theta(x,y) = (x,\theta_x(y))$  where $\theta_x$, $x \in \R^\ell$ is a smoothly-varying diffeomorphism of $\R$. Similarly, we say that $\theta$ is \emph{horizontal} if it has the form $\theta(x,y) = (\theta_y(x), y)$ where $\theta_y$, $y \in \R$ is a smoothly varying diffeomorphism of $\R^\ell$. 
\end{defn}
Since a horizontal diffeomorphism $\theta$ satisfies $\mathrm{pr}_2\circ\theta=\theta$, the following is an immediate consequence of Proposition~\ref{difftosub}.
\begin{lemma}
Any horizontal diffeomorphism preserves $\F^k_{\R^n}$.\qed 
\end{lemma}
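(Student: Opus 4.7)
The plan is to apply Proposition~\ref{difftosub} directly. Given a horizontal diffeomorphism $\theta(x,y)=(\theta_y(x),y)$, I would first observe that the second coordinate is preserved pointwise, so $\mathrm{pr}_2\circ\theta=\mathrm{pr}_2$. Setting $p\coloneqq\mathrm{pr}_2\circ\theta$ as in Proposition~\ref{difftosub}, this gives $p=\mathrm{pr}_2$.

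Next, I would substitute into the criterion of Proposition~\ref{difftosub}: the condition $p^{-1}(\F^k_\R)=\F^k_{\R^n}$ becomes $\mathrm{pr}_2^{-1}(\F^k_\R)=\F^k_{\R^n}$, which holds by the very definition of $\F^k_{\R^n}$ (see Example~\ref{prototype}, where $\F^k_{\R^n}$ is defined precisely as $\mathrm{pr}_2^{-1}(\F^k_\R)$). Proposition~\ref{difftosub} then yields $\theta_*(\F^k_{\R^n})=\F^k_{\R^n}$, as required.

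There is no real obstacle here; the lemma is essentially a one-line consequence of the previous proposition combined with the definition of $\F^k_{\R^n}$ as a pullback along the vertical projection. The only thing to verify is the elementary identity $\mathrm{pr}_2\circ\theta=\mathrm{pr}_2$ for horizontal $\theta$, which is immediate from the definition of ``horizontal''. I would expect the written proof to be a single sentence invoking Proposition~\ref{difftosub}.
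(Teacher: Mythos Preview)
Your proposal is correct and matches the paper's approach exactly: the paper also derives the lemma as an immediate consequence of Proposition~\ref{difftosub} via the identity $\mathrm{pr}_2\circ\theta=\mathrm{pr}_2$ for horizontal $\theta$ (the paper's text has a typo, writing $\mathrm{pr}_2\circ\theta=\theta$, but the intended argument is yours).
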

On the other hand, an inverse function theorem argument gives the following.
\begin{lemma}\label{verhorizdecomp}
Let $\theta$ be any diffeomorphism of $\R^n$ which preserves $L$. Then, locally near any point of $L$, we can write $\theta= \theta_h \circ \theta_v$ where $\theta_v$ is vertical and $\theta_h$ is horizontal. \qed
\end{lemma}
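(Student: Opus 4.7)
The plan is to build $\theta_v$ directly from the last component of $\theta$ and then define $\theta_h \coloneqq \theta \circ \theta_v^{-1}$, verifying horizontality. Write $\theta(x,y) = (\theta_1(x,y),\theta_2(x,y))$ with $\theta_1 \in \R^\ell$, $\theta_2 \in \R$. The hypothesis $\theta(L) = L$ says precisely that $\theta_2(x,0) = 0$ for every $x \in \R^\ell$. Set
\[
\theta_v(x,y) \coloneqq (x, \theta_2(x,y)).
\]
If this is a local diffeomorphism, then any diffeomorphism $\theta_h$ satisfying $\theta = \theta_h \circ \theta_v$ will automatically send $(x, \theta_2(x,y)) \mapsto (\theta_1(x,y),\theta_2(x,y))$, so that its second component is simply the identity; that is exactly the definition of horizontality.

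The one thing to check is that $\theta_v$ is a local diffeomorphism near each $(x_0,0) \in L$. Differentiating the identity $\theta_2(x,0) = 0$ in $x$ gives $\partial_x\theta_2(x,0) = 0$. Hence the Jacobian of $\theta$ at $(x_0, 0)$ is block lower-triangular,
\[
d\theta_{(x_0,0)} = \begin{pmatrix} \partial_x \theta_1(x_0,0) & \partial_y \theta_1(x_0,0) \\ 0 & \partial_y \theta_2(x_0,0) \end{pmatrix},
\]
and its invertibility forces $\partial_y \theta_2(x_0,0) \neq 0$. The Jacobian of $\theta_v$ at the same point is block lower-triangular with diagonal blocks $I_\ell$ and $\partial_y\theta_2(x_0,0)$, hence invertible. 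The inverse function theorem now yields an open neighbourhood of $(x_0,0)$ on which $\theta_v$ is a diffeomorphism onto its image, and on that neighbourhood $\theta_v$ is manifestly vertical in the sense of the definition (its second coordinate $y \mapsto \theta_2(x,y)$ is, for each fixed $x$, a local diffeomorphism of $\R$).

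Finally, set $\theta_h \coloneqq \theta \circ \theta_v^{-1}$ on the (possibly smaller) open set where everything is defined. For any $(x,z)$ in its domain, write $\theta_v^{-1}(x,z) = (x,y)$ with $z = \theta_2(x,y)$; then
\[
\theta_h(x,z) = \theta(x,y) = (\theta_1(x,y),\theta_2(x,y)) = (\theta_1(x,y), z),
\]
so the second coordinate of $\theta_h$ is $z$, confirming that $\theta_h$ is horizontal.

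There is really no serious obstacle here; the only non-cosmetic step is noting that $\partial_y\theta_2(x_0,0) \neq 0$, which is forced by invertibility of $d\theta$ together with the vanishing $\partial_x\theta_2(x_0,0) = 0$ coming from $\theta(L)=L$. Everything else is a straightforward inverse function theorem argument as the statement suggests.
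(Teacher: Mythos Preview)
Your proof is correct and is precisely the inverse function theorem argument the paper alludes to (the paper omits the proof, marking the lemma with \qed). One cosmetic slip: the Jacobian you display for $d\theta_{(x_0,0)}$ is block \emph{upper}-triangular, not lower-triangular, but this does not affect the conclusion that $\partial_y\theta_2(x_0,0)\neq 0$.
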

The crux, therefore, is to understand which vertical diffeomorphisms $\theta$ preserve $\F^k_{\R^n}$. It is obviously necessary that $\theta$ preserve $L$, but this does not suffice. For example, $(x,y)\mapsto (x,e^xy)$ does not preserve $\F^2_{\R^2}$ (see Example~\ref{planeex}).
\begin{lemma}\label{theta'}
Let $\theta(x,y)=(x,\theta_x(y)$ be a  vertical diffeomorphism of $\R^n$ which preserves $L$. Then $\theta$ preserves $\F^k_{\R^n}$ if and only if $\theta_x'(0), \theta_x''(0), \ldots, \theta_x^{(k-1)}(0)$ are independent of $x$. 
\end{lemma}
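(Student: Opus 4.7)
The plan is to invoke Proposition~\ref{difftosub} to rephrase the condition as an equality of pullback foliations, then characterize each foliation by a simple divisibility condition and reduce to a Taylor expansion argument on the coefficients of $\theta_x(y)$.

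First, since $\theta(x,y) = (x,\theta_x(y))$ is vertical, $p \coloneqq \mathrm{pr}_2 \circ \theta$ satisfies $p(x,y) = \theta_x(y)$. By Proposition~\ref{difftosub}, $\theta$ preserves $\F^k_{\R^n}$ if and only if $p^{-1}(\F^k_\R) = \F^k_{\R^n}$. I would then establish two dual characterizations: a compactly supported vector field $X$ on $\R^n$ lies in $\F^k_{\R^n} = \mathrm{pr}_2^{-1}(\F^k_\R)$ iff $Xy \in (y^k)$, and lies in $p^{-1}(\F^k_\R)$ iff $Xp \in (p^k)$. Each follows from choosing the relevant function ($y$ or $p$) as a transverse coordinate, so that the foliation assumes the standard form generated by $z^k \tfrac{\partial}{\partial z}$ together with a Euclidean projection. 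Moreover, since $\theta(L)=L$ and $\theta_x'(0)\neq 0$, one has $\theta_x(y) = y \cdot u(x,y)$ for a smooth, nowhere-vanishing $u$, so the ideals $(p^k)$ and $(y^k)$ coincide in $C^\infty(\R^n)$.

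Given these reductions, the lemma reduces to: the equivalence $Xp \in (y^k) \Leftrightarrow Xy \in (y^k)$ holds for every compactly supported $X$ if and only if $\theta_x^{(j)}(0)$ is independent of $x$ for $j=1,\ldots,k-1$. Writing $X = A\tfrac{\partial}{\partial y} + \sum_i B_i \tfrac{\partial}{\partial x_i}$ with $A, B_i \in C_c^\infty(\R^n)$, one computes $Xy = A$ and $Xp = A\,\theta_x'(y) + \sum_i B_i\,\partial_{x_i}\theta_x(y)$. Because $\theta_x'(y)$ is nowhere zero, $A\,\theta_x'(y) \in (y^k)$ iff $A \in (y^k)$. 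Thus the above equivalence holds for all $X$ iff $\partial_{x_i}\theta_x(y) \in (y^k)$ for each $i$; the nontrivial direction comes from taking each $B_i$ to be a bump function equal to $1$ near an arbitrary point, giving the divisibility locally and hence globally.

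Finally, by Taylor's theorem with integral remainder, a smooth $f(x,y)$ lies in $(y^k)$ if and only if $\partial_y^j f(x,0) = 0$ for $j=0,\ldots,k-1$. Applying this to $f = \partial_{x_i}\theta_x(y)$ and interchanging derivatives yields $\partial_{x_i}\bigl(\theta_x^{(j)}(0)\bigr) = 0$ for $j=0,\ldots,k-1$. The $j=0$ case is automatic since $\theta(L)=L$ gives $\theta_x(0) = 0$, while the cases $j=1,\ldots,k-1$ say exactly that $\theta_x^{(j)}(0)$ is independent of $x_i$. Ranging over $i$ produces the claimed independence of $x$. The only real care point—not a genuine obstacle—is the characterization of $p^{-1}(\F^k_\R)$ by the single condition $Xp \in (p^k)$; I would justify this by choosing local coordinates in which $p$ is the last coordinate (possible since $p$ is a submersion), in which case $p^{-1}(\F^k_\R)$ manifestly equals the model foliation with $p$ as transverse coordinate, and then passing from this local fact to the global ideal condition.
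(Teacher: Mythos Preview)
Your proof is correct, and both it and the paper's converge on the same core condition: $\partial_{x_i}\theta_x(y)$ must lie in the ideal $(y^k)$ for each $i$, which via Taylor's theorem is exactly the stated independence of $\theta_x^{(j)}(0)$ for $j=1,\ldots,k-1$.

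The packaging differs. The paper works directly with the pushforward of the generators: it observes that any vertical diffeomorphism automatically preserves the sub-foliation generated by $y^k\tfrac{\partial}{\partial y}$, then computes $\theta_*(\tfrac{\partial}{\partial x_i}) = \tfrac{\partial}{\partial x_i} + (f_i\circ\theta^{-1})\tfrac{\partial}{\partial y}$ with $f_i(x,y)=\partial_{x_i}\theta_x(y)$, so preservation of $\F^k_{\R^n}$ reduces immediately to $f_i \in (y^k)$. You instead route through Proposition~\ref{difftosub} to recast the question as $p^{-1}(\F^k_\R)=\F^k_{\R^n}$ with $p(x,y)=\theta_x(y)$, then characterize both sides by the ideal conditions $Xp\in(p^k)=(y^k)$ and $Xy\in(y^k)$. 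Your framing makes the ideal-theoretic content more explicit and treats the two foliations symmetrically; the paper's is a shade more economical since it never needs to justify the characterization of $p^{-1}(\F^k_\R)$ via $Xp\in(p^k)$, which in your argument requires the extra step of straightening $p$ locally.
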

\begin{proof}
Suppose that $\theta$ is vertical. Firstly, even without the assumptions on the derivatives, one has that $\theta$ preserves the  foliation singly-generated by $y^k \frac{\partial}{\partial y}$. Indeed, for any $f \in C_c^\infty(\R^n)$, we have $\theta^*(f \frac{d}{dy}) = (f \circ \theta^{-1}) \theta^*(\frac{\partial}{\partial y})$. Since $f \mapsto f \circ \theta^{-1}$ preserves the ideal $I^k_L \subset C_c^\infty(\R^n)$ of functions vanishing to order $k$ on $L$, and since $\theta_*(\frac{\partial}{\partial y}) = \theta_x'(y) \frac{d}{dy}$ where $\theta_x'(y)$ is nowhere vanishing, the claim follows.

Next,  $\theta_*(\tfrac{\partial}{\partial x_i}) = \tfrac{\partial}{\partial x_i} +  (f_i \circ \theta^{-1}) \tfrac{\partial}{\partial y}$, where $f_i(x,y) = \tfrac{\partial}{\partial x_i} \theta_x(y)$. The foliation $\F^k_{\R^n}$ is preserved by $\theta$ if and only if $f_i \circ \theta^{-1}$, or equivalently $f_i$, belongs to  the ideal $I^k_L$ for  $i=1,\ldots,\ell$. In other words, we need $\frac{\partial}{\partial x_i} \theta_x^{(r)}(0) =0$ for all $x \in \R^\ell$, $i=1,\ldots, \ell$, $r=1,\ldots, k-1$, proving (2). 
\end{proof}

We now translate Lemma~\ref{theta'} into the following result about submersions which will play an important role.

\begin{thm}\label{local}
Let $U \subset \R^n$ be a convex open set containing $(0,0)$. Let  $p : U \to \R$ be a submersion with $p^{-1}(0) = L \cap U$. Then, the following are equivalent:
\begin{enumerate}
\item $p^{-1}(\F^k_\R) = (\F^k_{\R^n})_U$, i.e. $p$ is a local $\F^k_{\R^n}$-$\F^k_\R$-submersion.
\item $\frac{\partial^rp}{\partial y^r}$ is constant on $L \cap U$ for $r=1,\ldots,k-1$. 
\item There exist constants $a_1,\ldots,a_{k-1} \in \R$ with $a_1 \neq 0$ and a smooth function $f$ on $L\cap U$  such that $j^k_{(x_0,0)}(p) = a_1 y + \ldots a_{k-1} y^{k-1} + f(x_0) y^k$ for all $(x_0,0) \in L \cap U$.
\item There exist constants $a_1,\ldots,a_{k-1} \in \R$ with $a_1 \neq 0$ and a smooth function $f$ on $U$ such that $p(x,y)=a_1 y + \ldots a_{k-1} y^{k-1} + f(x,y) y^k$ for all $(x,y) \in U$. 
\end{enumerate}
\end{thm}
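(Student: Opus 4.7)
The plan is to split the proof into two halves: the purely jet-theoretic equivalences $(2) \Leftrightarrow (3) \Leftrightarrow (4)$, which use only Taylor expansion along $L$ and Hadamard's lemma, and the foliation-theoretic equivalence $(1) \Leftrightarrow (2)$, which I will reduce to Lemma~\ref{theta'} by manufacturing an explicit local vertical diffeomorphism from $p$.

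For the jet equivalences, the key observation I will use is that $p$ vanishes identically on $L \cap U$, so all pure-$x$ partial derivatives of $p$ vanish at every $(x_0, 0) \in L \cap U$. Consequently the $k$-jet of $p$ at $(x_0, 0)$ takes the form $\sum_{j \geq 1,\, |\alpha|+j \leq k} c_{\alpha,j}(x_0) (x-x_0)^\alpha y^j$, where $c_{\alpha, j}(x_0)$ is proportional to $\partial_x^\alpha \partial_y^j p(x_0, 0)$. Condition (3) amounts to $c_{\alpha, j} \equiv 0$ for $|\alpha| \geq 1$, $1 \leq j \leq k-1$, together with $c_{0, j}$ being constant in $x_0$ for those $j$; since $U$ convex forces $L \cap U$ connected, this amounts to condition (2), giving $(2) \Leftrightarrow (3)$. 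The direction $(4) \Rightarrow (3)$ is immediate because differentiating $f(x, y) y^k$ with respect to any $x_i$ retains the factor $y^k$, so the $k$-jet of $f(x, y) y^k$ at $(x_0, 0)$ is simply $f(x_0, 0) y^k$. For $(3) \Rightarrow (4)$, I will set $g(x, y) := p(x, y) - (a_1 y + \ldots + a_{k-1} y^{k-1})$; condition (3) gives $j^{k-1}_{(x_0, 0)}(g) = 0$ for every $x_0$, i.e.\ $g$ vanishes to order $k$ along $L$, and Taylor's formula with integral remainder in the $y$-direction (applicable because $U$ is convex) exhibits $g$ as $y^k f(x, y)$ with $f$ smooth.

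For the bridge $(1) \Leftrightarrow (2)$, I define $\Theta : U \to \R^n$ by $\Theta(x, y) := (x, p(x, y))$. Because $p$ vanishes on $L$, each $\partial_{x_i} p$ vanishes on $L$, so the submersion hypothesis forces $\partial_y p(x_0, 0) \neq 0$ for every $(x_0, 0) \in L \cap U$; accordingly $\det(D\Theta) = \partial_y p$ is nonzero on all of $L \cap U$. The inverse function theorem then yields an open neighbourhood $V$ of $L \cap U$ in $U$ on which $\Theta$ restricts to a diffeomorphism onto its image. The restriction $\Theta|_V$ is a vertical, $L$-preserving diffeomorphism with $\mathrm{pr}_2 \circ \Theta|_V = p|_V$, so Proposition~\ref{difftosub} (whose proof is local) gives that $p|_V$ is an $\F^k_{\R^n}$-$\F^k_\R$-submersion if and only if $\Theta|_V$ preserves $\F^k_{\R^n}$. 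Lemma~\ref{theta'} then translates this into the statement that $\partial_y^r p(\cdot, 0)$ is constant on $L \cap U$ for $r = 1, \ldots, k-1$, which is condition (2).

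To upgrade from equivalence of foliations on $V$ to equivalence on all of $U$, I will invoke the sheaf property of foliations (Proposition~\ref{foliatedgluing}(1)) applied to the cover $U = V \cup (U \setminus L)$: on $U \setminus L$ both $(\F^k_{\R^n})_U$ and $p^{-1}(\F^k_\R)$ restrict to the full module of compactly-supported vector fields, because $y^k \partial_y$ is a nowhere-vanishing multiple of $\partial_y$ off $L$, and similarly $p$ restricts to a submersion into $\R \setminus \{0\}$ where $\F^k_\R$ is likewise trivialized. Combined with the equality on $V$, the two foliations agree on $U$. The only step requiring real care will be the combinatorial bookkeeping of jet coefficients in $(2) \Leftrightarrow (3)$ and verifying the local version of Proposition~\ref{difftosub}; the rest is a straightforward assembly of the lemmas already in hand.
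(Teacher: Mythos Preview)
Your proposal is correct and follows essentially the same route as the paper: build the vertical map $\Theta(x,y)=(x,p(x,y))$, invoke Proposition~\ref{difftosub} and Lemma~\ref{theta'} for $(1)\Leftrightarrow(2)$, and use Taylor expansion along $L$ for the remaining equivalences. The one place you are more careful than the paper is in not shrinking $U$: the paper simply replaces $U$ by a smaller neighbourhood of $L\cap U$ on which $\Theta$ is a diffeomorphism, whereas you keep $U$ fixed and recover the equality of foliations on all of $U$ via the sheaf property and the observation that both foliations are trivially the full module on $U\setminus L$; this is a genuine (if minor) gain in precision, since the statement is about a fixed $U$.
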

\begin{proof}
Define $\theta:U \to \R^n$ by $\theta(x,y) = (x,p(x,y))$. By the inverse function theorem,  $\theta$ is a diffeomorphism in a neighbourhood of $L$. Shrinking $U$, we may assume that $\theta$ maps $U$ diffeomorphically onto $\theta(U)$.  By Proposition~\ref{difftosub}, $\theta$ preserves $\F^k_{\R^m}$ if and only if $p^{-1}(\F^k_\R) =(\F^k_{\R^n})_U$. The equivalence of statements (1) and (2) then follows from  Lemma~\ref{theta'}. Obviously (3) implies (2). Conversely, (2) and the fact that $p$ itself vanishes on $L$ imply that 
\[ 
\left( \tfrac{\partial}{\partial x_1} \right)^{\alpha_1}
\cdots
\left( \tfrac{\partial}{\partial x_\ell} \right)^{\alpha_\ell}
\left( \tfrac{\partial}{\partial y} \right)^\beta
p  (x,0) = 0\]
whenever $\beta \leq k-1$ and at least one of $\alpha_1,\ldots,\alpha_\ell$ is nonzero. Statement (3) follows. Clearly (4) implies (3) and, by a Taylor series argument, (4) implies (3) as well. 
\end{proof}

One may interpret the above theorem as saying that the infinitesimal structure of local $\F^k_{\R^n}$-$\F^k_\R$-submersions is very rigid along the singular leaf $L$:  their $k$th order Taylor expansions involve only  the variable $y$, and none of the variables $x_1,\ldots,x_\ell$. Furthermore, the coefficients of $y,\ldots, y^{k-1}$ remain constant as the basepoint of the Taylor expansion varies in $L$.  As a further demonstration of this rigidity principle, the following corollary says that, working locally and up to order $k-1$, any two local $\F$-$\F^k_\R$-submersions are related by a unique polynomial.

\begin{cor}
Let $(M,\F)$ be any transverse order $k$ foliation with singular leaf $L$. Let $p$ and $q$ be local $\F$-submersions defined at a point $x_0 \in L$. Then, on some neighbourhood $U$ of $x_0$, there exist unique constants $a_1,\ldots, a_{k-1} \in \R$ and a unique smooth, real-valued function $f:U \to \R$ such that 
\[ q = a_1 p + \ldots a_{k-1} p^{k-1}  + f p^k \]
holds on $U$. Necessarily, $a_1 \neq 0$. 
\end{cor}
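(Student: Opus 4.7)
The plan is to reduce the statement to the model setting and then read off both existence and uniqueness from Theorem~\ref{local}(4).

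First, note that a local $\F$-$\F^k_\R$-submersion vanishes on the singular leaf: since the pullback $p^{-1}(\F^k_\R) = \F_U$ respects singular loci, we have $p^{-1}(0) = L\cap U$ and in particular $p(x_0)=q(x_0)=0$. Now invoke Proposition~\ref{zoom} applied to $p$ (together with any local retraction onto $L$): there exists a diffeomorphism $\theta : U \to V \subset \R^n$ with $\theta(x_0)=(0,0)$, $\theta_*(\F_U) = (\F^k_{\R^n})_V$, and $\mathrm{pr}_2 \circ \theta = p$. Pushing $q$ forward via $\theta$, we obtain a smooth function $\widetilde q \coloneqq q \circ \theta^{-1}$ on $V$ which is a local $\F^k_{\R^n}$-$\F^k_\R$-submersion vanishing on $L \cap V$ (shrinking $V$ to be a convex open set containing $(0,0)$).

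Apply Theorem~\ref{local}(4) to $\widetilde q$: there exist constants $a_1,\ldots,a_{k-1} \in \R$ with $a_1 \neq 0$ and a smooth $\widetilde f : V \to \R$ such that
\[
\widetilde q(x,y) = a_1 y + a_2 y^2 + \ldots + a_{k-1} y^{k-1} + \widetilde f(x,y) y^k.
\]
Pulling back through $\theta$ and using $\mathrm{pr}_2 \circ \theta = p$, this becomes
\[
q = a_1 p + a_2 p^2 + \ldots + a_{k-1} p^{k-1} + f p^k
\]
on $U$, where $f \coloneqq \widetilde f \circ \theta$. This gives existence, with $a_1 \neq 0$ automatically.

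For uniqueness, suppose $\sum_{i=1}^{k-1} a_i p^i + f p^k = \sum_{i=1}^{k-1} a_i' p^i + f' p^k$ on some common neighbourhood of $x_0$. Setting $b_i \coloneqq a_i - a_i'$ and $g \coloneqq f - f'$, we have
\[
b_1 p + b_2 p^2 + \ldots + b_{k-1} p^{k-1} + g\, p^k = 0.
\]
Transport this equation back to the model coordinates via $\theta$; since $p$ becomes the coordinate $y$, we obtain $\sum_{i=1}^{k-1} b_i y^i + \widetilde g(x,y) y^k = 0$ on $V$. Taking successive $y$-derivatives at any point $(x,0)\in L\cap V$ forces $b_1 = \cdots = b_{k-1} = 0$, after which $\widetilde g(x,y)y^k = 0$ implies $\widetilde g \equiv 0$ by continuity, and hence $g \equiv 0$ on $U$.

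The proof is essentially a straightforward packaging of Proposition~\ref{zoom} and Theorem~\ref{local}; no step is a genuine obstacle. The only mild care needed is to (i) observe that $q$ vanishes on $L$ so that Theorem~\ref{local}(4) is applicable after straightening coordinates, and (ii) note that uniqueness is intrinsic to $M$ even though we verify it after pulling back to the model, which is valid because $\theta$ is a diffeomorphism.
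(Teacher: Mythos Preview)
Your proof is correct and follows essentially the same approach as the paper: reduce via Proposition~\ref{zoom} to the model case with $p=\mathrm{pr}_2$, then apply Theorem~\ref{local}(4). You have simply made explicit the details the paper leaves to the reader, including the uniqueness argument, which the paper does not spell out.
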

\begin{proof}
By Proposition~\ref{zoom}, we may assume $M=\R^n$, $\F=\F^k_{\R^n}$, $x_0=(0,0)$ and $p = \mathrm{pr}_2$, whence the claim follows from Theorem~\ref{local}~(4).
\end{proof}

Taking Proposition~\ref{difftosub} and Theorem~\ref{local} together gives a good understanding of  the structure of $\F^k_{\R^n}$-automorphisms and $\F^k_{\R^n}$-$\F^k_\R$-submersions. Our next task is to mod out by null automorphisms  (Definition~\ref{nulldef}). It is therefore necessary to involve some $\F^k_{\R^n}$-bisubmersions. We can easily  convert an $\F^k_\R$-bisubmersion into an $\F^k_{\R^n}$-bisubmersion by taking the product with the pair groupoid $L^2$. 
\begin{lemma}
If $(W,t,s)$ is an $\F^k_\R$-bisubmersion, then $(L^2\times W, \mathrm{pr}_1\times t,\mathrm{pr}_2\times s)$ 
is an $\F^k_{\R^n}$-bisubmersion.
\end{lemma}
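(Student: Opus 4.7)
The plan is to verify each of the bisubmersion axioms for the triple $\widetilde W \coloneqq (L^2 \times W, \mathrm{pr}_1 \times t, \mathrm{pr}_2 \times s)$. Write $p$ and $q$ for the proposed target and source maps, and let $\pi_W : L^2 \times W \to W$ denote the projection. It is immediate that $p$ and $q$ are submersions, so the work lies in establishing the module identity
\[ p^{-1}(\F^k_{\R^n}) = q^{-1}(\F^k_{\R^n}) = C_c^\infty(\ker dp) + C_c^\infty(\ker dq). \]

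First I would reduce pullback along $p$ to pullback along $\pi_W$. Because $\F^k_{\R^n} = \mathrm{pr}_2^{-1}(\F^k_\R)$ and pullback of foliations is functorial under composition of submersions, we have $p^{-1}(\F^k_{\R^n}) = (\mathrm{pr}_2 \circ p)^{-1}(\F^k_\R)$. Since $\mathrm{pr}_2 \circ p$ simply returns $t(w)$, it factors as $t \circ \pi_W$, giving $p^{-1}(\F^k_{\R^n}) = \pi_W^{-1}(t^{-1}(\F^k_\R))$. Symmetrically $q^{-1}(\F^k_{\R^n}) = \pi_W^{-1}(s^{-1}(\F^k_\R))$. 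The bisubmersion property of $W$ says that $t^{-1}(\F^k_\R) = s^{-1}(\F^k_\R) = C_c^\infty(\ker dt) + C_c^\infty(\ker ds)$, which settles the first equality above.

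For the second equality, direct inspection at a point $((x_1,x_2),w)$ gives
\[ \ker dp = \{0\} \oplus T_{x_2} L \oplus \ker dt_w, \qquad \ker dq = T_{x_1} L \oplus \{0\} \oplus \ker ds_w. \]
Since these are direct sum subbundles, their $C_c^\infty$-sections split accordingly, and summing yields
\[ C_c^\infty(\ker dp) + C_c^\infty(\ker dq) = C_c^\infty(\ker d\pi_W) + C_c^\infty(\pi_W^*\ker dt) + C_c^\infty(\pi_W^*\ker ds). \]
Meanwhile, unfolding the definition of $\pi_W^{-1}$ applied to $C_c^\infty(\ker dt) + C_c^\infty(\ker ds)$ produces exactly these three pieces: the vertical vector fields for $\pi_W$ (that is, sections of $TL^2 = \ker d\pi_W$), together with $C_c^\infty(L^2 \times W)$-multiples of lifts $\widetilde Y$ of elements $Y$ of $C_c^\infty(\ker dt)$ and $C_c^\infty(\ker ds)$.

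The step I expect to demand the most care is the last identification, namely that $C_c^\infty(L^2 \times W) \cdot \{\widetilde Y : Y \in C_c^\infty(W;\ker dt)\}$ coincides with every compactly supported section of the pullback bundle $\pi_W^{*}(\ker dt)$. Since $L$ is not compact, a lifted vector field $\widetilde Y$ is not itself compactly supported, so cutoff functions must be introduced. This is dealt with by a standard partition of unity argument using the local triviality of $\ker dt$ as a vector bundle over $W$, which reduces the claim to the obvious fact that compactly supported sections of a trivial bundle on $L^2 \times W$ are tuples of elements of $C_c^\infty(L^2 \times W)$. Once this bookkeeping is done, the three modules displayed above match those in $\pi_W^{-1}(t^{-1}(\F^k_\R))$ and the proof concludes.
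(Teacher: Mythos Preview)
Your proof is correct and follows the same route as the paper: the paper's entire argument is the pair of commutative squares expressing $\mathrm{pr}_2\circ(\mathrm{pr}_1\times t)=t\circ\pi_W$ and $\mathrm{pr}_2\circ(\mathrm{pr}_2\times s)=s\circ\pi_W$, which is exactly your reduction of $p^{-1}(\F^k_{\R^n})$ and $q^{-1}(\F^k_{\R^n})$ to $\pi_W$-pullbacks. The paper leaves the kernel computation and the partition-of-unity bookkeeping implicit, whereas you have written them out; your added care on the identification $C_c^\infty(L^2\times W)\cdot\{\widetilde Y\}=C_c^\infty(\pi_W^*\ker dt)$ is warranted and handled correctly.
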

\begin{proof}
This conclusion follows from consideration of the commutative diagrams:
\[ \begin{tikzcd}
L^2 \times W \ar[r] \ar[d,"{\mathrm{pr}_2\times\sigma}"]  & W \ar[d,"\sigma"]  & L^2 \times W \ar[r] \ar[d,"{\mathrm{pr}_1\times\tau}"]   & W \ar[d,"\tau"] \\
\R^n \ar[r] & \R & \R^n \ar[r] & \R\nospacepunct{.} 
\end{tikzcd}\]
\end{proof}
In particular, recall (Definition~\ref{omegadef}) that
\begin{align*}
\Omega=\{ (t,y) \in \R^2 : 1+ty^{k-1} > 0 \} && \sigma(t,y) =  y   && \tau(t,y)= y + ty^y
\end{align*}
defines an $\F^k_\R$-bisubmersion $\Omega\coloneqq(\Omega,\tau,\sigma)$. More generally (Definition~\ref{omegatheta}), $\Omega_\theta \coloneqq (\Omega, \theta \circ \tau, \sigma)$ is an $\F^k_\R$-bisubmersions for any $\theta \in \mathrm{Diff}_0(\R)$.

\begin{defn}\label{tildeath}
Let $\widetilde\Omega$ denote the $\F^k_{\R^n}$-bisubmersion $L^2 \times \Omega$. More generally, for any $\theta \in \Diff_0(\R)$, let $\widetilde\Omega_\theta$  denote the $\F^k_{\R^n}$-bisubmersion $L^2 \times \Omega_\theta$. 
\end{defn} 

Note that $N_0\coloneqq \{ (x,x,0,y) : (x,y) \in \R^n\}$ is a bisection of $\widetilde \Omega_\theta$ inducing the constant vertical diffeomorphism $(x,y)\mapsto (x,\theta(y))$. In particular, $\Omega=\Omega_{\id}$ carries the identity map and we may characterize the local  $\F^k_{\R^n}$-automorphisms which are null at a point $x \in L$ as the ones which are carried by $\widetilde\Omega$ at at the point $(x,x,0,0)$. For example, we have the following:
 
 \begin{lemma}\label{horiznull}
Let $\theta$ be a local $\F^k_{\R^n}$-automorphism with $\theta(0,0)=(0,0)$. If $\theta$ is horizontal, then $\theta$ is null (Definition~\ref{nulldef}) at $(0,0)$. 
\end{lemma}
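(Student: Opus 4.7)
The plan is to exhibit an explicit bisection of the $\F^k_{\R^n}$-bisubmersion $\widetilde\Omega = L^2 \times \Omega$ (Definition~\ref{tildeath}) through the point $w_0 = (0,0,0,0)$ that induces (the germ of) $\theta$ at $(0,0)$. Since the bisection $N_0 = \{(x,x,0,y) : (x,y) \in \R^n\}$ already shows that $\id$ is carried by $\widetilde\Omega$ at $w_0$, this will verify that $\theta$ and $\id$ are both carried by $\widetilde\Omega$ at the same point, hence that $\theta$ is null at $(0,0)$ in the sense of Definition~\ref{nulldef}.

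Write $\theta(x,y) = (\theta_y(x), y)$, and note that $\theta_0(0) = 0$ since $\theta(0,0)=(0,0)$. On a sufficiently small neighbourhood $U$ of $(0,0)$ on which $\theta$ is defined, I would set
\[ N_\theta \coloneqq \{ (\theta_y(x),\, x,\, 0,\, y) : (x,y) \in U \} \subset \widetilde\Omega.\]
Note that setting the $\Omega$-coordinate $t=0$ trivially satisfies the defining inequality $1+ty^{k-1}>0$ for $\Omega$, so $N_\theta$ really lies in $\widetilde\Omega$. It is a smooth, locally closed submanifold of $\widetilde\Omega$, smoothly parameterized by $(x,y) \in U$, and it clearly contains $w_0$ (take $(x,y)=(0,0)$ and use $\theta_0(0)=0$).

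To see $N_\theta$ is a bisection, I would compute the restrictions of the source and target maps of $\widetilde\Omega = L^2 \times \Omega$: using $s = \mathrm{pr}_2 \times \sigma$ and $t = \mathrm{pr}_1 \times \tau$, the source restricts on $N_\theta$ to $(x,y)\mapsto (x,y)$, and the target restricts to $(x,y)\mapsto (\theta_y(x), y + 0 \cdot y^k) = \theta(x,y)$; both are diffeomorphisms onto open subsets of $\R^n$. Consequently, $t|_{N_\theta} \circ (s|_{N_\theta})^{-1} = \theta$ in a neighbourhood of $(0,0)$, so $\theta$ is carried by $\widetilde\Omega$ at $w_0$. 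Combined with the analogous fact for $\id$ via $N_0$, this gives the required conclusion. There is no real obstacle here — the content is simply the choice of the correct bisection, which exploits that horizontality makes the required deformation in the $\Omega$-factor trivial ($t=0$).
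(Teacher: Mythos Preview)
Your proof is correct and follows essentially the same approach as the paper: you exhibit the bisection $N_\theta = \{(\theta_y(x),x,0,y):(x,y)\in U\}$ of $\widetilde\Omega$ through $(0,0,0,0)$ and verify it induces $\theta$, exactly as the paper does. Your added verification that $N_\theta$ lies in $\widetilde\Omega$ and that source and target restrict to diffeomorphisms is a welcome bit of extra detail.
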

\begin{proof}
Near $(0,0)$, since $\theta$ is horizontal, we may write $\theta(x,y) = (\theta_y(x),y)$ where $\theta_y$ depends smoothly on $y$. Let $N = \{ (\theta_y(x),x, 0 , y) \in \widetilde \Omega: (x,y) \in U\}$ , where $U \subset \R^n$ is an appropriately chosen neighbourhood of $(0,0)$. Then, $N$ is a bisection of $\widetilde \Omega$ through the point $(0,0,0,0)$ inducing the germ of $\theta$ at $(0,0)$. 
\end{proof}
 
 In a similar spirit, we have the following  generalization of  Lemma~\ref{omegacarry}~(2).

\begin{lemma}\label{constvertnull}
Let $\theta_0 \in \Diff_0(\R)$ be a diffeomorphism of $\R$ defined near $0$ and define  a constant, vertical diffeomorphism $\theta$ of $\R^n$  by  $(x,y) \mapsto (x,\theta(y))$. Then, $\theta$ preserves $\F^k_{\R^n}$  and is  null at $(0,0)$ if and only if $j^k_0(\theta)=y$. \qed
\end{lemma}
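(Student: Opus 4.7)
The lemma makes two assertions: first, that $\theta$ preserves $\F^k_{\R^n}$, and second, that it is null at $(0,0)$ exactly when $j^k_0(\theta_0) = y$. The first is immediate from Lemma~\ref{theta'}: since $\theta_x = \theta_0$ is constant in $x$, its $y$-derivatives $\theta_x^{(r)}(0) = \theta_0^{(r)}(0)$ at the origin are trivially independent of $x$, so $\theta \in \Aut(\F^k_{\R^n})$.

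For the sufficiency of the jet condition, assume $j^k_0(\theta_0) = y$. By Taylor's theorem we may write $\theta_0(y) = y + f(y)\,y^k$ for a smooth function $f$ with $f(0) = 0$, and then $N \coloneqq \{(x, x, f(y), y) : (x,y) \in U\}$ (with $U$ a small neighbourhood of $(0,0)$) is a bisection of $\widetilde\Omega = L^2 \times \Omega$ passing through $(0,0,0,0)$ whose induced source-to-target transformation is exactly $\theta$. Since the ``diagonal'' bisection $N_0 = \{(x, x, 0, y)\}$ carries $\id_{\R^n}$ through that same point, $\theta$ is null at $(0,0)$.

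For the converse, I would start from the observation that, by construction (Definitions~\ref{omegatheta} and \ref{tildeath}), the bisubmersion $\widetilde\Omega_{\theta_0}$ already carries $\theta$ at the point $(0,0,0,0)$ via the bisection $\{(x, x, 0, y)\}$. If $\theta$ is null at $(0,0)$, then by Proposition~\ref{simtoapprox} the identity $\id_{\R^n}$ must also be carried by $\widetilde\Omega_{\theta_0}$ at that same point. Any bisection of $\widetilde\Omega_{\theta_0}$ through $(0,0,0,0)$, parametrised by the source coordinates $(x,y)$, takes the form $\{(\alpha(x,y), x, \beta(x,y), y)\}$ with $\alpha(0,0) = 0$ and $\beta(0,0) = 0$; the induced transformation is $(x,y) \mapsto (\alpha(x,y), \theta_0(y + \beta(x,y)\,y^k))$. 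For this to coincide with $\id_{\R^n}$, we must have $\alpha(x,y) = x$ and $\theta_0^{-1}(y) - y = \beta(x,y)\,y^k$. Since the left-hand side is a function of $y$ alone, smoothness of $\beta$ forces $\theta_0^{-1}(y) - y$ to lie in the ideal $(y^k)$, whence $j^k_0(\theta_0^{-1}) = y$, which is equivalent to $j^k_0(\theta_0) = y$.

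The main conceptual point to secure is the passage from the abstract definition of ``null'' — which only asserts the existence of \emph{some} bisubmersion witnessing the equivalence $\theta \approx \id$ — to an analysis inside the concrete model $\widetilde\Omega_{\theta_0}$. Proposition~\ref{simtoapprox} supplies exactly this reduction: any bisubmersion carrying $\theta$ at a source-fibre point is interchangeable, as far as the set of carried automorphisms is concerned, with $\widetilde\Omega_{\theta_0}$ at $(0,0,0,0)$. Once this reduction is in place, the remainder is a routine computation with the explicit polynomial form of the target map on $\Omega_{\theta_0}$.
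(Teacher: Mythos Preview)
Your argument is correct and is precisely the generalization of Lemma~\ref{omegacarry}(2) that the paper has in mind (the paper omits the proof entirely, marking it with \qed{} after noting it is ``in a similar spirit'' to Lemma~\ref{omegacarry}). Your use of Proposition~\ref{simtoapprox} to reduce the abstract nullity condition to a computation inside the explicit bisubmersion $\widetilde\Omega_{\theta_0}$ is exactly the intended mechanism; the only stylistic difference is that the paper's one-dimensional proof works in $\Omega$ rather than $\Omega_{\theta_0}$ (deducing that $\theta_0$ is carried at $(0,0)\in\Omega$, rather than that $\id$ is carried at $(0,0,0,0)\in\widetilde\Omega_{\theta_0}$), but these are dual versions of the same argument.

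One small slip to tighten: from $\theta_0^{-1}(y)-y=\beta(x,y)\,y^k$ with $\beta$ smooth, you conclude $\theta_0^{-1}(y)-y\in(y^k)$ and then assert $j^k_0(\theta_0^{-1})=y$. Membership in $(y^k)$ alone only gives $j^{k-1}_0(\theta_0^{-1})=y$; you need $(y^{k+1})$. You do have the missing ingredient, namely $\beta(0,0)=0$: since $\beta(x,y)y^k$ is a function of $y$ alone, $\beta(x,y)$ is independent of $x$ for $y\neq 0$, hence (by smoothness) $\beta(x,0)$ is constant and equal to $\beta(0,0)=0$, so $\beta(x,y)=y\,\gamma(x,y)$ for some smooth $\gamma$ and $\theta_0^{-1}(y)-y\in(y^{k+1})$. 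With that adjustment the conclusion $j^k_0(\theta_0^{-1})=y$ follows.
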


We state  the next result  for general foliations of transverse order $k$, though we quickly reduce to coordinates in the proof.

\begin{thm}\label{germ=nullorbequiv}
Let $(M,\F)$ be a transverse order $k \geq 2$ foliation with singular leaf $L$. Let $p$ and $q$ be local $\F$-$\F^k_\R$-submersions defined at $x_0 \in L$. Then, $j^k_{x_0}(p)=j^k_{x_0}(q)$ if and only if there exists  a local $\F$-automorphism $\theta$ which is null at $x_0$ such that $q=p\circ \theta$ on a neighbourhood of $x_0$.
\end{thm}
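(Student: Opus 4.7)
The plan is to reduce to local coordinates near $x_0$ and then exploit the concrete bisubmersion $\widetilde\Omega = L^2 \times \Omega$ of Definition~\ref{tildeath}. By Proposition~\ref{zoom}, I can conjugate by a local diffeomorphism to reduce to the case $M = \R^n$, $\F = \F^k_{\R^n}$, $x_0 = (0,0)$ and $p = \mathrm{pr}_2$; the jet-equality hypothesis and the existence/nullness of $\theta$ both transport back and forth under this move (since a foliation-preserving diffeomorphism sends bisubmersions to bisubmersions and preserves $k$-jets).

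For the ``$\Rightarrow$'' direction, assume $j^k_{(0,0)}(q) = j^k_{(0,0)}(\mathrm{pr}_2) = y$. Theorem~\ref{local}(4) gives $q(x,y) = a_1 y + \ldots + a_{k-1} y^{k-1} + g(x,y) y^k$, and matching against the prescribed $k$-jet forces $a_1 = 1$, $a_2 = \cdots = a_{k-1} = 0$, and $g(0,0)=0$. I will set $\theta(x,y) \coloneqq (x, q(x,y))$. The inverse function theorem (since $\partial q/\partial y(0,0)=1$) makes this a local diffeomorphism at $(0,0)$, while Proposition~\ref{difftosub} makes it an $\F^k_{\R^n}$-automorphism because $\mathrm{pr}_2 \circ \theta = q$ is an $\F^k_{\R^n}$-$\F^k_\R$-submersion. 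To show $\theta$ is null at $(0,0)$, I exhibit the bisection
\[ N_\theta \coloneqq \{(x, x, g(x,y), y) : (x,y) \text{ near } (0,0)\} \subset \widetilde\Omega, \]
which passes through $w_0 \coloneqq (0,0,0,0)$ thanks to $g(0,0)=0$ and induces $\theta$, together with the bisection $\{(x,x,0,y)\}$ through $w_0$ which induces $\id$.

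For the ``$\Leftarrow$'' direction, suppose $\theta$ is null at $(0,0)$ and set $q = \mathrm{pr}_2 \circ \theta$. By Definition~\ref{nulldef}, there is an $\F^k_{\R^n}$-bisubmersion $W$ and $w \in W$ with $s(w)=(0,0)$ at which both $\theta$ and $\id$ are carried. Since $\id$ is also carried by $\widetilde\Omega$ at $w_0 = (0,0,0,0)$, Proposition~\ref{simtoapprox} will give $w \sim w_0$ and consequently that $\theta$ itself is carried by $\widetilde\Omega$ at $w_0$. Every bisection of $\widetilde\Omega$ through $w_0$ is locally a section of $s$ of the form $(x_2, y) \mapsto (\phi_1(x_2,y), x_2, \phi_2(x_2,y), y)$ with $\phi_1(0,0) = \phi_2(0,0) = 0$, so its induced local automorphism $\eta$ satisfies
\[ \mathrm{pr}_2 \circ \eta(x_2, y) = y + \phi_2(x_2, y) y^k. \]
Since $\phi_2(0,0) = 0$, a short Taylor expansion shows that $\phi_2(x_2,y) y^k$ has no terms of total degree $\leq k$, so $j^k_{(0,0)}(\mathrm{pr}_2 \circ \eta) = y$; applied to (the local form of) $\theta$ this yields $j^k_{(0,0)}(q) = y = j^k_{(0,0)}(p)$.

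The main obstacle will be the ``$\Leftarrow$'' direction, where nullness a priori guarantees only \emph{some} bisubmersion $W$ carrying both $\theta$ and $\id$ at a common point, whereas the explicit jet computation really needs $\widetilde\Omega$ at $w_0$. The crucial leverage comes from Proposition~\ref{simtoapprox}: because $\id$ is carried by both $W$ at $w$ and $\widetilde\Omega$ at $w_0$, the two points are $\sim$-equivalent, so the sets of carried local automorphisms coincide, allowing me to replace the unknown $W$ by the model bisubmersion. Once the problem has been transferred to $\widetilde\Omega$ at $w_0$, the vanishing of the jet is forced by the $y^k$ factor built into $\tau$.
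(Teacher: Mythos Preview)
Your proof is correct. The $\Rightarrow$ direction is essentially identical to the paper's argument: reduce via Proposition~\ref{zoom} to $p=\mathrm{pr}_2$, write $q(x,y)=y+g(x,y)y^k$ with $g(0,0)=0$ by Theorem~\ref{local}(4), set $\theta(x,y)=(x,q(x,y))$, and exhibit the bisection $\{(x,x,g(x,y),y)\}\subset\widetilde\Omega$ through $(0,0,0,0)$ inducing $\theta$.

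The paper's own proof in fact only writes out the $\Rightarrow$ direction explicitly. Your $\Leftarrow$ argument---using Proposition~\ref{simtoapprox} to transfer from the unknown bisubmersion $(W,w)$ to $(\widetilde\Omega,w_0)$ via the common carried germ $\id$, and then reading off the $k$-jet from the explicit form of bisections of $\widetilde\Omega$ through $w_0$---is a clean and correct way to supply the converse. One minor slip: a bisection of $\widetilde\Omega$ through $w_0$, viewed as a local section of the source map $(x_2,x_1,t,y)\mapsto(x_1,y)$, should be parametrized by $(x_1,y)$ rather than $(x_2,y)$; this is purely notational and the computation $\mathrm{pr}_2\circ\eta(x,y)=y+\phi_2(x,y)y^k$ with $\phi_2(0,0)=0$ goes through unchanged.
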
 
\begin{proof}
Using Proposition~\ref{zoom}, we may suppose without loss of generality that $(M,\F) = (\R^n, \F^k_{\R^n})$, $x_0=(0,0)$  and $p = \mathrm{pr}_2$. By the inverse function theorem, $\eta(x,y)=(x,q(x,y))$  defines a diffeomorphism nearby to $(0,0)$. By definition, $q = p \circ \eta$ holds near $(0,0)$. It remains to confirm that $\eta$ is null at $(0,0)$. Since $j^k_{(0,0)}(q)=y$, Theorem~\ref{local}~(4) implies that we can write $q(x,y) = y + f(x,y)y^k$ near $(0,0)$ for a smooth function $f$  satisfying $f(0,0)=0$.  Then, the bisection
\[ N_f = \{ (x,x,f(x,y),y) : (x,y) \in U \}, \]
where $U$ is an appropriate neighbourhood of $(0,0) \in \R^n$, induces $\eta$. Since  $(0,0,0,0) \in N_f$ and the identity is also carried at $(0,0,0,0)\in\widetilde\Omega$, we have that $\eta$ is null at $(0,0)$. 
\end{proof}

Along similar lines, we now show that all local $\F^k_{\R^n}$-automorphisms can be brought, modulo a null automorphism, into a simple form.

\begin{propn}\label{perturbtoconstvert}
Let $x_1,x_2 \in L$ and let $\theta$ be a local $\F^k_{\R^n}$-automorphism with $\theta(x_1,0)=(x_2,0)$. Then, there is a local $\F^k_{\R^n}$ automorphism $\eta$ which is null at $(x_1,0)$ and a diffeomorphism  $\theta_0 \in \Diff_0(\R)$ such that $(\theta \circ \eta)(x,y) = (x-x_1+x_2, \theta_0(y))$ holds on a neighbourhood of $(x_1,0)$. 
\end{propn}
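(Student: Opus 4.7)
The plan is to exhibit $\eta$ as a product of two null automorphisms at $(x_1,0)$: a vertical one absorbing the $y$-dependence of $\theta$, followed by a horizontal one converting the remaining $x$-part into a translation.

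By Lemma~\ref{verhorizdecomp}, shrinking the domain if necessary, decompose $\theta = \theta_h \circ \theta_v$ with $\theta_v(x,y)=(x,\phi_x(y))$ vertical and $\theta_h(x,y)=(\psi_y(x),y)$ horizontal. The factor $\theta_h$ automatically preserves $\F^k_{\R^n}$, so $\theta_v$ does too, and Lemma~\ref{theta'} gives that $j^{k-1}_0(\phi_x)$ is independent of $x$. From $\theta(x_1,0)=(x_2,0)$ we obtain $\theta_v(x_1,0)=(x_1,0)$ (so $\phi_{x_1}(0)=0$) and $\psi_0(x_1)=x_2$. Define $\theta_0 := \phi_{x_1} \in \Diff_0(\R)$, $V(x,y) := (x,\theta_0(y))$, and $T(x,y) := (x-x_1+x_2,\,y)$; then $V$ is an $\F^k_{\R^n}$-automorphism by Lemma~\ref{theta'}, and the goal is a null $\eta$ with $\theta\circ\eta = T\circ V$.

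First I verify that $\eta_v := \theta_v^{-1}\circ V$, of the form $(x,y)\mapsto(x,\phi_x^{-1}(\theta_0(y)))$, is null at $(x_1,0)$. Since $j^{k-1}_0(\phi_x) = j^{k-1}_0(\theta_0)$ (independent of $x$) and $\phi_{x_1} = \theta_0$ exactly, I may write $\phi_x^{-1}(\theta_0(y)) = y + h(x,y)\,y^k$ for a smooth $h$ with $h(x_1,y)\equiv 0$. Then
\[ N_v := \{(x,x,h(x,y),y) : (x,y)\in U_v\} \subset \widetilde{\Omega} \]
is a bisection of $\widetilde{\Omega}$ (Definition~\ref{tildeath}) through $(x_1,x_1,0,0)$ whose induced local diffeomorphism is $\eta_v$; since $\id$ is also carried by $\widetilde{\Omega}$ at $(x_1,x_1,0,0)$ via the trivial bisection $\{(x,x,0,y)\}$, $\eta_v$ is null at $(x_1,0)$. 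By construction $\theta\circ\eta_v = \theta_h\circ V$.

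Next I set $\eta'' := V^{-1}\circ\theta_h^{-1}\circ T\circ V$; a direct computation gives $\eta''(x,y) = (\psi_{\theta_0(y)}^{-1}(x-x_1+x_2),\,y)$, which is horizontal and fixes $(x_1,0)$ because $\psi_0(x_1)=x_2$. The proof of Lemma~\ref{horiznull} adapts verbatim (using the bisection $\{(\eta''_y(x),x,0,y)\}$ of $\widetilde{\Omega}$ through $(x_1,x_1,0,0)$) to show $\eta''$ is null at $(x_1,0)$. Setting $\eta := \eta_v\circ\eta''$, which is null at $(x_1,0)$ because $\NullAut(\F^k_{\R^n})_{(x_1,0)}$ is a group, we compute $\theta\circ\eta = \theta_h\circ V\circ\eta'' = T\circ V$, i.e.\ $(\theta\circ\eta)(x,y) = (x-x_1+x_2,\theta_0(y))$ near $(x_1,0)$. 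The main obstacle is the nullness of $\eta_v$: this hinges on the rigidity afforded by Lemma~\ref{theta'}, which forces $h(x_1,y)\equiv 0$ and so ensures $N_v$ actually passes through the identity point $(x_1,x_1,0,0)$.
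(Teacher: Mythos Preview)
Your proof is correct and follows essentially the same strategy as the paper's: decompose $\theta$ into horizontal and vertical parts via Lemma~\ref{verhorizdecomp}, use that horizontal pieces are null (Lemma~\ref{horiznull}), and use the rigidity from Lemma~\ref{theta'} to show that the vertical part differs from a constant vertical map by a null automorphism.

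The main difference is organizational. The paper first reduces to $x_1=x_2=0$ by translation, then to $\theta$ vertical, then to $\theta_0=\id$, and finally invokes Theorem~\ref{germ=nullorbequiv} (with $p=\mathrm{pr}_2$, $q(x,y)=\theta_x(y)$) as a black box. You instead keep $x_1,x_2$ general and construct $\eta=\eta_v\circ\eta''$ explicitly, with the bisection $N_v=\{(x,x,h(x,y),y)\}\subset\widetilde\Omega$ doing exactly the work that the bisection $N_f$ does inside the proof of Theorem~\ref{germ=nullorbequiv}. So your argument is really an inlining of that theorem's proof rather than a genuinely different route. One small remark: for nullness of $\eta_v$ you only need $h(x_1,0)=0$ so that $N_v$ passes through $(x_1,x_1,0,0)$; the stronger fact $h(x_1,y)\equiv 0$ (which does hold since $\phi_{x_1}=\theta_0$) is not needed. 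Also note that your appeal to Lemma~\ref{theta'} tacitly uses that $\theta_v$ preserves $L$, which follows because $\theta$ preserves the singular leaf and $\theta_h$, being horizontal, automatically preserves $L$.
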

\begin{proof}
By translating, we may reduce to the case $x_1=x_2=0$. 
Using Lemmas~\ref{verhorizdecomp} and \ref{horiznull}, we furthermore reduce to the case where $\theta$ is a vertical $\F^k_{\R^n}$-automorphism $(x,y) \mapsto (x,\theta_x(y))$. Composing with the constant vertical diffeomorphism $(x,y) \mapsto (x,\theta_0^{-1}(y))$, we may furthermore reduce to the case where $\theta_0(y)=y$. The result then follows from Theorem~\ref{germ=nullorbequiv}, taking $p(x,y)=y$ and $q(x,y)=\theta_x(y)$. 
\end{proof}

\begin{propn}\label{tildeatlas}\leavevmode
\begin{enumerate}
\item $\left\{ \widetilde \Omega_\theta : \theta \in \Diff_0(\R)\right\}$ is  a full holonomy atlas for $\F^k_{\R^n}$. 
\item For each $\theta \in \Diff_0(\R)$, the canonical map $Q_{\widetilde\Omega_\theta} : \widetilde\Omega_\theta \to G_\textup{full}(\F^k_{\R^n})$ is a diffeomorphism onto its image.
\end{enumerate}
\end{propn}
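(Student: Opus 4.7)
The plan is to follow the strategy used in Proposition~\ref{omegaatlas}, upgrading each $\F^k_\R$-bisubmersion $\Omega_\theta$ by taking the product with $L^2$, and using the local structural results of Section~\ref{localresultssection}, particularly Proposition~\ref{perturbtoconstvert}, to reduce the general case to a form from which the needed bisection can be read off directly.

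For (1), by Proposition~\ref{simtoapprox} it suffices to exhibit, for each local $\F^k_{\R^n}$-automorphism germ $\eta$ at each point $p_0 \in \R^n$, a $\theta_0 \in \Diff_0(\R)$ and a point $w_0 \in \widetilde\Omega_{\theta_0}$ at which $\eta$ is carried. I would split into two cases. In the first, $p_0 = (x^0, y^0)$ with $y^0 \neq 0$: because $\eta$ must carry $p_0$ into one of the open half-space leaves, $\eta_2 := \mathrm{pr}_2 \circ \eta$ will have constant sign (either matching or opposite to $y$) near $p_0$. I pick $\theta_0 \in \Diff_0(\R)$ of the matching orientation, so $\phi := \theta_0^{-1} \circ \eta_2$ has the same sign as $y$ near $p_0$, and set $t(x,y) := (\phi(x,y)-y)/y^k$. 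Then $t$ is smooth with $1+ty^{k-1} = \phi/y > 0$, so the graph $N = \{(\eta_1(x,y), x, t(x,y), y)\}$ lies in $\widetilde\Omega_{\theta_0}$; a direct check shows $N$ is a bisection whose source restriction is the identity and whose target restriction equals $\eta$, so $\eta$ is carried at $(\eta_1(p_0), x^0, t(p_0), y^0)$.

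In the second case, $p_0 = (x_1, 0) \in L$ and $\eta(p_0) = (x_2, 0)$. Proposition~\ref{perturbtoconstvert} yields a null-at-$p_0$ automorphism $\nu$ and $\theta_0 \in \Diff_0(\R)$ such that $\eta \circ \nu$ has the same germ as $f(x,y) := (x-x_1+x_2, \theta_0(y))$ at $p_0$. The subset $\{(x+x_2-x_1, x, 0, y)\} \subset \widetilde\Omega_{\theta_0}$ is evidently a bisection through $w_0 := (x_2, x_1, 0, 0)$ inducing $f$. Since $\eta$ and $f$ differ by composition with a null automorphism at $p_0$, Theorem~\ref{G/Nthm} places them in the same $\approx$-class, and Proposition~\ref{simtoapprox} then yields that $\eta$ too is carried by $\widetilde\Omega_{\theta_0}$ at $w_0$.

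For (2), I expect the dimension identity $\dim \widetilde\Omega_\theta = 2n = \dim \R^n + \dim A_x\F^k_{\R^n}$ together with Proposition~3.11(b) of \cite{AS[2007]} to make $Q_{\widetilde\Omega_\theta}$ a local diffeomorphism. Injectivity will follow by showing every local self-morphism $f$ of $\widetilde\Omega_\theta$ is the identity: the source condition forces $f(x_1,x_2,t,y) = (x_1', x_2, t', y)$; the target condition then fixes $x_1' = x_1$ and imposes $\theta(y+t'y^k)=\theta(y+ty^k)$, whence $t'=t$ on the open dense set $\{y\neq 0\}$, and continuity finishes the job. The main obstacle, modest as it is, will be the orientation bookkeeping in the case $y^0 \neq 0$ of (1): once $\theta_0$ is chosen with the correct sign behavior relative to $\eta$, everything becomes an explicit calculation, and the $L$-case is essentially a direct application of Proposition~\ref{perturbtoconstvert}.
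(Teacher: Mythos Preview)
Your proposal is correct and follows essentially the same route as the paper: off $L$ you show the germs are carried by explicit bisections (the paper does this more tersely by observing that $\widetilde\Omega_\theta$ restricted to $\R^n\setminus L$ is isomorphic to the relevant pair-groupoid pieces), and on $L$ you invoke Proposition~\ref{perturbtoconstvert} exactly as the paper does, with the passage from $\eta\circ\nu$ back to $\eta$ made explicit via Theorem~\ref{G/Nthm} and Proposition~\ref{simtoapprox}. Your argument for (2) is likewise the same dimension-count-plus-density reasoning the paper defers to Proposition~\ref{omegaatlas}, just written out in coordinates.
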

\begin{proof}
Denote the open upper and lower half spaces of $\R^n$ by  $\R^n_+ \coloneqq \R^\ell \times \R_+$ and $\R^n_-\coloneqq \R^\ell \times \R_-$. It is easy to see that the restriction of $\widetilde\Omega_\theta$ to $\R^n\setminus L$ is isomorphic to $(\R^n_-)^2 \cup (\R_+^n)^2$ if $\theta$ is orientation-preserving and $(\R^n_-\times\R^n_+)\cup(\R^n_+\times\R^n_-)$ if $\theta$ is orientation-reversing. Thus, every  local diffeomorphism of $\R^n\setminus L$ is already carried by $\widetilde\Omega_\id$ and $\widetilde\Omega_{-\id}$. It remains to show that, given any point $x_0 \in L$ and  any local $\F^k_{\R^n}$-automorphism $\theta$ defined at $x_0$, the germ of $\theta$ at $x_0$ can by induced by a bisection of one of  $\{ \widetilde \Omega_\theta : \theta \in \Diff_0(\R)\}$. After an easy reduction, we may take $x_0=(0,0)$ and assume $\theta(0,0)=(0,0)$. By Proposition~\ref{perturbtoconstvert}, after composing $\theta$ with a local $\F^k_{\R^n}$-automorphism that is null at $(0,0)$, we may assume that $\theta(x,y)=(x,\theta_0(y))$ holds on a neighbourhood $U$ of $(0,0)$ for some  $\theta_0 \in \Diff_0(\R)$. Then, $N=\{(x,x,0,y) : (x,y) \in U\}$ is a bisection of $\widetilde \Omega_\theta$ which induces $\theta|_U$, proving (1). The proof of (2) is the same as in Proposition~\ref{omegaatlas}.
\end{proof}

 We can now show that the restriction of $G_\textup{full}(\F^k_{\R^n})$ is isomorphic, as a Lie groupoid,  to $L^2 \times J^k_\R$, the product of the pair groupoid $L^2$ and the 1-dimensional Lie group $J^k_\R$ (see Definition~\ref{weirdgroup}).

\begin{thm}\label{fullGforRn}
There is a unique isomorphism of Lie groupoids 
\[ G_\textup{full}(\F^k_{\R^n})_L \to L^2 \times J^k_\R \]
 such that
\[ Q_{\widetilde \Omega_\theta}(x_2,x_1,t,0) \mapsto (x_2,x_1,j^k_0(\theta) \circ(y+ty^k)) \]
for all $\theta\in\Diff_0(\R)$, $x_1,x_2 \in L$ and $t \in \R$.
\end{thm}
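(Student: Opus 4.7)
The plan is to construct $\Phi: G_\textup{full}(\F^k_{\R^n})_L \to L^2 \times J^k$ abstractly via the groupoid isomorphism of Theorem~\ref{G/Nthm}, verify it satisfies the stated formula on each holonomy chart $\widetilde\Omega_\theta$, and then deduce smoothness from Proposition~\ref{tildeatlas}. Note that $L$ is its own $G_\textup{full}$-orbit (any local $\F^k_{\R^n}$-automorphism preserves the singular leaf), so $G_\textup{full}(\F^k_{\R^n})_L$ is a smooth subgroupoid.

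First, given $g \in G_\textup{full}(\F^k_{\R^n})_L$ with $s(g) = (x_1, 0)$ and $t(g) = (x_2, 0)$, represent $g$ by a local $\F^k_{\R^n}$-automorphism $\theta$ via Theorem~\ref{G/Nthm}. By Proposition~\ref{perturbtoconstvert}, we may adjust $\theta$ by a null automorphism at $(x_1, 0)$ so that $\theta(x,y) = (x - x_1 + x_2, \theta_0(y))$ on a neighbourhood of $(x_1,0)$, with $\theta_0 \in \Diff_0(\R)$; Lemma~\ref{constvertnull} (applied to the ratio of two such $\theta_0$'s) ensures $j^k_0(\theta_0)$ is well-defined modulo the subgroup $\{j^k_0(\eta_0) = y\}$, i.e.\ is well-defined in $J^k$. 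Set $\Phi(g) \coloneqq (x_2, x_1, j^k_0(\theta_0))$. Bijectivity is clear since every element of $J^k$ lifts to some $\theta_0 \in \Diff_0(\R)$. To see $\Phi$ is a groupoid homomorphism, compose two normal forms: $(x,y) \mapsto (x - x_2 + x_3, \eta_0(y))$ after $(x,y) \mapsto (x - x_1 + x_2, \theta_0(y))$ gives $(x,y) \mapsto (x - x_1 + x_3, \eta_0(\theta_0(y)))$, matching $(x_3, x_2, j^k_0(\eta_0))\cdot(x_2, x_1, j^k_0(\theta_0))$.

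Next, verify the explicit formula. At any $(x_2, x_1, t, 0) \in \widetilde\Omega_\theta = L^2 \times \Omega_\theta$, the set
\[ N \coloneqq \{(x_2 - x_1 + x, x, t, y) : (x,y) \in U\}, \]
for $U$ a small neighbourhood of $(x_1, 0)$, is a bisection of $\widetilde\Omega_\theta$ through $(x_2, x_1, t, 0)$, since $y \mapsto \theta(y + ty^k)$ has nonvanishing derivative at $0$. It carries the local $\F^k_{\R^n}$-automorphism $(x,y) \mapsto (x - x_1 + x_2, \theta(y + ty^k))$, which is already in the normal form with $\theta_0(y) = \theta(y + ty^k)$, so
\[ \Phi(Q_{\widetilde\Omega_\theta}(x_2, x_1, t, 0)) = (x_2, x_1, j^k_0(\theta \circ (y + ty^k))) = (x_2, x_1, j^k_0(\theta) \circ (y + ty^k)), \]
as required.

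Finally, Proposition~\ref{tildeatlas} says the $Q_{\widetilde\Omega_\theta}$ are diffeomorphisms onto opens covering $G_\textup{full}(\F^k_{\R^n})$, and the preimage of $G_\textup{full}(\F^k_{\R^n})_L$ inside $\widetilde\Omega_\theta$ is the closed submanifold $\{y = 0\}$, naturally identified with $L^2 \times \R$. Under $\Phi$ this chart becomes the map $(x_2, x_1, t) \mapsto (x_2, x_1, j^k_0(\theta) \circ (y + ty^k))$, whose image is $L^2 \times C$, where $C$ is the coset of $\R$ in $J^k$ containing $j^k_0(\theta)$. The main bookkeeping obstacle is checking this matches the nonstandard one-dimensional Lie group structure on $J^k_\R$: because $\R = \ker(J^k \to J^{k-1})$ is normal in $J^k$, the coset $j^k_0(\theta) \cdot \R$ is exactly the connected component of $J^k_\R$ containing $j^k_0(\theta)$, and $t \mapsto j^k_0(\theta) \circ (y + ty^k)$ is a diffeomorphism of $\R$ onto it. As $\theta$ ranges over $\Diff_0(\R)$ these charts cover both sides, so $\Phi$ is a diffeomorphism and hence a Lie groupoid isomorphism; uniqueness is immediate since the defining formula pins $\Phi$ down on the image of the holonomy atlas.
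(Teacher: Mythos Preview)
Your proof is correct and follows essentially the same route as the paper: both use Theorem~\ref{G/Nthm} to realize $G_\textup{full}(\F^k_{\R^n})_L$ as germs modulo null, invoke Proposition~\ref{perturbtoconstvert} to reduce to the ``translate-plus-constant-vertical'' normal form, apply Lemma~\ref{constvertnull} to see that only the $k$-jet of $\theta_0$ matters, verify the explicit formula via the same bisection, and conclude smoothness from Proposition~\ref{tildeatlas}. The only cosmetic difference is that the paper packages the well-definedness and bijectivity step as a second-isomorphism-theorem argument ($\mathcal{HN}/\mathcal{N}\cong\mathcal{H}/(\mathcal{H}\cap\mathcal{N})$ with $\mathcal{H}$ the subgroupoid of normal-form germs), whereas you check well-definedness and injectivity by hand.
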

\begin{proof}
Theorem~\ref{G/Nthm} gives an isomorphism $G_\textup{full}(\F^k_\R)_L \to \mathcal{G}/\mathcal{N}$   where, for the sake of brevity, we write $\mathcal{G}\coloneqq \mathcal{G}(\F^k_{\R^n})_L$ and $\mathcal{N}\coloneqq \mathcal{N}(\F^k_{\R^n})_L$.  Given $x_1,x_2 \in L$, $\theta \in \Diff_0(\R)$, let $T_{x_2,x_1,\theta}$ denote the germ at $x_1$ of the diffeomorphism $(x,y) \mapsto (x+x_2-x_1,\theta(x))$. The set of all $T_{x_2,x_1,\theta}$ constitute a subgroupoid $\mathcal{H} \subset \mathcal{G}$. It is easy to see that $T_{x_2,x_1,\theta} \to (x_2,x_1,j^k_0(\theta)) : \mathcal{H} \to L^2 \times J^k$ is a surjective groupoid homomorphism. By Lemma~\ref{constvertnull}, the kernel of the latter homomorphism is exactly $\mathcal{H} \cap \mathcal{N}$, so that
\[ \frac{\mathcal{H}\mathcal{N}}{\mathcal{N}}\cong \frac{\mathcal{H}}{\mathcal{H} \cap \mathcal{N}} \cong L^2 \times J^k. \]
By Proposition~\ref{perturbtoconstvert}, $\mathcal{H}\mathcal{N}=\mathcal{G}$, so we have an isomorphism
\[ \mathcal{G}/\mathcal{N} \ni [T_{x_2,x_1,\theta}]  \mapsto (x_2,x_1,j^k_0(\theta) \to L^2 \times J^k.\]
Finally, note that the  bisection $N_{x_1,x_2,t} \coloneqq  \{(x+x_2-x_1,x,t,y) \in \widetilde\Omega_\theta : (x,y) \in \R^n \}$ induces the diffeomorphism $(x,y)\mapsto (x+x_2-x_1,\theta(y+ty^k)$ so that the resulting isomorphism $G_\textup{full}(\F^k_{\R^n})\to L^2 \times J^k$ indeed sends \[Q_{\widetilde\Omega_\theta}(x_2,x_1,t,0) \mapsto (x_2,x_1,j^k_0(\theta)\circ(y+ty^k)) \in L^2 \times J^k. \] 
By Proposition~\ref{tildeatlas}, the
$\widetilde\Omega_\theta$ constitute a full holonomy atlas for $\F^k_{\R^n}$ and each $Q_{\widetilde\Omega_\theta}$ is a diffeomorphism onto its image. It follows that this groupoid isomorphism  becomes a Lie groupoid isomorphism when $J^k$ is replaced by $J^k_\R$. 
\end{proof}

\section{Principal bundles of a transverse order \texorpdfstring{$k$}{k}  foliation}\label{principalbundles}

In what follows, $(M,\F)$ denotes a transverse order $k$ foliation with singular leaf $L$ and $k \geq 2$. The purpose of this section is to leverage the rigidity phenomena encountered in the preceding section  to construct certain natural principal bundles over $L$ whose elements are jets of local $\F$-$\F^k_\R$-submersions. The main result is Theorem~\ref{bunthm}.

Because the group  $\mathrm{Diff}_0(\R)$ of diffeomorphisms of $\R$ fixing $0$ preserves $\F^k_\R$, the group $\mathrm{Diff}_0(\R)$ acts by composition from the left on the set 
of local $\F$-$\F^k_\R$-submersions defined at $x \in M$. Since composition of jets is well-defined, the following definition makes sense.

\begin{defn}
Given $x \in L$ and $r \in \{1,\ldots,k\}$, we write $P^r_x(\F)$ for the set of $r$-jets at $x$ of local $\F$-$\F^k_\R$-submersions endowed with the left  action of $J^r$ that descends from the left action of $\mathrm{Diff}_0(\R)$.
Put also $P^r(\F) \coloneqq \bigsqcup_{x\in L} P^r_x(\F)$, so that $P^r(\F)$ is a bundle of sets over $L$ with an action of $J^r$ on each fiber.
\end{defn}

Our goal is to equip $P^r(\F)$ with the structure of a smooth principal bundle over $L$. Our constructions will rely on the  following elementary lemma which provides a mechanism by which a smooth principal bundle structure may be induced from  an appropriate family of sections. In essence, this is the construction of a principal bundle from a 1-cocycle.

\begin{lemma}\label{bundlemaker}
Let $H$ be a Lie group and $M$ a smooth manifold. Suppose that $P$ is a set equipped with a map $\pi : P \to M$ and an action of $H$ that is free and transitive on each fiber of $\pi$.  Let $\{U_i\}_{i \in I}$ be an open cover of $M$ and let  $\{ s_i : U_i \to P\}_{i \in I}$ be a collection of local sections of $\pi$ such that the transition maps $\{h_{ij} : U_i \cap U_j \to H\}_{i,j \in I}$, uniquely defined by   $s_i(x) = h_{ij}(x)  s_j(x)$ for all $x \in U_i \cap U_j$, are smooth. There is a unique smooth structure on $P$ making it a smooth principal $H$-bundle with respect to which each  $s_i$ is a smooth section. \qed
\end{lemma}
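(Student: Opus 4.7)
The plan is to use the sections $s_i$ to define local trivializations of $\pi$ and then to assemble them into a global smooth principal bundle structure using the smoothness of the $h_{ij}$. Concretely, I would first set
\[ \phi_i : U_i \times H \to \pi^{-1}(U_i), \qquad \phi_i(x, h) = h \cdot s_i(x). \]
Because the $H$-action on each fiber of $\pi$ is free and transitive and $s_i$ is a section, each $\phi_i$ is a bijection. I would topologize $P$ by declaring a subset to be open if and only if its preimage under each $\phi_i$ is open in $U_i \times H$; consistency is guaranteed by computing the transition
\[ \phi_j^{-1}\circ\phi_i(x, h) = (x, h\, h_{ij}(x)), \]
which follows from the identity $s_i(x) = h_{ij}(x) \cdot s_j(x)$, together with the fact that $h_{ij}$ is continuous.

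Once $P$ is topologized in this way, I would transport the product smooth structure from $U_i \times H$ along each $\phi_i$. The transitions $\phi_j^{-1}\circ\phi_i$ are smooth by the hypothesis on the $h_{ij}$, so these local smooth structures agree on overlaps, yielding a well-defined smooth manifold structure on $P$ in which every $\phi_i$ is a diffeomorphism. From there, the standard checks are routine: $\pi$ is a submersion because in each trivialization it becomes $\mathrm{pr}_1$; the left $H$-action is smooth, free and fiberwise transitive because in the $\phi_i$-trivialization it reads $k\cdot\phi_i(x, h) = \phi_i(x, kh)$; and each $s_i = \phi_i(\,\cdot\,, e)$ is smooth on $U_i$, with smoothness over overlaps guaranteed by the compatibility already verified. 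For uniqueness, any smooth principal $H$-bundle structure on $P$ making every $s_i$ smooth forces each $\phi_i$ to be smooth (hence a diffeomorphism, using smoothness of the action), which pins the smooth structure down.

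The main subtlety I expect is keeping left/right conventions consistent, since the paper works with left principal bundles and left $H$-actions: one has to verify that the cocycle formula assembles in the correct order so that the transitions really do come out as $(x,h) \mapsto (x, h\, h_{ij}(x))$ (with $h_{ij}$ on the right of $h$), and that the induced action on each local trivialization is a genuine left action on the second factor. Otherwise the argument is a textbook 1-cocycle construction and presents no serious difficulty.
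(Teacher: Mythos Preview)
Your argument is correct and is precisely the standard 1-cocycle construction the paper alludes to in the sentence preceding the lemma. Note that the paper does not actually supply a proof: the lemma is stated with a \qed and left as an elementary, well-known fact, so there is nothing to compare against beyond confirming that your approach is the intended one.
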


Recall (Definition~\ref{weirdgroup}) that $J^r_d$ denotes $J^r$ considered as a discrete group and $J^r_\R$ denotes $J^r$ with a certain  one-dimensional Lie group structure.

\begin{thm}\label{bunthm}
Let $(M,\F)$ be a transverse order $k$ foliation with singular leaf $L$ and $k \geq 2$.  
\begin{enumerate}
\item There is a unique principal $J^k_\R$-bundle structure on $P^k(\F)$ such that, for any local $\F$-$\F^k_\R$-submersions $p:U\to \R$, the map $x \mapsto j^k_x(p) : U \cap L \to P^k(\F)$ is a smooth section. 
\item If $1 \leq r \leq k-1$, there is a unique principal $J^r_d$-bundle structure on $P^r(\F)$ such that, for any local $\F$-$\F^k_\R$-submersions $p:U\to \R$, the map $x \mapsto j^r_x(p) : U \cap L \to P^r(\F)$ is a smooth section. 
\end{enumerate}
\end{thm}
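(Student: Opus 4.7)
My plan is to invoke the general principal-bundle-from-local-sections mechanism of Lemma~\ref{bundlemaker}. The two things I need to check are: (i) the $J^r$-action on each fiber $P^r_x(\F)$ is free and transitive, and (ii) there is an open cover of $L$ by local sections whose transition functions are smooth with respect to the appropriate structure group ($J^r_d$ for $r \leq k-1$, $J^k_\R$ for $r=k$).

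For (i), I would use the corollary to Theorem~\ref{local}. Given two local $\F$-$\F^k_\R$-submersions $p,q$ defined near $x \in L$, that corollary yields unique constants $a_1,\ldots,a_{k-1} \in \R$ with $a_1 \neq 0$ and a unique smooth function $f$ on a neighbourhood of $x$ such that
\[ q = a_1 p + a_2 p^2 + \ldots + a_{k-1} p^{k-1} + f p^k. \]
Since $p$ vanishes on $L$, a short Taylor-expansion check shows that the term $f p^k$ contributes only $f(x) y^k$ to the $k$-jet at $x$ (all cross-terms with $\partial_{x_i} f$ live in degree $\geq k+1$). Thus $j^k_x(q) = \theta_x \cdot j^k_x(p)$ where $\theta_x \coloneqq a_1 y + a_2 y^2 + \ldots + a_{k-1} y^{k-1} + f(x) y^k \in J^k$, and projecting gives transitivity for every $r \leq k$. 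Freeness follows from the uniqueness clause of the corollary.

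For (ii), Proposition~\ref{zoom} covers $L$ by open sets $U$ each carrying a local $\F$-$\F^k_\R$-submersion $p$, giving sections $s_p \colon U \cap L \to P^r(\F)$, $x \mapsto j^r_x(p)$. Given a second such $p'$ on an overlapping chart, the analysis above shows the transition function is
\[ h_{p,p'}(x) = j^r_0\bigl(a_1 y + a_2 y^2 + \ldots + a_{k-1} y^{k-1} + f(x)y^k\bigr), \]
where the $a_i$ and $f$ depend only on the pair $(p,p')$ (not on $x$ for the $a_i$). When $r \leq k-1$, truncation to $r$-jets annihilates the $y^k$ coefficient and every $y^j$ coefficient with $j > r$, leaving $h_{p,p'}(x) = a_1 y + \ldots + a_r y^r$, a constant in $x$; this is tautologically smooth into the discrete Lie group $J^r_d$, so Lemma~\ref{bundlemaker} yields the principal $J^r_d$-bundle structure. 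When $r=k$, the function $h_{p,p'}$ varies smoothly with $x$ precisely along the single coset of $\R \subset J^k$ determined by $(a_1,\ldots,a_{k-1})$, and since $J^k_\R$ is defined (Definition~\ref{weirdgroup}) so that these cosets are exactly its connected components endowed with their standard one-dimensional structure, $h_{p,p'}$ is smooth into $J^k_\R$; Lemma~\ref{bundlemaker} then gives the principal $J^k_\R$-bundle structure. Uniqueness in both cases is immediate from the requirement that the sections $s_p$ be smooth.

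The one subtlety — really the whole content of the theorem — is the discrete-vs.-smooth dichotomy at $r = k-1$ versus $r = k$. This is not an obstacle so much as the phenomenon one is trying to capture: it is precisely the rigidity established in Theorem~\ref{local} (the coefficients $a_1,\ldots,a_{k-1}$ being forced to be constant along $L$, while the order-$k$ coefficient is free to vary smoothly) that makes the structure group drop to a discrete group below order $k$ and retain a one-dimensional continuous part exactly at order $k$.
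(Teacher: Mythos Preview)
Your proof is correct and follows essentially the same route as the paper's: verify the $J^r$-action on each fiber is free and transitive using the local form of $\F$-$\F^k_\R$-submersions (you go through the corollary to Theorem~\ref{local}, the paper reduces to the model $(\R^n,\F^k_{\R^n})$ via Proposition~\ref{zoom} and quotes Theorem~\ref{local} directly), then feed the sections $x \mapsto j^r_x(p)$ and their transition functions into Lemma~\ref{bundlemaker}. The discrete-versus-one-dimensional dichotomy you highlight is exactly the point the paper's proof extracts from Theorem~\ref{local} as well.
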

\begin{proof}
First we claim that, for any $r \in \{1,\ldots,k\}$, the action of $J^r$ on $P^r(\F)$ is free and transitive on fibers. By  Proposition~\ref{zoom}, it suffices to consider the case $(M,\F)= (\R^n,\F^k_{\R^n})$ and work near the point $(0,0) \in \R^n$. By Theorem~\ref{local}, given a local $\F^k_{\R^n}$-$\F^k_\R$-submersion  $p$ defined at $(0,0)$, the jet $j^k_{(0,0)}(p)$ (a priori a polynomial of degree $\leq r$ in $x_1,\ldots, x_\ell,y$) has the form $a_1y+\ldots+a_ky^k$ where $a_i \in \R$, $a_1 \neq 0$. The action of $J^k$, meanwhile, is the usual compose and truncate operation. It follows that $J^r$ acts freely and transitively on $P^r_{(0,0)}(\F)$ for $r=1,\ldots, k$.

Next, suppose that $p,q  : U\to \R$ are local $\F^k_{\R^n}$-$\F^k_\R$-submersions, where $U \subset \R^n$ is a convex, open neighbourhood of $(0,0)$. Without loss of generality (Proposition~\ref{zoom}) we may take $p=\mathrm{pr}_2$. By Theorem~\ref{local}, we have \[j^k_{(0,x_0)}(q) = a_1y+\ldots+a_{k-1} y^{k-1} + f(x_0) y^{k-1}\]
for $x_0 \in U \cap \R^\ell$, where $f : U \cap \R^\ell \to \R$ is smooth. The map $h : U \cap \R^\ell \to J^k_\R$ defined by $h(x_0) = a_1 y +\ldots + a_{k-1}y^{k-1} + f(x_0) y^k$ is smooth (the smooth structure on $J^{k-1}_\R$ only permits us to vary the coefficient of $y^k$). We have $j^k_{(x_0,0)}(q) = h \circ j^k_{(x_0,0)}(p)$ for all $x_0 \in U \cap \R^\ell$ and so, applying Lemma~\ref{bundlemaker}, we obtain (1). The proof of (2) is similar, keeping fewer terms of the Taylor expansions.
 \end{proof}

By construction, these bundles are functorial for foliation-preserving diffeomorphisms defined near the singular leaf.

\begin{propn}\label{bunfunc}
For $i=1,2$, let $(M_i,\F_i)$ be transverse order $k$ singular foliations with singular leaves $L_i$. Suppose $U_i \subset M_i$ is an open set containing $L_i$ and $\theta:  U_1 \to U_2$ is a diffeomorphism with $\theta_*((\F_1)_{U_1}) = (\F_2)_{U_2}$. Then  $\theta$ restricts to a diffeomorphism $\theta_0:L_1 \to L_2$ and,  for $r=1,\ldots,n$,  there is a unique isomorphism of principal bundles $P^r(\theta) : P^r(\F_1)\to P^r(\F_2)$ covering $\theta_0$ sending  $j^r_x(p) \mapsto j^r_{\theta(x)} (p \circ \theta^{-1})$, whenever $p$ is a local $\F_1$-$\F^k_\R$-submersions defined at $x \in L$.
\[ \begin{tikzcd}
P^r(\F_1) \ar[d] \ar[r,"P^r(\theta)"] &  P^r(\F_2) \ar[d] \\
L_1 \ar[r,"\theta|_L"] & L_2
\end{tikzcd} \]
\end{propn}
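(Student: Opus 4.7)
The plan is to prove the four claims in sequence: that $\theta$ restricts to a diffeomorphism $\theta_0 : L_1 \to L_2$, that $P^r(\theta)$ is well-defined and bijective, that it is $J^r$-equivariant, and finally that it is smooth with respect to the bundle structures built in Theorem~\ref{bunthm}. Uniqueness is automatic once well-definedness is established, since every element of $P^r(\F_1)$ is by definition a jet of some local $\F_1$-$\F^k_\R$-submersion.

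For the first claim, I would argue that $\theta$, being foliation-preserving, permutes the leaves of $(M_1, \F_1)$ with the leaves of $(M_2, \F_2)$. The singular leaves $L_i$ are distinguished among leaves of transverse order $k$ foliations as the unique closed codimension-$1$ leaves in $U_i$ (the other leaves are open in $U_i$), so $\theta(L_1) = L_2$; smoothness of $\theta_0$ and its inverse is inherited from $\theta$. For well-definedness of $P^r(\theta)$, I would check two things: first, if $p:V \to \R$ is a local $\F_1$-$\F^k_\R$-submersion, then $p \circ \theta^{-1} : \theta(V) \to \R$ is a local $\F_2$-$\F^k_\R$-submersion, which follows from the identity $(p \circ \theta^{-1})^{-1}(\F^k_\R) = \theta_*(p^{-1}((\F_1)_V)) = (\F_2)_{\theta(V)}$; second, if two such submersions $p, p'$ have the same $r$-jet at $x \in L_1$, then $p \circ \theta^{-1}$ and $p' \circ \theta^{-1}$ have the same $r$-jet at $\theta(x)$, which is immediate from the chain-rule description of jet composition.

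Equivariance is a direct manipulation: for $\phi \in J^r$ and any local $\F_1$-$\F^k_\R$-submersion $p$ at $x$,
\[ P^r(\theta)(\phi \cdot j^r_x(p)) = j^r_{\theta(x)}((\phi \circ p)\circ\theta^{-1}) = \phi \cdot j^r_{\theta(x)}(p \circ \theta^{-1}) = \phi \cdot P^r(\theta)(j^r_x(p)), \]
and bijectivity follows from symmetry since the same construction applied to $\theta^{-1}$ gives a two-sided inverse. The only step requiring a small argument is smoothness. By Theorem~\ref{bunthm}, the smooth structure on $P^r(\F_i)$ is characterized by the property that $x \mapsto j^r_x(p)$ is a smooth local section for every local $\F_i$-$\F^k_\R$-submersion $p$. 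But the image of such a section under $P^r(\theta)$ is precisely the section $y \mapsto j^r_y(p \circ \theta^{-1})$ of $P^r(\F_2)$ over $\theta(V) \cap L_2$, which is again smooth by the defining property of the bundle structure on $P^r(\F_2)$. Thus $P^r(\theta)$ carries smooth local sections to smooth local sections, so together with equivariance it is a smooth principal bundle isomorphism.

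I do not anticipate any serious obstacle; the proposition is essentially a functoriality statement that repackages Theorem~\ref{bunthm}. The only conceptual point worth flagging is the verification that the singular leaf is preserved, which rests on the transverse order $k$ structure making $L_i$ a canonical (rather than merely chosen) object.
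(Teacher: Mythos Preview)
Your proposal is correct and follows essentially the same approach as the paper's (very terse) proof, which simply notes that $p \mapsto p \circ \theta^{-1}$ is a $\Diff_0(\R)$-equivariant bijection between local $\F_1$-$\F^k_\R$-submersions and local $\F_2$-$\F^k_\R$-submersions and then passes to $r$-jets. Your version is more thorough in spelling out leaf preservation and smoothness; note one small slip in the pullback identity, which should read $(p\circ\theta^{-1})^{-1}(\F^k_\R)=\theta_*\big(p^{-1}(\F^k_\R)\big)=\theta_*((\F_1)_V)=(\F_2)_{\theta(V)}$.
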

\begin{proof}
We may assume $U_1=M_1$, $U_2=M_2$. Then  $\theta : M_1 \to M_2$ induces a bijection $p \mapsto p \circ \theta^{-1}$ from the set of local $\F_1$-$\F^k_\R$-submersions to the set of local $\F_2$-$\F^k_\R$-submersions. This bijection commutes with the left action of $\mathrm{Diff}_0(\R)$ by composition. Passing to $r$-jets gives the result. 
\end{proof}

Since $P^{k-1}(\F)$ has discrete structure group $J^{k-1}_d$, taking monodromy immediately gives the following.  It is appropriate to think of an  $\F$-$\F^k_\R$-submersion at a point $x\in L$ as a dual version of a transversal. Accordingly,  the statement below may be interpreted as saying that a path in $L$ induces a holonomy transformation between transversals  \emph{at the level of $(k-1)$-jets}, in alignment with the classical notion of  holonomy for regular foliations.

\begin{cor}
Suppose $(M,\F)$ is a transverse order $k$ singular foliation with singular leaf $L$. Then a path  $c:[0,1] \to L$ from a point $x$ to a point $y$ induces a $J^{k-1}_d$-equivariant map  $P^{k-1}_x(\F) \to P^{k-1}_y(\F)$ defined by sending $p_0 \mapsto p(1)$ where $p:[0,1]\to P^{k-1}(\F)$ is the unique lift of $c$ with $p(0)=p_0$.  
\end{cor}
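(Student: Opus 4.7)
The plan is to deduce this directly from the principal bundle structure established in Theorem~\ref{bunthm}~(2), appealing to standard path-lifting for principal bundles with discrete structure group. Since $P^{k-1}(\F) \to L$ is a smooth principal $J^{k-1}_d$-bundle, it is in particular a covering space (the local sections provided in Theorem~\ref{bunthm} yield local trivializations, and each trivialization $U \times J^{k-1}_d$ is a disjoint union of copies of $U$ indexed by $J^{k-1}_d$). It is a standard result in covering space theory that, given any covering projection $\pi : P \to L$, any continuous path $c : [0,1] \to L$ and any $p_0 \in \pi^{-1}(c(0))$, there exists a unique continuous lift $p : [0,1] \to P$ with $p(0) = p_0$ and $\pi \circ p = c$.

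Given this, I would first verify that the assignment $p_0 \mapsto p(1)$ is well-defined as a map $P^{k-1}_x(\F) \to P^{k-1}_y(\F)$; this is immediate from existence and uniqueness of lifts and the fact that $\pi(p(1)) = c(1) = y$. The remaining content is $J^{k-1}_d$-equivariance: given any $g \in J^{k-1}_d$ and any $p_0 \in P^{k-1}_x(\F)$, I need to check that the lift of $c$ starting at $g \cdot p_0$ is exactly $t \mapsto g \cdot p(t)$, where $p$ is the lift starting at $p_0$. But this is clear: since the $J^{k-1}_d$-action on $P^{k-1}(\F)$ consists of homeomorphisms of $P^{k-1}(\F)$ covering the identity on $L$, the path $t \mapsto g \cdot p(t)$ is continuous, starts at $g \cdot p_0$, and projects to $c$, so by uniqueness of lifts it must be the lift of $c$ starting at $g \cdot p_0$. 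Evaluating at $t = 1$ gives equivariance.

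The only subtlety worth mentioning is that $J^{k-1}_d$ is uncountable as a discrete group, so $P^{k-1}(\F)$ is typically a non-second-countable smooth space; however, the path lifting argument does not require second countability — it only requires that $\pi$ be a covering projection and that the path $c$ be continuous (indeed, one lifts in small steps using  local trivializations over a finite cover of $c([0,1])$ by evenly covered open sets). No serious obstacle arises, and the corollary is essentially a formal consequence of the preceding theorem together with classical covering space theory.
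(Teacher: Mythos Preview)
Your proof is correct and aligns with the paper's approach: the paper does not give an explicit proof of this corollary, simply remarking that ``since $P^{k-1}(\F)$ has discrete structure group $J^{k-1}_d$, taking monodromy immediately gives the following.'' Your argument spells out precisely the covering-space path-lifting that underlies this remark, including the verification of equivariance, so it is a faithful expansion of what the paper leaves implicit.
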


We furthermore use the monodromy of the $J^{k-1}_d$-bundle $P^{k-1}(\F)$ to define the following invariant of a transverse order $k$ foliation. See Section~\ref{sec:monodromy} for notation and terminology.

\begin{defn}\label{invdef}
Let $(M,\F)$ be a transverse order $k$ foliation with singular leaf $L$. The \textbf{holonomy invariant} of $\F$ is
\[ h(\F) \coloneqq h(P^{k-1}(\F)) \in [\pi_1(L),J^{k-1}].\]
That is, $h(\F)$ is the monodromy invariant (Definition~\ref{def:monodromy}) of  $P^{k-1}(\F)$.
\end{defn}

Here,  $[\pi_1(L),J^{k-1}]$ denotes the quotient of the set $\Hom(\pi_1(M,x_0),J^{k-1})$ by the conjugation action of $J^{k-1}$, where  $x_0 \in L$ is any basepoint, as in Section~\ref{sec:monodromy}.

The following proposition shows that $h(\F)$ is indeed an \emph{invariant} of $\F$. More precisely,  $h(\F)$ is an ``$L$-local invariant'' in the sense that it only depends on the  restriction of $\F$  to any neighbourhood of its singular leaf.

\begin{propn}
For $i=1,2$, let $(M_i,\F_i)$ be  transverse order $k$ singular foliations with singular leaves $L_i$. Suppose $U_i \subset M_i$ is an open set containing $L_i$ and $\theta:  U_1 \to U_2$ is a diffeomorphism with $\theta_*((\F_1)_{U_1}) = (\F_2)_{U_2}$. Then  $\theta$ restricts to a diffeomorphism $\theta_0:L_1 \to L_2$  and $(\theta_0)_*(h(\F_1)) = h(\F_2)$. We refer to  Proposition~\ref{[pi,Gamma]induce} for the definition of the induced  map  $(\theta_0)_*: [\pi_1(M_1),J^{k-1}] \to [\pi_1(M_2),J^{k-1}]$.
\end{propn}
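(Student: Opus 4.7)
The plan is to chain together two results already established: Proposition~\ref{bunfunc}, which lifts a foliation-preserving diffeomorphism to a principal bundle isomorphism on the bundles $P^r(\F)$, and Theorem~\ref{thm:monodromy}(1), which ensures that principal bundles with discrete structure group linked by an isomorphism covering $\theta_0$ have monodromy invariants related by $(\theta_0)_*$. Modulo verifying that $\theta$ restricts to a diffeomorphism $\theta_0 : L_1 \to L_2$, the proof is then immediate from Definition~\ref{invdef}.

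First I would verify that $\theta$ carries $L_1$ diffeomorphically onto $L_2$. The singular leaf of a transverse order $k$ foliation admits an intrinsic characterization: by Proposition~\ref{zoom}, near any point of $L_i$ the foliation is locally modelled on $\F^k_{\R^n}$ with $L_i$ corresponding to $\R^\ell \times \{0\}$, so $L_i$ is exactly the subset of $U_i$ at which the tangent space $T_x \F_i$ fails to be full-dimensional (equivalently, the set of non-regular points of $(\F_i)_{U_i}$). Since this condition is preserved by any foliation-preserving diffeomorphism, $\theta$ must send $L_1$ onto $L_2$, and the restriction is the desired $\theta_0$.

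Next I would apply Proposition~\ref{bunfunc} with $r = k-1$ to the diffeomorphism $\theta : (U_1, (\F_1)_{U_1}) \to (U_2, (\F_2)_{U_2})$, producing a principal $J^{k-1}_d$-bundle isomorphism $P^{k-1}(\theta) : P^{k-1}((\F_1)_{U_1}) \to P^{k-1}((\F_2)_{U_2})$ covering $\theta_0$. The construction of $P^{k-1}(\F_i)$ only involves $(k-1)$-jets at points of $L_i$ of local $\F_i$-$\F^k_\R$-submersions, which depend only on the foliation in a neighbourhood of $L_i$; thus $P^{k-1}((\F_i)_{U_i})$ coincides canonically with $P^{k-1}(\F_i)$, and we obtain a $J^{k-1}_d$-bundle isomorphism $P^{k-1}(\F_1) \to P^{k-1}(\F_2)$ covering $\theta_0$.

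Invoking Theorem~\ref{thm:monodromy}(1) on this bundle isomorphism at once gives $(\theta_0)_*(h(P^{k-1}(\F_1))) = h(P^{k-1}(\F_2))$, which unfolds via Definition~\ref{invdef} to the desired equality $(\theta_0)_*(h(\F_1)) = h(\F_2)$. The only step requiring any argument is the preservation of the singular leaf by $\theta$; the remainder is a direct invocation of machinery already in place, so I do not anticipate any real obstacle.
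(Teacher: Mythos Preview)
Your proposal is correct and follows essentially the same route as the paper, which simply says to apply Proposition~\ref{bunfunc} and then Theorem~\ref{thm:monodromy}(1). Your additional verification that $\theta$ carries $L_1$ onto $L_2$ is already subsumed in the statement of Proposition~\ref{bunfunc}, and your remark that $P^{k-1}((\F_i)_{U_i})$ agrees with $P^{k-1}(\F_i)$ makes explicit a point the paper leaves implicit, but otherwise the argument is identical.
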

 
\begin{proof}
First apply Theorem~\ref{bunfunc} and then Theorem~\ref{thm:monodromy}~(1). 
\end{proof}

If we take the underlying $J^1$-bundle of the $J^1_d$-bundle $P^1(\F) \to L$ and identify $J^1$ with $\operatorname{GL}(1,\R)$ in the obvious way, then we see that, through the usual correspondence between vector bundles and their frame bundles, $P^1(\F)$ determines a flat line bundle over $L$. As one might guess (since a $1$-jet $j^1_x(p)$ of a real-valued map is essentially the same thing as its differential $dp_x$),  this line bundle is canonically isomorphic to $\nu_M^*(L)$, the conormal bundle of $L$ in $M$. For convenience, we identify $\nu_M^*(L)$ with  the subbundle of $T^*M|_L$ which annihilates $TL \subset TM|_L$. We omit the verification  of the following.

\begin{propn}\label{bottproof}
Let $(M,\F)$ be a transverse order $k\geq 2$ foliation with singular leaf $L$. View $P^1(\F)$ as a $\mathrm{G}(1,\R)$-bundle, as described above. Then
\[ \R\underset{\operatorname{GL}(1,\R)}{\times} P^1(\F) \ni[\lambda,j^1_x(p)]  \mapsto \lambda dp_x \in \nu_M^*(L) \]
is an isomorphism of line bundles. \qed
\end{propn}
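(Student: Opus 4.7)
The plan is to verify that the stated formula defines a well-defined smooth isomorphism of line bundles by checking, in turn, well-definedness on the balanced product, image in the conormal bundle, fiberwise bijectivity, and smoothness. The proof is largely bookkeeping once the definitions are unwound, and I do not expect any substantive analytic difficulty.

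For well-definedness, the identification of $J^1$ with $\operatorname{GL}(1,\R)$ sends $a$ to the $1$-jet of $y \mapsto a y$, whose left action on $P^1(\F)$ takes $j^1_x(p)$ to $j^1_x(ap)$; thus the balanced-product relation $[\lambda, j^1_x(p)] = [\lambda a^{-1}, j^1_x(ap)]$ maps to $\lambda a^{-1}\, d(ap)_x = \lambda\, dp_x$, so the formula descends to the quotient. For the image, any local $\F$-$\F^k_\R$-submersion $p$ at $x \in L$ vanishes on $L$ near $x$ (being a submersion with $p^{-1}(0) = L$ locally), so $dp_x$ annihilates $T_xL$ and lies in $\nu_M^*(L)_x$; moreover $dp_x \neq 0$ because $p$ is a submersion. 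Since both fibers over $x$ are one-dimensional and the map is linear in $\lambda$, the nonvanishing of $dp_x$ immediately yields a fiberwise isomorphism, with existence of the required submersion at each $x \in L$ guaranteed by Proposition~\ref{zoom}.

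For smoothness, Theorem~\ref{bunthm} provides that the section $x \mapsto j^1_x(p)$ of $P^1(\F)$ is smooth over $U \cap L$ for any local $\F$-$\F^k_\R$-submersion $p:U\to\R$. In the resulting local trivialization of the associated bundle, the candidate map becomes $(x,\lambda) \mapsto \lambda\, dp_x$, which is smooth because $dp$ restricts to a smooth section of $\nu_M^*(L)$ over $U \cap L$; covering $L$ by such neighborhoods upgrades this to a global smooth vector bundle isomorphism. The one step requiring care is matching the left/right action conventions of the balanced product with the identification $J^1 \cong \operatorname{GL}(1,\R)$, and this is where any mismatch would surface as a failure of the well-definedness check; beyond that, the verification is immediate.
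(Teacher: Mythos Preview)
Your proof is correct. The paper itself omits this verification entirely (the proposition is stated with a terminal \qed\ and preceded by the sentence ``We omit the verification of the following''), so there is no proof in the paper to compare against; your argument supplies exactly the routine checks the author elected to skip.
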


Thinking of $P^1(\F)$ as a flat $\operatorname{GL}(1,\R)$-bundle, we may therefore make the following definition.

\begin{defn}\label{bottdef}
Let $(M,\F)$ be a transverse order $k \geq 2$ foliation with singular leaf $L$. The \textbf{Bott connection}  of $\F$ is the flat connection $b(\F)$ on the conormal bundle $\nu^*_M(L)$ induced by Proposition~\ref{bottproof} above.
\end{defn}

We conclude this section with the observation that one  really only needs to construct the bundle $P^k(\F)$; the bundles $P^r(\F)$ for $r <k$ can be recovered as quotients of the former. Recall that if $P$ is a principal $H$-bundle and $K$ is a closed normal subgroup of $H$, then $P/K$ is naturally a principal $H/K$ bundle over the same base.

\begin{propn}
Let $r$ be an integer with $2 \leq r \leq k$ and identify $J^{r-1}$ with $J^r/\R$. Then the map  $P^r(\F) \to P^{r-1}(\F)$  given by taking the underlying $(r-1)$-jet of an $r$-jet induces an isomorphism of $J^{r-1}_d$-bundles $P^r(\F)/\R \to P^{r-1}(\F)$. \qed
\end{propn}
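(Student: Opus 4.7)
The plan is to show that the truncation map $\rho : P^r(\F) \to P^{r-1}(\F)$, $j^r_x(p) \mapsto j^{r-1}_x(p)$, is $J^r$-equivariant along the canonical quotient $q : J^r \to J^{r-1}$, and then to observe that factoring through the kernel $\R$ gives a smooth principal $J^{r-1}_d$-bundle isomorphism $\bar\rho : P^r(\F)/\R \to P^{r-1}(\F)$.

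First I would verify the elementary set-theoretic content. The assignment $\rho$ is clearly well-defined and basewise (covers $\id_L$). Surjectivity is immediate: any $(r-1)$-jet in $P^{r-1}(\F)$ is represented by some local $\F$-$\F^k_\R$-submersion $p$, and $j^r_x(p)$ is a preimage. Equivariance reads $\rho(\alpha \cdot j^r_x(p)) = j^{r-1}_x(\alpha \circ p) = q(\alpha) \cdot \rho(j^r_x(p))$ for $\alpha \in J^r$, which is just the compatibility of composition with truncation. Since $\ker q$ equals $\R$ (as identified by $t \mapsto y + ty^r$) and the $J^r$-action on $P^r_x(\F)$ is free and transitive (Theorem~\ref{bunthm}), each fiber of $\rho$ over a point of $P^{r-1}(\F)$ is a single $\R$-orbit. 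Thus $\bar\rho$ is a basewise $J^{r-1}$-equivariant bijection.

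Next I would promote $\bar\rho$ to a smooth $J^{r-1}_d$-bundle isomorphism. For any local $\F$-$\F^k_\R$-submersion $p : U \to \R$, Theorem~\ref{bunthm} declares the local sections $s_p^r(x) \coloneqq j^r_x(p)$ and $s_p^{r-1}(x) \coloneqq j^{r-1}_x(p)$ smooth, and these families of sections generate the smooth structures on $P^r(\F)$ and $P^{r-1}(\F)$ respectively via Lemma~\ref{bundlemaker}. By construction $\rho \circ s_p^r = s_p^{r-1}$, so after passing to the quotient, $\bar\rho$ carries the smooth section $[s_p^r]$ of $P^r(\F)/\R$ to the smooth section $s_p^{r-1}$ of $P^{r-1}(\F)$. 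Since an arbitrary section $s_q^{r-1}$ on the target is realized as the image of $[s_q^r]$ on the source (for any representative $q$), $\bar\rho$ is a local diffeomorphism in charts adapted to these sections, hence a smooth bundle isomorphism.

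The only delicate point is to be sure the quotient $P^r(\F)/\R$ carries a sensible smooth structure making it a principal $J^{r-1}_d$-bundle. When $r \leq k-1$ this is automatic: $J^r_d$ is discrete, the $\R$-subgroup is discrete, and the quotient $J^r_d/\R$ is exactly $J^{r-1}_d$, so $P^r(\F)/\R$ inherits the obvious smooth $J^{r-1}_d$-bundle structure. The main obstacle is the case $r = k$, where $P^k(\F)$ has the one-dimensional Lie group $J^k_\R$ as structure group and $\R$ sits as the identity component; here one must use that the $\R$-action on $P^k(\F)$ is a free, proper, smooth action with one-dimensional orbits, so that the quotient is naturally a smooth manifold whose smooth sections are in bijection with the smooth $\R$-invariant sections upstairs, and the resulting bundle structure has as structure group $J^k_\R/\R = J^{k-1}_d$. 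With this in hand, the argument of the previous paragraph applies uniformly and shows $\bar\rho$ is the required smooth $J^{r-1}_d$-bundle isomorphism. \qed
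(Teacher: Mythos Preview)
Your proof is correct. Note that the paper states this proposition without proof (marked with \qed), treating it as a routine consequence of the general fact recalled immediately beforehand: if $P$ is a principal $H$-bundle and $K$ is a closed normal subgroup of $H$, then $P/K$ is naturally a principal $H/K$-bundle. Your argument spells out the details that the paper leaves implicit, and does so along the expected lines: equivariance of truncation along $J^r \to J^{r-1}$, identification of the $\rho$-fibers with $\R$-orbits, and smoothness via the defining local sections $j^r_x(p)$ of Theorem~\ref{bunthm}.

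One small remark: in your final paragraph, the phrase ``smooth sections are in bijection with the smooth $\R$-invariant sections upstairs'' is not quite the right formulation (a section cannot be $\R$-invariant in any useful sense). What you need, and what follows from the recalled general fact about $P/K$, is simply that the quotient map $P^k(\F) \to P^k(\F)/\R$ is a smooth submersion and the induced $J^k_\R/\R = J^{k-1}_d$-action makes $P^k(\F)/\R$ a principal bundle; then the image $[s_p^k]$ of any smooth section $s_p^k$ is automatically smooth, which is all your argument uses. With that clarification, the case $r=k$ presents no additional difficulty.
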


\section{Gauge groupoid description of full holonomy groupoid}\label{sec:gaugefull}

Suppose $(M,\F)$ is a transversely order $k$ singular foliation with singular leaf $L$. Since the restriction  $G_\textup{full}(\F)_L$ of the full holonomy groupoid to $L$ is transitive, it must be a gauge groupoid (see Definition~\ref{gaugedef}). In this section, we show that it is isomorphic to  the  gauge groupoid of the principal $J^k_\R$-bundle $P^k(\F)$ constructed in Section~\ref{principalbundles}. The main result is Theorem~\ref{gaugeiso}.

The following lemma indicates the method we use to produce elements of the gauge groupoid. The proof is a simple algebraic verification, which we omit.

\begin{lemma}\label{lambda}
Let $P \to B$ be a (smooth, left) principal $H$-bundle. Given $x,y \in B$, denote by $\Lambda_x^y$ the set of all maps  $\lambda : P_y \times P_x \to H$ that satisfy
\[
\lambda(kq,h p) = k \lambda(q,p) h^{-1} 
\]
for all $h,k \in H$, $p \in P_x$, $q \in P_y$.
\begin{enumerate}
\item  Given  $\lambda \in \Lambda_x^y$, the element 
\[ \overline \lambda \coloneqq [q,\lambda(q,p)p] \in \Gauge(P)_x^y \]
is independent of choice of $p \in P_x$,  $q \in P_y$.
\item $\lambda \mapsto \overline \lambda$ defines a bijection  $\Lambda_x^y \to  \Gauge(P)_x^y$.
\item Given $x,y,z \in B$; $\lambda_1 \in \Lambda_x^y$; $\lambda_2 \in \Lambda_y^z$; $\lambda \in \Lambda_x^z$, the following are equivalent:
\begin{enumerate}
\item $\overline \lambda_2 \overline \lambda_1 = \overline \lambda$ 
\item $\lambda_2(r,q)\lambda_1(q,p) = \lambda(r,p)$ for all $p \in P_x$, $q \in P_y$, $r \in P_z$. \qed
\end{enumerate}
\end{enumerate}
\end{lemma}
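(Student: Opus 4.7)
The plan is to verify each of the three items by direct algebraic manipulation, exploiting the freeness of the $H$-action on fibers of $P$ together with the defining equivariance relation
\[ \lambda(kq, hp) = k\,\lambda(q,p)\,h^{-1}. \]
The underlying principle is that an element of $\Lambda_x^y$ is determined by its value on a single pair $(p_0, q_0) \in P_x \times P_y$, and this value precisely encodes an element of $\Gauge(P)_x^y$.

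For (1), I would pick alternative representatives $p' = hp \in P_x$ and $q' = kq \in P_y$ and compute
\[ \lambda(q', p')\, p' = \bigl(k\,\lambda(q,p)\,h^{-1}\bigr)(hp) = k\,\lambda(q,p)\,p. \]
Hence $(q', \lambda(q',p')p') = k \cdot (q, \lambda(q,p)p)$ under the diagonal $H$-action, so both pairs descend to the same class in $(P \times P)/H = \Gauge(P)$. A quick check of source and target projections shows the result lies in $\Gauge(P)_x^y$.

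For (2), I would construct an inverse map. Given $g \in \Gauge(P)_x^y$, fix base points $p_0 \in P_x$, $q_0 \in P_y$. Then there is a unique $h_0 \in H$ such that $g = [q_0, h_0 p_0]$; set $\lambda(q_0, p_0) \coloneqq h_0$ and extend to all of $P_y \times P_x$ using the required equivariance, i.e.\ $\lambda(kq_0, hp_0) \coloneqq k h_0 h^{-1}$. Independence of the choice of representative $(q_0, h_0 p_0)$ for $g$ follows because replacing this pair by its diagonal $H$-translate modifies $h_0$ in the expected way. Injectivity of $\lambda \mapsto \overline{\lambda}$ is immediate from freeness of the $H$-action: if $\overline{\lambda_1} = \overline{\lambda_2}$, then comparing representatives forces $\lambda_1(q_0, p_0) = \lambda_2(q_0, p_0)$, and equivariance propagates this to full equality.

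For (3), I would just compute in the gauge groupoid. Using the identity $[kq, p] = [q, k^{-1}p]$ to normalize, we have
\[ \overline{\lambda_2}\,\overline{\lambda_1} = \bigl[r,\, \lambda_2(r,q)q\bigr]\bigl[q,\, \lambda_1(q,p)p\bigr] = \bigl[\lambda_2(r,q)^{-1}r,\, q\bigr]\bigl[q,\, \lambda_1(q,p)p\bigr] = \bigl[r,\, \lambda_2(r,q)\lambda_1(q,p)\,p\bigr], \]
whereas $\overline{\lambda} = [r, \lambda(r,p)p]$. These classes coincide iff $\lambda_2(r,q)\lambda_1(q,p)\,p$ and $\lambda(r,p)\,p$ differ by an $H$-element that also fixes $r$; by freeness, this $H$-element must be the identity, which gives precisely $\lambda_2(r,q)\lambda_1(q,p) = \lambda(r,p)$. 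No genuine obstacle arises — the only subtlety is keeping the diagonal quotient and the left/right placement of $H$-actions straight, since the equivariance is ``mixed'' (left in $q$, right in $p$).
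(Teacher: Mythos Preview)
Your proposal is correct and is precisely the kind of routine algebraic verification the paper has in mind; indeed, the paper explicitly omits the proof, stating that it ``is a simple algebraic verification, which we omit.'' Your argument matches what that verification would look like.
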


We need the following simple lemma concerning  bisubmersions.

\begin{lemma}\label{qplemma}
Let $(M,\F)$ and $(N,\F_N)$ be singular foliations. Let $U,V \subset M$ be open and let $p : U \to N$ and $q : V \to N$ be submersions such that $p^{-1}(\F_N) = \F|_U$ and $q^{-1}(\F_N)=\F|_V$.
\begin{enumerate}
\item If $(W,t,s)$ is an $\F$-bisubmersion, then $W_{q,p} \coloneqq(s^{-1}(U) \cap t^{-1}(V), q \circ t,p \circ s)$ is an $\F_N$-bisubmersion.
\item Suppose $W'$ is another  $\F$-bisubmersion and let $w \in W_{q,p}$, $w' \in  W'_{q,p}$. If there is a local morphism of $\F$-bisubmersions from $W$ to $W'$ sending $w_1 \mapsto w_2$, then there  is also a local morphism of $\F_N$-bisubmersions from $W_{q,p}$ to $W'_{q,p}$ sending $w_1\mapsto w_2$. 
\end{enumerate}
\end{lemma}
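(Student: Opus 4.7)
The plan is to reduce both parts to standard facts: part (1) to pullback functoriality for composable submersions plus the chain-rule containments $\ker ds \subset \ker d(p\circ s)$ and $\ker dt \subset \ker d(q\circ t)$; part (2) to direct composition of the defining intertwining relations.

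For part (1), write $\widetilde{W} \coloneqq s^{-1}(U)\cap t^{-1}(V)$, which is open in $W$ and hence inherits a manifold structure. The maps $p\circ s|_{\widetilde{W}}$ and $q\circ t|_{\widetilde{W}}$ are submersions as compositions of submersions. What remains is the bisubmersion identity
$$(p\circ s)^{-1}(\F_N) \;=\; (q\circ t)^{-1}(\F_N) \;=\; C_c^\infty(\widetilde{W};\ker d(p\circ s)) + C_c^\infty(\widetilde{W};\ker d(q\circ t)).$$
I would establish the leftmost equality via functoriality of pullback along composable submersions: $(p\circ s)^{-1}(\F_N) = s^{-1}(p^{-1}(\F_N)) = s^{-1}(\F|_U) = s^{-1}(\F)|_{\widetilde{W}}$, and symmetrically $(q\circ t)^{-1}(\F_N) = t^{-1}(\F)|_{\widetilde{W}}$. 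Since $W$ is an $\F$-bisubmersion, $s^{-1}(\F) = t^{-1}(\F)$ on $W$, so the two pullbacks agree on $\widetilde{W}$.

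For the remaining equality, the $\supset$ inclusion is immediate: vector fields in $\ker d(p\circ s)$ (resp.\ $\ker d(q\circ t)$) project to zero under $p\circ s|_{\widetilde{W}}$ (resp.\ $q\circ t|_{\widetilde{W}}$) and so lie in the corresponding pullback foliation. For $\subset$, start from the $\F$-bisubmersion decomposition $s^{-1}(\F) = C_c^\infty(W;\ker ds) + C_c^\infty(W;\ker dt)$, restrict to $\widetilde{W}$, and apply the chain-rule containments $\ker ds|_{\widetilde{W}} \subset \ker d(p\circ s)$ and $\ker dt|_{\widetilde{W}} \subset \ker d(q\circ t)$. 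The only mild bookkeeping is the use of cutoff functions when passing from $C_c^\infty(W)$ to $C_c^\infty(\widetilde{W})$, which is routine.

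For part (2), let $f : W_0 \to W'$ be a local morphism of $\F$-bisubmersions with $W_0 \subset W$ open, $f(w_1) = w_2$, and $s' \circ f = s|_{W_0}$, $t' \circ f = t|_{W_0}$, where $s', t'$ denote the source and target of $W'$. These intertwining relations immediately imply that $f$ carries $W_0 \cap s^{-1}(U) \cap t^{-1}(V)$ into $(s')^{-1}(U) \cap (t')^{-1}(V) = W'_{q,p}$, and postcomposing with $p$ and $q$ gives $(p\circ s') \circ f = p \circ s$ and $(q \circ t') \circ f = q \circ t$ on this intersection. Thus $f$ restricts to a local morphism of $\F_N$-bisubmersions $W_{q,p} \to W'_{q,p}$ sending $w_1$ to $w_2$. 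The main obstacle I anticipate is the $\subset$ inclusion in the kernel-sum identity of part (1), where one must juggle restriction of foliations with compact-support conventions; everything else is unpacking of definitions.
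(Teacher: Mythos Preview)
Your proposal is correct and follows essentially the same approach as the paper: for (1), both arguments use functoriality of pullback to identify $(p\circ s)^{-1}(\F_N)=(q\circ t)^{-1}(\F_N)$ with the restricted bisubmersion foliation, then establish the kernel-sum identity via the obvious $\supset$ inclusion together with the chain-rule containments $\ker ds\subset\ker d(p\circ s)$, $\ker dt\subset\ker d(q\circ t)$ for the reverse inclusion; for (2), the paper simply says to restrict the domain of the given morphism, which is exactly what you spell out.
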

\begin{proof}
For brevity, put $s_N \coloneqq p \circ s$, $t_N \coloneqq q \circ t$ and  $\F_W \coloneqq s_N^{-1}(\F_N) =t_N^{-1}(\F_N)$. Since $C_c^\infty(\ker(ds_N)) \subset s_N^{-1}(\F_N)$ and $C_c^\infty(\ker(dt_N)) \subset t_N^{-1}(\F_N)$, we have \[ C_c^\infty(\ker(ds_N)) + C_c^\infty(\ker(dt_N)) \subset \F_W.\] On the other hand, $\ker(ds) \subset \ker(ds_N)$ and $\ker(dt) \subset \ker(dt_N)$, so 
\[ \F_W = C_c^\infty(\ker(ds)) + C_c^\infty(\ker(dt)) \subset C_c^\infty(\ker(ds_N)) + C_c^\infty(\ker(dt_N)), \] 
establishing (1). For (2), one needs  simply to restrict the domain of the given morphism appropriately.
\end{proof}

\begin{rmk}\label{pqmap}
The above lemma shows that $p:U \to N$ and $q: V \to N$ determine a well-defined function (typically not a groupoid morphism) $G_\textup{full}(\F)_U^V \to G_\textup{full}(\F_N)$ sending $Q_W(w) \mapsto Q_{W_{q,p}}(w)$. This function is smooth when  $\F$ and $\F_N$ are almost regular.
\end{rmk}

The  notation in the following definition will facilitate the statement of Theorem~\ref{gaugeiso}.

\begin{defn}\label{lambdadef}
Let $(M,\F)$ be a transverse order $k\geq 2$ foliation with singular leaf $L$. Let $(W,t,s)$ be an $\F$-bisubmersion, let $w \in W$ and assume that $x_1 \coloneqq s(w)$ and $x_2 \coloneqq t(w)$ belong to $L$. Let $p$ and $q$ be $\F$-$\F^k_\R$-submersions defined at $x_1$ and $x_2$ respectively. Then, we put
\[ \lambda_W(w,q,p) \coloneqq j^k_0(\theta) \in J^k, \] 
where $\theta$  is any diffeomorphism of $\R$ carried by the associated $\F^k_\R$-bisubmersion $W_{q,p}$ (see Lemma~\ref{qplemma},~(1)) at $w$. Equivalently, $\lambda_W(w,q,p)$ is the image of $Q_{W_{q,p}}(w) \in G(\F^k_\R)_0$ under the isomorphism $G(\F^k_\R)_0 \to  J^k$ of Proposition~\ref{Jkdescrip}. 
\end{defn}

\begin{lemma}\label{lambdawelldef} 
The $k$-jet $\lambda_W(w,q,p)$ in Definition~\ref{lambdadef} above only depends on:
\begin{enumerate}
\item the groupoid element  $Q_W(w) \in G_\textup{full}(\F)$, and
\item the $k$-jets $j^k_{x_1}(p) \in P^k_{x_1}(\F)$ and $j^k_{x_2}(q) \in P^k_{x_2}(\F)$.
\end{enumerate}
\end{lemma}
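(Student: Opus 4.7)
The plan is to give an alternative description of $\lambda_W(w,q,p)$ in which the dependence only on the data (1) and (2) is manifest. Using Theorem~\ref{G/Nthm}, I represent $Q_W(w)\in G_\textup{full}(\F)$ by the germ of a local $\F$-automorphism $\phi$ sending $x_1\mapsto x_2$; this representative is unique up to pre-composition with a null $\F$-automorphism at $x_1$ and post-composition with one at $x_2$. Because $\phi$ preserves $\F$, the composition $q\circ\phi$ is an $\F$-$\F^k_\R$-submersion defined at $x_1$, just like $p$. Freeness and transitivity of the $J^k$-action on $P^k_{x_1}(\F)$ (from Theorem~\ref{bunthm}) then produce a unique element $\mu\in J^k$ satisfying
$$ \mu \circ j^k_{x_1}(p) = j^k_{x_1}(q\circ\phi) $$
in $P^k_{x_1}(\F)$, and the first step is to identify $\mu$ with $\lambda_W(w,q,p)$.

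To do so, I produce an explicit bisection of $W_{q,p}$ through $w$. Choose a bisection $N_\phi\subset W$ inducing $\phi$ and a smooth curve $\gamma:(-\epsilon,\epsilon)\to M$ with $\gamma(0)=x_1$ and $p\circ\gamma=\id$; this gives $j^k_{x_1}(p)\circ j^k_0(\gamma)=y$ in $J^k$. After shrinking $\epsilon$ if needed, the one-dimensional submanifold $N\coloneqq (s|_{N_\phi})^{-1}(\gamma(-\epsilon,\epsilon))$ is a bisection of $W_{q,p}$ through $w$ (the necessary transversality for $q\circ t|_N$ rests on the observation that $p$ and $q\circ\phi$, as two local $\F$-$\F^k_\R$-submersions at $x_1$, share the same tangent kernel $T_{x_1}L$), and it carries $\theta=q\circ\phi\circ\gamma$. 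The chain rule for jets then gives
$$\lambda_W(w,q,p)=j^k_0(\theta)=j^k_{x_1}(q\circ\phi)\circ j^k_0(\gamma)=\mu\circ j^k_{x_1}(p)\circ j^k_0(\gamma)=\mu.$$

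The final step is to check that $j^k_{x_1}(q\circ\phi)$ depends only on $Q_W(w)$ and $j^k_{x_2}(q)$. If $\phi$ is replaced by $\phi\circ\eta_1$ with $\eta_1$ null at $x_1$, the resulting submersion $(q\circ\phi)\circ\eta_1$ has the same $k$-jet at $x_1$ as $q\circ\phi$ by Theorem~\ref{germ=nullorbequiv}. If instead $\phi$ is replaced by $\eta_2\circ\phi$ with $\eta_2$ null at $x_2$, then $q\circ\eta_2$ has the same $k$-jet at $x_2$ as $q$ (again by Theorem~\ref{germ=nullorbequiv}), and the chain rule yields $j^k_{x_1}(q\circ\eta_2\circ\phi)=j^k_{x_2}(q\circ\eta_2)\circ j^k_{x_1}(\phi)=j^k_{x_1}(q\circ\phi)$. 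The same two applications of Theorem~\ref{germ=nullorbequiv} show that replacing $q$ by any $\F$-$\F^k_\R$-submersion with the same $k$-jet at $x_2$ preserves $j^k_{x_1}(q\circ\phi)$. Combined with the uniqueness of $\mu$, this completes the proof. The main obstacle, finding an expression for $\lambda_W(w,q,p)$ that involves no choice of bisection or of $\gamma$, is dissolved by the characterization $\mu\circ j^k_{x_1}(p)=j^k_{x_1}(q\circ\phi)$; thereafter every dependency is handled by jet chain-rule arithmetic and Theorem~\ref{germ=nullorbequiv}.
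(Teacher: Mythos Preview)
Your argument is correct, but it follows a genuinely different route from the paper's proof. The paper stays entirely inside the bisubmersion calculus: for (1) it invokes Lemma~\ref{qplemma}(2) directly, and for (2) it takes $p',q'$ with the same $k$-jets as $p,q$, produces null automorphisms $\theta_1,\theta_2$ via Theorem~\ref{germ=nullorbequiv} with $p'=p\circ\theta_1$, $q'=q\circ\theta_2$, and then observes that the modified $\F$-bisubmersion $W'=(W,\theta_2\circ t,\theta_1\circ s)$ is $\sim$-equivalent to $W$ at $w$, whence $W_{q,p}\sim W'_{q,p}=W_{q',p'}$ at $w$ again by Lemma~\ref{qplemma}(2). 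No explicit bisection or curve is ever written down.

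Your approach instead passes through Theorem~\ref{G/Nthm} to represent $Q_W(w)$ by an $\F$-automorphism germ $\phi$ and extracts the closed formula $\lambda_W(w,q,p)\cdot j^k_{x_1}(p)=j^k_{x_1}(q\circ\phi)$, then checks invariance by jet chain-rule arithmetic together with Theorem~\ref{germ=nullorbequiv}. This buys you something concrete: the identity you establish is exactly the relation $[j^k_{x_2}(q),\lambda_W(w,q,p)\cdot j^k_{x_1}(p)]$ appearing in Theorem~\ref{gaugeiso}, so your lemma already contains the algebraic core of that theorem. The paper's argument is shorter and avoids the auxiliary construction of $N_\phi$ and $\gamma$, but yields no such formula. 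One small redundancy: since $\NullAut(\F)$ is normal, checking invariance under post-composition by $\eta_2$ is not strictly necessary once pre-composition by $\eta_1$ is handled, though it does no harm.
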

\begin{proof}
Lemma~\ref{qplemma},~(2) gives immediately that $\lambda_W(w,q,p)$ only depends on $Q_W(w)$ (this is in the same vein as Remark~\ref{pqmap}). It remains to investigate the dependence of $\lambda_W(w,q,p)$ on $p$ and $q$. To this end, suppose $p'$ and $q'$ are local $\F$-$\F^k_\R$-submersions with $j^k_{x_1}(p)=j^k_{x_1}(p')$ and $j^k_{x_2}(q)=j^k_{x_2}(q')$. Then, by Theorem~\ref{germ=nullorbequiv}, there exists, for $i=1,2$, a  local $\F$-automorphism $\theta_i$ which is null at $x_i$, such that $p' = p \circ \theta_1$ near $x_1$ and $q'=q\circ\theta_2$ near $x_2$. Because $\theta_1,\theta_2$ are null at $w$, the $\F$-bisubmersion $W'\coloneqq W_{\theta_2,\theta_2}$ carries the same local $\F$-automorphisms at $w$ as does $W$. Equivalently, by Lemma~\ref{simtoapprox}, there exists a local morphism from $W$ to $W'$ sending $w \mapsto w$. Thus, applying Lemma~\ref{qplemma},~(2), there is a local morphism from $W_{q,p}$ to $W'_{q,p} =W_{q\circ\theta_2,p\circ\theta_1} = W_{q',p'}$ sending $w\mapsto w$ and it follows that $\lambda_W(w,q,p)=\lambda_W(w,q',p')$. 
\end{proof}

We have come to the main result of this section.

\begin{thm}\label{gaugeiso}
Let $(M,\F)$ be a transverse order $k \geq 2$ foliation with singular leaf $L$. There is a unique isomorphism of Lie groupoids
\[ G_\textup{full}(\F)_L \to \operatorname{Gauge}(P^k(\F)) \]
such that  if $(W,t,s)$ is an $\F$-bisubmersion,
 and if  $w \in W$ is such that $x_1 \coloneqq s(w)$ and $x_2 \coloneqq t(w)$ belong to $L$, then
\[ Q_W(w) \mapsto 
 [ j^k_{x_2}(q), \lambda_W(w,q,p) \cdot j^k_{x_1}(p)],\]
where $p$ and $q$ are any  local $\F$-$\F^k_\R$-submersions defined at $x_1$ and $x_2$, respectively.
\end{thm}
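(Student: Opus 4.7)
The plan is to define the map by the stated formula, then verify in sequence: well-definedness, multiplicativity, bijectivity, and smoothness. The two main tools are Lemma~\ref{lambda}, which parametrizes elements of $\Gauge(P^k(\F))$ by equivariant maps, and the combination of Proposition~\ref{zoom} with Theorem~\ref{fullGforRn}, which gives an explicit local model of $G_\textup{full}(\F)_L$ over which both sides become manifestly identifiable.

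For well-definedness, Lemma~\ref{lambdawelldef} already guarantees that $\lambda_W(w,q,p)$ depends only on $g\coloneqq Q_W(w)$ and on the $k$-jets $j^k_{x_1}(p)$, $j^k_{x_2}(q)$, so the choice of bisubmersion representative is irrelevant. What needs checking is the dependence on the submersions themselves. Replacing $p$ by $\phi\circ p$ with $\phi\in\Diff_0(\R)$ post-composes the source of the associated $\F^k_\R$-bisubmersion $W_{q,p}$ by $\phi$, so any automorphism carried at $w$ gets right-multiplied by $\phi^{-1}$; hence $\lambda_W(w,q,\phi\circ p)=\lambda_W(w,q,p)\cdot j^k_0(\phi)^{-1}$, and symmetrically $\lambda_W(w,\psi\circ q,p)=j^k_0(\psi)\cdot\lambda_W(w,q,p)$. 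Substituting into $[j^k_{x_2}(q),\lambda_W(w,q,p)\cdot j^k_{x_1}(p)]$ and invoking the diagonal $J^k$-action defining $\Gauge(P^k(\F))$ shows the class is unchanged, and the resulting map $g\mapsto\lambda_g$ manifestly lies in $\Lambda^{x_2}_{x_1}$ in the sense of Lemma~\ref{lambda}.

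Multiplicativity follows from the compatibility
\[ (W_2\circ W_1)_{r,p} \;=\; (W_2)_{r,q}\circ (W_1)_{q,p} \]
with any intermediate $\F$-$\F^k_\R$-submersion $q$, whence
\[ \lambda_{W_2\circ W_1}\bigl((w_2,w_1),r,p\bigr) \;=\; \lambda_{W_2}(w_2,r,q)\cdot\lambda_{W_1}(w_1,q,p). \]
This is precisely the cocycle condition in Lemma~\ref{lambda}(3)(b), so the images indeed multiply correctly in the gauge groupoid, and uniqueness of the map is automatic from the formula since every element of $G_\textup{full}(\F)_L$ is of the form $Q_W(w)$.

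For bijectivity and smoothness I would pass to the local model. Given $x_0\in L$, Proposition~\ref{zoom} provides a neighbourhood over which $(M,\F)\cong(\R^n,\F^k_{\R^n})$ with $L\cong\R^\ell\times\{0\}$, and the coordinate projection $\mathrm{pr}_2$ furnishes a canonical local $\F$-$\F^k_\R$-submersion. By Theorem~\ref{bunthm} this yields a smooth local section $x\mapsto j^k_{(x,0)}(\mathrm{pr}_2)$ of $P^k(\F)$, trivializing $\Gauge(P^k(\F))$ locally as $L^2\times J^k_\R$ via Example~\ref{trivalgauge}. Taking $p=q=\mathrm{pr}_2$ and running the formula through the full holonomy atlas $\{\widetilde\Omega_\theta\}$ of Proposition~\ref{tildeatlas}, a direct computation of $\lambda_{\widetilde\Omega_\theta}((x_2,x_1,t,0),\mathrm{pr}_2,\mathrm{pr}_2)=j^k_0(\theta)\circ(y+ty^k)$ shows that under these two trivializations the proposed map coincides with the Lie groupoid isomorphism of Theorem~\ref{fullGforRn}. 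Locally, then, the map is a Lie groupoid isomorphism; naturality ensures that these local isomorphisms patch into a global one. The main subtlety throughout is the equivariance bookkeeping in the well-definedness step — keeping track of left versus right composition with $\phi$ at the level of bisubmersions — and this is where I would spend the most care.
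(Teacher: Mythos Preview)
Your argument is correct and follows essentially the same route as the paper: equivariance of $\lambda_W$ for well-definedness, the cocycle identity for multiplicativity, and reduction to the local model $(\R^n,\F^k_{\R^n})$ via Proposition~\ref{zoom} and Theorem~\ref{fullGforRn} for the Lie groupoid isomorphism. One small technical correction: the displayed equality $(W_2\circ W_1)_{r,p}=(W_2)_{r,q}\circ(W_1)_{q,p}$ is not literally true, since the fiber-product condition on the right is $q(s_2(w_2))=q(t_1(w_1))$ rather than $s_2(w_2)=t_1(w_1)$; the paper instead observes that the latter sits inside the former as a closed submanifold, which is all that is needed to transport bisections and obtain your cocycle identity.
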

 \begin{proof}
Suppose that $W$ is a $\F$-bisubmersion and $w\in W$ has $s(w) \in L$. Let $p$ be an $\F$-$\F^k_\R$-submersions defined at $s(w)$ and $q$ and $\F$-$\F^k_\R$-submersions defined at $t(w)$. If  $W_{q,p}$ carries $\theta$ at $w$ and  $\theta_1, \theta_2 \in \Diff_0(\R)$ are given, then it is easy to see $W_{q\circ\theta_2,p\circ\theta_1}$ carries $\theta_2 \circ \theta \circ \theta_1^{-1}$ at $w$. Therefore, combining Lemma~\ref{lambdawelldef} and Lemma~\ref{lambda}~(2), the procedure in the theorem statement determines a well-defined map $G_\textup{full}(\F)_L \to \operatorname{Gauge}(P^k(\F))$.

Next we claim the map $G_\textup{full}(\F)_L \to \operatorname{Gauge}(P^k(\F))$ under discussion preserves groupoid multiplication. Let $(W,t,s)$ and $(W',t',s')$ be $\F$-bisubmersions and suppose $w \in W$ and $w'\in W'$ have $s'(w')=t(w) \in L$. Put $x_1 \coloneqq s(w)$, $x_2 \coloneqq s'(w') = t(w)$, $x_3 \coloneqq t'(w')$. Let $p_i$ be an $\F$-$\F^k_\R$-submersions defined at $x_i$ for $i=1,2,3$. Observe that $W'_{p_3,p_2} \circ W_{p_2,p_1}$ is a  closed submanifold of  $(W' \circ W)_{p_3,p_1}$. The latter observation implies that, if  $\theta$ is carried by $W_{p_2,p_1}$ at $w$ and $\theta'$ is carried by $W'_{p_3,p_2}$ at $w'$, then $\theta' \circ \theta$ is carried by $(W' \circ W)_{p_3,p_1}$ at $(w',w)$. Thus, 
\[ \lambda_{W'}(w',p_3,p_2)\lambda_W(w,p_2,p_1) = \lambda_{W' \circ W}((w',w),p_3,p_1) \]
so that, by Lemma~\ref{lambda}~(3), the map $G_\textup{full}(\F)_L \to \operatorname{Gauge}(P^k(\F))$ preserves the groupoid multiplication.

It remains to check  the groupoid morphism $ G_\textup{full}(\F)_L \to \operatorname{Gauge}(P^k(\F))$ is a diffeomorphism. Because of the local nature of the problem, it suffices (by Proposition~\ref{zoom})  to consider the case $(M,\F)=(\R^n,\F^k_{\R^n})$, $L=\R^\ell\times\{0\}$ which was studied in detail in Section~\ref{localresultssection}. We do so for the remainder of the proof. In this case, the principal $J^k_\R$-bundle $P^k(\F)\to L$ is isomorphic to $L \times J^k_\R$, courtesy of  the global section induced by the global $\F$-$\F^k_\R$-submersion $\mathrm{pr}_2 : \R^n \to \R$. Correspondingly (Example~\ref{trivalgauge}), there is an isomorphism  of Lie groupoids:
\[ L^2 \times J^k_\R\ni (x_2,x_1,\alpha) \mapsto [j^k_{x_2}(\mathrm{pr}_2), \alpha \cdot  j^k_{x_2}(\mathrm{pr}_2)] \in \Gauge(P^k(\F)).  
\]
We also have already a Lie groupoid isomorphism 
\[ G_\textup{full}(\F) \to (\R^\ell)^2 \times J^k_\R \] 
by Theorem~\ref{fullGforRn}. It remains, therefore, to check that the diagram 
\[\begin{tikzcd}
G_\textup{full}(\F) \ar[rd] \ar[rr] && \Gauge(P^k(\F)) \\
&L^2 \times J^k_\R \ar[ru] & 
\end{tikzcd} \]
is commutative. For this purpose, recall the full holonomy atlas $\{\widetilde\Omega_\theta:\theta \in \R\}$ (Definition~\ref{tildeath}). Fix $\theta \in \Diff_0(\R)$; $x_1,x_2 \in \R^\ell$; $t \in \R$ and put $w \coloneqq (x_2,x_1,t,0) \in \widetilde\Omega_\theta$. It is  straightforward to chase $Q_{\widetilde\Omega_\theta}(w) \in G_\textup{full}(\F)$ both ways around the above diagram and confirm that result is the same.
 \end{proof}

\section{Extracting the holonomy groupoid from the full holonomy groupoid}\label{sec:gaugemin}

The holonomy groupoid  of a singular foliation is contained in the full holonomy groupoid as an open subgroupoid. To be precise, $G(\F)$ is the $s$-connected component $G_\textup{full}(\F)$ (c.f. Proposition~\ref{openmap} and \cite{AS[2007]}, Theorem~0.1).

\begin{defn}
The \textbf{$\mathbf{s}$-connected component} of a Lie groupoid $G$ is the subgroupoid $G_0 \subset G$ consisting of all $g \in G$ that can be connected to $s(g)$ by a path in $G_{s(g)}$. 
\end{defn}

In the preceding section, we obtained  a description
\[ G_\textup{full}(\F) = (M\setminus L)^2 \cup \Gauge(P^k(\F)) \]
for the full holonomy groupoid (Theorem~\ref{gaugeiso}). As a corollary, we have:

\begin{thm}\label{thm:extract}
Let $(M,\F)$ be a transverse order $k \geq 2$ foliation with singular leaf $L$. If $M\setminus L$ is connected, then 
\[ G(\F)\cong (M\setminus L)^2 \cup \Gauge(P^k(\F))_0. \]
If $M\setminus L$ has two connected components $U$ and $V$, then 
\[ G(\F)\cong U^2 \cup V^2 \cup \Gauge(P^k(\F))_0. \]
Here, $\Gauge(P^k(\F))_0$ denotes the $s$-connected component of $\Gauge(P^k(\F))$. The smooth structure of $G(\F)$ is the one inherited from $G_\textup{full}(\F)$. \qed
\end{thm}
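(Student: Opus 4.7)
The plan is to obtain the theorem as a direct consequence of the description $G_\textup{full}(\F) \cong (M\setminus L)^2 \cup \Gauge(P^k(\F))$, which combines Theorem~\ref{gaugeiso} on $L$ with the routine identification of $G_\textup{full}(\F)$ over $M\setminus L$ with the pair groupoid (coming from the fact that on the open leaves $\F$ contains all compactly-supported vector fields, so any bisubmersion there can be compared with the obvious one coming from the trivial transformation groupoid). Since $G(\F)$ is by definition the $s$-connected component of $G_\textup{full}(\F)$, the entire content of the theorem reduces to computing that component within the given decomposition.

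First I would observe that the source map $s \colon G_\textup{full}(\F) \to M$ respects the decomposition cleanly: elements of $(M\setminus L)^2$ have source in $M\setminus L$, while elements of $\Gauge(P^k(\F))$ have source in $L$. Consequently, for every $x \in M$, the source fiber $G_\textup{full}(\F)_x$ lies entirely in one of the two pieces, and the $s$-connected component of the unit over $x$ may be computed inside that piece in isolation, with no ``cross'' paths to consider.

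Next I would carry out the bookkeeping of components. For $x \in L$, the source fiber equals $\Gauge(P^k(\F))_x$, and the $s$-connected component of the unit is, by definition, $(\Gauge(P^k(\F))_0)_x$; ranging over $x \in L$ gives $G(\F)|_L = \Gauge(P^k(\F))_0$. For $x \in M\setminus L$, the source fiber is $(M\setminus L) \times \{x\}$ as a subspace of the pair groupoid, whose connected component through $(x,x)$ is $C \times \{x\}$, where $C$ is the component of $M\setminus L$ containing $x$. Assembling over $x \in M\setminus L$ gives $(M\setminus L)^2$ in the connected case and $U^2 \cup V^2$ in the two-component case, matching the theorem statement.

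The smooth-structure claim is then automatic: the $s$-connected component of an almost regular holonomy groupoid is an open subgroupoid, so $G(\F)$ inherits its smooth structure from $G_\textup{full}(\F)$ by restriction. I anticipate no real obstacle; once Theorem~\ref{gaugeiso} is in hand the present result is essentially a corollary, and the only slightly delicate input is the clean separation of source fibers between the two pieces of the decomposition.
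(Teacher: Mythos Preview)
Your proposal is correct and matches the paper's approach: the theorem is stated there as an immediate corollary (with a bare \qed) of the decomposition $G_\textup{full}(\F) \cong (M\setminus L)^2 \cup \Gauge(P^k(\F))$ together with the fact that $G(\F)$ is the $s$-connected component of $G_\textup{full}(\F)$. Your explicit fiber-by-fiber bookkeeping simply spells out what the paper leaves implicit.
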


Although the bundle $P^k(\F)$ is in fact a manifold in our sense (i.e. is metrizable), it is still very large; both it and its structure group have continuum-many components. One may therefore desire a more concrete description of $G(\F)_L$ which avoids this bundle. The following proposition, whose proof we omit, shows that we in fact only need to deal with one connected component of $P_0 \subset P^k(\F)$, which is automatically a  second-countable manifold.

\begin{propn}\label{subgauge}
Let $P \to B$ be a (smooth, left) principal $H$-bundle, where $H$ is a (possibly disconnected) Lie group. 
\begin{enumerate}
\item If $p \in P$, then the set of $h \in H$ for which $hp$ belongs to the same  component of $P$ as $p$ is a closed and open subgroup $H_p \subset H$. 
\item If $p$ and $q$ belong to the same component of $P$, then $H_p = H_q$. 
\item Let $P_0$ be a component of $P$. Then $P_0$ is a principal $H_0$-bundle, where $H_0$ is the stabilizer of $P_0$ or, equivalently, any particular point of $P_0$. 
\item The inclusion  $P_0 \times P_0 \to P \times P$ descends to a Lie groupoid isomorphism from  $\Gauge(P_0)$ onto the $s$-connected component $\Gauge(P)_0 \subset \Gauge(P)$. \qed
\end{enumerate}
\end{propn}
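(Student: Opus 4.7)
The plan is to work through parts (1)--(4) in order, each using the previous. The essential observation throughout is that, for any $h \in H$, left multiplication $L_h : P \to P$ is a diffeomorphism and hence permutes the connected components of $P$; moreover these components are open and closed in $P$, since $P$ is a manifold.

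For (1), fix $p \in P$ and let $P_0$ be its component. Since $L_h$ permutes components, $L_h(P_0) = P_0$ if and only if $L_h(p) \in P_0$, so $H_p$ is precisely the set-stabilizer of $P_0$ under left multiplication, hence a subgroup of $H$. It is clopen because the orbit map $\phi_p : H \to P$, $h \mapsto hp$, is continuous and $H_p = \phi_p^{-1}(P_0)$ is the preimage of a clopen set. Part (2) follows at once: if $p, q$ lie in a common component $P_0$, then any $h \in H_p$ sends $P_0$ onto itself by the above, so $hq \in P_0$ and $h \in H_q$; symmetry yields equality.

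For (3), by (2) the restricted $H_0$-action on $P_0$ is well-defined and remains free. Local triviality comes from restricting local trivializations of $P$: given a local section $\sigma : U \to P$ of $\pi$ with $\sigma(U) \subset P_0$ (achieved by shrinking $U$ using openness of $P_0$), the trivialization $\Phi : U \times H \to \pi^{-1}(U)$, $(x,h)\mapsto h\sigma(x)$, restricts by (2) to a diffeomorphism $U \times H_0 \to \pi^{-1}(U) \cap P_0$, exhibiting $P_0$ as a smooth principal $H_0$-bundle.

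For (4), define $\iota : \Gauge(P_0) \to \Gauge(P)$ by $[q,p]_{H_0} \mapsto [q,p]_H$; this is well-defined because $H_0 \subset H$, and it clearly preserves source, target, units and composition. For injectivity, if $[q_1,p_1]_H = [q_2,p_2]_H$ with all four points in $P_0$, the unique $h \in H$ witnessing the equality (unique by freeness) satisfies $hp_1 = p_2 \in P_0$, whence $h \in H_{p_1} = H_0$. To identify the image with $\Gauge(P)_0$, fix $p_0 \in P$ and use the diffeomorphism $s^{-1}(\pi(p_0)) \to P$, $[q, p_0] \mapsto q$, under which the unit $[p_0,p_0]$ maps to $p_0$; hence $[q,p_0] \in \Gauge(P)_0$ iff $q$ is path-connected to $p_0$ in $P$, iff $q$ and $p_0$ lie in a common component. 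Thus the image of $\iota$ is exactly $\Gauge(P)_0$, and since $\iota$ is smooth and a local diffeomorphism, it is a Lie groupoid isomorphism onto its image. The main obstacle in the proof is making sure the smooth structures match up in (3) and (4): the crux is the rigidity established in (1)--(2) that $H_0$ is the \emph{exact} subgroup of $H$ preserving $P_0$, which is what forces the local charts of $P_0$ to have structure group no larger than $H_0$ and what pins down the injectivity witness in (4).
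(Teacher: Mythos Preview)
The paper omits the proof of this proposition entirely, so there is nothing to compare your argument against; your proof is correct and follows the natural line. Two minor points you leave implicit: (i) parts (3)--(4) as stated need $B$ connected so that $\pi(P_0)=B$ and a single component $P_0$ accounts for all of $\Gauge(P)_0$ (this holds in the paper's application, where the base is the connected leaf $L$); (ii) the assertion that $\iota$ is a local diffeomorphism deserves one more sentence: since $P_0\times P_0$ is open in $P\times P$ and, by your injectivity argument, the $H_0$-orbits in $P_0\times P_0$ are exactly the intersections of the $H$-orbits with $P_0\times P_0$, the quotient smooth structure on $(P_0\times P_0)/H_0$ agrees with the one inherited from the open subset $\Gauge(P)_0\subset\Gauge(P)$.
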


\begin{cor}
Let $(M,\F)$ be a transverse order $k \geq 2$ foliation with singular leaf $L$. Then $G(\F^K)_L$ is isomorphic to $\Gauge(P_0)$, where $P_0$ is any connected component of the principal $J^k$-bundle $P^k(\F)$. 
\end{cor}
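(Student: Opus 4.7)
The proof will be essentially a two-line chase combining two results already established in the paper. The plan is to restrict Theorem~\ref{thm:extract} to the singular leaf $L$ and then invoke Proposition~\ref{subgauge}~(4) to replace the $s$-connected component of the gauge groupoid by the gauge groupoid of a single connected component of the principal bundle.

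More precisely, first I would observe that under the isomorphism $G_\textup{full}(\F) \cong (M\setminus L)^2 \cup \Gauge(P^k(\F))$ of Theorem~\ref{gaugeiso}, restriction to $L$ picks out exactly the $\Gauge(P^k(\F))$ summand, so $G_\textup{full}(\F)_L \cong \Gauge(P^k(\F))$. Theorem~\ref{thm:extract} identifies $G(\F)$ as the $s$-connected component of $G_\textup{full}(\F)$, and restriction to $L$ commutes with taking $s$-connected components (since $L$ is a union of orbits and the source fibers of $G(\F)_L$ are the source fibers of $G(\F)$ based at points of $L$). This yields $G(\F)_L \cong \Gauge(P^k(\F))_0$.

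Next I would apply Proposition~\ref{subgauge}~(4) to the principal $J^k_\R$-bundle $P^k(\F) \to L$: it gives a canonical Lie groupoid isomorphism $\Gauge(P_0) \to \Gauge(P^k(\F))_0$ for any connected component $P_0 \subset P^k(\F)$. Composing the two isomorphisms produces the desired conclusion $G(\F)_L \cong \Gauge(P_0)$. Part (2) of Proposition~\ref{subgauge} ensures the statement is well-posed --- the isomorphism class of $\Gauge(P_0)$ is independent of the chosen component, since any two components of $P^k(\F)$ lying over the same fiber of $L$ have conjugate (in fact, by (2), equal in the setting of part (3)) stabilizer subgroups, so the resulting gauge groupoids are canonically isomorphic.

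Since both ingredients are already available, there is no serious obstacle; the only thing worth verifying carefully is the compatibility of ``restriction to $L$'' with ``taking the $s$-connected component,'' which follows immediately from the fact that the source fibers of $G(\F)_L$ over points of $L$ coincide with the corresponding source fibers of $G(\F)$, together with $L$ being saturated under the orbit equivalence relation of $G_\textup{full}(\F)$.
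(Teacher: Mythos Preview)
Your proposal is correct and matches the paper's intended argument: the corollary is stated without proof immediately after Proposition~\ref{subgauge}, and is meant to follow at once by combining Theorem~\ref{thm:extract} (which already gives $G(\F)_L \cong \Gauge(P^k(\F))_0$) with Proposition~\ref{subgauge}~(4). Your final remark about well-posedness via part~(2) is unnecessary---the statement is already quantified over any component $P_0$, and (4) applies to each one directly---but this does not detract from the argument.
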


The following result  gives additional insight into the  holonomy \emph{groups} of a transverse order $k$ foliation at points of its singular leaf.

\begin{thm}\label{isotrope}
Let $(M,\F)$ be a transverse order $k$  foliation with singular leaf $L$. Fix $x_0 \in L$ and $q_0 \in P^{k-1}(\F)_{x_0}$. Let $\gamma : \pi_1(L,x_0) \to J^{k-1}$ be the  monodromy homomorphism determined by $q_0$ and let $\Gamma \subset J^{k-1}$ be the range of $\gamma$. Finally, let $\Gamma_\R \subset J^k$ be the preimage of $\Gamma$ under the projection $P^k(\F) \to P^{k-1}(\F)$. Then, the isotropy group of $G(\F)$ at any point of $L$ is isomorphic, as a Lie group, to the one-dimensional group $\Gamma_\R$. For convenience, the following diagram summarizes the relationship between the various groups at hand:
\[ \begin{tikzcd} 
\R \ar[r] & J^k_\R \ar[r] & J^{k-1}_d \\
\R \ar[u,equals] \ar[r] & \Gamma_\R \ar[u] \ar[r] & \Gamma \ar[u] \\
 & & \pi_1(L,x_0)\nospacepunct{.} \ar[u,"\gamma"] 
\end{tikzcd} \] 
\end{thm}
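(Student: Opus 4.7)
The plan is to combine the gauge-groupoid description of $G_\textup{full}(\F)_L$ (Theorem~\ref{gaugeiso}) with the extraction of the $s$-connected component (Theorem~\ref{thm:extract} and Proposition~\ref{subgauge}(4)), and then analyze the connected components of $P^k(\F)$ via the monodromy of $P^{k-1}(\F)$. Fix $p_0 \in P^k(\F)$ lying over $q_0$ under the canonical projection $\pi \colon P^k(\F) \to P^{k-1}(\F)$, and let $P_0 \subseteq P^k(\F)$ be its connected component. By Theorem~\ref{thm:extract} and Proposition~\ref{subgauge}(4), $G(\F)_L$ is Lie-isomorphic to $\Gauge(P_0)$, and the isotropy of this gauge groupoid at $x_0$ is canonically Lie-isomorphic to the structure group of $P_0$ via $h \mapsto [hp_0, p_0]$. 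By Proposition~\ref{subgauge}(3), this structure group is
\[ H_0 = \{ h \in J^k_\R : h \cdot p_0 \in P_0 \}, \]
so it suffices to identify $H_0$ with $\Gamma_\R$.

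First I would pin down $P_0$ explicitly. Let $Q_0$ be the connected component of $q_0$ in $P^{k-1}(\F)$; the claim is $P_0 = \pi^{-1}(Q_0)$. The inclusion $P_0 \subseteq \pi^{-1}(Q_0)$ is immediate since $\pi(P_0)$ is connected and contains $q_0$. For the reverse inclusion, recall from the last proposition of Section~\ref{principalbundles} that $\pi$ realizes $P^k(\F)/\R \cong P^{k-1}(\F)$, where $\R$ is the identity component of $J^k_\R$; in particular $\pi$ is a principal $\R$-bundle. Since $Q_0$ is paracompact and $\R$ is contractible, $\pi^{-1}(Q_0) \to Q_0$ admits a global section and is therefore trivializable, so $\pi^{-1}(Q_0) \cong Q_0 \times \R$ is connected. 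As it contains $p_0$, we conclude $\pi^{-1}(Q_0) \subseteq P_0$.

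With $P_0 = \pi^{-1}(Q_0)$ in hand, $h \in H_0$ is equivalent to $\bar h \cdot q_0 \in Q_0$, where $\bar h$ is the image of $h$ under $J^k_\R \to J^{k-1}_d$. The bundle $P^{k-1}(\F) \to L$ has discrete structure group, i.e.\ is a regular covering space of $L$. By the definition of the monodromy (Definition~\ref{def:monodromy}), the lift of a loop representing $g \in \pi_1(L, x_0)$ starting at $q_0$ ends at $\gamma(g) \cdot q_0$; running this argument both ways identifies $Q_0 \cap P^{k-1}(\F)_{x_0}$ with the orbit $\Gamma \cdot q_0$. Since $J^{k-1}_d$ acts freely on fibers, $\bar h \cdot q_0 \in Q_0$ holds precisely when $\bar h \in \Gamma$, i.e.\ $h$ lies in the preimage $\Gamma_\R$ of $\Gamma$ under $J^k_\R \to J^{k-1}_d$, giving $H_0 = \Gamma_\R$ as required. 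The main subtlety is the connectivity step $\pi^{-1}(Q_0) = P_0$, which must be handled carefully because $P^k(\F)$ has continuum-many components and is not second countable; the contractibility of $\R$ makes the argument go through one component at a time.
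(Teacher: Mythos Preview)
Your proof is correct and follows essentially the same route as the paper's own argument: both identify $P_0$ with $\pi^{-1}(Q_0)$ using that the fibers of $\pi$ are copies of $\R$, invoke covering-space theory to see that the stabilizer of $Q_0$ in $J^{k-1}_d$ is $\Gamma$, and then apply Proposition~\ref{subgauge} to conclude that the structure group of $P_0$ is $\Gamma_\R$. The only minor difference is that you justify the connectedness of $\pi^{-1}(Q_0)$ via trivializability of $\R$-bundles, whereas the paper simply appeals to the fibers being copies of $\R$; either way the point is that a locally trivial fiber bundle with connected base and connected fiber has connected total space, so your extra step through a global section is valid but not strictly necessary.
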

\begin{proof} 
Let $Q_0$ denote the connected component of $P^{k-1}(\F)$ containing $q_0$. By standard covering space theory, the subgroup $\Gamma$ equals the stabilizer of $Q_0$ so that, by Proposition~\ref{subgauge}, $Q_0$  becomes a principal $\Gamma$-bundle. Let $P_0$ be the preimage of $Q_0$ under the projection $P^k(\F)\to P^{k-1}(\F)$. Since the fibers of this projection are copies of $\R$, it follows that $P_0$ is a connected component of $P^k(\F)$. Thus, by Proposition~\ref{subgauge}~(4), 
\[ \Gauge(P_0)\cong \Gauge(P^k(\F))_0 \cong G(\F)_L. \] 
The subgroup of $J^k_\R$ stabilizing $P_0$ is $\Gamma_\R$. By Proposition~\ref{subgauge}~(3), $P_0$ is a principal $\Gamma_\R$-bundle,  whence every isotropy group of $\Gauge(P_0)$ is isomorphic to $\Gamma_\R$. 
\end{proof}

\begin{cor}\label{C*L}
With notation as in Theorem~\ref{isotrope}, we have 
\[ C^*(G(\F)_L) \cong C^*(\Gamma_\R)\otimes \K,\] 
where $\K$ denotes the compact operators on the canonical $L^2$ space of $L$. 
\end{cor}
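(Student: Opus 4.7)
The plan is to derive this corollary directly from the identification of $G(\F)_L$ with the gauge groupoid of a principal $\Gamma_\R$-bundle established in Theorem~\ref{isotrope}, together with a standard result on C*-algebras of transitive Lie groupoids.

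First I would recall from the proof of Theorem~\ref{isotrope} the Lie groupoid isomorphism $G(\F)_L \cong \Gauge(P_0)$, where $P_0\to L$ is a principal $\Gamma_\R$-bundle obtained as a connected component of $P^k(\F)$. Modulo a choice of smooth Haar system, computing $C^*(G(\F)_L)$ therefore reduces to computing $C^*(\Gauge(P_0))$.

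Next I would invoke the well-known principle that, for a (smooth, left) principal $H$-bundle $P\to B$ over a second-countable smooth base $B$, the gauge groupoid $\Gauge(P)$ is transitive with isotropy $H$, and its groupoid C*-algebra is canonically isomorphic to
\[ C^*(\Gauge(P)) \cong C^*(H) \otimes \K(L^2(B)). \]
One way to see this is via the strong Morita equivalence between $\Gauge(P)$ and $H$ (viewed as a one-object groupoid) realized by the equivalence bimodule $P$ itself, see e.g.\ Muhly--Renault--Williams; equivalently, one observes that every transitive Lie groupoid on a second-countable base is stably isomorphic to its isotropy Lie group, with the precise form of the tensor factor $\K(L^2(B))$ determined by any choice of smooth positive density on $B$.

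Before applying this I would verify the required separability hypothesis. Since $L$ is connected and metrizable, it is second countable; consequently $\pi_1(L)$ is countable, the image $\Gamma \subset J^{k-1}$ is countable, and the preimage $\Gamma_\R \subset J^k_\R$ has countably many connected components, making $\Gamma_\R$ a second-countable Lie group. Specializing the boxed isomorphism with $H=\Gamma_\R$ and $B=L$ then yields the corollary. I expect the only real obstacle to lie in matching the Haar system implicit in the Androulidakis--Skandalis construction of $C^*(\F)$ with the one natural to the gauge-groupoid presentation; this is routine, however, since the Lie groupoid isomorphism of Theorem~\ref{isotrope} is smooth and any two smooth Haar systems on the same Lie groupoid yield the same C*-algebra up to isomorphism.
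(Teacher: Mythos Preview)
Your proposal is correct and follows essentially the same approach as the paper: both invoke the Muhly--Renault--Williams result that the C*-algebra of a transitive Lie groupoid is $C^*(H)\otimes\K(L^2(B))$ for $H$ any isotropy group, applied to $G(\F)_L\cong\Gauge(P_0)$ from Theorem~\ref{isotrope}. Your additional remarks on second-countability and Haar systems are reasonable elaborations, though the paper's proof omits them.
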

\begin{proof}
From \cite{MRW}, Theorem~3.1 it follows that, if $G\rightrightarrows M$ is any transitive Lie groupoid (hence a gauge groupoid), then $C^*(G)\cong C^*(H)\otimes \K( L^2(M))$, where $H$ denotes any isotropy group of $G$. 
\end{proof}
 
 \begin{rmk}
The homomorphisms $\gamma$ in Theorem~\ref{isotrope} are exactly the same  ones used to define the holonomy invariant $h(\F)$  (Definition~\ref{invdef}), so Corollary~\ref{C*L}  shows that knowledge of the invariant $h(\F)$  is enough to compute  $C^*(G(\F)_L)$.
 \end{rmk}

\section{Transverse order \texorpdfstring{$k$}{k} foliations on line bundles}\label{sec:linbun}

Throughout this section, $L$ is a connected, smooth manifold. If $E \to L$ is a smooth line bundle, we always identity  $L$ with the zero section of $E$, so that $L \subset E$.

\begin{defn}\label{linebundef}
Fix a positive integer $r$. 
\begin{itemize}
\item If $E\to L$ is a line bundle, we write  $J^r(E,\R)\to L$ for the   smooth, left principal $J^r$-bundle whose fiber over $x \in L$ consists of all $r$-jets at $x$ of local diffeomorphisms $E_x \to \R$ sending $x \mapsto 0$.
\item If $U\subset E$ is open and $p : U \to \R$ is a submersion with $p^{-1}(0)=U\cap L$, we write $j^r(p) : U\cap L \to J^r(E,\R)$ for the smooth section defined by $j^r(p)(x) \coloneqq j_x^r(p|_{E_x \cap U})$.
\item If $\pi_i:E_i \to L$ is a line bundle for $i=1,2$,  we write $J^r(E_1,E_2)\to L$ for the smooth fiber bundle whose fiber over $x \in L$ consists of $r$-jets at $x$ of (local) diffeomorphisms $(E_1)_x\to (E_2)_x$ sending $x \mapsto x$.
\item Let $\theta : U_1\to U_2$ be a diffeomorphism, where $U_i\subset E_i$ is open for $i=1,2$. We say that $\theta$ is \textbf{fiberwise} if   $\pi_2(\theta(e))=\pi_1(e)$ for all $e\in U_1$ and $\theta(L \cap U_1)=L\cap  U_2$. If $\theta$ is fiberwise, we write $j^r(\theta) : U \cap L \to J^r(E_1,E_2)$ for the   smooth section defined by $j^r(\theta)(x) = j^r_x(\theta|_{E_x \cap U})$.
\end{itemize}
\end{defn}

The following proposition shows that all sections of these jet bundles can be lifted to honest maps.

\begin{propn}\label{choosepoly}
Let $r$ be a positive integer. 
\begin{enumerate}
\item Let $E$ be a line bundle over $L$. Then, every local section of $J^r(E,\R)$ is equal to $j^r(p)$ for some local submersion $p:U \to L$ with $p^{-1}(0)=L \cap U$. 
\item Let $E_1$ and $E_2$ be line bundles over $L$.  Then, every local section of $J^r(E_1, E_2)$ is equal to $j^r(\theta)$ for some local fiberwise diffeomorphism $\theta$. 
\end{enumerate}
\end{propn}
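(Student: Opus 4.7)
The plan is to reduce both parts to the local case by working in bundle trivializations, then use a partition-of-unity argument that exploits the fact that only the fiberwise jet along $L$ matters.

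For part (1): I would begin by choosing a locally finite open cover $\{V_\alpha\}$ of the domain $U_L \subset L$ of the given section $\sigma$ such that $E$ is trivial over each $V_\alpha$. Picking a nonvanishing section $e_\alpha$ of $E$ over $V_\alpha$ gives a trivialization $\phi_\alpha : V_\alpha \times \R \to E|_{V_\alpha}$, $\phi_\alpha(x,y) = y \cdot e_\alpha(x)$. In this trivialization, $\sigma$ restricted to $V_\alpha$ is represented by a smooth map $V_\alpha \to J^r$, $x \mapsto a_{1,\alpha}(x)y + a_{2,\alpha}(x)y^2 + \ldots + a_{r,\alpha}(x)y^r$ with $a_{1,\alpha}$ nowhere vanishing. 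Defining $p_\alpha : V_\alpha \times \R \to \R$ by this polynomial formula and transporting via $\phi_\alpha^{-1}$ yields a smooth function $p_\alpha$ on the open set $\phi_\alpha(V_\alpha \times \R) \subset E$ that realizes $\sigma|_{V_\alpha}$, vanishes on $V_\alpha$, and is a submersion (at least near $V_\alpha$) since $\partial p_\alpha/\partial y|_{y=0} = a_{1,\alpha}(x) \neq 0$.

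Next I would glue: pick a smooth partition of unity $\{\chi_\alpha\}$ on $U_L$ subordinate to $\{V_\alpha\}$, pull each $\chi_\alpha$ back to $E$ via the bundle projection $\pi$, and set $p \coloneqq \sum_\alpha (\chi_\alpha \circ \pi)\, p_\alpha$, defined on a suitable open neighbourhood $U$ of $U_L$ in $E$. The key observation is that for any two indices $\alpha,\beta$ with overlapping supports, the difference $p_\alpha - p_\beta$ is a smooth function whose $r$-jet at each point of $V_\alpha \cap V_\beta$ \emph{along the fiber} vanishes (both produce $\sigma(x)$), so $p_\alpha - p_\beta$ vanishes to order $r{+}1$ on $L \cap V_\alpha \cap V_\beta$ in the fiber direction. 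Multiplying by a function pulled back from $L$ preserves this vanishing, so writing $p = p_\beta + \sum_\alpha (\chi_\alpha \circ \pi)(p_\alpha - p_\beta)$ on a neighbourhood of any $x \in V_\beta$ shows that $p$ has the same fiberwise $r$-jet at $x$ as $p_\beta$, namely $\sigma(x)$. Since $p$ vanishes on $L\cap U$ and $\partial p/\partial y|_{y=0} = \sum_\alpha \chi_\alpha(x)a_{1,\alpha}(x) \neq 0$ (it is a positive-weighted sum when signs agree — we can arrange this by choosing the trivializations compatibly, or simply observe that this sum realizes the $1$-jet of $\sigma(x)$, which lies in $J^1$ and is thus nonzero), $p$ is a fiberwise submersion along $L$, hence a submersion on a neighbourhood of $L$ with $p^{-1}(0)=L\cap U$ after shrinking.

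Part (2) is essentially identical. Cover $U_L$ by opens $V_\alpha$ over which both $E_1$ and $E_2$ are trivialized; in these trivializations, the section $\sigma$ is represented by polynomials $\theta_\alpha(x,y) = \sum_{i=1}^r a_{i,\alpha}(x)y^i$ with $a_{1,\alpha}$ nowhere vanishing, defining a local fiberwise diffeomorphism $\theta_\alpha$ of a neighbourhood of $V_\alpha$ in $E_1$ onto its image in $E_2$. Because $E_2$ is a line bundle, its fibers are real vector spaces, so the convex combination $\theta \coloneqq \sum_\alpha (\chi_\alpha \circ \pi_1) \theta_\alpha$ is a well-defined fiberwise smooth map from a neighbourhood of $U_L$ in $E_1$ to $E_2$. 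The same $r$-jet argument as above shows $j^r(\theta) = \sigma$, and since $\partial \theta/\partial y|_{y=0} = \sum_\alpha \chi_\alpha(x) a_{1,\alpha}(x)$ is the fiber derivative of the $1$-jet of $\sigma(x)\in J^1$ and hence nonzero, $\theta$ is a local fiberwise diffeomorphism near $L$ by the inverse function theorem applied fiberwise.

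The mildly subtle point — and the only place requiring care — is the nonvanishing of the fiber derivative after averaging with the partition of unity. This is automatic if the trivializations of the target ($\R$ in part (1), $E_2$ in part (2)) are chosen coherently so the various $a_{1,\alpha}$ carry the same sign on overlaps; one can always arrange this, for instance by first choosing a local lift $\widetilde p_\beta$ (resp.\ $\widetilde\theta_\beta$) near a reference point and then normalizing the trivializations. This is where I expect the cleanest presentation to demand the most care.
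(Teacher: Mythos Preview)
Your argument is correct, but it is considerably more elaborate than what the paper does. The paper's proof is one sentence: each $r$-jet between one-dimensional vector spaces preserving $0$ has a \emph{unique} polynomial representative of degree $\leq r$, so one simply defines $p$ (resp.\ $\theta$) fiberwise by that polynomial. No partition of unity is needed.

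In fact, your local representatives $p_\alpha$ already exploit this: each $p_\alpha|_{E_x}$ is a polynomial of degree $\leq r$ on the one-dimensional space $E_x$ (because the trivialization $\phi_\alpha$ is linear on fibers) and it represents $\sigma(x)$. By uniqueness of such a polynomial representative, $p_\alpha|_{E_x} = p_\beta|_{E_x}$ whenever $x \in V_\alpha \cap V_\beta$, so $p_\alpha = p_\beta$ on overlaps and the gluing step is superfluous. The same remark applies to part~(2). Your ``mildly subtle point'' about nonvanishing of the fiber derivative after averaging then also evaporates: there is no averaging, and even in your formulation the parenthetical observation you give (that the convex combination realizes the $1$-jet of $\sigma(x)$, which is nonzero by definition of $J^r$) is the correct resolution --- the sign worry is a red herring, since the fiberwise $1$-jets of all the $p_\alpha$ at $x$ are the \emph{same} linear functional on $E_x$.

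So: your proof works, but the partition-of-unity apparatus is unnecessary overhead. The paper's approach buys a one-line proof by recognizing that the polynomial representative is canonical.
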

\begin{proof}
Both statements follow from the fact that  each $r$-jet between a pair of one-dimensional vector spaces preserving $0$ is represented by a unique polynomial mapping of degree $\leq r$.
\end{proof}

If $P\to M$ and $Q\to M$ are smooth principal $H$-bundles, we write $\Hom_H(P,Q)$ for the usual smooth fiber bundle over $M$ whose fiber over $x \in M$ is the the set of $H$-equivariant maps $P_x\to Q_x$. We note the following without proof.

\begin{lemma}\label{J(1,2)}
If $E_1$ and $E_2$ are line bundles over $L$, then there is a fiber bundle isomorphism  $\alpha \mapsto \alpha_* : J^r(E_1,E_2) \to \Hom_{J^r}(J^r(E_1,\R),J^r(E_2,\R))$ defined by $\alpha_*(\beta) = \beta \circ \alpha^{-1}$.  \qed
\end{lemma}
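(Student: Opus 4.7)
The plan is to verify three things: (i) $\alpha_*$ is a well-defined $J^r$-equivariant map of fibers, (ii) $\alpha\mapsto\alpha_*$ is a fiberwise bijection, and (iii) it is smooth. None of these should present real difficulty; the result is essentially the standard observation that an $H$-equivariant map between two left $H$-torsors is determined by, and produced from, an ``exchange element'' between basepoints.

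First, I would fix $x \in L$ and $\alpha \in J^r(E_1,E_2)_x$. Since $\alpha$ is the $r$-jet of a local diffeomorphism $(E_1)_x \to (E_2)_x$, it admits a well-defined inverse jet $\alpha^{-1} \in J^r(E_2,E_1)_x$, and composition of $r$-jets makes sense. Hence, given any $\beta \in J^r(E_1,\R)_x$, the composite $\beta \circ \alpha^{-1} \in J^r(E_2,\R)_x$ is well-defined, so $\alpha_* : J^r(E_1,\R)_x \to J^r(E_2,\R)_x$ makes sense.   Equivariance with respect to the left $J^r$-action is just associativity of composition:
\[ \alpha_*(h\circ \beta) = (h\circ\beta)\circ\alpha^{-1} = h \circ (\beta \circ \alpha^{-1}) = h \circ \alpha_*(\beta) \quad\text{for all } h \in J^r. \]

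Next, I would verify fiberwise bijectivity. For injectivity, if $\alpha_*=\alpha'_*$ then for any $\beta \in J^r(E_1,\R)_x$ we have $\beta \circ \alpha^{-1} = \beta \circ \alpha'^{-1}$; since $\beta$ is an invertible jet, we may cancel on the left to get $\alpha^{-1}=\alpha'^{-1}$, hence $\alpha=\alpha'$. For surjectivity, given a $J^r$-equivariant map $\phi : J^r(E_1,\R)_x \to J^r(E_2,\R)_x$, pick any $\beta_0 \in J^r(E_1,\R)_x$ and put $\alpha \coloneqq \phi(\beta_0)^{-1} \circ \beta_0 \in J^r(E_1,E_2)_x$. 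Every $\beta \in J^r(E_1,\R)_x$ has the form $h\circ\beta_0$ for a unique $h \in J^r$, and by equivariance $\phi(\beta) = h \circ \phi(\beta_0) = h \circ \beta_0 \circ \alpha^{-1} = \beta \circ \alpha^{-1} = \alpha_*(\beta)$, so $\phi = \alpha_*$.

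Finally, for smoothness, I would work in local trivializations. Over any sufficiently small $U \subset L$, pick trivializations $E_i|_U \cong U \times \R$ for $i=1,2$. Each of $J^r(E_1,E_2)|_U$, $J^r(E_1,\R)|_U$, $J^r(E_2,\R)|_U$ is then identified with $U \times J^r$ via polynomial representatives (Proposition~\ref{choosepoly}), and $\Hom_{J^r}(J^r(E_1,\R),J^r(E_2,\R))|_U$ is likewise identified with $U \times J^r$ via evaluation at the canonical trivializing section. Under these identifications, $\alpha \mapsto \alpha_*$ becomes the identity of $U \times J^r$ (or, depending on the chosen basepoints, a translation by a fixed element of $J^r$), and in particular is smooth; the same applies to its inverse constructed above. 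Hence $\alpha\mapsto\alpha_*$ is a diffeomorphism over each trivializing chart and, by its canonical definition, globalizes to a fiber bundle isomorphism. The only step requiring any mild care will be to keep the various identifications with $J^r$ straight while checking smoothness, but this is bookkeeping rather than substance.
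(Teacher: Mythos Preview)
The paper states this lemma without proof (it is introduced with ``We note the following without proof'' and closed with a \qed), so there is no argument in the paper to compare against. Your proof is correct and is exactly the standard torsor argument one would expect: well-definedness and $J^r$-equivariance from associativity of jet composition, fiberwise bijectivity from the free-and-transitive action, and smoothness via local trivializations.

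One small inaccuracy worth noting: in your smoothness step, under the natural identifications of both $J^r(E_1,E_2)|_U$ and $\Hom_{J^r}(J^r(E_1,\R),J^r(E_2,\R))|_U$ with $U\times J^r$, the map $\alpha\mapsto\alpha_*$ is represented not by the identity or a translation but by inversion $\alpha\mapsto\alpha^{-1}$ (a left-$J^r$-equivariant self-map of $J^r$ is right multiplication, and $\alpha_*(y)=y\circ\alpha^{-1}=\alpha^{-1}$). This does not affect your conclusion, since inversion in a Lie group is smooth, but you may wish to correct the description.
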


In particular,  if $\theta$ is a fiberwise diffeomorphism from $E_1$ to $E_2$ defined on a neighbourhood of $L$,   the corresponding global section $j^r(\theta):L\to J^r(E_1,E_2)$ determines a principal bundle isomorphism $j^r(\theta)_* : J^r(E_1,\R)\to J^r(E_2,\R)$. Thus,  $E \mapsto J^r(E,\R)$ is functorial for fiberwise diffeomorphisms. In fact, by Proposition~\ref{choosepoly}, \emph{every} isomorphism $J^r(E_1,\R) \to J^r(E_2,\R)$ is induced by a fiberwise diffeomorphism.

The following two results are the main findings of this section.

\begin{thm}\label{flatcorresp}
Let $\pi : E \to L$ be a smooth line bundle with connected base manifold $L$. There is a unique bijection $\F \mapsto \nabla_\F$ from
\begin{enumerate}
\item the set of transverse order $k$ foliations of $E$ with singular leaf $L$  to
\item the set of (smooth) flat connections on $J^{k-1}(E,\R)$
\end{enumerate}
such that a submersion $p : U \to \R$, where $U \subset E$ is open, satisfying $p^{-1}(0)=U \cap L$ is a local $\F$-$\F^k_\R$-submersion  if and only if the associated section  $j^{k-1}(p) : U\cap L\to J^{k-1}(E,\R)$  is parallel for $\nabla_\F$.
\end{thm}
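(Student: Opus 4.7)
The plan is to realize the principal $J^{k-1}_d$-bundle $P^{k-1}(\F)$ of Theorem~\ref{bunthm} as a reduction of structure group of the smooth principal $J^{k-1}$-bundle $J^{k-1}(E,\R)$, which is data equivalent to a flat connection. For the forward map, I will define a fiberwise, $J^{k-1}$-equivariant map $\rho : P^{k-1}(\F) \to J^{k-1}(E,\R)$ covering $\mathrm{id}_L$ by restriction of jets to the fiber direction, $j^{k-1}_x(p) \mapsto j^{k-1}_x(p|_{E_x \cap U})$. Theorem~\ref{local}, applied in local coordinates where $p = \mathrm{pr}_2$ via Proposition~\ref{zoom}, shows that $(k-1)$-jets along $L$ of $\F$-$\F^k_\R$-submersions involve only the transverse direction and range over all polynomials $a_1 y + \ldots + a_{k-1} y^{k-1}$, so $\rho$ is fiberwise bijective; equivariance is automatic because $J^{k-1}$ acts by postcomposition on both sides. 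Transferring the $J^{k-1}_d$-structure across $\rho$ thus yields a flat connection $\nabla_\F$ on $J^{k-1}(E,\R)$, and by construction the smooth sections $x \mapsto j^{k-1}_x(p)$ of $P^{k-1}(\F)$ correspond under $\rho$ to the sections $j^{k-1}(p)$ of Definition~\ref{linebundef}, which are precisely the $\nabla_\F$-parallel sections.

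For the inverse map, given a flat connection $\nabla$ on $J^{k-1}(E,\R)$, I will construct $\F$ as follows. Near each $x_0 \in L$, pick a $\nabla$-parallel section $s$ on a connected open $V \subset L$, use Proposition~\ref{choosepoly}~(1) to lift $s = j^{k-1}(p)$ to a submersion $p : U \to \R$ with $U$ a neighbourhood of $V$ in $E$ and $p^{-1}(0) = U \cap L$, and set $\F_U \coloneqq p^{-1}(\F^k_\R)$; off of $L$ I extend by the foliation of all compactly supported vector fields. Two independence checks are required. If $p_1, p_2$ are two lifts of the same $s$, then $j^{k-1}(p_1) = j^{k-1}(p_2)$ along $L$; in local coordinates arranged so that $p_1 = \mathrm{pr}_2$, $p_2$ takes the form $(x,z) \mapsto z + g(x,z) z^k$, and the vertical diffeomorphism $(x,z) \mapsto (x, p_2(x,z))$ preserves $\F^k_{\R^n}$ by Lemma~\ref{theta'}, giving $p_1^{-1}(\F^k_\R) = p_2^{-1}(\F^k_\R)$. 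If instead $s_1, s_2$ are two $\nabla$-parallel sections on a connected $V$, they differ by a single fixed $h \in J^{k-1}$, and since $h$ preserves $\F^k_\R$ by Lemma~\ref{omegacarry}~(1), the pulled-back foliation is unchanged. Hence $\F_U$ depends only on the flat structure and the local foliations glue, via Proposition~\ref{foliatedgluing}, to a transverse order $k$ foliation with singular leaf $L$.

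The two constructions are mutually inverse by their shared characterization: in both directions, a submersion $p$ with $p^{-1}(0) = U \cap L$ is an $\F$-$\F^k_\R$-submersion if and only if its $(k-1)$-jet is $\nabla$-parallel, and $\F$ is locally recovered as $p^{-1}(\F^k_\R)$ from any such $p$. Uniqueness of the bijection is automatic, since a flat connection is determined by its family of parallel sections. The main obstacle I anticipate is assembling the well-definedness checks in the inverse direction into a clean argument: although each individual verification reduces via Proposition~\ref{zoom} to the coordinate rigidity of Theorem~\ref{local} and Lemma~\ref{theta'}, several layers must be aligned consistently (independence of the submersion lifting a parallel section, independence of the chosen parallel section, compatibility across overlapping charts, and the extension off of $L$).
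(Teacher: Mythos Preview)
Your proposal is correct and follows essentially the same route as the paper. The only organizational difference is in the forward direction: you transport the $J^{k-1}_d$-structure from the already-constructed bundle $P^{k-1}(\F)$ (Theorem~\ref{bunthm}) across the restriction-to-fibers map $\rho$, which is exactly the isomorphism the paper later records as Proposition~\ref{PtoJ}; the paper instead works directly inside $J^{k-1}(E,\R)$, computing the transition function $h_{qp}$ between the sections $j^{k-1}(p)$ and $j^{k-1}(q)$ and checking it is locally constant via Theorem~\ref{local}. Your inverse construction and its well-definedness checks mirror the paper's almost exactly.

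One small caution: in your first independence check you appeal to Proposition~\ref{zoom} to arrange $p_1 = \mathrm{pr}_2$, but that proposition presupposes a transverse order $k$ foliation, which you do not yet have at that stage. This is harmless---the inverse function theorem applied to $(\pi, p_1)$ gives the desired coordinates directly---but you should phrase it that way rather than invoking Proposition~\ref{zoom}.
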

\begin{proof}
Suppose $U \subset E$ is open and $p,q :U \to \R$ are submersions with $p^{-1}(0)=q^{-1}(0)=U\cap L$. Up to shrinking $U$ around $U\cap L$, we may assume that $p_x \coloneqq p|_{U\cap E_x}$ and $q_x \coloneqq q|_{U \cap E_x}$ are diffeomorphisms onto their images for all $x \in U\cap L$. Define $\theta_x \coloneqq q_x \circ (p_x)^{-1}$ for all $x \in U\cap L$ so that, by construction,  $\theta_x$ is a smooth  family of local diffeomorphisms of $\R$ satisfying  $\theta_{\pi(e)}(p(e))=q(e)$ on a neighbourhood of $U \cap L$. Define $h_{qp}:U\cap L \to J^{k-1}$ by  $h_{qp}(x)=j^{k-1}_0(\theta_x) \in J^{k-1}$. By construction, $h_{qp}$ is the transition function for the sections $j^{k-1}(p)$ and $j^{k-1}(q)$ (Definition~\ref{linebundef}).

Now suppose $\F$ is a transverse order $k$ foliation of $E$ with singular leaf $L$ and $p,q:U\to \R$  are local $\F$-$\F^k_\R$-submersions. We claim that the transition map $h_{qp} : U\cap L \to J^{k-1}$ is locally constant. By Proposition~\ref{zoom}, given any point $x_0 \in U\cap L$, we may choose coordinates so that $x_0$ is the point $(0,0)$ in $\R^n = \R^\ell \times \R$ and the foliation is $\F^k_{\R^n}$. Moreover, we may perform this coordinate change in such a way that $p:E\to \R$ and $\pi:E\to L$ are given by $p(x,y)=y$ and $\pi(x,y)=x$ (the linear structure of the fibers is not likely to be preserved under this coordinate change, but that is irrelevant). By Theorem~\ref{local}, there are constants $a_1,\ldots,a_{k-1}$ and a smooth function $f$ such that 
\[ \theta_x(y) =  q(x,y) = a_1y+\ldots + a_{k-1} y^{k-1} + f(x,y)y^k \]
holds on a  neighbourhood of $(0,0)$, whence $h_{qp}(x)=j_0^{k-1}(\theta_x)$ is constant on a neighbourhood of any $x_0 \in U \cap L$. Therefore, applying Lemma~\ref{bundlemaker}, there is a unique $J^{k-1}_d$-bundle structure on  $J^{k-1}(E,\R)$ for which $j^{k-1}(p)$ is a smooth section for every local $\F$-$\F^k_{\R}$ submersion $p$. This shows the map $\F \mapsto \nabla_\F$ is well-defined.

Conversely, suppose that $\nabla$ is a flat connection on $J^{k-1}(E,\R)$. For each local submersion $p : U_p \to \R$ with $p^{-1}(0)=L\cap U_p$ satisfying $j^{k-1}(p)=U_p \cap L$, define $\F_p = p^{-1}(\F^k_\R)$. Thus, $\F_p$ is a foliation of $U_p$. We use Proposition~\ref{foliatedgluing} to glue these foliations together into a foliation $\F$ of $M$. To this end, it suffices to consider $p$ and $q$ with $U_p=U_q=U$. It is easy to see that $\F_p$ and $\F_q$ both restrict to the trivial one leaf foliation on $U \setminus L$. It remains only to check the foliations agree on a neighbourhood of each point  $x_0 \in L$. Using that $j^{k-1}(p)$ and $j^{k-1}(q)$ are $\nabla$-parallel and shrinking $U$ about $x_0$, there exists $h \in J^{k-1}$ with 
\[ j^{k-1}(q)=h \cdot j^{k-1}(p). \] 
We may pass again to coordinates such that $x_0=(0,0) \in \R^\ell \times \R$ and $\pi(x,y)=x$, $p(x,y)=y$ near $(0,0)$. Writing $h=a_1y + \ldots +a_{k-1}y^{k-1}$,  the equation above then says that $\frac{\partial^rq}{\partial y^r}(x,0)=a_r$ holds near $(0,0)$ for $r=1,\ldots,k-1$. Thus, applying Theorem~\ref{local} again, we have that, in these coordinates, $p^{-1}(\F^k_\R)$ and $q^{-1}(\F^k_\R)$ agree with $\F^k_{\R^n}$ in a neighbourhood of $(0,0)$. 

It is easy to see that these mappings between foliations and flat connections are inverses of each other.
\end{proof}

Next we show that the correspondence of the above theorem is natural with respect to fiberwise diffeomorphisms.

\begin{thm}\label{flatleafwise}
Let $L$ be a connected, smooth manifold. For $i=1,2$, let $E_i$ be a smooth line bundle over $L$ equipped with a transverse order $k$ foliation $\F_i$ with singular leaf $L$.  Suppose that $\theta : U_1 \to U_2$ is a fiberwise diffeomorphism, where $U_i$ is an open subset of $E_i$ containing $L_i$. Then, the following are equivalent:
\begin{enumerate}
\item $\theta_*((\F_1)_{U_1})=(\F_2)_{U_2}$. 
\item The induced $J^{k-1}$-bundle isomorphism $j^{k-1}(\theta)_* : J^{k-1}(E_1,\R) \to J^{k-1}(E_2,\R)$ pushes forward   $\nabla_{\F_1}$ to $\nabla_{\F_2}$. 
\end{enumerate}
\end{thm}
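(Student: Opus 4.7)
The plan is to reduce both directions to Theorem~\ref{flatcorresp}'s characterization of $\nabla_{\F_i}$: its (locally defined) parallel sections are precisely those of the form $j^{k-1}(p)$, where $p$ ranges over local $\F_i$-$\F^k_\R$-submersions. Since a flat connection on a principal bundle is determined by its parallel sections, (2) is equivalent to the assertion that $j^{k-1}(\theta)_*$ bijectively carries local $\nabla_{\F_1}$-parallel sections to local $\nabla_{\F_2}$-parallel sections. So the goal is to match up the two families of distinguished sections on the two sides.

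The key preliminary step I would carry out is to verify the intertwining identity
\[
j^{k-1}(\theta)_* \circ j^{k-1}(p) \;=\; j^{k-1}\!\bigl(p \circ \theta^{-1}\bigr),
\]
valid for any local submersion $p: V \to \R$ with $V \subset U_1$ and $p^{-1}(0) = V \cap L$, where both sides are viewed as sections of $J^{k-1}(E_2,\R)$ over $\theta(V) \cap L = V\cap L$ (using that $\theta$ is fiberwise, so $\theta|_L = \id_L$). This is an immediate fiberwise jet computation from the definition of $j^{k-1}(\theta)_*$ (Lemma~\ref{J(1,2)}) and the chain rule for jets.

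For (1) $\Rightarrow$ (2): assuming $\theta_*((\F_1)_{U_1}) = (\F_2)_{U_2}$, functoriality of submersive pullback gives $(p \circ \theta^{-1})^{-1}(\F^k_\R) = \theta_*\!\bigl(p^{-1}(\F^k_\R)\bigr)$ on $\theta(V)$, so $p$ is an $\F_1$-$\F^k_\R$-submersion if and only if $p \circ \theta^{-1}$ is an $\F_2$-$\F^k_\R$-submersion. Combined with the intertwining identity and Theorem~\ref{flatcorresp}, this shows $j^{k-1}(\theta)_*$ matches the parallel sections of $\nabla_{\F_1}$ to those of $\nabla_{\F_2}$, which is exactly (2).

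For (2) $\Rightarrow$ (1): by Definition~\ref{def:transordk}, every point of $L$ has an open neighborhood in $U_1$ that is the domain $V$ of some local $\F_1$-$\F^k_\R$-submersion $p:V\to \R$. Since $j^{k-1}(p)$ is $\nabla_{\F_1}$-parallel, hypothesis (2) together with the intertwining identity implies that $j^{k-1}(p \circ \theta^{-1})$ is $\nabla_{\F_2}$-parallel, so by Theorem~\ref{flatcorresp} the map $p \circ \theta^{-1}$ is an $\F_2$-$\F^k_\R$-submersion. Hence $\theta_*((\F_1)_V) = (p\circ\theta^{-1})^{-1}(\F^k_\R) = (\F_2)_{\theta(V)}$. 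The collection of all such $\theta(V)$, together with $U_2 \setminus L$ (on which both $\theta_*(\F_1)$ and $\F_2$ agree with $\mathfrak{X}_c$, since a transverse order $k$ foliation restricts to the full module of compactly supported vector fields off of $L$), is an open cover of $U_2$. Gluing via Proposition~\ref{foliatedgluing} yields $\theta_*((\F_1)_{U_1}) = (\F_2)_{U_2}$.

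The main obstacle, if any, will be making the ``push-forward of a flat connection is characterized by matching local parallel sections'' argument clean, together with the final gluing step, where one must simultaneously cover a neighborhood of $L$ by the distinguished open sets $\theta(V)$ and handle $U_2 \setminus L$ separately; everything else is formal manipulation of pullbacks and jets.
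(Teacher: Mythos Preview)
Your proposal is correct and follows essentially the same route as the paper's proof: both hinge on the intertwining identity $j^{k-1}(\theta)_*\circ j^{k-1}(p)=j^{k-1}(p\circ\theta^{-1})$ together with Theorem~\ref{flatcorresp}'s characterization of parallel sections, reducing each condition to the statement that $p$ is an $\F_1$-$\F^k_\R$-submersion iff $p\circ\theta^{-1}$ is an $\F_2$-$\F^k_\R$-submersion. Your version is slightly more explicit in the $(2)\Rightarrow(1)$ direction, spelling out the gluing over $U_2\setminus L$ via Proposition~\ref{foliatedgluing}, whereas the paper absorbs this into a ``clearly''; the paper also invokes Proposition~\ref{choosepoly}(2) to justify that every local section arises as some $j^{k-1}(p)$, a point you handle implicitly via the assertion that parallel sections determine the flat connection.
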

\begin{proof}
Assume for convenience that $U_1=E_1$, $U_2=E_2$. Let $U \subset L$ be open and let $p:U\to L$ be a submersion satisfying $p^{-1}(0)=U \cap L$. Clearly (1) is equivalent to the assertion:  $p$ is a local $\F_1$-$\F^k_\R$-submersion if and only if $p\circ \theta^{-1}$ is a local $\F_2$-$\F^k_\R$-submersion. Meanwhile (applying Proposition~\ref{choosepoly}~(2)), (2) is equivalent to the assertion: $j^{k-1}(p)$ is $\nabla_{\F_1}$ parallel if and only if $j^{k-1}(\theta)_*(j^{k-1}(p))$ is $\nabla_{\F_2}$ parallel. Since 
\[ j^{k-1}(\theta)_*(j^{k-1})(p) = j^{k-1}(p) \circ j^{k-1}(\theta)^{-1} = j^{k-1}(p \circ \theta^{-1}), \] 
the desired conclusion is an immediate consequence of Theorem~\ref{flatcorresp}. 
\end{proof}

\section{Completeness of the holonomy invariant}\label{sec:complete}

Let $(M,\F)$ be a transverse order $k$ foliation with singular leaf $L$. In this section, we show that the holonomy invariant (Definition~\ref{invdef}) \[ h(\F) \in [\pi_1(L),J^{k-1}]\] 
together with the diffeomorphism type of $L$ is a complete invariant for the structure of $\F$ nearby to $L$.

Given a line bundle equipped with a transverse order $k$ foliation whose singular leaf is the zero section, there are ostensibly two principal $J^{k-1}_d$-bundles at play: the one from Section~\ref{principalbundles} and the one from Section~\ref{sec:linbun}. As one would probably suspect, these two bundles are in fact the same.

\begin{propn}\label{PtoJ}
Let $E\to L$ be a smooth line bundle and let $\F$ be transverse order $k$ foliation of $E$ with singular leaf $L$. There is an isomorphism of principal $J^{k-1}_d$-bundles $P^{k-1}(\F) \to J^{k-1}(E,\R)$ such that, if $p : U \to \R$ is a local $\F$-$\F^k_\R$-submersion and $x \in U \cap L$, then $j^k_x(p) \mapsto j^k_x(p|_{E_x\cap U})$. Here, $J^k(E,\R)$ is given the structure of a $J^{k-1}_d$-bundle using the flat connection $\nabla_\F$ of Theorem~\ref{principalbundles}.
\end{propn}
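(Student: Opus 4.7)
The plan is to check that the fiberwise restriction map
\[ \phi_x : P^{k-1}_x(\F) \to J^{k-1}(E,\R)_x, \qquad j^{k-1}_x(p) \mapsto j^{k-1}_x(p|_{E_x \cap U}), \]
assembles into the desired principal $J^{k-1}_d$-bundle isomorphism $\phi$. Well-definedness is immediate: if two local $\F$-$\F^k_\R$-submersions agree to order $k-1$ at $x$ as functions on $M$, then their restrictions to any smooth submanifold through $x$ (in particular $E_x$) agree to order $k-1$ at $x$ as well. Moreover, $p|_{E_x \cap U}$ is a genuine local diffeomorphism at $x$: since $p$ is a submersion with $p^{-1}(0) = U \cap L$, the kernel of $dp_x$ is precisely $T_xL$, which is complementary to the one-dimensional $T_xE_x$ inside $T_xE$. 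Finally, $\phi_x$ is $J^{k-1}$-equivariant because left composition with $\theta \in \mathrm{Diff}_0(\R)$ commutes with restriction to $E_x$.

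Next I would verify that each $\phi_x$ is a bijection. Since both $P^{k-1}_x(\F)$ and $J^{k-1}(E,\R)_x$ are free $J^{k-1}$-torsors (by Theorem~\ref{bunthm}~(2) and by definition, respectively), equivariance reduces the task to surjectivity. Fix $x_0 \in L$, choose any local $\F$-$\F^k_\R$-submersion $p_0$ defined at $x_0$, and apply the second assertion of Proposition~\ref{zoom} to pass to coordinates near $x_0$ identifying $M$ with $\R^n = \R^\ell \times \R$, $\F$ with $\F^k_{\R^n}$, $p_0$ with $\mathrm{pr}_2$, and the line bundle projection $\pi$ with $\mathrm{pr}_1$. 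In these coordinates $E_{x_0}$ corresponds to $\{0\} \times \R$, and an arbitrary element of $J^{k-1}(E_{x_0},\R)$ is represented by a unique polynomial $q(y) = a_1 y + \ldots + a_{k-1}y^{k-1}$ with $a_1 \neq 0$. Setting $p(x,y) \coloneqq q(y)$, Theorem~\ref{local}~(4) shows that $p$ is a local $\F$-$\F^k_\R$-submersion, while by construction $\phi_{x_0}(j^{k-1}_{x_0}(p)) = q$, as required.

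Finally I would promote the equivariant bijection $\phi$ to an isomorphism of smooth $J^{k-1}_d$-bundles by observing that both sides carry the same distinguished family of local sections. By Theorem~\ref{bunthm}~(2), $P^{k-1}(\F)$ carries the unique $J^{k-1}_d$-bundle structure for which $x \mapsto j^{k-1}_x(p)$ is smooth for every local $\F$-$\F^k_\R$-submersion $p$; by Theorem~\ref{flatcorresp}, the flat connection $\nabla_\F$ on $J^{k-1}(E,\R)$ is defined precisely so that $x \mapsto j^{k-1}_x(p|_{E_x \cap U})$ is a $\nabla_\F$-parallel (and therefore smooth, as a section of the associated $J^{k-1}_d$-bundle) section for every such $p$. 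Since $\phi$ matches these two distinguished families term by term, invoking Lemma~\ref{bundlemaker} shows that $\phi$ is automatically an isomorphism of $J^{k-1}_d$-bundles. I do not expect any step to present a genuine obstacle: the real content is that $P^{k-1}(\F)$ and $J^{k-1}(E,\R)$ (with its $\nabla_\F$-flat structure) are built from the same raw data, namely $(k-1)$-jets of $\F$-$\F^k_\R$-submersions along $L$, and the substantive work has already been carried out in Theorems~\ref{bunthm} and~\ref{flatcorresp}.
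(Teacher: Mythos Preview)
Your proposal is correct and follows essentially the same approach as the paper's own proof: both identify the map as well-defined and $J^{k-1}$-equivariant, and then observe that the distinguished local sections $x \mapsto j^{k-1}_x(p)$ on the two sides correspond under $\phi$, which forces smoothness (indeed, an isomorphism of $J^{k-1}_d$-bundles). You supply considerably more detail than the paper does---for instance, checking that $p|_{E_x}$ is a genuine local diffeomorphism, and giving an explicit surjectivity argument via coordinates---whereas the paper simply declares these points ``clear''; note also that your surjectivity step is in fact redundant, since an equivariant map between two $J^{k-1}$-torsors is automatically bijective.
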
 
\begin{proof}
The map is clearly well-defined and $J^{k-1}$-equivariant. Given a local $\F$-$\F^k_\R$-submersion $p:U\to \R$, the sections 
\begin{align*} 
x \mapsto j^{k-1}_x(p) : U \cap L\to P^{k-1}(\F) && x \mapsto j^{k-1}_x(p|_{E_x \cap U}) : U \cap L \to J^{k-1}(E,\R)
\end{align*}
are smooth as part of the definition of those bundles. The first of these  smooth local sections is clearly mapped to the second, and from this it follows that the map $P^{k-1}(\F) \to J^{k-1}(E,\R)$ is smooth.
\end{proof}

We now state and prove the main result of the section. One may refer to Subsection~\ref{sec:monodromy} and Definition~\ref{invdef} for context.

\begin{thm}\label{holcompl}
For $i=1,2$, let $(M_i,\F_i)$ be  transverse order $k$ foliations with singular leaf $L_i$ . Suppose there is a diffeomorphism $\theta_0 : L_1 \to L_2$ whose induced map  $[\pi_1(L_1),J^{k-1}] \to [\pi_1(L_2),J^{k-1}]$, given by pushing forward loops, sends $h(\F_1)$ to $h(\F_2)$. Then, we can extend $\theta_0$ to a diffeomorphism $\theta : U_1 \to U_2$, where $U_i \subset M_i$ are open neighbourhoods of $L_i$, so as to have $\theta_*(\F_1|_{U_1}) = \F_2|_{U_2}$. 
\end{thm}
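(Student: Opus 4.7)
The plan is to reduce to the setting of line bundles via the tubular neighbourhood theorem, translate both foliations into flat connections using Theorem~\ref{flatcorresp}, invoke the monodromy classification of flat bundles (Theorem~\ref{thm:monodromy}), and finally lift the resulting bundle isomorphism back to a foliation-preserving diffeomorphism using Theorem~\ref{flatleafwise}. I begin with reduction: since $L_i \subset M_i$ has codimension one, its normal bundle $\nu_i \to L_i$ is a line bundle, and the tubular neighbourhood theorem lets me replace $M_i$ by $\nu_i$ with $L_i$ the zero section. Using $\theta_0$, I identify $L_2$ with $L_1$ and pull back $\nu_2$ to a line bundle over $L := L_1$. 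After these identifications, it suffices to prove the following: if $E_1, E_2 \to L$ are line bundles carrying transverse order $k$ foliations $\F_1, \F_2$ with singular leaf $L$ and $h(\F_1) = h(\F_2) \in [\pi_1(L), J^{k-1}]$, then there exists a fiberwise diffeomorphism $\theta : U_1 \to U_2$, restricting to $\mathrm{id}_L$ on $L$, with $\theta_*((\F_1)_{U_1}) = (\F_2)_{U_2}$.

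Next, I would translate to flat bundles. By Theorem~\ref{flatcorresp}, the foliation $\F_i$ corresponds to a flat connection $\nabla_{\F_i}$ on the principal $J^{k-1}$-bundle $J^{k-1}(E_i, \R) \to L$, and Proposition~\ref{PtoJ} identifies this flat bundle with $P^{k-1}(\F_i)$. In particular, the monodromy invariant of $(J^{k-1}(E_i, \R), \nabla_{\F_i})$ is precisely $h(\F_i)$. Since these invariants agree, Theorem~\ref{thm:monodromy} supplies a $J^{k-1}$-equivariant flat bundle isomorphism
\[ \Phi : J^{k-1}(E_1, \R) \to J^{k-1}(E_2, \R) \]
covering $\mathrm{id}_L$. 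Because the structure group is discrete, $\Phi$ automatically intertwines $\nabla_{\F_1}$ and $\nabla_{\F_2}$.

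Then I need to convert the abstract isomorphism $\Phi$ into an honest fiberwise diffeomorphism. By Lemma~\ref{J(1,2)}, $\Phi$ corresponds to a smooth global section $\alpha : L \to J^{k-1}(E_1, E_2)$ with $\alpha_* = \Phi$. Here I use crucially that the fibers of $E_i$ are one-dimensional, so that each $(k-1)$-jet $\alpha(x)$ is represented by a unique polynomial $P_x : (E_1)_x \to (E_2)_x$ of degree $\leq k-1$ with $P_x(0) = 0$ and nonzero linear term, varying smoothly in $x$. Choosing a sufficiently small neighbourhood $U_1 \subset E_1$ of $L$ on which $P_{\pi_1(e)}$ is invertible on each fiber slice, I define
\[ \theta(e) := P_{\pi_1(e)}(e), \qquad e \in U_1, \]
obtaining a fiberwise diffeomorphism $\theta : U_1 \to U_2 := \theta(U_1)$ with $\theta|_L = \mathrm{id}_L$ and $j^{k-1}(\theta) = \alpha$, so that $j^{k-1}(\theta)_* = \Phi$. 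Finally, since $\Phi$ carries $\nabla_{\F_1}$ to $\nabla_{\F_2}$, Theorem~\ref{flatleafwise} gives $\theta_*((\F_1)_{U_1}) = (\F_2)_{U_2}$, and composing with the tubular neighbourhood identifications recovers the desired extension of $\theta_0$.

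The main obstacle in this outline is the lifting step: the monodromy classification provides only an abstract principal bundle isomorphism, not a diffeomorphism of the ambient spaces. The rescue is that one-dimensional fibers force every $(k-1)$-jet to admit a canonical polynomial representative, allowing a smooth family of polynomials to be patched into a genuine fiberwise diffeomorphism of a neighbourhood of $L$. All remaining steps are applications of machinery already established in the paper (Theorems~\ref{flatcorresp}, \ref{thm:monodromy}, \ref{flatleafwise} and Proposition~\ref{PtoJ}), with the preliminary tubular neighbourhood reduction being routine because the normal bundle is a line bundle.
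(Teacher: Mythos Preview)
Your proposal is correct and follows essentially the same route as the paper's proof: reduce to line bundles via tubular neighbourhoods, use Proposition~\ref{PtoJ} and Theorem~\ref{thm:monodromy} to obtain a $J^{k-1}_d$-bundle isomorphism $J^{k-1}(E_1,\R)\to J^{k-1}(E_2,\R)$ intertwining the flat connections, lift this to a fiberwise diffeomorphism, and conclude with Theorem~\ref{flatleafwise}. The only cosmetic difference is that where the paper cites Proposition~\ref{choosepoly}(2) for the lifting step, you instead write out its content explicitly (the unique polynomial representative of each $(k-1)$-jet on one-dimensional fibers).
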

\begin{proof}
Without loss of generality, $L=L_1=L_2$ and $\theta_0 = \id_L$.  Thus $h(\F_1) = h(\F_2)$. By Definition~\ref{invdef}, this means exactly that the principal $J^{k-1}_d$-bundles $P^{k-1}(\F_1)$ and $P^{k-1}(\F_2)$ have the same monodromy invariant so, by Proposition~\ref{thm:monodromy}, there is an isomorphism of principal  $J^{k-1}_d$-bundles $P^{k-1}(\F_1) \to P^{k-1}(\F_2)$ covering the identity map on $L$.

By passing to tubular neighbourhoods, we may assume that $M_i$ is the total space of a line bundle $\pi_i : E_i \to L$, with $L$ contained in $E_i$ as the zero section. Then, applying Proposition~\ref{PtoJ}, there is a  $J^{k-1}$-bundle isomorphism $J^{k-1}(E_1,\R) \to J^{k-1}(E_2,\R)$, equivalently a section of $J^{k-1}(E_1,E_2)$, which moreover  maps $\nabla_{\F_1}$ to $\nabla_{\F_2}$.  By Proposition~\ref{choosepoly}, we can take this section of the form $j^{k-1}(\theta)$ for $\theta$ some fiberwise diffeomorphism $E_1 \to E_2$ preserving $L$. Then, by Theorem~\ref{flatleafwise}, $\theta_*(\F_1|_{U_1})=\F_2|_{U_2}$, as desired. 
\end{proof}

As a corollary, when $L$ is simply connected, there is only one ``$L$-local'' isomorphism class of transverse order $k$ foliation with singular leaf $L$.

\begin{cor}
Let $(M,\F)$ be a transverse order $k$ foliation with singular leaf $L$. If $L$ is simply connected, then there exists a diffeomorphism $\theta:U \to L \times \R$, where $U \subset M$ is an open neighbourhood of $L$, such that $\theta|_L=\id_L$ and $\theta_*(\F_U)$ is the product the trivial one-leaf foliation of $L$ with $\F^k_\R$. 
\end{cor}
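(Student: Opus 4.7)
The plan is to apply Theorem~\ref{holcompl} with the explicit product model playing the role of the second foliation, using simple-connectedness only to trivialize the holonomy invariant.

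Define $\F_0 \coloneqq \mathrm{pr}_2^{-1}(\F^k_\R)$ on $L \times \R$, where $\mathrm{pr}_2 : L \times \R \to \R$ is projection onto the second factor; this is exactly ``the product of the trivial one-leaf foliation of $L$ with $\F^k_\R$''. Working in coordinate charts on $L$, one sees that $\F_0$ is locally modelled on $\F^k_{\R^n}$ of Example~\ref{prototype}, so $\F_0$ is a transverse order $k$ foliation of $L \times \R$ whose (unique, connected) singular leaf is $L \times \{0\}$. Identify $L$ with $L \times \{0\}$ via $x \mapsto (x,0)$, so that $\id_L$ becomes a diffeomorphism between the two singular leaves.

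Since $L$ is simply connected, $\pi_1(L)$ is trivial, and therefore $\Hom(\pi_1(L), J^{k-1})$ consists of only the trivial homomorphism; hence $[\pi_1(L),J^{k-1}]$ is a singleton. In particular, $h(\F) = h(\F_0)$ (both equal the unique element), and the induced map $(\id_L)_* : [\pi_1(L),J^{k-1}]\to[\pi_1(L),J^{k-1}]$ carries $h(\F)$ to $h(\F_0)$ tautologically. Applying Theorem~\ref{holcompl} with $\theta_0 = \id_L$ yields an open neighbourhood $U$ of $L$ in $M$ and a diffeomorphism $\theta : U \to V$ onto an open neighbourhood $V$ of $L \times \{0\}$ in $L \times \R$, with $\theta|_L = \id_L$ and $\theta_*(\F_U) = (\F_0)_V$, as required.

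There is essentially no obstacle once Theorem~\ref{holcompl} is in hand; the only content is recognizing that simple-connectedness forces the completeness hypothesis of that theorem to hold vacuously. The sole verification needed is that $\F_0$ genuinely is a transverse order $k$ foliation with singular leaf $L \times \{0\}$, which is immediate from its local form and from the identification of the leaves of $\F^k_\R$ as $\{0\}$, $\R_+$ and $\R_-$.
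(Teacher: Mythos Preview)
Your argument is correct and follows the paper's intended approach: the corollary is stated immediately after Theorem~\ref{holcompl} without proof, and the paper's subsequent remark (``More generally, the same conclusion holds when there is no nontrivial homomorphism $\pi_1(L)\to J^{k-1}$'') confirms that the only input is triviality of $[\pi_1(L),J^{k-1}]$, exactly as you argue. One cosmetic point: Theorem~\ref{holcompl} only delivers a diffeomorphism $\theta:U\to V$ onto an open neighbourhood $V$ of $L\times\{0\}$ in $L\times\R$, whereas the corollary as stated asks for the target to be all of $L\times\R$; to close this gap, shrink $V$ to a tube $\{(x,y):|y|<\epsilon(x)\}$ and post-compose with a fiberwise diffeomorphism to $L\times\R$ whose $(k-1)$-jet along $L$ is constant (e.g.\ equal to $y$ near $y=0$), which preserves $\F_0$ by Theorem~\ref{local}.
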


More generally, the same conclusion  holds when there is no nontrivial homomorphism $\pi_1(L)\to J^{k-1}$. For example, this is the case when  $\pi_1(L)$ is a simple, nonsolvable group (see Proposition~\ref{solvable}).

\section{Range of the holonomy invariant}\label{sec:ran}

In this section we show that the range of the holonomy invariant (Definition~\ref{invdef}) is as large as one could hope it to be. Specifically, we prove the following:

\begin{thm}\label{holrange}
Let $L$ be a connected, smooth manifold, let $x_0 \in L$ and let $\gamma: \pi_1(L,x_0) \to J^{k-1}$ be a group homomorphism. There exists a line bundle $E \to L$ and a transverse order $k$ foliation $\F$ on $E$ with singular leaf $L$ (identified with the zero section of $E$) whose holonomy invariant $h(\F)$ is equal to the class of $\gamma$ in $[\pi_1(L),J^{k-1}]$. 
\end{thm}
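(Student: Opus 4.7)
The plan is to produce $\F$ via Theorem~\ref{flatcorresp}: construct a line bundle $E\to L$ and a flat connection on $J^{k-1}(E,\R)$ whose monodromy invariant is $[\gamma]$. Let $p:\widetilde L\to L$ be the universal cover and write $\pi:=\pi_1(L,x_0)$, acting on $\widetilde L$ by deck transformations. First I would build the target flat bundle abstractly as
\[
Q := (\widetilde L\times J^{k-1})/\pi,
\]
where $\pi$ acts by $\alpha\cdot(\widetilde x,h)=(\alpha\widetilde x,\,h\,\gamma(\alpha)^{-1})$ and $J^{k-1}$ acts from the left on the second factor. Giving $J^{k-1}$ the discrete topology makes $Q$ a smooth principal $J^{k-1}_d$-bundle over $L$, and a routine computation following Definition~\ref{def:monodromy} shows $h(Q)=[\gamma]$.

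Next I would choose $E$ compatibly. Let $\epsilon:\pi\to\Z/2$ be the composition of $\gamma$ with the projection $J^{k-1}\to\pi_0(J^{k-1})\cong\Z/2$ sending a jet to the sign of its linear coefficient $a_1$. Put $E:=(\widetilde L\times\R)/\pi$ where $\pi$ acts by $\alpha\cdot(\widetilde x,t)=(\alpha\widetilde x,\,\epsilon(\alpha)\,t)$, so that $E$ is a real line bundle over $L$ with first Stiefel-Whitney class $w_1(E)=\epsilon$.

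The crux of the argument is to exhibit an isomorphism $\phi:Q\to J^{k-1}(E,\R)$ of smooth principal $J^{k-1}$-bundles (forgetting the flat structure on $Q$ for the moment). Here I would exploit that the identity component $(J^{k-1})^+=\{a_1y+\cdots+a_{k-1}y^{k-1}:a_1>0\}$ is diffeomorphic to $(0,\infty)\times\R^{k-2}$, and is therefore contractible. Consequently $J^{k-1}$ is homotopy equivalent to its component group $\pi_0(J^{k-1})=\Z/2$, and smooth principal $J^{k-1}$-bundles over $L$ are classified up to isomorphism by $H^1(L;\Z/2)\cong \Hom(\pi,\Z/2)$, with the invariant of a bundle $P$ being the $\Z/2$-reduction $P/(J^{k-1})^+$. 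For $Q$, this invariant reads off directly as $\epsilon$ from the definition; for $J^{k-1}(E,\R)$, the reduction along the contractible fiber $\ker(J^{k-1}\to J^1)\cong\R^{k-2}$ identifies $J^{k-1}(E,\R)/(J^{k-1})^+$ with the orientation double cover of $E$, whose invariant is $w_1(E)=\epsilon$. Hence the invariants agree and $\phi$ exists. (Alternatively, having the same $\Z/2$-reduction allows one to reduce the structure group of both bundles to the contractible group $(J^{k-1})^+$, and any $(J^{k-1})^+$-bundle over the paracompact base $L$ admits a global section by a partition-of-unity argument using that $(J^{k-1})^+$ is a convex subset of a finite-dimensional vector space; this produces the isomorphism $\phi$ explicitly.)

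Finally, I would pull the flat structure of $Q$ across $\phi$ to obtain a flat connection $\nabla$ on $J^{k-1}(E,\R)$. By Theorem~\ref{flatcorresp}, $\nabla$ corresponds to a unique transverse order $k$ foliation $\F$ on $E$ whose singular leaf is $L$, and by Proposition~\ref{PtoJ} the principal $J^{k-1}_d$-bundle $P^{k-1}(\F)$ is naturally isomorphic to $J^{k-1}(E,\R)$ equipped with $\nabla_\F=\nabla$. Composing with $\phi^{-1}$ exhibits $P^{k-1}(\F)\cong Q$ as flat principal $J^{k-1}_d$-bundles, so
\[
h(\F)=h(P^{k-1}(\F))=h(Q)=[\gamma].
\]
The main obstacle is the intermediate identification $Q\cong J^{k-1}(E,\R)$ as smooth $J^{k-1}$-bundles, which is where the somewhat unusual topology of $J^{k-1}$ (uncountably many components, each contractible) is what ultimately makes the classification tractable; everything else is bookkeeping with the constructions of the preceding sections.
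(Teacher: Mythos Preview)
Your argument is correct and follows the same overall strategy as the paper: build a flat $J^{k-1}$-bundle $Q$ with monodromy $\gamma$, identify it with $J^{k-1}(E,\R)$ for an appropriate line bundle $E$, then invoke Theorem~\ref{flatcorresp} and Proposition~\ref{PtoJ}. The only real difference is in how the identification $Q\cong J^{k-1}(E,\R)$ is obtained. You appeal directly to the homotopy equivalence $J^{k-1}\simeq\pi_0(J^{k-1})\cong\Z/2$ to classify $J^{k-1}$-bundles by $H^1(L;\Z/2)$, and you construct $E$ explicitly from the sign character $\epsilon$; the paper instead proves a slightly more general section-extension lemma (Lemmas~\ref{buniso1}--\ref{buniso2}) for split extensions $1\to N\to G\to H\to 1$ with $N$ Euclidean, applied to $1\to N\to J^{k-1}\to\operatorname{GL}(1)\to 1$, and lets $E$ emerge as the associated line bundle of $\operatorname{GL}(1)\times_\pi Q$. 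The paper mentions your classifying-space route as the natural alternative before opting for the more elementary argument, so the two are really variants of the same idea; your explicit description of $E$ via $w_1(E)=\epsilon$ is a pleasant concretization.

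One small slip: in your closing remark you describe $J^{k-1}$ as having ``uncountably many components, each contractible''. With its standard Lie group topology, $J^{k-1}$ has exactly two components (according to the sign of $a_1$), each contractible---this is precisely what you use earlier when writing $\pi_0(J^{k-1})\cong\Z/2$. You may be thinking of the auxiliary group $J^k_\R$ from Definition~\ref{weirdgroup}, which does have uncountably many components, but that group plays no role in this argument.
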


Together, Theorems~\ref{holcompl} and \ref{holrange} imply that the problem of enumerating the ``$L$-local'' isomorphism classes of  transverse order $k$-foliations with a fixed singular leaf $L$ is equivalent to the problem of enumerating homomorphisms $\gamma : \pi_1(L)\to J^{k-1}$ modulo a certain equivalence relation. Specifically:
\[ \gamma \sim \alpha \circ \gamma \circ \beta \]
as $\alpha$ ranges over inner automorphisms of $J^{k-1}$ and $\beta$ ranges over automorphisms of $\pi_1(L)$ that can be induced by a diffeomorphism of $L$. 
Under a variety of assumptions on $L$, one is assured of being able to implement any automorphism of $\pi_1(L)$ with a diffeomorphism of $L$. For example, if $L$ is a surface, then the homomorphism from the mapping class group to the group of outer automorphisms of $\pi_1(L)$ is an isomorphism (the Dehn-Nielsen-Baer theorem). Another example: for $L$ a  hyperbolic 3-manifold of dimension $\geq 3$, one has that $\mathrm{Isom}(L) \to \mathrm{Out}(\pi_1(L))$ is an isomorphism by the Mostow rigidity theorem. In  cases such as these, the $L$-local isomorphism classes of transverse order $k$ foliations with singular leaf $L$ can be put into correspondence with the quotient set
$$\mathrm{Inn}(J^{k-1}) \setminus \operatorname{Hom}(\pi_1(L),J^{k-1})/ \Aut(\pi_1(L)).$$

The proof of Theorem \ref{holrange} will be an exercise in manipulating principal bundles and will have little to do with foliations per se. Let us first recall the associated principal bundle construction:

\begin{defn}
If $\varphi:G \to H$ is a homomorphism of Lie groups and $P$ is a principal $G$-bundle, then $H \times_\varphi P$ denotes  the principal $H$-bundle whose total space is the quotient  of $H \times P$ by the equivalence relation defined by $(h,p)\sim(hg,g^{-1}p)$ for all $g\in G$ and equipped with the $H$-action defined by $h[h',p]=[hh',p]$. Here we denote the class of $(h,p)$ by $[h,p]$.
\end{defn}

See for example \cite{greub-halperin}, Chapter~5. The following instances of this construction will be relevant:

\begin{ex}\label{assoc-quotbun}
If $1 \to N \to G \overset{\pi}{\to} H \to 1$ is an exact sequence of Lie groups  and if $P$ is a principal $G$-bundle, then there is a principal $H$-bundle isomorphism $P/N \to H \times_\pi P$ sending $[p]\mapsto[1,p]$. 
\end{ex}

\begin{defn}
Given a line bundle $E$, the \textbf{coframe bundle} $F^*(E)$ of $E$  is the principal $\operatorname{GL}(1)\coloneqq \operatorname{GL}(1,\R)$-bundle whose fiber at $x$ is the collection of nonzero linear functionals $E_x \to \R$. 
\end{defn}

\begin{ex}\label{silly}
Let $E$ be a  smooth line bundle and $r$ a positive integer. Denote by   $\iota$ the natural inclusion $a \mapsto ay : \operatorname{GL}(1)\to J^r$.  Then there is a  principal $J^r$-bundle isomorphism $J^r \times_\iota F^*(E) \to J^r(E,\R)$  sending $[j^r_0(\theta), \varphi] \mapsto j^r_x(\theta \circ \varphi)$ 
for all $j^r_0(\theta) \in J^r$ and  all $\varphi:E_x \to \R$.
\end{ex}

\begin{ex}\label{ex:univcover}
Let $M$ be a connected smooth manifold with basepoint $x_0$. Let $\widetilde M$ be the universal cover of $M$ with corresponding basepoint $\widetilde x_0$. Let $\Gamma$ be a discrete group and $\varphi : \pi_1(M,x_0) \to \Gamma$  a homomorphism. View $\widetilde M$ as a principal $\pi_1(M,x_0)$-bundle in the usual way using deck transformations. Put $P=\Gamma \times_\varphi \widetilde M$ and $p_0 = [1,\widetilde x_0]$. Then, $P$ is a principal $\Gamma$-bundle over $M$ for which the monodromy homomorphism $\pi_1(M,x_0) \to \Gamma$ determined by the point $p_0$ is exactly $\varphi$. 
\end{ex}

Suppose we have an exact sequence of Lie groups $1 \to N \to G \overset{\pi}{\to} H \to 1$ that is split by a homomorphism $\iota : H \to G$. In other words, suppose $G$ is presented as a semidirect product  $N \rtimes H$. Then, using the fact that $\pi \circ \iota = \id_H$, there is a natural isomorphism
\begin{align}\label{invertnat} 
P \cong H \times_\pi (G \times_\iota P) 
\end{align}
for every principal $H$-bundle $P$. On the other hand,  $\iota \circ \pi\neq\id_G$  (unless $\pi$ is an isomorphism), so there is no  reason to expect the opposite relation 
\begin{align}
\label{invert} Q \cong G\times_\iota(H \times_\pi Q) 
\end{align}
to hold for every principal  $G$-bundle $Q$. With some knowledge of classifying space theory , however,  one may prove the following. 

\begin{propn}
Let $1 \to N \to G \overset{\pi}{\to} H \to 1$ be an exact sequence of Lie groups and let  $\iota : H \to G$ satisfy  $\pi \circ \iota = \id_H$. If  either $\iota$ or $\pi$ is a homotopy equivalence, then $Q \cong G\times_\iota(H \times_\pi Q)$   for every principal $G$-bundle $Q$. 
\end{propn}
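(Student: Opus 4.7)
The plan is to pass to classifying space theory and reduce the claim to showing that $B(\iota\circ\pi) : BG \to BG$ is homotopic to the identity. First I would observe that the two alternative hypotheses are equivalent: since $\pi\circ\iota = \id_H$, if $\iota$ is a homotopy equivalence then $\pi$ is forced to be a homotopy inverse for it, and conversely, so in either case $\iota\circ\pi \simeq \id_G$ (equivalently, the kernel $N$ is weakly contractible).

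Next I would reduce to showing $Q \cong G\times_{\iota\circ\pi}Q$, invoking the elementary natural isomorphism $G\times_\iota(H\times_\pi Q) \cong G\times_{\iota\circ\pi}Q$ that comes straight from the definitions of associated bundles. For a smooth principal $G$-bundle $Q$ over a paracompact manifold $M$, the classification theorem gives a bijection between isomorphism classes of such bundles and $[M,BG]$, under which the associated bundle operation $Q \mapsto G\times_\varphi Q$ for a continuous group homomorphism $\varphi : G \to G$ corresponds to post-composition with $B\varphi$. Thus it suffices to show that $B(\iota\circ\pi) = B\iota\circ B\pi$ is homotopic to $\id_{BG}$.

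The key remaining input is that a continuous group homomorphism which is a weak homotopy equivalence induces a weak homotopy equivalence on classifying spaces; this follows from comparing the long exact sequences in homotopy of the universal principal bundles $G \to EG \to BG$ and $H \to EH \to BH$. Under our hypothesis, $B\pi$ is therefore a homotopy equivalence, and the relation $B\pi\circ B\iota = B(\pi\circ\iota) = \id_{BH}$ exhibits $B\iota$ as a right inverse for $B\pi$. A formal manipulation then upgrades $B\iota$ to a genuine two-sided homotopy inverse of $B\pi$, so $B\iota\circ B\pi \simeq \id_{BG}$ as required.

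The main obstacle I anticipate is that the Lie groups arising in our applications (such as $J^k_\R$) are not second countable, so one must verify that Milnor's functorial construction of $BG$ and the classification theorem for principal $G$-bundles over paracompact bases remain valid at this level of generality. If one prefers a direct approach that bypasses classifying space theory, contractibility of $N$ can instead be exploited to produce a section of the associated fiber bundle $Q/\iota(H) \to M$ (whose fiber $G/\iota(H)$ is diffeomorphic to $N$, hence an ANR); such a section yields a reduction of structure group to a principal $H$-subbundle $P' \subset Q$, and a direct computation confirms both $P' \cong H\times_\pi Q$ and $Q \cong G\times_\iota P'$, closing the argument without reference to $BG$.
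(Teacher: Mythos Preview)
Your proposal is correct and follows essentially the same route as the paper's proof sketch: pass to classifying spaces, observe that a homomorphism which is a homotopy equivalence induces a homotopy equivalence $BG \simeq BH$, and then invoke the principle that a one-sided inverse of an invertible map is a two-sided inverse. Your anticipated alternative---exploiting contractibility of $N$ to produce a section and thereby a reduction of structure group---is also what the paper provides in the two lemmas immediately following this proposition, under the additional hypothesis that $N$ is diffeomorphic to a Euclidean space.
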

Note that, unlike the isomorphism \eqref{invertnat}, the isomorphism given by the above proposition is not natural.
\begin{proof}[Proof sketch]
A homomorphism between $G$ and $H$ which is also a homotopy equivalence induces a homotopy equivalence at the level of classifying spaces. It follows that the associated bundle construction for such a homomorphism induces a bijection between isomorphism classes of $G$-bundles and isomorphism classes of $H$-bundles. The desired claim then follows from the principle  that a one-sided inverse for an invertible map is the two-sided inverse. We refer the reader to \cite{Husemoller}, Chapter~4 or \cite{Steenrod}, Part II for relevant aspects of classifying space theory. 
\end{proof}

In the following two lemmas, we give an alternative proof of a special case of the above proposition which avoids the technology of classifying space theory.

\begin{lemma}\label{buniso1}
Let $1 \to N \to G \to H \to 1$ be short exact exact sequence of Lie groups. Let $P\to B$ and $Q\to B$ be smooth principal  $G$-bundles. If  $P/N\cong Q/N$   as principal $H$-bundles and $N$ is diffeomorphic to a finite-dimensional Euclidean space,  then $P \cong Q$    as principal $G$-bundles. 
\end{lemma}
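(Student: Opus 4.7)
The plan is to reduce the claim to the intuition that, since $N$ is contractible, the fibration $G \to H$ is a homotopy equivalence at the level of classifying spaces, so $G$-bundles and $H$-bundles over $B$ are classified by the same data via $P \mapsto P/N$. I would argue this directly, without classifying-space machinery, along the following lines.

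First, use the given isomorphism to identify $P/N$ and $Q/N$ with a common principal $H$-bundle $R \to B$, so that the quotient maps $\pi_P : P \to R$ and $\pi_Q : Q \to R$ become principal $N$-bundles. Because $N$ is diffeomorphic to a Euclidean space it is smoothly contractible, and a standard partition-of-unity argument (using a smooth deformation retraction of $N$ to a point) shows that every principal $N$-bundle over a paracompact smooth manifold is smoothly trivial. Picking smooth sections $\sigma_P : R \to P$ and $\sigma_Q : R \to Q$, form the smooth, $N$-equivariant bundle isomorphism $\varphi_0 : P \to Q$ over $\id_R$ defined by $\varphi_0(n \cdot \sigma_P(r)) := n \cdot \sigma_Q(r)$.

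The main obstacle is to promote $\varphi_0$ to a $G$-equivariant isomorphism by replacing $\sigma_Q$ with $m \cdot \sigma_Q$ for a suitably chosen smooth $m : R \to N$. For each $g \in G$ and $r \in R$, introduce $\alpha_P(g,r), \alpha_Q(g,r) \in N$ via $g \cdot \sigma_P(r) = \alpha_P(g,r) \cdot \sigma_P(\pi(g) \cdot r)$ and similarly for $Q$, where $\pi : G \to H$ denotes the quotient. A direct calculation shows that the modified $\varphi$ is $G$-equivariant if and only if
\[ \alpha_P(g,r) \;=\; (g \, m(r) \, g^{-1}) \, \alpha_Q(g,r) \, m(\pi(g) \cdot r)^{-1} \qquad \text{for all } g \in G,\; r \in R. \]
Solving this for $m$ is a (generally non-abelian) degree-one coboundary problem valued in the sheaf on $R$ of smooth $N$-valued functions, twisted by the $G$-action. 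I would produce $m$ by trivializing $R \to B$ over a locally finite open cover $\{U_i\}$ of $B$ with each $U_i$ contractible: local solutions $m_i$ exist by the contractibility of $N$ and of $U_i$, and the local solutions can be patched by a partition of unity on $B$ composed with a smooth contraction of $N$ to its identity element. The hard step will be this patching; in the abelian case (which is the only one the paper actually applies, with $N = \R$) it reduces to the triviality of first \v{C}ech cohomology of a fine sheaf of smooth $\R$-valued functions, while for non-abelian contractible $N$ one must verify more carefully that the patching procedure respects the twisted coefficient structure and produces a globally smooth $m$ satisfying the displayed equation.
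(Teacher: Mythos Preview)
Your proposal has a genuine gap: the patching step is not carried out, and you explicitly flag it as ``the hard step'' that ``one must verify more carefully.'' The cocycle equation you derive for $m$ is correct, but producing a global solution $m:R\to N$ from local ones via a partition of unity is not straightforward when $N$ is non-abelian---convex combinations of $N$-valued functions make no sense, and composing with a contraction does not obviously interact well with the twisted coefficient structure you identified. Moreover, your parenthetical remark that the abelian case ``is the only one the paper actually applies, with $N=\R$'' is incorrect: in the paper's application the extension is $1\to N\to J^{k-1}\to\operatorname{GL}(1)\to 1$, so $N=\{y+a_2y^2+\cdots+a_{k-1}y^{k-1}\}$, which is non-abelian once $k\geq 5$. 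Hence the non-abelian case is genuinely needed, and your argument does not cover it.

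The paper takes a shorter route that sidesteps all of this. It reformulates the claim as the existence of a global section of the fiber bundle $\Hom_G(P,Q)\to B$, and notes that the hypothesis furnishes a global section of $\Hom_H(P/N,Q/N)\to B$. The key observation is then that $\Hom_G(P,Q)$ is itself a fiber bundle over $\Hom_H(P/N,Q/N)$ with typical fiber $N$. Since $N$ is Euclidean, any fiber bundle with fiber $N$ over a manifold admits a section (Steenrod, Theorem~12.2, plus a routine smoothing), and composing the two sections finishes the proof. This argument never invokes the group structure on $N$, only its contractibility, so the abelian/non-abelian distinction that blocks your approach simply does not arise.
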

\begin{proof}
Recall $\Hom_G(P,Q)$ is defined as the fiber bundle over $B$ whose fiber at $x \in B$ is the set of $G$-equivariant maps from $P_x$ to $Q_x$. Reformulated slightly, the statement to be proven is that $\Hom_G(P,Q) \to B$ admits a (smooth) global section. We are given that the fiber bundle $\Hom_H(P/N,Q/N) \to B$ has a global section. We may instead view $\Hom_G(P,Q)$ as a fiber bundle over $\Hom_H(P/N,Q/N)$ with typical fiber $N$. Since $N$ is locally Euclidean, it follows that the latter fiber bundle has a global section. Indeed, the Tietze extension theorem can be used to show that every fiber bundle over a manifold with Euclidean space fibers has a continuous section  (see \cite{Steenrod}, Theorem~12.2) and routine smoothing arguments can then be applied. Composing the section $B \to \Hom_H(P/N,Q/N)$ with the  section $\Hom_H(P/N,Q/N) \to \Hom_G(P,Q)$ gives the result.
\end{proof}

\begin{lemma}\label{buniso2}
Let $1 \to N \to G \to H \to 1$ be a short exact exact sequence of Lie groups and let  $\iota : H \to G$ satisfy  $\pi \circ \iota = \id_H$. Let $P \to B$ be a smooth principal $G$ bundle. If is $N$ diffeomorphic to a finite-dimensional Euclidean space, then there is a smooth isomorphism of principal $G$-bundles from $P$ to $G\times_\iota( H\times_\pi P )$. 
\end{lemma}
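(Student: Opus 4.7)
The strategy is to compare the two principal $G$-bundles through their quotients by $N$ and apply Lemma~\ref{buniso1}. Set $Q \coloneqq G \times_\iota (H \times_\pi P)$, so that what is to be shown is the existence of a principal $G$-bundle isomorphism $P \cong Q$. Since $N$ is diffeomorphic to Euclidean space, Lemma~\ref{buniso1} reduces the problem to exhibiting an isomorphism $P/N \cong Q/N$ of principal $H$-bundles.

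First, I would apply Example~\ref{assoc-quotbun} directly to $P$ to obtain a canonical principal $H$-bundle isomorphism $P/N \cong H \times_\pi P$. Applying Example~\ref{assoc-quotbun} a second time, now to the principal $G$-bundle $Q$, gives
\[ Q/N \;\cong\; H \times_\pi Q \;=\; H \times_\pi \bigl( G \times_\iota (H \times_\pi P) \bigr). \]
The remaining task is to simplify this iterated associated bundle.

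Here I would invoke the standard functoriality of the associated bundle construction: for any Lie group homomorphisms $\varphi : K_1 \to K_2$ and $\psi : K_2 \to K_3$, and any principal $K_1$-bundle $R$, the assignment $[k_3,[k_2,r]] \mapsto [k_3\,\psi(k_2),r]$ defines a canonical isomorphism of principal $K_3$-bundles $K_3 \times_\psi (K_2 \times_\varphi R) \to K_3 \times_{\psi \circ \varphi} R$. Applied with $\varphi = \iota$, $\psi = \pi$ and $R = H \times_\pi P$, and using the hypothesis $\pi \circ \iota = \id_H$ together with the evident identification $H \times_{\id_H}(-) \cong (-)$, this yields
\[ H \times_\pi \bigl( G \times_\iota (H \times_\pi P) \bigr) \;\cong\; H \times_{\id_H} (H \times_\pi P) \;\cong\; H \times_\pi P \;\cong\; P/N. \]
Chaining the identifications gives $Q/N \cong P/N$ as principal $H$-bundles, and Lemma~\ref{buniso1} then produces the desired principal $G$-bundle isomorphism $P \cong Q$.

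The argument is essentially formal manipulation of associated bundles, and the real analytic content sits in Lemma~\ref{buniso1}, which is where the Euclidean hypothesis on $N$ is consumed. The only step requiring mild care is the functoriality identity $K_3 \times_\psi (K_2 \times_\varphi R) \cong K_3 \times_{\psi \circ \varphi} R$; this is standard and could either be cited or verified on the nose by checking that the explicit formula above descends to the relevant quotients and is $K_3$-equivariant. I do not anticipate a serious obstacle beyond book-keeping.
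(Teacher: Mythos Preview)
Your argument is correct and follows essentially the same route as the paper: both reduce via Lemma~\ref{buniso1} to comparing the $N$-quotients, identify these with associated bundles through Example~\ref{assoc-quotbun}, and then collapse the iterated associated bundle using $\pi\circ\iota=\id_H$. The paper packages the last step as the natural isomorphism $R \cong H\times_\pi(G\times_\iota R)$ for $H$-bundles $R$ (applied with $R = H\times_\pi P$), which is exactly your functoriality identity specialized to this case.
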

\begin{proof}
Let $Q \coloneqq H\times_\pi P$. Since $\pi \circ \iota = \id_H$, we have $Q\cong H  \times_\pi(G\times_\iota Q )$ as principal $H$-bundles. Noting that $Q \cong P/N$ and $H  \times_\pi(G\times_\iota Q ) \cong (G\times_\iota Q )/N$ (see Example~\ref{assoc-quotbun}), we have $P/N \cong (G\times_\iota Q )/N$. Thus, applying Lemma~\ref{buniso1}, $P$ is isomorphic to $G \times_\iota Q$, as desired. 
\end{proof}

In particular, the above lemma applies to the extension
\[ 1 \to N \to J^r \overset{\pi}{\to} \operatorname{GL}(1) \to 1,\]
where the quotient map is $a_1y+\ldots+a_ry^r \mapsto a_1$, $N$ is its kernel and $\iota(a)=ay$ provides a splitting on the right. From this we obtain:

\begin{propn}
Let $P \to B$ be any principal $J^r$ bundle, where $r$ is a positive integer. Then, there exists a line bundle $E \to B$ such that $P \cong J^r(E,\R)$ as principal $J^r$-bundles.
\end{propn}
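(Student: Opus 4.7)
The plan is to combine Lemma~\ref{buniso2} applied to the split extension
\[ 1 \to N \to J^r \overset{\pi}{\to} \operatorname{GL}(1) \to 1, \qquad \iota(a) = ay, \]
with the identification of $J^r(E,\R)$ as an associated bundle from Example~\ref{silly}. The key observation that makes Lemma~\ref{buniso2} applicable is that $N = \{y + a_2 y^2 + \ldots + a_r y^r : a_i \in \R\}$ is diffeomorphic (as a manifold) to $\R^{r-1}$, so the Euclidean-fiber hypothesis is satisfied.

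First, I would construct the candidate line bundle. Set $Q \coloneqq \operatorname{GL}(1) \times_\pi P$, which is a principal $\operatorname{GL}(1)$-bundle over $B$. By the standard correspondence between principal $\operatorname{GL}(1)$-bundles and line bundles (take the associated bundle via the defining representation of $\operatorname{GL}(1)$ on $\R$), there exists a smooth line bundle $E \to B$ with a principal $\operatorname{GL}(1)$-bundle isomorphism $F^*(E) \cong Q$.

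Next, I would chain the relevant isomorphisms. Applying Lemma~\ref{buniso2} to $P$ gives a principal $J^r$-bundle isomorphism
\[ P \;\cong\; J^r \times_\iota Q. \]
Substituting $Q \cong F^*(E)$ and then invoking Example~\ref{silly} yields
\[ J^r \times_\iota Q \;\cong\; J^r \times_\iota F^*(E) \;\cong\; J^r(E,\R), \]
and composing these isomorphisms produces the desired principal $J^r$-bundle isomorphism $P \cong J^r(E,\R)$.

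There is no real obstacle here: the substantive work has been done in Lemma~\ref{buniso2} (which uses the Euclidean-fiber section-extension argument) and in Example~\ref{silly} (which identifies $J^r(E,\R)$ explicitly as an associated bundle of the coframe bundle). The only point requiring a small amount of care is the verification that $N \cong \R^{r-1}$ as a smooth manifold, which is immediate from the polynomial description of $J^r$, and the well-known fact that every principal $\operatorname{GL}(1)$-bundle arises as the coframe bundle of a line bundle. Thus the proof is essentially a bookkeeping exercise assembling the lemmas already at hand.
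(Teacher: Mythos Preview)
Your proposal is correct and follows essentially the same route as the paper's proof: apply Lemma~\ref{buniso2} to the split extension $1 \to N \to J^r \to \operatorname{GL}(1) \to 1$ (using that $N \cong \R^{r-1}$), set $Q = \operatorname{GL}(1) \times_\pi P$, identify $Q \cong F^*(E)$ for the associated line bundle $E$, and then invoke Example~\ref{silly} to conclude $P \cong J^r \times_\iota F^*(E) \cong J^r(E,\R)$.
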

\begin{proof}
Applying Lemma~\ref{buniso2} to $P$ with the extension described above, we obtain $P\cong J^r \times_\iota Q$ where $Q \coloneqq \operatorname{GL}(1) \times_{\pi} P$. Any $\operatorname{GL}(1)$-bundle is isomorphic to the coframe bundle of its associated vector bundle, so we have $Q \cong F^*(E)$ where $E \coloneqq \R \times_{\operatorname{GL}(1)} Q$. Thus, $P \cong J^r \times_\iota F^*(E) \cong J^r(E,\R)$ (see Example~\ref{silly}).
\end{proof}

We conclude with the proof of the main result of this section.

\begin{proof}[Proof of Theorem~\ref{holrange}]
Let $\gamma:\pi_1(L,x_0)\to J^{k-1}$ be any homomorphism. Let $P \to L$ be any  principal $J^{k-1}_d$-bundle such that the monodromy mapping associated to some  point $p_0 \in P_{x_0}$ is $\gamma$. For instance, we may use $P=J^{k-1}\times_\gamma \widetilde L$, as in Example~\ref{ex:univcover}. We may equivalently view $P$ as a principal $J^{k-1}$-bundle equipped with a flat connection.   
According to the preceding lemma, there exists a line bundle $E \to L$ and an isomorphism $P \to J^{k-1}(E,\R)$ of principal $J^{k-1}$-bundles. Push forward the flat connection on $P$ to $J^{k-1}(E,\R)$ through this isomorphism, and then use  Theorem~\ref{flatcorresp} to produce a corresponding transverse order $k$ foliation $\F$ of $E$ with singular leaf $L$. By Proposition~\ref{PtoJ}, $J^{k-1}(E,\R)$ and $P^{k-1}(\F)$ are isomorphic as $J^{k-1}_d$-bundles, so the holonomy invariant of $\F$ is as desired.
\end{proof}

\printbibliography


\end{document}